\newtheorem{theorem}{Theorem}[section]
\newtheorem{lemma}[theorem]{Lemma}
\newtheorem{proposition}[theorem]{Proposition}
\newtheorem{corollary}[theorem]{Corollary}
\theoremstyle{definition}
\newtheorem{example}[theorem]{Example}
\newtheorem{remark*}[theorem]{Remark}
\newtheorem{rremark*}[theorem]{Final remark}
\newtheorem{problem}[theorem]{Problem}
\newcommand{\mathsmall}[1]{{\begingroup\everymath{\scriptstyle}\small{#1}\endgroup}}
\begin{document}

\title{{\huge The ideal-valued index for a dihedral group action,}\\
{\huge and mass partition by two hyperplanes}}
\author{Pavle V. M. Blagojevi\'c\thanks{%
The research leading to these results has received funding from the European Research
Council under the European Union's Seventh Framework Programme (FP7/2007-2013) /
ERC Grant agreement no.~247029-SDModels. Also supported by the grant ON 174008 of the Serbian
Ministry of Science and Environment.} \\
Mathemati\v cki Institut\\
Knez Michailova 35/1\\
11001 Beograd, Serbia\\
\url{pavleb@mi.sanu.ac.rs} \and \setcounter{footnote}{6} G\"unter M. Ziegler%
\thanks{The research leading to these results has received funding from the European Research
Council under the European Union's Seventh Framework Programme (FP7/2007-2013) /
ERC Grant agreement no.~247029-SDModels.} \\
Inst.\ Mathematics, MA 6-2\\
TU Berlin\\
D-10623 Berlin, Germany\\
\url{ziegler@math.tu-berlin.de}}
\date{{\small December 9, 2010}}
\maketitle

\begin{abstract}
\noindent We compute the complete Fadell--Husseini index of the dihedral
group $D_{8}=(\mathbb{Z}_{2})^{2}\rtimes \mathbb{Z}_{2}$ acting on~$%
S^{d}\times S^{d}$ for $\mathbb{F}_{2}$ and for $\mathbb{Z}$ coefficients,
that is, the kernels of the maps in equivariant cohomology
\begin{equation*}
H_{D_{8}}^{\ast }(\mathrm{pt},\mathbb{F}_{2})\ \ \longrightarrow \ \ H_{D_{8}}^{\ast
}(S^{d}\times S^{d},\mathbb{F}_{2})
\end{equation*}%
and%
\begin{equation*}
H_{D_{8}}^{\ast }(\mathrm{pt},\mathbb{Z})\ \ \longrightarrow \ \ H_{D_{8}}^{\ast }(S^{d}\times S^{d},
\mathbb{Z}).
\end{equation*}%
This establishes the complete cohomological lower bounds, with $\mathbb{F}%
_{2}$ and with $\mathbb{Z}$ coefficients, for the two-hyperplane case of Gr%
\"{u}nbaum's 1960 mass partition problem: For which $d$ and $j$ can any $j$
arbitrary measures be cut into four equal parts each by two suitably chosen
hyperplanes in $\mathbb{R}^{d}$? In both cases, we find that the ideal
bounds are not stronger than previously established bounds based on one of
the maximal abelian subgroups of $D_{8}$.
\end{abstract}

\tableofcontents

\section{Introduction}

\subsection{The hyperplane mass partition problem}

A \textit{mass distribution} on $\mathbb{R}^{d}$ is a finite Borel measure $\mu (X)=\int_{X}fd\mu $ determined by an
integrable density function $f:\mathbb{R}^{d}\rightarrow\mathbb{R}$.

Every affine hyperplane $H=\{x\in\mathbb{R}^{d}~|~\langle x,v\rangle =\alpha \}$ in
$\mathbb{R}^{d}$ determines two open halfspaces%
\begin{equation*}
H^{-}=\{x\in\mathbb{R}^{d}~|~\langle x,v\rangle < \alpha \}\text{ and }H^{+}=\{x\in
\mathbb{R}^{d}~|~\langle x,v\rangle > \alpha \}\text{.}
\end{equation*}%
An \textit{orthant} of an arrangement of $k$ hyperplanes $\mathcal{H}%
=\{H_1,H_2,\ldots,H_k\}$ in $\mathbb{R}^{d}$ is an intersection of halfspaces $%
\mathcal{O}=H_1^{\alpha_{1}}\cap\cdots\cap H_k^{\alpha _{k}}$, for some $%
\alpha _{j}\in\mathbb{Z}_{2}$. Thus there are $2^{k}$ orthants determined by
$\mathcal{H}$ and they are naturally indexed by elements of the group $%
\left(\mathbb{Z}_{2}\right) ^{k}$.

An arrangement of hyperplanes $\mathcal{H}$ \textit{equiparts} a collection
of mass distributions $\mathcal{M}$ in $\mathbb{R}^{d}$ if for each orthant~$\mathcal{O}$ and each measure $\mu \in \mathcal{M}
$ we have
\begin{equation*}
\mu (\mathcal{O})=\tfrac{1}{2^{k}}\mu (\mathbb{R}^{d}).
\end{equation*}

A triple of integers $(d,j,k)$ is \textit{admissible} if for every
collection $\mathcal{M}$ of $j$ mass distributions in $\mathbb{R}^{d}$ there exists an arrangement of $k$ hyperplanes $\mathcal{H}$
equiparting them.


The general problem formulated by Gr\"{u}nbaum \cite{Grunb} in 1960 can be
stated as follows.


\begin{problem}
\label{GruenbaumProblem}Determine the function $\Delta :\mathcal{\mathbb{N}}^{2}\rightarrow \mathcal{
\mathbb{N}}$ given by
\begin{equation*}
\Delta (j,k)=\min \{d~|~(d,j,k)\text{ {is an admissible triple}}\}.
\end{equation*}
\end{problem}


The case of one hyperplane, $\Delta (j,1)=j$, is the famous ham sandwich
theorem, which is equivalent to the Borsuk--Ulam theorem. The equality $%
\Delta (2,2)=3$, and consequently $\Delta (1,3)=3$, was proven by Hadwiger
\cite{Hadw}. Ramos \cite{Ramos} gave a general lower bound for the function $%
\Delta $,%
\begin{equation}
\Delta (j,k)\geq \tfrac{2^{k}-1}{k}j.  \label{eq:LowerBound}
\end{equation}%
Recently, Mani-Levitska, Vre\'{c}ica and \v{Z}ivaljevi\'{c} \cite{Main-Sinisa-Rade}
applied Fadell--Husseini index theory for an elementary abelian subgroup $(\mathbb{Z}_2)^k$
of the Weyl group $W_k=(\mathbb{Z}_2)^k\rtimes S_k$ to obtain a new upper bound for the function $\Delta $,%
\begin{equation}
\Delta (2^{q}+r,k)\leq 2^{k+q-1}+r.  \label{eq:UpperBound}
\end{equation}%
In the case of $j=2^{l+1}-1$ measures and $k=2$ hyperplanes these bounds
yield the equality%
\begin{equation*}
\Delta (j,2)=\lceil \tfrac{3}{2}j\rceil .
\end{equation*}

\subsection{Statement of the main result ($\mathit{k=\mathrm{2}}$)}

This paper addresses Problem \ref{GruenbaumProblem} for $k=2$ using two
different but related Configuration Space/Test Map schemes (Section \ref%
{sec:CS/TM}, Proposition \ref{prop:TestSpace}).

\begin{compactitem}
\item The \textbf{product scheme} is the classical one, already considered
in \cite{guide2} and \cite{Main-Sinisa-Rade}. The problem is translated to
the problem of the existence of a $W_{k}$-equivariant map,%
\begin{equation*}
Y_{d,k}:=\left( S^{d}\right) ^{k}\longrightarrow S\left( \left(
R_{2^{k}}\right) ^{j}\right) ,
\end{equation*}%
where $W_{k}=(\mathbb{Z}_{2})^{k}\rtimes S_{k}$ is the Weyl group.

\item The \textbf{join scheme} is a new one. It connects the problem with
classical Borsuk--Ulam properties in the spirit of Marzantowicz \cite{Marza}%
. It asks the question whether there exists a $W_{k}$-equivariant map%
\begin{equation*}
X_{d,k}:=\left( S^{d}\right) ^{\ast k}\longrightarrow S\left( U_{k}\times
\left( R_{2^{k}}\right) ^{j}\right).
\end{equation*}%
The $W_{k}$-representations $R_{2^{k}}$ and $U_{k}$ are introduced in
Section \ref{Sec:TestMap}.
\end{compactitem}

\smallskip
\noindent Obstruction theory methods cannot be applied to either scheme
directly for $k>1$, since the $W_{k}$-actions on the respective
configuration spaces $\left( S^{d}\right) ^{k}$ and $\left( S^{d}\right)
^{\ast k}$ are \textbf{not free} (compare \cite[Section 2.3.3]%
{Main-Sinisa-Rade}, assumptions on the manifold $M^{n}$). Therefore we
analyze the associated equivariant question for $k=2$ via the
Fadell--Husseini ideal index theory method. We show that the join scheme
considered from the Fadell--Husseini point of view, with either $\mathbb{F}%
_{2}$ or $\mathbb{Z}$ coefficients, yields no obstruction to the existence of the
equivariant map in question (Remarks \ref{Rem:Fail-1} and \ref{Rem:Fail-2}%
). In the case of the product scheme we give the ideal
bounds obtained from the use of the full group of symmetries by proving the following theorem.


\begin{theorem}
\label{Th:Main}Let $\pi _{d}$, $d\geq 0$, be polynomials in $\mathbb{F}%
_{2}[y,w]$ given by%
\begin{equation*}
\pi _{d}(y,w)=\sum\limits_{i}\binom{d-1-i}{i}_{\mathrm{mod}\,2}~w^{i}y^{d-2i}
\end{equation*}%
and $\Pi _{d}$, $d\geq 0$, be polynomials in $\mathbb{Z}\lbrack \mathcal{Y},%
\mathcal{M},\mathcal{W}]/\langle 2\mathcal{Y},2\mathcal{M},4\mathcal{W},%
\mathcal{M}^{2}-\mathcal{WY}\rangle $ given by
\begin{equation*}
\Pi _{d}(\mathcal{Y},\mathcal{W})=\sum\limits_{i}\binom{d-1-i}{i}_{\mathrm{%
mod}\,2}~\mathcal{W}^{i}\mathcal{Y}^{d-2i}.
\end{equation*}%

\begin{compactenum}[\rm(A)]

\item $\mathbb{F}_{2}$-bound: The triple $(d,j,2)\in\mathbb{N}^{3}$ is admissible if
\begin{equation*}
y^{j}w^{j}\notin \langle \pi _{d+1},\pi _{d+2}\rangle \subseteq \mathbb{F}%
_{2}[y,w].
\end{equation*}

\item $\mathbb{Z}$-bound: The triple $(d,j,2)\in
\mathbb{N}^{3}$ is
admissible if%
\begin{equation*}
\left\langle
\begin{tabular}{ll}
\multicolumn{2}{l}{$(j-1)_{\mathrm{mod}\,2}~\mathcal{Y}^{\frac{j}{2}}%
\mathcal{W}^{\frac{j}{2}},$} \\
$j_{\mathrm{mod}\,2}~\mathcal{Y}^{\frac{j+1}{2}}\mathcal{W}^{\frac{j-1}{2}}%
\mathcal{M},$ & $j_{\mathrm{mod}\,2}~\mathcal{Y}^{\frac{j+1}{2}}\mathcal{W}^{%
\frac{j+1}{2}}$%
\end{tabular}%
\right\rangle \subseteq \left\langle
\begin{tabular}{ll}
$(d-1)_{\mathrm{mod}\,2~}\Pi _{\frac{d+2}{2}},$ & $(d-1)_{\mathrm{mod}%
\,2~}\Pi _{\frac{d+4}{2}},$ \\
$(d-1)_{\mathrm{mod}\,2}~\mathcal{M}\Pi _{\frac{d}{2}},$ &  \\
$d_{\mathrm{mod}\,2~}\Pi _{\frac{d+1}{2}},$ & $d_{\mathrm{mod}\,2~}\Pi _{%
\frac{d+3}{2}}$%
\end{tabular}%
\right\rangle
\end{equation*}
in the ring $\mathbb{Z}\lbrack \mathcal{Y},\mathcal{M},\mathcal{W}]/\langle 2\mathcal{Y},2\mathcal{M%
},4\mathcal{W},\mathcal{M}^{2}-\mathcal{WY}\rangle $.
\end{compactenum}
\end{theorem}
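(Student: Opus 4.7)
The plan is to apply the product Configuration Space/Test Map scheme of Proposition~\ref{prop:TestSpace} combined with the standard Fadell--Husseini index obstruction. Admissibility of the triple $(d,j,2)$ is implied by the non-existence of a $W_{2}=D_{8}$-equivariant map
\begin{equation*}
Y_{d,2}=S^{d}\times S^{d}\ \longrightarrow\ S\bigl((R_{4})^{j}\bigr),
\end{equation*}
and, by functoriality of the index, a sufficient condition for such non-existence is the non-containment
\begin{equation*}
\mathrm{Index}_{D_{8}}\,S\bigl((R_{4})^{j}\bigr)\ \not\subseteq\ \mathrm{Index}_{D_{8}}\bigl(S^{d}\times S^{d}\bigr)
\end{equation*}
inside $H_{D_{8}}^{\ast}(\mathrm{pt})$, with either $\mathbb{F}_{2}$ or $\mathbb{Z}$ coefficients. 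Both parts (A) and (B) of the theorem are reformulations of this criterion once the two indices are made explicit, so the whole proof reduces to identifying the two ideals.

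The index of the test sphere is the easier half. Since $S((R_{4})^{j})$ is the unit sphere of $j$ copies of the four-dimensional real $D_{8}$-representation $R_{4}$, its Fadell--Husseini index is the principal ideal generated by the top equivariant characteristic class of $(R_{4})^{j}$: the top Stiefel--Whitney class $w_{4j}$ over $\mathbb{F}_{2}$, and the equivariant Euler class $e((R_{4})^{j})$ over $\mathbb{Z}$. Splitting $R_{4}$ over the maximal abelian subgroup $(\mathbb{Z}_{2})^{2}\subseteq D_{8}$ into the sum of its two non-trivial characters permuted by the outer~$\mathbb{Z}_{2}$, and then expressing these characteristic classes in the generators of $H^{\ast}(BD_{8})$ introduced in the theorem, using the relations $\mathcal{M}^{2}=\mathcal{W}\mathcal{Y}$ and $2\mathcal{Y}=2\mathcal{M}=4\mathcal{W}=0$ over~$\mathbb{Z}$, yields the monomial $y^{j}w^{j}$ of part~(A) and, according to the parity of $j$, the three generators $\mathcal{Y}^{j/2}\mathcal{W}^{j/2}$, $\mathcal{Y}^{(j+1)/2}\mathcal{W}^{(j-1)/2}\mathcal{M}$, and $\mathcal{Y}^{(j+1)/2}\mathcal{W}^{(j+1)/2}$ on the left of part~(B).

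The main obstacle, and the core technical work, is the computation of $\mathrm{Index}_{D_{8}}(S^{d}\times S^{d})$, the central computation of the paper as announced in the abstract. I would run the Serre spectral sequence of the Borel fibration
\begin{equation*}
S^{d}\times S^{d}\ \hookrightarrow\ ED_{8}\times_{D_{8}}(S^{d}\times S^{d})\ \longrightarrow\ BD_{8},
\end{equation*}
whose $E_{2}$-page is $H^{\ast}(BD_{8})\otimes H^{\ast}(S^{d}\times S^{d})$ with the local system induced by the swapping action of the outer~$\mathbb{Z}_{2}$ and, over~$\mathbb{Z}$, by the sign $(-1)^{d+1}$ of the antipodal action on $H^{d}(S^{d})$. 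After diagonalising the $D_{8}$-module $H^{d}(S^{d}\times S^{d})$ into one-dimensional character summands, the first non-trivial differentials $d_{d+1}$ and $d_{d+2}$ are the equivariant Euler classes of the line bundles $ED_{8}\times_{D_{8}}\mathbb{R}$ associated to those characters. A direct calculation exploiting the recursion $\binom{d-1-i}{i}\equiv\binom{d-2-i}{i}+\binom{d-2-i}{i-1}\pmod 2$, which is precisely the multiplicative structure encoded by $\pi_{d}$, identifies these Euler classes with $\pi_{d+1}$ and $\pi_{d+2}$; and a dimension count combined with the ring structure shows that no higher differentials or multiplicative extensions can contribute new generators. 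This yields $\mathrm{Index}_{D_{8}}(S^{d}\times S^{d})=\langle\pi_{d+1},\pi_{d+2}\rangle$ in the $\mathbb{F}_{2}$-case.

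The integral case of part~(B) runs in complete parallel, with the polynomials $\Pi_{d}$ playing the role of Bockstein lifts of $\pi_{d}$ and the torsion relations $2\mathcal{Y}=2\mathcal{M}=4\mathcal{W}=0$ together with $\mathcal{M}^{2}=\mathcal{W}\mathcal{Y}$ producing the parity prefactors $d_{\mathrm{mod}\,2}$ and $(d-1)_{\mathrm{mod}\,2}$, as well as the extra generator $\mathcal{M}\Pi_{d/2}$ on the right-hand side of~(B). Assembling the two index calculations and invoking the non-containment criterion of the first paragraph then delivers both parts of the theorem.
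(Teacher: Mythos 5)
Your global strategy is the same as the paper's (product scheme via Proposition~\ref{prop:TestSpace}, then comparison of Fadell--Husseini indexes via Proposition~\ref{prop:basic}), but the plan for the two key computations has genuine gaps. First, on the product side: the $d$-th row of the Borel spectral sequence is $H^{p}(D_{8},\mathbb{F}_{2}[D_{8}/H_{1}])$, and $\mathbb{F}_{2}[D_{8}/H_{1}]$ (resp.\ $\mathbb{Z}[D_{8}/H_{1}]$, or its twist $M$ for $d$ even) is an induced permutation module, \emph{not} a sum of one-dimensional characters; so ``diagonalising $H^{d}(S^{d}\times S^{d})$ into character summands'' and reading the differentials as Euler classes of line bundles is not available. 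The paper instead identifies this row via Shapiro's lemma with $H^{\ast}(H_{1})$, determines its $H^{\ast}(D_{8})$-module generators (degrees $0$ and $1$, with $x$ acting trivially, Propositions~\ref{Prop:MultiplyByX} and~\ref{Prop:Modulestructure}), and computes the single differential $\partial_{d+1}$ on those generators by restriction to $H_{1}$; there is no differential $\partial_{d+2}$ -- both $\pi_{d+1}$ and $\pi_{d+2}$ come from $\partial_{d+1}$. More seriously, your claim that ``no higher differentials \dots can contribute new generators'' is false: $\partial_{2d+1}(x)=w^{d+1}$, and the full index is $\langle\pi_{d+1},\pi_{d+2},w^{d+1}\rangle$, equation~(\ref{eqAlmostTh}). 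Hence your criterion ``$\mathrm{Index}(\text{sphere})\not\subseteq\mathrm{Index}(S^{d}\times S^{d})$'' with \emph{full} indexes, once the computation is corrected, only yields the weaker condition $y^{j}w^{j}\notin\langle\pi_{d+1},\pi_{d+2},w^{d+1}\rangle$. The theorem as stated uses the \emph{partial} index $\mathrm{Index}^{d+2}_{D_{8}}(S^{d}\times S^{d})=\langle\pi_{d+1},\pi_{d+2}\rangle$, and one must invoke the partial-index monotonicity of Proposition~\ref{prop:basic} (comparing $\mathrm{Index}^{k}$ for a suitable level $k$ reached before $\partial_{2d+1}$ acts); this bookkeeping is entirely absent from your proposal.

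Second, on the sphere side: over $\mathbb{F}_{2}$ your Euler/Stiefel--Whitney argument is essentially the paper's (note $\dim R_{4}^{\oplus j}=3j$, not $4j$), but over $\mathbb{Z}$ it fails for $j$ odd. There $\sigma$ reverses the orientation of $R_{4}^{\oplus j}$, so $H^{3j-1}(S(R_{4}^{\oplus j}),\mathbb{Z})$ is a nontrivial $D_{8}$-module, there is no ordinary integral Euler class in $H^{\ast}(D_{8},\mathbb{Z})$, and indeed the index is \emph{not principal}: it is $\langle\mathcal{Y}^{\frac{j+1}{2}}\mathcal{W}^{\frac{j-1}{2}}\mathcal{M},\,\mathcal{Y}^{\frac{j+1}{2}}\mathcal{W}^{\frac{j+1}{2}}\rangle$, which no single characteristic class can generate. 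The paper has to work with the twisted coefficients directly, determining the $H^{\ast}(D_{8},\mathbb{Z})$-module structure of $H^{\ast}(D_{8},\mathcal{Z})$ through Bockstein spectral sequences and restrictions (Proposition~\ref{Prop:Modul_ZC}); the analogous issue occurs for the product when $d$ is even (Corollary~\ref{Coroll-1}), so the integral case is not ``in complete parallel'' with the mod~$2$ case. In short, your generators coincide with the paper's answers, but the proposed derivation would not establish them, nor the precise (partial-index) form of the criterion in the theorem.
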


\begin{remark*}
\label{Rem:definition}Let $\hat{\Pi}_{d}$, $d\geq 0$, be the sequence of
polynomials in $\mathbb{Z}\mathfrak{[}Y,W]$ defined by $\hat{\Pi}_{0}=0$, $\hat{\Pi}_{1}=Y$ and $\hat{%
\Pi}_{d+1}=Y\hat{\Pi}_{d}+W\hat{\Pi}_{d-1}$ for $d\geq 2$. Then the
sequences of polynomials $\Pi _{d}$ and $\pi _{d}$ are reductions of the
polynomials $\hat{\Pi}_{d}$. The polynomials $\hat{\Pi}_{d}$ can be also
described by the generating function (formal power series)%
\begin{equation*}
\sum\limits_{d\geq 0}\hat{\Pi}_{d}=\frac{{Y}}{{1-Y-W}}
\end{equation*}%
where $\hat{\Pi}_{d}$ is homogeneous of degree $2d$ if we set $\deg (Y)=2$
and $\deg \left( W\right) =4$.
\end{remark*}


Theorem \ref{Th:Main} is a consequence of a topological result, the complete
and explicit computation of the relevant Fadell--Husseini indexes of the $%
D_{8}$-space $S^{d}\times S^{d}$ and the $D_{8}$-sphere $S(R_{4}^{\oplus j})$%
.

\begin{theorem}
\label{Th:Main-2} \qquad \newline
\begin{compactenum}[\rm(A)]

\item $\mathrm{Index}_{D_{8},\mathbb{F}_{2}}^{3j}S(R_{4}^{\oplus j})=\mathrm{%
Index}_{D_{8},\mathbb{F}_{2}}S(R_{4}^{\oplus j})=\langle y^{j}w^{j}\rangle.$

\item $\mathrm{Index}_{D_{8},\mathbb{F}_{2}}^{d+2}(S^{d}\times
S^{d})=\langle \pi _{d+1},\pi _{d+2}\rangle .$

\item $\mathrm{Index}_{D_{8},\mathbb{\mathbb{Z}}}^{3j+1}S(R_{4}^{\oplus j})=\left\{
\begin{array}{lll}
\langle \mathcal{Y}^{\frac{j}{2}}\mathcal{W}^{\frac{j}{2}}\rangle  & {,} &
{\text{for }j\text{ even,}} \\
\langle \mathcal{Y}^{\frac{j+1}{2}}\mathcal{W}^{\frac{j-1}{2}}\mathcal{M},%
\mathcal{Y}^{\frac{j+1}{2}}\mathcal{W}^{\frac{j+1}{2}}\rangle  & {,}  &{
\text{for }j\text{ odd}.}%
\end{array}%
\right.$

\item \textrm{\ }$\mathrm{Index}_{D_{8},\mathbb{\mathbb{Z}}}^{d+2}S^{d}\times S^{d}=\left\{
\begin{array}{lll}
\langle \Pi _{\frac{d+2}{2}},\Pi _{\frac{d+4}{2}},\mathcal{M}\Pi _{\frac{d}{2%
}}\rangle & {,} & { \text{for }}d{ \text{ even,}} \\

\langle \Pi _{\frac{d+1}{2}},\Pi _{\frac{d+3}{2}}\rangle &{,} & { \text{%
for }d\text{ odd.}}
\end{array}%
\right. $
\end{compactenum}
\end{theorem}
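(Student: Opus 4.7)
The plan is to apply the Serre spectral sequence of the Borel fibration
\[
X \;\longrightarrow\; ED_8\times_{D_8}X \;\longrightarrow\; BD_8
\]
in the two cases $X=S(R_{4}^{\oplus j})$ and $X=S^{d}\times S^{d}$, with coefficients in both $\mathbb{F}_2$ and $\mathbb{Z}$. By definition the Fadell--Husseini index is the kernel of the restriction $H^{*}(BD_{8};R)\to H^{*}_{D_{8}}(X;R)$, so it is generated by the transgressed classes from the fiber together with their multiplicative consequences. I would begin by recording the classical ring $H^{*}(BD_{8};R)$ for $R=\mathbb{F}_{2}$ and $R=\mathbb{Z}$, and listing explicitly the restriction homomorphisms to the maximal abelian subgroups $(\mathbb{Z}_{2})^{2}$ and $\mathbb{Z}_{4}$ of $D_{8}$; these restrictions are the principal device for detecting equivariant cohomology classes throughout.

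For parts (A) and (C), the space $S(R_{4}^{\oplus j})$ is the unit sphere of a $3j$-dimensional real $D_{8}$-representation, so the $E_{2}$-page of its Borel spectral sequence is supported in only two fiber rows (degrees $0$ and $3j-1$) and carries a single possibly nontrivial differential $d_{3j}$, whose image in $H^{3j}(BD_{8};R)$ equals the equivariant Euler class $e(R_{4}^{\oplus j})=e(R_{4})^{j}$. The entire computation therefore reduces to identifying $e(R_{4})$ in each coefficient ring. I would decompose $R_{4}$ as a sum of one-dimensional characters after restriction to the maximal abelian subgroups, use naturality of the Stiefel--Whitney and Euler classes, and then identify $e(R_{4})$ via these detecting restrictions, producing the equality $e(R_{4})=yw$ with $\mathbb{F}_{2}$-coefficients and the analogous expression in $\mathcal{Y},\mathcal{M},\mathcal{W}$ with $\mathbb{Z}$-coefficients. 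The truncation claims ($\mathrm{Index}^{3j}$ and $\mathrm{Index}^{3j+1}$ already equal the full index) hold because no higher differentials occur.

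For parts (B) and (D), the fiber $S^{d}\times S^{d}$ has cohomology concentrated in degrees $0,d,d,2d$, and the relevant transgressions are $d_{d+1}$ on the two generators $u_{1},u_{2}\in H^{d}$ together with a secondary differential out of the $2d$-row. I would first determine the $D_{8}$-module structure on $H^{*}(S^{d}\times S^{d};R)$: the normal subgroup $(\mathbb{Z}_{2})^{2}$ acts by antipodes on each factor (trivially on mod-$2$ cohomology, with signs $(-1)^{d+1}$ over $\mathbb{Z}$), while the outer $\mathbb{Z}_{2}$ swaps the two factors. Since $(\mathbb{Z}_{2})^{2}$ acts freely on $S^{d}\times S^{d}$, the Borel construction collapses to $(\mathbb{RP}^{d}\times\mathbb{RP}^{d})\times_{\mathbb{Z}_{2}}E\mathbb{Z}_{2}$ with $\mathbb{Z}_{2}$ acting by swap, which reduces the computation to a more tractable spectral sequence. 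By naturality with respect to the two projections $S^{d}\times S^{d}\to S^{d}$, each $d_{d+1}(u_{i})$ equals the Euler class $a_{i}^{d+1}$ of the sign representation of the corresponding copy of $\mathbb{Z}_{2}\subset(\mathbb{Z}_{2})^{2}\subset D_{8}$; combining both and enforcing swap-symmetry produces two $D_{8}$-invariant generators of $\mathrm{Index}^{d+2}$, and a secondary differential accounts for the third generator arising in case (D) when $d$ is even.

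The central technical obstacle will be matching these transgressions with the specific polynomials $\pi_{d+1},\pi_{d+2}$ and $\Pi_{\cdot}$ of the theorem. The recursion $\hat{\Pi}_{d+1}=Y\hat{\Pi}_{d}+W\hat{\Pi}_{d-1}$ of the Remark is exactly Newton's power-sum recurrence for two roots $\alpha,\beta$ with $\alpha+\beta=Y$ and $\alpha\beta=-W$, so I expect $\pi_{n}$ and $\Pi_{n}$ to arise as the symmetrizations of $a_{1}^{n}+a_{2}^{n}$, that is, as Euler classes computed after restriction to the detecting subgroup $(\mathbb{Z}_{2})^{2}$. Establishing this identification cleanly should proceed by induction on $d$ in parallel with the recursion. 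The integer case (D) is the most delicate because $H^{*}(BD_{8};\mathbb{Z})$ carries $2$- and $4$-torsion and the transgressions are not determined by their mod-$2$ reductions; I would link (B) and (D) via the Bockstein long exact sequence, and split the argument according to the parity of $d$ in order to track the orientation and sign contributions systematically.
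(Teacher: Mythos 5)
Your plan for the $\mathbb{F}_2$-statements (A) and (B) is essentially the paper's: the index of $S(R_4^{\oplus j})$ is the transgression of the fundamental class, i.e.\ the top Stiefel--Whitney/Euler class of the representation, identified by restricting to the subgroup lattice of $D_8$ (the paper does this by splitting $R_4=(V_{-+}\oplus V_{+-})\oplus V_{--}$ and detecting $\langle w\rangle$ and $\langle y\rangle$ separately), and for $S^d\times S^d$ one computes $\partial_{d+1}$ by restricting to $H_1=(\mathbb{Z}_2)^2$. One caveat you leave implicit: the projections $S^d\times S^d\to S^d$ are only $H_1$-equivariant, so ``naturality'' really means working in the $H_1$-spectral sequence and then lifting back; the lifting is possible because the $d$-row is $H^*(D_8,\mathbb{F}_2[D_8/H_1])\cong H^*(H_1,\mathbb{F}_2)$ by Shapiro's lemma, is annihilated by $x$, and is generated as an $H^*(D_8,\mathbb{F}_2)$-module in degrees $0$ and $1$ --- this module analysis, not ``swap-symmetry'', is what pins down $\partial_{d+1}(1)=\pi_{d+1}$, $\partial_{d+1}(a)=\pi_{d+2}$ inside $H^*(D_8,\mathbb{F}_2)$.

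For the integral statements there are two genuine gaps. First, in (C) with $j$ odd the $D_8$-action on $H^{3j-1}(S(R_4^{\oplus j}),\mathbb{Z})$ is nontrivial ($\sigma$ acts by $-1$), so the top row of the integral spectral sequence has twisted coefficients and there is no class $e(R_4)\in H^{*}(D_8,\mathbb{Z})$ with $\mathrm{Index}_{D_8,\mathbb{Z}}=\langle e(R_4)^j\rangle$; indeed the stated answer $\langle \mathcal{Y}^{\frac{j+1}{2}}\mathcal{W}^{\frac{j-1}{2}}\mathcal{M},\ \mathcal{Y}^{\frac{j+1}{2}}\mathcal{W}^{\frac{j+1}{2}}\rangle$ is not principal. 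One must first determine $H^*(D_8,\mathcal{Z})$ as an $H^*(D_8,\mathbb{Z})$-module (the paper finds generators $\rho_1,\rho_2,\rho_3$ in degrees $1,2,3$ via Bockstein exact couples for $D_8$, $H_1$, $H_2$, $K_4$) and then evaluate $\partial_{3j}$ on them by comparison with the $\mathbb{F}_2$-computation; your ``identify $e(R_4)$'' step simply does not exist here. Second, in (D) with $d$ even you attribute the third generator to ``a secondary differential out of the $2d$-row''. That differential is $\partial_{2d+1}$ and, by definition, $\mathrm{Index}^{d+2}_{D_8,\mathbb{Z}}=\ker\bigl(H^*(\mathrm{B}D_8,\mathbb{Z})\to E_{d+2}^{*,0}\bigr)$ sees only $\partial_{d+1}$; the $2d$-row contributes (e.g.\ $w^{d+1}$ in the full $\mathbb{F}_2$-index) only to later pages, never to the partial index. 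The generator $\mathcal{M}\Pi_{\frac d2}$ in fact arises from $\partial_{d+1}$ applied to an extra module generator: for $d$ even $H^d(S^d\times S^d,\mathbb{Z})$ is a sign-twisted $D_8$-module and $H^*(D_8,M)$ needs three generators (degrees $1,2,3$) over $H^*(D_8,\mathbb{Z})$, so $\partial_{d+1}$ has images in degrees $d+2$, $d+3$, $d+4$. Your proposed passage to $(\mathbb{R}P^d\times\mathbb{R}P^d)\times_{\mathbb{Z}_2}\mathrm{E}\mathbb{Z}_2$ (the referee's alternative sketched in the paper) does not repair this: it yields the full $\mathbb{F}_2$-index but not the partial index without extra work, and with $\mathbb{Z}$ coefficients the ring structure of the $E_\infty$-term of that smaller spectral sequence does not agree with $H^*(D_8,\mathbb{Z})$ (there are order-$4$ classes and extension problems), which is precisely why the paper works directly with the $D_8$ Borel construction and Bockstein comparisons.
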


The sequence of Fadell--Husseini indexes will be introduced in Section \ref%
{Sec:Fadell--Husseini}. The actions of the dihedral group $D_{8}$ and the
definition of the representation space $R_{4}^{\oplus j}$ are given in
Section \ref{sec:CS/TM}. Even though it does not seem to have any relevance
to our study of Problem \ref{GruenbaumProblem}, the complete index $\mathrm{%
Index}_{D_{8},\mathbb{F}_{2}}(S^{d}\times S^{d})$ will also be computed in
the case of $\mathbb{F}_{2}$ coefficients,
\begin{equation}
\mathrm{Index}_{D_{8},\mathbb{F}_{2}}(S^{d}\times S^{d})=\langle \pi
_{d+1},\pi _{d+2},w^{d+1}\rangle .  \label{eqAlmostTh}
\end{equation}

\medskip

\begin{rremark*}
The preprint versions of this paper, posted on the arXiv in April 2007 and July 2008, arXiv0704.1943v1--v2, have been referenced in diverse applications: see Gonzalez and Landweber \cite{GoLa}, Adem and Reichstein \cite{AdemZi}, as well as \cite{B-Z}.
\end{rremark*}

\subsection{Proof overview}

The Problem \ref{GruenbaumProblem} about mass partitions by hyperplanes can
be connected with the problem of the existence of equivariant maps as
discussed in Section \ref{sec:CS/TM}, Proposition \ref{prop:TestSpace}. The
topological problems we face, about the existence of $W_{k}=(\mathbb{Z}_{2})^{k}\rtimes S_{k}$-equivariant maps, for the product / join schemes,
\begin{equation*}
\begin{array}{lll}
\left( S^{d}\right) ^{k}\longrightarrow S\left( R_{2^{k}}^{\oplus
j}\right) , &  & \left( S^{d}\right) ^{\ast k}\longrightarrow
S\left( U_{k}\times R_{2^{k}}^{\oplus j}\right) ,%
\end{array}%
\end{equation*}%
have to be treated with care because the actions of the Weyl groups $W_{k}$
are not free. Note that there is no naive Borsuk--Ulam theorem for fixed
point free actions. Indeed, in the case $k=2$ when $W_{2}=D_{8}$ there
exists a $W_{2}$-equivariant map \cite[Theorem 3.22, page 49]{Bart}%
\begin{equation*}
S\left( \left( V_{+-}\oplus V_{-+}\right) ^{10}\right) \longrightarrow
S\left( \left( U_{2}\oplus V_{--}\right) ^{8}\right)
\end{equation*}%
even though $\dim \left( V_{+-}\oplus V_{-+}\right) ^{10}>\dim \left(
U_{2}\oplus V_{--}\right) ^{8}$. The $W_{2}=D_{8}$-representations $%
V_{+-}\oplus V_{-+}$, $V_{--}$ and $U_{2}$ are introduced in Section \ref%
{Sec:TestMap}.

In this paper we focus on the case of $k=2$ hyperplanes. Theorem \ref%
{Th:Main} gives the best possible answer to the question about the existence
of $W_{2}=D_{8}$-equivariant maps
\begin{equation*}
S^{d}\times S^{d}\longrightarrow S(R_{4}^{\oplus j})
\end{equation*}%
from the point of view of Fadell--Husseini index theory (Section \ref%
{Sec:Fadell--Husseini}). We explicitly compute the relevant Fadell--Husseini
indexes with $\mathbb{F}_{2}$ and $\mathbb{Z}$ coefficients (Theorem \ref{Th:Main-2}, Sections \ref%
{Sec:IndexOfSpheresD8-F2}, \ref{Sec:IndexOfSpheresD8-Z}, \ref%
{Sec:IndexProductD8} and \ref{Sec:IndexProductD8-Z}). Then Theorem \ref%
{Th:Main} is a consequence of the basic index property, Proposition \ref%
{prop:basic}.

\medskip

The index of the sphere $S(R_{4}^{\oplus j})$, with $\mathbb{F}_{2}$
coefficients, is computed in Section \ref{Sec:IndexOfSpheresD8-F2} by

\begin{compactitem}
\item decomposition of the $D_{8}$-representation $R_{4}^{\oplus j}$ into a
sum of irreducible ones, and

\item computation of indexes of spheres of all irreducible $D_{8}$%
-representations.
\end{compactitem}

\noindent The main technical tool is the restriction diagram derived in
Section \ref{Sec:RestrictionDiagram}, which connects the indexes of the
subgroups of $D_{8}$.

\medskip
The index with $\mathbb{Z}$ coefficients is computed in Section \ref{Sec:IndexOfSpheresD8-Z} using

\begin{compactitem}
\item (for $j$ even) the results for $\mathbb{F}_{2}$ coefficients and
comparison of Serre spectral sequences, and

\item (for $j$ odd) the Bockstein spectral sequence combined with known
results for $\mathbb{F}_{2}$ coefficients and comparison of Serre spectral
sequences.
\end{compactitem}

\medskip

The index of the product $S^{d}\times S^{d}$ is computed in Sections \ref%
{Sec:IndexProductD8} and \ref{Sec:IndexProductD8-Z} by an explicit study of
the Serre spectral sequence associated with the fibration%
\begin{equation*}
S^{d}\times S^{d}\rightarrow \mathrm{E}D_{8}\times _{D_{8}}(S^{d}\times
S^{d})\rightarrow \mathrm{B}D_{8}.
\end{equation*}%
The major difficulty comes from non-triviality of the local coefficients in
the Serre spectral sequence. The computation of the spectral sequence with
non-trivial local coefficients is done by an independent study of $H^{\ast
}(D_{8},\mathbb{F}_{2})$-module and $H^{\ast }(D_{8},\mathbb{\mathbb{Z}})$-module structures of relevant rows in the Serre spectral sequence
(Sections \ref{Sec:Row} and \ref{Sec:Row-Z}).

\subsection{Evaluation of the index bounds}

\subsubsection{$\mathbb{F}_{2}$-evaluation}

It was pointed out to us by Sini\v{s}a Vre\'{c}ica that, with $\mathbb{F}_{2}$%
-coefficients, the $D_{8}$ index bound gives the same bounds as the $%
H_{1}=\left( \mathbb{\mathbb{Z}}_{2}\right) ^{2}$ index bound. This observation follows from the
implication
\begin{equation*}
a^{j}b^{j}(a+b)^{j}\in \langle a^{d+1},(a+b)^{d+1}\rangle ~\Rightarrow
~a^{j}b^{j}(a+b)^{j}\in \langle
a^{d+1}+(a+b)^{d+1},a^{d+2}+(a+b)^{d+2}\rangle .
\end{equation*}%
By introducing a new variable $c:=a+b$, it is enough to prove the implication%
\begin{equation}
a^{j}c^{j}(a+c)^{j}\in \langle a^{d+1},c^{d+1}\rangle ~\Rightarrow
~a^{j}c^{j}(a+c)^{j}\in \langle a^{d+1}+c^{d+1},a^{d+2}+c^{d+2}\rangle .
\label{v-1}
\end{equation}

\smallskip

\noindent Let us assume that $a^{j}c^{j}(a+c)^{j}\in \langle
a^{d+1},c^{d+1}\rangle $. The monomials in the expansion of $%
a^{j}c^{j}(a+c)^{j}$ always come in pairs
\begin{equation*}
a^{d+k}c^{3j-d-k}+c^{d+k}a^{3j-d-k}.
\end{equation*}%
This is also true when $j$ is even since $\binom{j}{j/2}=_{\mathrm{{mod}2}}0$
implies there are no middle terms. The sequence of equations%
\begin{equation*}
\begin{array}{lll}
a^{d+1}c^{3j-d-1}+c^{d+1}a^{3j-d-1} & = &
(a^{d+1}+c^{d+1})(c^{3j-d-1}+a^{3j-d-1})+a^{3j}+c^{3j} \\
a^{d+2}c^{3j-d-2}+c^{d+2}a^{3j-d-2} & = &
(a^{d+1}+c^{d+1})(ac^{3j-d-2}+a^{3j-d-2}c)+a^{3j-1}c+ac^{3j-1} \\
\ldots. &  &  \\
a^{3j}+c^{3j} & = &
(a^{d+2}+c^{d+2})(a^{3j-d-2}+c^{3j-d-2})+a^{d+2}c^{3j-d-2}+c^{d+2}a^{3j-d-2}%
\end{array}%
\end{equation*}%
shows that all the binomials
\begin{equation*}
a^{d+1}c^{3j-d-1}+c^{d+1}a^{3j-d-1},\text{ \quad }%
a^{d+2}c^{3j-d-2}+c^{d+2}a^{3j-d-2},\text{ \quad} \ldots \text{ \quad , }a^{3j}+c^{3j}
\end{equation*}%
belong to the ideal $\langle a^{d+1}+c^{d+1},a^{d+2}+c^{d+2}\rangle $ or
none of them do.

\noindent Since for $3j-d-1$ even
\begin{equation*}
\begin{array}{ccc}
a^{d+1+\tfrac{3j-d-1}{2}}c^{\tfrac{3j-d-1}{2}}+c^{d+1+\tfrac{3j-d-1}{2}}a^{%
\tfrac{3j-d-1}{2}} & = & (a^{d+1}+c^{d+1})a^{\tfrac{3j-d-1}{2}}c^{\tfrac{%
3j-d-1}{2}} \\
& \in & \langle a^{d+1}+c^{d+1},a^{d+2}+c^{d+2}\rangle%
\end{array}%
,
\end{equation*}%
and for $3j-d-1$ odd%
\begin{equation*}
\begin{array}{ccc}
a^{d+2+\tfrac{3j-d-2}{2}}c^{\tfrac{3j-d-2}{2}}+c^{d+2+\tfrac{3j-d-2}{2}}a^{%
\tfrac{3j-d-2}{2}} & = & (a^{d+2}+c^{d+2})a^{\tfrac{3j-d-2}{2}}c^{\tfrac{%
3j-d-2}{2}} \\
& \in & \langle a^{d+1}+c^{d+1},a^{d+2}+c^{d+2}\rangle%
\end{array}%
,
\end{equation*}%
the implication (\ref{v-1}) is proved.

\subsubsection{$\mathbb{\mathbb{Z}}$-evaluation}

More is true, even the complete $D_{8}$ index bound, now with $\mathbb{Z}$-coefficients, implies the same bounds
as does the subgroup $H_1=\left(\mathbb{Z}_2\right)^2$ for the $k=2$ hyperplanes mass
partition problem.

\smallskip

\begin{lemma}
\label{lemma:L-1}Let $a=\sum\limits_{i=1}^{k}a_{i}2^{i}$ and $%
b=\sum\limits_{i=1}^{k}b_{i}2^{i}$ be the dyadic expansions. Then%
\begin{equation*}
\binom{b}{a}_{\mathrm{mod}\,2}=\prod\limits_{i=1}^{k}\binom{b_{i}}{a_{i}}_{%
\mathrm{mod}\,2}.
\end{equation*}
\end{lemma}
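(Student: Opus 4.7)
The statement is the special case $p=2$ of Lucas's theorem, and I would give the standard generating-function proof in $\mathbb{F}_2[x]$.

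The plan is to compare two expansions of $(1+x)^b$ in $\mathbb{F}_2[x]$. On one hand, the binomial theorem gives
\begin{equation*}
(1+x)^b = \sum_{a\geq 0} \binom{b}{a}\, x^a,
\end{equation*}
so the coefficient of $x^a$ is $\binom{b}{a}_{\mathrm{mod}\,2}$. On the other hand, I want to use the dyadic expansion $b=\sum_{i=0}^k b_i 2^i$ to rewrite the left-hand side as a product of simple factors.

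The main tool is the Frobenius identity: in characteristic $2$ we have $(u+v)^2 = u^2+v^2$, and an immediate induction on $i$ gives $(1+x)^{2^i} = 1+x^{2^i}$ in $\mathbb{F}_2[x]$. Applying this factor by factor,
\begin{equation*}
(1+x)^b \ = \ \prod_{i=0}^{k}(1+x)^{b_i 2^i} \ = \ \prod_{i=0}^{k}\bigl(1+x^{2^i}\bigr)^{b_i}.
\end{equation*}
Since each $b_i\in\{0,1\}$, the factor $(1+x^{2^i})^{b_i}$ equals $1+b_i\, x^{2^i}$, so expanding the product one picks from each factor either $1$ or $x^{2^i}$ (the latter only being available if $b_i=1$). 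The coefficient of $x^a$ in the product is therefore the number (mod $2$) of ways to write $a=\sum_{i} \varepsilon_i 2^i$ with $\varepsilon_i\in\{0,1\}$ and $\varepsilon_i\leq b_i$.

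By uniqueness of the binary expansion of $a$, the only such choice is $\varepsilon_i = a_i$, and it is admissible exactly when $a_i\leq b_i$ for every $i$, i.e.\ exactly when $\binom{b_i}{a_i}=1$ for every $i$. Thus
\begin{equation*}
[x^a]\,(1+x)^b \ = \ \prod_{i=0}^{k}\binom{b_i}{a_i}_{\mathrm{mod}\,2},
\end{equation*}
and comparison with the direct binomial expansion yields the claimed identity. There is no serious obstacle here; the only thing to be careful about is that one works in $\mathbb{F}_2[x]$ throughout, so that the Frobenius identity is available and the coefficient count is automatically reduced mod $2$.
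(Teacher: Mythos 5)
Your proof is correct: the Frobenius identity $(1+x)^{2^i}=1+x^{2^i}$ in $\mathbb{F}_2[x]$, the factorization $(1+x)^b=\prod_i(1+x^{2^i})^{b_i}$, and the uniqueness of the binary expansion of $a$ together give exactly the coefficient comparison you state, and the reduction mod $2$ is built in by working over $\mathbb{F}_2$. The paper itself does not prove this lemma at all --- it records it as a classical fact and cites Lucas's 1878 paper --- so your argument is not a variant of the paper's proof but a self-contained substitute for the citation; this is the standard modern generating-function proof of Lucas's theorem at $p=2$, and it is arguably preferable here since the same Frobenius trick is in the spirit of the characteristic-$2$ polynomial manipulations the authors use elsewhere (e.g.\ in the $\mathbb{F}_2$-evaluation section). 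One cosmetic point: the paper writes the dyadic expansions as $\sum_{i=1}^k a_i 2^i$, which is evidently a typo for $\sum_{i=0}^k a_i 2^i$; your indexing from $i=0$ is the intended reading, so no adjustment is needed.
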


\smallskip

\noindent This classical fact \cite{Lucas} about binomial coefficients mod $%
2 $ yields the following property for the sequence of polynomials $\Pi _{d}$%
, $d\geq 0$.

\smallskip

\begin{lemma}
\label{lemma:L-2}Let $q>0$ and $i$ be integers. Then
\begin{compactenum}[\rm(A)]
\item $\binom{2^{q}-1-i}{i}=\left\{
\begin{array}{cc}
0, & i\neq 0 \\
1, & i=0%
\end{array}%
\right. ,$

\item $\Pi _{2^{q}}=\mathcal{Y}^{2^{q}}.$
\end{compactenum}
\end{lemma}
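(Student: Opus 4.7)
The plan is to prove (A) by Lucas's theorem (Lemma \ref{lemma:L-1}) applied to the complementary pair $(i, 2^q-1-i)$, and then deduce (B) by substituting $d = 2^q$ into the defining expansion of $\Pi_d$ and applying (A) to kill all but one monomial.

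For (A), first I would dispose of the trivial ranges: if $i<0$ or $i>2^q-1$ or $2i>2^q-1$, the binomial $\binom{2^q-1-i}{i}$ vanishes already as an integer (negative top, or top less than bottom). So the only interesting case is $0 \le i \le \lfloor(2^q-1)/2\rfloor$. In this range, write the dyadic expansions
\begin{equation*}
i = \sum_{r=0}^{q-1} i_r 2^r, \qquad 2^q - 1 - i = \sum_{r=0}^{q-1} c_r 2^r.
\end{equation*}
Since $i + (2^q - 1 - i) = 2^q - 1 = \sum_{r=0}^{q-1} 2^r$ and no carries can occur (each digit of the sum is already at most $1$), the digits satisfy $i_r + c_r = 1$ for every $r$; that is, the binary expansions of $i$ and of $2^q-1-i$ are bitwise complementary. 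Lemma \ref{lemma:L-1} then gives
\begin{equation*}
\binom{2^q - 1 - i}{i}_{\mathrm{mod}\,2} = \prod_{r=0}^{q-1} \binom{c_r}{i_r}_{\mathrm{mod}\,2},
\end{equation*}
and for $i>0$ at least one digit $i_r$ equals $1$, whence the corresponding factor $\binom{c_r}{i_r} = \binom{0}{1} = 0$ forces the whole product to vanish. For $i=0$, the product is empty and equals~$1$.

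For (B), I would simply substitute $d = 2^q$ into
\begin{equation*}
\Pi_{2^q}(\mathcal{Y},\mathcal{W}) \;=\; \sum_{i} \binom{2^q - 1 - i}{i}_{\mathrm{mod}\,2}\,\mathcal{W}^{i}\mathcal{Y}^{2^q - 2i}.
\end{equation*}
By part (A), every coefficient with $i \ne 0$ vanishes, leaving only the $i=0$ term $\mathcal{Y}^{2^q}$.

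The only genuine content is the bit-counting in (A); (B) is then immediate. I do not expect any serious obstacle, the one point requiring care being the verification that no carries occur when writing $2^q-1$ as $i+(2^q-1-i)$, which is what makes the complementarity of digits (and hence the Lucas product) collapse so cleanly.
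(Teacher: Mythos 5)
Your proof is correct and follows essentially the same route as the paper: part (A) via the observation that $i$ and $2^{q}-1-i$ have complementary binary digits together with Lucas's theorem (Lemma \ref{lemma:L-1}), and part (B) by substituting into the definition of $\Pi_{d}$ so that only the $i=0$ term survives. Your explicit treatment of the boundary ranges and of the no-carry argument is a slightly more detailed version of the paper's appeal to "boundary conditions" and the complementary index set, but the substance is identical.
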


\begin{proof}
The statement (B) is a direct consequence of the fact (A) and the
definition of polynomials $\Pi _{d}$. For $i\notin \{1,\ldots,2^{q-1}\}$ the
statement (A) is true from boundary conditions on binomial coefficients. Let
$i\in \{1,\ldots,2^{q-1}\}$ and $i=\sum\limits_{k\in I\subseteq
\{0,\ldots,q-1\}}2^{k}$. Then%
\begin{equation*}
\begin{array}{ccccc}
2^{q}-1-i & = & 2^{0}+2^{1}+2^{2}+\cdots+2^{q-1}-\sum\limits_{k\in I\subseteq
\{0,\ldots,q-1\}}2^{k} & = & \sum\limits_{k\in I^{c}\subseteq
\{0,\ldots,q-1\}}2^{k}%
\end{array}%
\end{equation*}%
where $I^{c}$ is the complementary index set in $\{0,\ldots,q-1\}$. The
statement (A) follows from Lemma \ref{lemma:L-1}
\end{proof}

\smallskip

\noindent Let $j$ be an integer such that $j=2^{q}+r$ where $0\leq r<2^{q}$
and $d=2^{q+1}+r-1$. Let us introduce the following ideals%
\begin{equation*}
A_{j}=\left\{
\begin{array}{ll}
\langle \mathcal{Y}^{\frac{j}{2}}\mathcal{W}^{\frac{j}{2}}\rangle , & {\
\text{ for }j\text{ even,}} \\
\langle \mathcal{Y}^{\frac{j+1}{2}}\mathcal{W}^{\frac{j-1}{2}}\mathcal{M},%
\mathcal{Y}^{\frac{j+1}{2}}\mathcal{W}^{\frac{j+1}{2}}\rangle , & {\ \text{
for }j\text{ odd,}}%
\end{array}%
\right. \quad \text{and}\quad B_{d}=\left\{
\begin{array}{ll}
\langle \Pi _{\frac{d+2}{2}},\Pi _{\frac{d+4}{2}},\mathcal{M}\Pi _{\frac{d}{2%
}}\rangle , & {\ \text{ for }d\text{ even,}} \\
\langle \Pi _{\frac{d+1}{2}},\Pi _{\frac{d+3}{2}}\rangle , & {\ \text{ for }d%
\text{ odd.}}%
\end{array}%
\right.
\end{equation*}%
The fact that the $D_{8}$ index bound with $\mathbb{\mathbb{Z}}$-coefficients does not improve the mass partition bounds
obtained by using the subgroup $H_1=\left(\mathbb{Z}_2\right)^2$ is a consequence of the following facts:

\begin{compactitem}
\item $r=0~\Rightarrow ~A_{j}\subseteq B_{d},$

\item $\left( r\neq 2^{q}-1~\text{and}~A_{j}\subseteq B_{d}\right)
~\Longrightarrow ~A_{j+1}\subseteq B_{d+1},$
\end{compactitem}
that are proved in Lemma \ref{lemma:L3} and \ref{lemma:L4}, respectively.

\smallskip

\begin{lemma}
\label{lemma:L3}$\langle \mathcal{Y}^{2^{q-1}}\mathcal{W}^{2^{q-1}}\rangle
=A_{2^{q}}\subseteq B_{2^{q+1}-1}=\langle \Pi _{2^{q}},\Pi _{2^{q}+1}\rangle
.$
\end{lemma}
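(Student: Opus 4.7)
My plan is to produce the single generator $\mathcal{Y}^{2^{q-1}}\mathcal{W}^{2^{q-1}}$ of $A_{2^q}$ explicitly as an element of $\langle \Pi_{2^q}, \Pi_{2^q+1}\rangle$. First, Lemma \ref{lemma:L-2}(B) lets me replace $\Pi_{2^q}$ by $\mathcal{Y}^{2^q}$, so the task reduces to realising the monomial $\mathcal{Y}^{2^{q-1}}\mathcal{W}^{2^{q-1}}$ modulo $\mathcal{Y}^{2^q}$ as a polynomial multiple of $\Pi_{2^q+1}$. The natural candidate multiplier is $\mathcal{Y}^{2^{q-1}-1}$, chosen so that the $\mathcal{Y}$-exponents in the product line up with the target monomial.

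Expanding with the defining formula for $\Pi_{2^q+1}$,
\[
\mathcal{Y}^{2^{q-1}-1}\,\Pi_{2^q+1} \;=\; \sum_{i=0}^{2^{q-1}} \binom{2^q - i}{i}_{\mathrm{mod}\,2}\, \mathcal{W}^{i}\, \mathcal{Y}^{3\cdot 2^{q-1}-2i},
\]
I would split the sum by the range of $i$. For $i \leq 2^{q-2}$ the $\mathcal{Y}$-exponent is at least $2^q$, so these terms already lie in $\langle \mathcal{Y}^{2^q}\rangle = \langle \Pi_{2^q}\rangle$. For $i = 2^{q-1}$ the binomial is $\binom{2^{q-1}}{2^{q-1}}=1$ and the corresponding summand is precisely $\mathcal{Y}^{2^{q-1}}\mathcal{W}^{2^{q-1}}$. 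Only the intermediate range $2^{q-2} < i < 2^{q-1}$ requires an argument.

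The hard step---really the only substantive one---is to show that $\binom{2^q - i}{i}$ is even throughout this intermediate range. I would write $i = 2^{q-2}+j$ with $0 < j < 2^{q-2}$; then the $(q-2)$-nd binary digit of $i$ equals $1$, whereas the decomposition $2^q - i = 2^{q-1}+(2^{q-1}-i)$ with $0 < 2^{q-1}-i < 2^{q-2}$ shows that the $(q-2)$-nd digit of $2^q - i$ equals $0$. Lemma \ref{lemma:L-1} then forces $\binom{2^q-i}{i} \equiv 0 \pmod{2}$. Combining the three cases yields
\[
\mathcal{Y}^{2^{q-1}-1}\,\Pi_{2^q+1} \;\equiv\; \mathcal{Y}^{2^{q-1}}\mathcal{W}^{2^{q-1}} \pmod{\langle \Pi_{2^q}\rangle},
\]
which is exactly the containment $A_{2^q}\subseteq B_{2^{q+1}-1}$. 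Sign bookkeeping is a non-issue, because every $\Pi_d$ carries a factor of $\mathcal{Y}$ and $2\mathcal{Y}=0$ in the quotient ring, so all equalities among multiples of $\mathcal{Y}$ effectively take place in characteristic $2$; the ranges also degenerate to trivially vacuous or one-term cases for $q=1,2$, and the argument goes through uniformly.
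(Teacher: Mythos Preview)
Your proof is correct, but it takes a different route from the paper's. The paper argues inductively via the recursion $\Pi_{d+1}=\mathcal{Y}\Pi_d+\mathcal{W}\Pi_{d-1}$: starting from $\mathcal{Y}^{2^{q-1}}=\Pi_{2^{q-1}}$ (Lemma~\ref{lemma:L-2}(B)), one has $\mathcal{Y}^{2^{q-1}}\mathcal{W}=\mathcal{W}\Pi_{2^{q-1}}=\Pi_{2^{q-1}+2}+\mathcal{Y}\Pi_{2^{q-1}+1}\in\langle\Pi_{2^{q-1}+1},\Pi_{2^{q-1}+2}\rangle$, and each further multiplication by $\mathcal{W}$ shifts the pair of indices up by one, so after $2^{q-1}$ steps one lands in $\langle\Pi_{2^q},\Pi_{2^q+1}\rangle$. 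This avoids the explicit binomial formula and Lucas' theorem entirely.

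Your approach instead works directly with the closed form for $\Pi_{2^q+1}$, multiplies by $\mathcal{Y}^{2^{q-1}-1}$, and uses Lucas' theorem (Lemma~\ref{lemma:L-1}) to kill the intermediate range of~$i$. This is more computational but perfectly valid; your binary-digit analysis for $2^{q-2}<i<2^{q-1}$ is correct, and your remark about characteristic~$2$ and the degenerate small-$q$ cases is accurate. The paper's recursion argument is slicker and reusable (the same step appears implicitly in Lemma~\ref{lemma:L4}), while yours has the merit of exhibiting the explicit witness $\mathcal{Y}^{2^{q-1}-1}\Pi_{2^q+1}$ in one shot.
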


\begin{proof}
Since $\mathcal{Y}^{2^{q-1}}=\Pi _{2^{q-1}}$ by Lemma \ref{lemma:L-2},%
\begin{equation*}
\mathcal{Y}^{2^{q-1}}\mathcal{W~}=~\Pi _{2^{q-1}}\mathcal{W~}=~\Pi
_{2^{q-1}+2}+\mathcal{Y}\Pi _{2^{q-1}+1}~\in ~\langle \Pi _{2^{q-1}+1},\Pi
_{2^{q-1}+2}\rangle .
\end{equation*}%
By induction on the power $i$ of $\mathcal{W}$ in $\mathcal{Y}^{2^{q-1}}%
\mathcal{W}^{2i}$,%
\begin{equation*}
\mathcal{Y}^{2^{q-1}}\mathcal{W}^{i}\in \langle \Pi _{2^{q-1}+i},\Pi
_{2^{q-1}+i+1}\rangle ,
\end{equation*}%
and consequently%
\begin{equation*}
\mathcal{Y}^{2^{q-1}}\mathcal{W}^{2^{q-1}}\in \langle \Pi _{2^{q}},\Pi
_{2^{q}+1}\rangle .
\end{equation*}
\end{proof}

\smallskip

\begin{lemma}
\label{lemma:L4}If $r\neq 2^{q}-1$ and $A_{j}\subseteq B_{d}$ then $%
A_{j+1}\subseteq B_{d+1}$.
\end{lemma}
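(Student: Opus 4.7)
The plan is to split into two cases by the parity of $j$. Since $d - j = 2^q - 1$ is odd for every $q \ge 1$ (and the hypothesis $r \ne 2^q - 1$ rules out the sole case $q = 0$), the parities of $j$ and $d$ are always opposite, while the constraint $r + 1 < 2^q$ ensures that $(j+1, d+1)$ still fits the parametrization defining $A_{j+1}$ and $B_{d+1}$. The principal algebraic input is the three-term recursion $\Pi_{n+2} = \mathcal{Y}\Pi_{n+1} + \mathcal{W}\Pi_n$ inherited from $\hat{\Pi}_n$, which in the ambient ring (where $2\mathcal{Y}=0$) rearranges to
\begin{equation*}
\mathcal{W}\Pi_n \ =\ \Pi_{n+2} + \mathcal{Y}\Pi_{n+1},
\end{equation*}
used repeatedly below.

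In Case A ($j$ even, so $d = 2m-1$ is odd), $A_j = \langle \mathcal{Y}^{j/2}\mathcal{W}^{j/2}\rangle$ while $A_{j+1}$ is generated by $\mathcal{Y}\mathcal{M}$ and $\mathcal{Y}\mathcal{W}$ times this same element. It therefore suffices to verify the two inclusions $\mathcal{Y}\mathcal{M}\cdot B_d \subseteq B_{d+1}$ and $\mathcal{Y}\mathcal{W}\cdot B_d \subseteq B_{d+1}$, with $B_d = \langle \Pi_m, \Pi_{m+1}\rangle$ and $B_{d+1} = \langle \Pi_{m+1}, \Pi_{m+2}, \mathcal{M}\Pi_m\rangle$. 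Three of the four resulting checks on generators are immediate, because each product is just a ring scalar times a generator of $B_{d+1}$; the fourth, $\mathcal{Y}\mathcal{W}\Pi_m$, rewrites via the displayed recursion as $\mathcal{Y}\Pi_{m+2} + \mathcal{Y}^2\Pi_{m+1} \in B_{d+1}$.

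Case B ($j$ odd, so $d = 2m$ is even) is the genuinely subtle one and I expect it to be the main obstacle. Here $A_{j+1} = \langle \mathcal{Y}^{(j+1)/2}\mathcal{W}^{(j+1)/2}\rangle$, and its sole generator is literally one of the two generators of $A_j \subseteq B_d = \langle \Pi_{m+1}, \Pi_{m+2}, \mathcal{M}\Pi_m\rangle$, hence automatically in $B_d$; the issue is that $B_{d+1} = \langle \Pi_{m+1}, \Pi_{m+2}\rangle$ has strictly fewer generators, so a priori the given expression in $B_d$ might require the $\mathcal{M}\Pi_m$ piece. The remedy is to exploit the additive splitting $R = R_0 \oplus R_0\mathcal{M}$ of the ambient ring (well defined because the relation $\mathcal{M}^2 = \mathcal{W}\mathcal{Y}$ eliminates higher $\mathcal{M}$-powers) and project the equation $\mathcal{Y}^{(j+1)/2}\mathcal{W}^{(j+1)/2} = f\Pi_{m+1} + g\Pi_{m+2} + h\mathcal{M}\Pi_m$ onto the $\mathcal{M}$-free summand $R_0$. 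Writing $h = h_0 + h_1\mathcal{M}$, the $\mathcal{M}$-free contribution of $h\mathcal{M}\Pi_m$ is $h_1\mathcal{M}^2\Pi_m = h_1\mathcal{W}\mathcal{Y}\Pi_m$; one last application of the recursion gives $\mathcal{W}\mathcal{Y}\Pi_m = \mathcal{Y}\Pi_{m+2} + \mathcal{Y}^2\Pi_{m+1}$, so absorbing this into the $\Pi_{m+1}$ and $\Pi_{m+2}$ coefficients exhibits $\mathcal{Y}^{(j+1)/2}\mathcal{W}^{(j+1)/2}$ as an element of $B_{d+1}$.
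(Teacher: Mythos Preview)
Your proof is correct and follows essentially the same two-case strategy as the paper. In Case~A the paper avoids your recursion step by writing $\mathcal{Y}\mathcal{W}\Pi_m=\mathcal{M}^2\Pi_m=\mathcal{M}\cdot(\mathcal{M}\Pi_m)\in B_{d+1}$ directly, which is a shade slicker; in Case~B the paper reaches the same conclusion via a degree-parity argument (the left side has even degree while $\mathcal{M}\Pi_{d/2}$ has odd degree, forcing the coefficient $\gamma$ to be $\mathcal{M}\gamma'$), whereas your additive splitting $R=R_0\oplus R_0\mathcal{M}$ and projection onto $R_0$ is a cleaner packaging of the same observation and sidesteps the paper's somewhat opaque clause about ``no occurrence of the defining relation.''
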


\begin{proof}
We distinguish two cases depending on the parity of $j$.
\begin{compactenum}[\rm(A)]

\item Let $j$ be even and $\mathcal{Y}^{\frac{j}{2}}\mathcal{W}^{\frac{j}{2}%
}\in \langle \Pi _{\frac{d+1}{2}},\Pi _{\frac{d+3}{2}}\rangle $. There are
polynomials $\alpha $ and $\beta $ such that%
\begin{equation*}
\mathcal{Y}^{\frac{j}{2}}\mathcal{W}^{\frac{j}{2}}=\alpha \Pi _{\frac{d+1}{2}%
}+\beta \Pi _{\frac{d+3}{2}}.
\end{equation*}%
Then%
\begin{eqnarray*}
\mathcal{Y}^{\frac{(j+1)+1}{2}}\mathcal{W}^{\frac{(j+1)-1}{2}}\mathcal{M} &=&%
\mathcal{Y}^{\frac{j+2}{2}}\mathcal{W}^{\frac{j}{2}}\mathcal{M}=\mathcal{YM}%
\left( \alpha \Pi _{\frac{d+1}{2}}+\beta \Pi _{\frac{d+3}{2}}\right) \\
&\in &\langle \Pi _{\frac{(d+1)+2}{2}},\mathcal{M}\Pi _{\frac{d+1}{2}%
}\rangle \subseteq \langle \Pi _{\frac{d+3}{2}},\Pi _{\frac{d+5}{2}},%
\mathcal{M}\Pi _{\frac{d+1}{2}}\rangle =B_{d+1},
\end{eqnarray*}%
and%
\begin{eqnarray*}
\mathcal{Y}^{\frac{(j+1)+1}{2}}\mathcal{W}^{\frac{(j+1)+1}{2}} &=&\mathcal{YW%
}\left( \mathcal{Y}^{\frac{j}{2}}\mathcal{W}^{\frac{j}{2}}\right) =\mathcal{%
YW}\left( \alpha \Pi _{\frac{d+1}{2}}+\beta \Pi _{\frac{d+3}{2}}\right)
=\alpha \mathcal{M}^{2}\Pi _{\frac{d+1}{2}}+\beta \mathcal{YW}\Pi _{\frac{d+3%
}{2}} \\
&\in &\langle \mathcal{M}\Pi _{\frac{d+1}{2}},\Pi _{\frac{d+3}{2}}\rangle
\subseteq \langle \Pi _{\frac{d+3}{2}},\Pi _{\frac{d+5}{2}},\mathcal{M}\Pi _{%
\frac{d+1}{2}}\rangle =B_{d+1}.
\end{eqnarray*}%
Thus $A_{j+1}\subseteq B_{d+1}$.

\item Let $j$ be odd and
\begin{equation*}
\langle \mathcal{Y}^{\frac{j+1}{2}}\mathcal{W}^{\frac{j-1}{2}}\mathcal{M},%
\mathcal{Y}^{\frac{j+1}{2}}\mathcal{W}^{\frac{j+1}{2}}\rangle
=A_{j}\subseteq B_{d}=\langle \Pi _{\frac{d+2}{2}},\Pi _{\frac{d+4}{2}},%
\mathcal{M}\Pi _{\frac{d}{2}}\rangle .
\end{equation*}%
There are polynomials $\alpha $, $\beta $ and $\gamma $ such that
\begin{equation*}
\mathcal{Y}^{\frac{j+1}{2}}\mathcal{W}^{\frac{j+1}{2}}=\alpha \Pi _{\frac{d+2%
}{2}}+\beta \Pi _{\frac{d+4}{2}}+\gamma \mathcal{M}\Pi _{\frac{d}{2}}
\end{equation*}%
and no occurrence of the defining relation $\Pi _{\frac{d+4}{2}}=%
\mathcal{Y}\Pi _{\frac{d+2}{2}}+\mathcal{W}\Pi _{\frac{d}{2}}$, Remark \ref{Rem:definition}, can be
subtracted from the presentation. Then $\gamma \mathcal{M}\Pi _{\frac{d}{2}%
}\in \langle \Pi _{\frac{d+2}{2}},\Pi _{\frac{d+4}{2}}\rangle $, and since $%
\mathcal{M}$ is of odd degree $\gamma =\mathcal{M}\gamma ^{\prime }$. In the
first case the inclusion $A_{j+1}\subseteq B_{d+1}$ follows directly.
Consider $\gamma =\mathcal{M}\gamma ^{\prime }$. Since $(\mathcal{Y}+%
\mathcal{X})\mathcal{W}\Pi _{i}=\mathcal{YW}\Pi _{i}$ for every $i>0$, we
have that%
\begin{eqnarray*}
\mathcal{Y}^{\frac{j+1}{2}}\mathcal{W}^{\frac{j+1}{2}} &=&\alpha \Pi _{\frac{%
d+2}{2}}+\beta \Pi _{\frac{d+4}{2}}+\gamma ^{\prime }\mathcal{M}^{2}\Pi _{%
\frac{d}{2}}=\alpha \Pi _{\frac{d+2}{2}}+\beta \Pi _{\frac{d+4}{2}}+\gamma
^{\prime }\mathcal{YW}\Pi _{\frac{d}{2}} \\
&=&\alpha \Pi _{\frac{d+2}{2}}+\beta \Pi _{\frac{d+4}{2}}+\gamma ^{\prime }%
\mathcal{Y}(\mathcal{Y}\Pi _{\frac{d}{2}+1}+\Pi _{\frac{d}{2}+2})\in \langle
\Pi _{\frac{d+2}{2}},\Pi _{\frac{d+4}{2}}\rangle =B_{d+1}.
\end{eqnarray*}

Thus $A_{j+1}\subseteq B_{d+1}$.

\end{compactenum}
\end{proof}

\subsubsection*{Acknowledgements}

We are grateful to Jon Carlson and to Carsten Schultz for useful comments
and insightful observations. The referee provided many useful suggestions and comments that are incorporated in the latest version of the
manuscript.

Some of this work was done in the framework of the MSRI program
\textquotedblleft Computational Applications of Algebraic
Topology\textquotedblright\ in the fall semester 2006.

\section{\label{sec:CS/TM}Configuration space/Test map scheme}

The Configuration Space/Test Map (CS/TM) paradigm (formalized by \v{Z}%
ivaljevi\'{c} in \cite{Zivaljevic-topmeth}, and also beautifully exposited
by Matou\v{s}ek in \cite{MatousekBZ:BU}) has been very powerful in the
systematic derivation of topological lower bounds for problems of
Combinatorics and of Discrete Geometry.

In many instances, the problem suggests natural configuration spaces $X$, $Y$%
, a finite symmetry group~$G$, and a test set $Y_{0}\subset Y$, where one
would try to show that every $G$-equivariant map $f:X\rightarrow Y$ must hit~%
$Y_{0}$. The canonical tool is then Dold's theorem, which says that if the
group actions are free, then the map $f$ must hit the test set $Y_{0}\subset
Y$ if the connectivity of~$X$ is higher than the dimension of $Y\setminus
Y_{0}$.

For the success of this \textquotedblleft canonical
approach\textquotedblright\ one crucially needs that a result such as Dold's
theorem is applicable. Thus the group action must be free, so one often
reduces the group action to a prime order cyclic subgroup of the full
symmetry group, and results may follow only in \textquotedblleft the prime
case\textquotedblright\ , or with more effort and deeper tools in the prime
power case. The main example for this is the Topological Tverberg Problem,
which is still not resolved for $(d,q)$ if $d>1$ and $q$ is not a prime
power \cite[Section 6.4, page 165]{MatousekBZ:BU}. So in general one has to
work much harder when the \textquotedblleft canonical\textquotedblright\
approach fails.

In the following, we present configuration spaces and test maps for the mass
partition problem.

\subsection{Configuration space}

The space of all oriented affine hyperplanes in $\mathbb{R}^{d}$ can be naturally identified with the subspace of the sphere $S^{d}$ obtained by
removing two points, namely the \textquotedblleft oriented hyperplanes at infinity\textquotedblright . Indeed, let
$\mathbb{R}^{d}$ be embedded in $\mathbb{R}^{d+1}$ by $(x_{1},\ldots,x_{d})\longmapsto (x_{1},\ldots,x_{d},1)$. Then every
oriented affine hyperplane $H$ in $\mathbb{R}^{d}$ determines a unique oriented hyperplane $\tilde{H}$ through the origin
in $\mathbb{R}^{d+1}$ such that $\tilde{H}\cap\mathbb{R}^{d}=H$, and conversely if the hyperplane at infinity is included. The
oriented hyperplane uniquely determined by the unit vector $v\in S^{d}$ is
denoted by $H_{v}$ and the assumed orientation is determined by the
half-space $H_{v}^{\pm }$. Then $H_{-v}^{-}=H_{v}^{+}$. The obvious and
classically used candidate for the configuration space associated with the
problem of testing admissibility of $(d,j,k)$ is
\begin{equation*}
Y_{d,k}=\left( S^{d}\right) ^{k}.
\end{equation*}%
The relevant group acting on this space is the Weyl group $W_{k}=(
\mathbb{Z}_{2})^{k}\rtimes S_{k}$. Each $\mathbb{Z}_{2}=\left( \{+1,-1\},\cdot \right) $ acts antipodally on the appropriate
copy of $S^{d}$ (changing the orientation of hyperplanes), while $S_{k}$ acts
by permuting copies. The second configuration space that we can use is%
\begin{equation*}
X_{d,k}=\underset{k\text{ copies}}{\underbrace{S^{d}\ast \cdots\ast S^{d}}}%
~\cong ~S^{dk+k-1}.
\end{equation*}%
The elements of $X_{d,k}$ are denoted by $t_{1}v_{1}+ \cdots +t_{k}v_{k}$, with $%
t_{i}\geq 0$, $\sum t_{1}=1$, $v_{i}\in S^{d}$. The Weyl group $W_{k}$ acts
on $X_{d,k}$ by%
\begin{eqnarray*}
\varepsilon _{i}\cdot \left( t_{1}v_{1}+\cdots +t_{i}v_{i}+\cdots +t_{k}v_{k}\right)
&=&t_{1}v_{1}+\cdots +t_{i}(-v_{i})+\cdots +t_{k}v_{k}\text{,} \\
\pi \cdot \left( t_{1}v_{1}+\cdots +t_{i}v_{i}+\cdots +t_{k}v_{k}\right) &=&t_{\pi
^{-1}(1)}v_{\pi ^{-1}(1)}+\cdots +t_{\pi ^{-1}(i)}v_{\pi ^{-1}(i)}+\cdots +t_{\pi
^{-1}(k)}v_{\pi ^{-1}(k)},
\end{eqnarray*}%
where $\varepsilon _{i}$ is the generator of the $i$-th copy of $\mathbb{Z}_{2}$ and $\pi \in S_{k}$ is an arbitrary permutation.

\subsection{\label{Sec:TestMap}Test map}

Let $\mathcal{M}=\{\mu _{1},\ldots,\mu _{j}\}$ be a collection of mass
distributions in $\mathbb{R}^{d}$. Let the coordinates of $\mathbb{R}^{2^{k}}$ be indexed by the elements of the group
$(\mathbb{Z}_{2})^{k}$. The Weyl group $W_{k}$ acts on $\mathbb{R}^{2^{k}}$ by acting on its coordinate index set
$(\mathbb{Z}_{2})^{k}$ in the following way:
\begin{equation*}
\left( \left( \beta _{1},\ldots,\beta _{k}\right) \rtimes \pi \right) \cdot
\left( \alpha _{1},\ldots,\alpha _{k}\right) =\left( \beta _{1}\alpha _{\pi
^{-1}(1)},\ldots,\beta _{k}\alpha _{\pi ^{-1}(k)}\right) .
\end{equation*}%
The test map $\phi :Y_{d,k}\rightarrow (\mathbb{R}^{2^{k}})^{j}$ used with the configuration space $Y_{d,k}$ is a $W_{k}$%
-equivariant map given by
\begin{equation*}
\phi \left( v_{1},\ldots,v_{k}\right) =\left( \left( \mu _{i}(H_{v_{1}}^{\alpha
_{1}}\cap\cdots\cap H_{v_{k}}^{\alpha _{k}})-\tfrac{1}{2^{k}}\mu _{i}(\mathbb{R}
^{d})\right) _{(\alpha _{1},\ldots,\alpha _{k})\in (\mathbb{Z}_{2})^{k}}\right) _{i\in \{1,\ldots,j\}}.
\end{equation*}%
Denote the $i$-th component of $\phi $ by $\phi _{i}$, $i=1,\ldots,j$.


To define a test map associated with the configuration space $X_{d,k}$, we
discuss the $(\mathbb{Z}_{2})^{k}$- and $W_{k}$-module structures on $\mathbb{R}^{2^{k}}$.

All irreducible representations of the group $(\mathbb{Z}_{2})^{k}$ are $1$-dimensional. They are in bijection with the homomorphisms
(characters) $\chi :(\mathbb{Z}_{2})^{k}\rightarrow\mathbb{Z}_{2}$. These homomorphisms are completely determined by the values on
generators $\varepsilon _{1}$,\ldots,$\varepsilon _{k}$ of $(\mathbb{Z}_{2})^{k}$, i.e. by the vector $(\chi (\varepsilon _{1}),\ldots,\chi
(\varepsilon _{k}))$. For $(\alpha _{1},\ldots,\alpha _{k})\in (\mathbb{Z}_{2})^{k}$ let $V_{\alpha _{1}\ldots\alpha _{k}}=\mathrm{span}\{v_{\alpha
_{1}\ldots\alpha _{k}}\}\subset\mathbb{R}^{2^{k}}$ denote the $1$-dimensional representation given by
\begin{equation*}
\varepsilon _{i}\cdot v_{\alpha _{1}\ldots\alpha _{k}}=\alpha _{i}~v_{\alpha
_{1}\ldots\alpha _{k}}
\end{equation*}%
The vector $v_{\alpha _{1}\ldots\alpha _{k}}\in \{+1,-1\}^{2^{k}}$ is uniquely
determined up to a scalar multiplication by $-1$. Note that%
\begin{equation*}
\langle v_{\alpha _{1}\ldots\alpha _{k}},v_{\beta _{1}\ldots\beta _{k}}\rangle =0
\end{equation*}%
for $\alpha _{1}\ldots\alpha _{k}\neq \beta _{1}\ldots\beta _{k}$. For $k=2$, with
the abbreviation $+$ for $+1$, $-$ for $-1$, the coordinate index set for $\mathbb{R}^{4}$ is $\{++,+-,-+,--\}$. Then%
\begin{equation*}
\begin{array}{lll}
v_{++}=(1,1,1,1) & , & v_{+-}=(1,-1,1,-1), \\
v_{-+}=(1,1,-1,-1) & , & v_{--}=(1,-1,-1,1).%
\end{array}%
\end{equation*}

The following decomposition of $(\mathbb{Z}_{2})^{k}$-modules holds, with the index identification $(\mathbb{Z}_{2})^{k}=\{+,-\}^{k}$,
\begin{equation*}
\mathbb{R}^{2^{k}}\cong V_{+\cdots+}\oplus \sum\limits_{\alpha _{1}\ldots\alpha _{k}\in (\mathbb{Z}
_{2})^{k}\backslash \{+\cdots+\}}V_{\alpha _{1}\ldots\alpha _{k}}
\end{equation*}%
where $V_{+\cdots+}$ is the trivial $(\mathbb{Z}_{2})^{k}$-representation. Let $R_{2^{k}}$ denote the orthogonal complement
of $V_{+\cdots+}$ and $\pi :\mathbb{R}^{2^{k}}\rightarrow R_{2^{k}}$ the associated (equivariant) projection.
Explicitly
\begin{equation}
R_{2^{k}}=\{(x_{1},\ldots,x_{2^{k}})\in
\mathbb{R}^{2^{k}}~|~\sum x_{i}=0\}=\sum\limits_{\alpha _{1}\ldots\alpha _{k}\in (
\mathbb{Z}_{2})^{k}\backslash \{+\cdots+)\}}V_{\alpha _{1}\ldots\alpha _{k}},
\label{decompostition-1}
\end{equation}%
and%
\begin{equation*}
\mathbf{x}=(x_{1},\ldots,x_{2^{k}})\overset{\pi }{\longmapsto }\tfrac{1}{2^{k-1}}%
\left( \langle \mathbf{x},v_{\alpha _{1}\ldots\alpha _{k})}\rangle \right)
_{\alpha _{1}\ldots\alpha _{k}\in (\mathbb{Z}_{2})^{k}\backslash \{+\cdots+\}},
\end{equation*}%
where $\langle \cdot ,\cdot \rangle $ denotes the standard inner product of $
\mathbb{R}^{2^{k}}$. Observe that
\begin{equation*}
\mathrm{im}~\phi =\phi (Y_{d,k})\subseteq \left( R_{2^{k}}\right) ^{j}\text{.}
\end{equation*}%
Let $\alpha _{1}\ldots\alpha _{k}\in (\mathbb{Z}_{2})^{k}$ and let $\eta (\alpha _{1}\ldots\alpha _{k})=\tfrac{1}{2}\left(
k-\sum \alpha _{i}\right) $. The following decomposition of $W_{k}$-modules
holds%

\begin{equation}
\mathbb{R}^{2^{k}}\cong V_{+\cdots +}\oplus \sum\limits_{n=1}^{k}~\sum\limits_{n=\eta
(\alpha _{1},\ldots,\alpha _{k})}V_{\alpha _{1}\ldots\alpha _{k}}\cong
V_{+\cdots+}\oplus R_{2^{k}}.  \label{decompostition-2}
\end{equation}%
The test map $\tau :X_{d,k}\rightarrow U_{k}\times \left( R_{2^{k}}\right)
^{j}$ is defined by%
\begin{equation*}
\begin{array}{ll}
\tau \left( t_{1}v_{1}+\cdots +t_{k}v_{k}\right) = & \left( t_{1}-\tfrac{1}{k}%
,\ldots,t_{k}-\tfrac{1}{k}\right) \times \\
& \left( \left( t_{1}^{\tfrac{1-\alpha _{1}}{2}}\cdots t_{k}^{\tfrac{1-\alpha
_{k}}{2}}\langle \phi _{i}\left( v_{1},\ldots,v_{k}\right) ,v_{\alpha
_{1}\ldots \alpha _{k}}\rangle \right) _{\alpha _{1}\ldots \alpha _{k}\in
(\mathbb{Z}_{2})^{k}\backslash \{+\cdots +\}}\right) _{i=1}^{j}.%
\end{array}%
\end{equation*}%
Here $U_{k}=\{(\xi _{1},\ldots,\xi _{k})\in
\mathbb{R}^{k}~|~\sum \xi _{i}=0\}$ is a $W_{k}$-module with an action given by%
\begin{equation*}
\left( \left( \beta _{1},\ldots,\beta _{k}\right) \rtimes \pi \right) \cdot
\left( \xi _{1},\ldots,\xi _{k}\right) :=\left( \xi _{\pi ^{-1}(1)},\ldots,\xi _{\pi
^{-1}(k)}\right) .
\end{equation*}%
The subgroup $(\mathbb{Z}_{2})^{k}$ acts trivially on $U_{k}$. The action on $U_{k}\times \left(
R_{2^{k}}\right) ^{j}$ is assumed to be the diagonal action. The test map $\tau $ is well defined, continuous and $W_{k}$-equivariant.

\begin{example}
The test map $\tau :X_{d,k}\rightarrow U_{k}\times \left( R_{2^{k}}\right)
^{j}$ is in the case of $k=2$ hyperplanes and $j=1$ measure given by $\tau
:X_{d,2}\rightarrow U_{2}\times R_{4}=U_{2}\times \left( \left( V_{+-}\oplus
V_{-+}\right) \oplus V_{--}\right) $ and
\begin{equation*}
\begin{array}{ll}
\tau \left( t_{1}v_{1}+t_{2}v_{2}\right) = & \left( t_{1}-\tfrac{1}{2},t_{2}-%
\tfrac{1}{2},\right. \\
& \left. t_{1}\langle \phi \left( v_{1},v_{2}\right) ,v_{-+}\rangle
,t_{2}\langle \phi \left( v_{1},v_{2}\right) ,v_{+-}\rangle
,t_{1}t_{2}\langle \phi \left( v_{1},v_{2}\right) ,v_{--}\rangle \right)%
\end{array}%
\end{equation*}%
where%
\begin{equation*}
\phi \left( v_{1},v_{2}\right) =\left( \mu _{i}(H_{v_{1}}^{\alpha _{1}}\cap
H_{v_{2}}^{\alpha _{2}})-\tfrac{1}{4}\mu (\mathbb{R}^{d})\right) _{\alpha _{1}\alpha _{2}\in
(\mathbb{Z}_{2})^{2}}\in\mathbb{R}^{4}\text{.}
\end{equation*}
\end{example}

\subsection{The test space}

The test spaces for the maps $\phi $ and $\tau $ are the origins of $\left(
R_{2^{k}}\right) ^{j}$ and $U_{k}\times \left( R_{2^{k}}\right) ^{j}$,
respectively. The constructions that we perform in this section satisfy the
usual hypotheses for the CS/TM scheme.

\begin{proposition}
\label{prop:TestSpace}
\begin{compactenum}[\rm(i)]
\item For a collection of mass distributions $%
\mathcal{M}=\{\mu _{1},\ldots,\mu _{j}\}$ let $\phi
:Y_{d,k}\rightarrow \left( R_{2^{k}}\right) ^{j}$ and $\tau
:X_{d,k}\rightarrow U_{k}\times \left( R_{2^{k}}\right) ^{j}$ be
the corresponding test maps. If
\begin{equation*}
(0,\ldots,0)\in \phi \left( Y_{d,k}\right) \text{ \qquad or }\qquad
(0,\ldots,0)\in \tau \left( X_{d,k}\right)
\end{equation*}%
then there exists an arrangement of $k$ hyperplanes $\mathcal{H}$ \textit{in
}$\mathbb{R}^{d}$ \textit{equiparting} the collection $\mathcal{M}$.

\item If there is no $W_{k}$-equivariant map with respect to the
actions defined above,%
\begin{equation*}
\begin{array}{llll}
Y_{d,k}\rightarrow \left( R_{2^{k}}\right) ^{j}\backslash \{(0,\ldots,0)\}, &
\text{or} & Y_{d,k}\rightarrow S\left( \left( R_{2^{k}}\right) ^{j}\right)
\approx S^{j(2^{k}-1)-1}, & \text{or} \\
X_{d,k}\rightarrow U_{k}\times \left( R_{2^{k}}\right)
^{j}\backslash \{(0,\ldots,0)\}, & \text{or} & X_{d,k}\rightarrow
S\left( U_{k}\times \left( R_{2^{k}}\right) ^{j}\right) \approx
S^{j(2^{k}-1)+k-2}, &
\end{array}%
\end{equation*}%
then the triple $(d,j,k)$ is admissible.

\item Specifically, for $k=2$, if there is no $D_{8}\cong W_{2}$
equivariant map, with the already defined actions,%
\begin{equation*}
\begin{array}{llll}
Y_{d,2}\rightarrow \left( R_{4}\right) ^{j}\backslash \{(0,\ldots,0)\}, & \text{%
or} & Y_{d,2}\rightarrow S\left( \left( R_{4}\right) ^{j}\right) \approx
S^{3j-1}, & \text{or} \\
X_{d,2}\rightarrow U_{2}\times \left( R_{4}\right) ^{j}\backslash
\{(0,\ldots,0)\}, & \text{or} & S^{2d+1}\approx X_{d,2}\rightarrow S\left(
U_{2}\times \left( R_{4}\right) ^{j}\right) \approx S^{3j}, &
\end{array}%
\end{equation*}%
then the triple $(d,j,2)$ is admissible.
\end{compactenum}
\end{proposition}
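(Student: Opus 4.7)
The plan is to prove (i) directly from the definitions, deduce (ii) from (i) by the standard contrapositive move of the CS/TM paradigm, and obtain (iii) as the specialization $k=2$. Continuity and $W_k$-equivariance of $\phi$ and $\tau$ are built into the formulas of Section~\ref{Sec:TestMap}, so I would only quickly note these and not belabor them.

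For (i), I would unwind the meaning of each test map hitting the origin. For $\phi$, the condition $\phi(v_1,\ldots,v_k) = (0,\ldots,0)$ in $(R_{2^k})^j$ is, by definition, exactly $\mu_i(H_{v_1}^{\alpha_1}\cap\cdots\cap H_{v_k}^{\alpha_k}) = \tfrac{1}{2^k}\mu_i(\mathbb{R}^d)$ for every $i$ and every orthant, i.e.\ the hyperplanes $H_{v_1},\ldots,H_{v_k}$ equipart $\mathcal{M}$. For $\tau$, the $U_k$-component forces $t_1-\tfrac1k = \cdots = t_k-\tfrac1k = 0$, hence $t_i = 1/k > 0$ for every $i$. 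Consequently the scalar prefactor $t_1^{(1-\alpha_1)/2}\cdots t_k^{(1-\alpha_k)/2}$ is a strictly positive constant, and the vanishing of the remaining coordinates is equivalent to $\langle \phi_i(v_1,\ldots,v_k),v_{\alpha_1\ldots\alpha_k}\rangle = 0$ for every $i$ and every $\alpha\neq(+,\ldots,+)$. Because $\phi_i(v_1,\ldots,v_k)\in R_{2^k}$ and the vectors $v_{\alpha_1\ldots\alpha_k}$ with $\alpha\neq(+,\ldots,+)$ form an orthogonal basis of $R_{2^k}$ by~\eqref{decompostition-1}, this forces $\phi_i(v_1,\ldots,v_k)=0$, and the $\phi$-case already treated yields equipartition.

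For (ii) I would argue by contraposition: assuming $(d,j,k)$ is not admissible, there is a collection $\mathcal{M}$ of $j$ mass distributions admitting no equiparting arrangement of $k$ hyperplanes, so by (i) both $\phi$ and $\tau$ miss the origin. Since the $W_k$-actions preserve the standard Euclidean inner product on $(R_{2^k})^j$ and on $U_k\times(R_{2^k})^j$, the radial retraction $Z\setminus\{0\}\to S(Z)$ is $W_k$-equivariant, and post-composition produces the desired $W_k$-equivariant maps $Y_{d,k}\to S((R_{2^k})^j)\approx S^{j(2^k-1)-1}$ and $X_{d,k}\to S(U_k\times(R_{2^k})^j)\approx S^{j(2^k-1)+k-2}$, using $\dim R_{2^k}=2^k-1$ and $\dim U_k=k-1$. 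This contradicts the stated nonexistence hypothesis. Part (iii) is then immediate from (ii) by plugging in $k=2$, $W_2=D_8$, $\dim X_{d,2}=2d+1$, and the same dimension counts. No serious obstacle is anticipated; the only point requiring a moment of care is observing that at $t_i=1/k$ the monomial prefactors in the $\tau$-formula are all nonzero, which is what allows the passage from the vanishing of $\tau$ to the vanishing of each $\phi_i$.
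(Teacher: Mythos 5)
Your proof is correct and is exactly the standard CS/TM argument that the paper leaves implicit (Proposition \ref{prop:TestSpace} is stated there without a written proof): unwinding $\phi=0$ as equipartition, noting that vanishing of the $U_k$-component of $\tau$ forces $t_i=\tfrac1k>0$ so the positive monomial prefactors together with the orthogonality of the vectors $v_{\alpha_1\ldots\alpha_k}$ spanning $R_{2^k}$ give $\phi_i=0$, and then passing to spheres by the (orthogonal, hence equivariant) radial retraction in the contrapositive of (ii). No gaps; part (iii) is indeed just the case $k=2$ with the stated dimension counts.
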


\begin{remark*}
{The action of $W_{k}$ on the sphere $S(U_{2}\times (R_{4}) ^{j}) $ is
\emph{fixed point free, but not free.} For $k=2 $, the action of the unique $
\mathbb{Z}_{4}$ subgroup of $W_{2}=D_{8}$ on the sphere $S(U_{2}\times (R_{4}) ^{j}) $
is fixed point free. }
\end{remark*}

The necessary condition for the non-existence of an equivariant $W_{k}$-map
\begin{equation*}
X_{d,k}\rightarrow S(U_{k}\times (R_{2^{k}})^{j})
\end{equation*}%
implied by the equivariant Kuratowski--Dugundji theorem \cite[Theorem 1.3,
page 25]{Bal-Kush} is
\begin{equation}
\begin{array}{ccc}
dk+k-1>j(2^{k}-1)+k-2 & \Longleftrightarrow & d\geq \tfrac{2^{k}-1}{k}j%
\end{array}%
.  \label{eq:NecesseryCondtion-1}
\end{equation}%
For $k=2$ the condition (\ref{eq:NecesseryCondtion-1}) becomes%
\begin{equation}
d\geq \lceil \tfrac{3}{2}j\rceil .  \label{eq:NecesseryCondtion-2}
\end{equation}%

\section{\label{Sec:Fadell--Husseini}The Fadell--Husseini index theory}

\subsection{Equivariant cohomology}

Let $X$ be a $G$ -space and $X\rightarrow \mathrm{E}G\times _{G}X\overset{%
\pi _{X}}{\rightarrow }\mathrm{B}G$ the associated universal bundle, with $X$
as a typical fibre. $\mathrm{E}G$ is a contractible cellular space on which $%
G$ acts freely, and $\mathrm{B}G:=\mathrm{E}G/G$. The space $\mathrm{E}%
G\times _{G}X=\left( \mathrm{E}G\times X\right) /G$ is called the \emph{%
Borel construction} of $X$ with respect to the action of $G$. The \emph{%
equivariant cohomology} of $X$ is the ordinary cohomology of the Borel
construction $\mathrm{E}G\times _{G}X$,%
\begin{equation*}
H_{G}^{\ast }(X):=H^{\ast }(\mathrm{E}G\times _{G}X).
\end{equation*}%
The equivariant cohomology is a module over the ring $H_{G}^{\ast
}(\mathrm{pt})=H^{\ast }(\mathrm{B}G)$. When $X$ is a free $G$-space the homotopy
equivalence $\mathrm{E}G\times _{G}X\simeq X/G$ induces a natural isomorphism%
\begin{equation*}
H_{G}^{\ast }(X)\cong H^{\ast }(X/G).
\end{equation*}%
The universal bundle $X\rightarrow \mathrm{E}G\times _{G}X\overset{\pi _{X}}{%
\rightarrow }\mathrm{B}G$, for coefficients in the ring $R$, induces a Serre
spectral sequence converging to the graded group $\mathrm{Gr}(H_{G}^{\ast
}(X,R))$ associated with $H_{G}^{\ast }(X,R)$ appropriately filtered. In this
paper \textquotedblleft ring\textquotedblright\ means commutative ring with
a unit element. The $E_{2}$-term is given by%
\begin{equation}
E_{2}^{p,q}\cong H^{p}(\mathrm{B}G,\mathcal{H}^{q}(X,R)),
\label{eq:E2Term-1}
\end{equation}%
where $\mathcal{H}^{q}(X,R)$ is a system of local coefficients. For a
discrete group $G$, the $E_{2}$-term of the spectral sequence can be
interpreted as the cohomology of the group $G$ with coefficients in the $G$%
-module $H^{\ast }(X,R)$,
\begin{equation}
E_{2}^{p,q}\cong H^{p}(G,H^{q}(X,R)).  \label{eq:E2Term-2}
\end{equation}

\subsection{$\mathrm{Index}_{G,R}$ and $\mathrm{Index}_{G,R}^{k}$}

Let $X$ be a $G$-space, $R$ a ring and $\pi _{X}^{\ast }$ the ring
homomorphism in cohomology%
\begin{equation*}
\pi _{X}^{\ast }:H^{\ast }(\mathrm{B}G,R)\rightarrow H^{\ast }(\mathrm{E}%
G\times _{G}X,R)
\end{equation*}%
induced by the projection $\mathrm{E}G\times_G X\to\mathrm{E}G\times_G \mathrm{pt}\approx\mathrm{B}G$.

The \emph{Fadell--Husseini (ideal-valued) index} of a $G$-space $X$ is the
kernel ideal of $\pi _{X}^{\ast }$,
\begin{equation*}
\mathrm{Index}_{G,R}X:=\ker \pi _{X}^{\ast }\subseteq H^{\ast }(\mathrm{B}%
G,R)\text{.}
\end{equation*}%
The Serre spectral sequence (\ref{eq:E2Term-1}) yields a representation of
the homomorphism $\pi _{X}^{\ast }$ as the composition%
\begin{equation*}
H^{\ast }(\mathrm{B}G,R)\rightarrow E_{2}^{\ast ,0}\rightarrow E_{3}^{\ast
,0}\rightarrow E_{4}^{\ast ,0}\rightarrow \cdots\rightarrow E_{\infty }^{\ast
,0}\subseteq H^{\ast }(\mathrm{E}G\times _{G}X,R).
\end{equation*}%
The $k$\textbf{-}\emph{th Fadell--Husseini index} is defined by%
\begin{eqnarray*}
\mathrm{Index}_{G,R}^{k}X &=&\ker \big(H^{\ast }(\mathrm{B}G,R)\rightarrow
E_{k}^{\ast ,0}\big),\qquad k\geq 2, \\
\mathrm{Index}_{G,R}^{1}X &=&\{0\}.
\end{eqnarray*}%
From the definitions the following properties of indexes can be derived.

\begin{proposition}
\label{prop:GenralizedIndex}Let $X$, $Y$ be $G$-spaces.
\begin{compactenum}[\rm(1)]
\item  $\mathrm{Index}_{G,R}^{k}X\subseteq H^{\ast }(\mathrm{B}G,R )$
is an ideal, for every $k\in\mathbb{N}$;
\item  $\mathrm{Index}_{G,R}^{1}X\subseteq \mathrm{Index}%
_{G,R}^{2}X\subseteq \mathrm{Index}_{G,R}^{3}X\subseteq \cdots\subseteq \mathrm{%
Index}_{G,R}X$;
\item  $\bigcup_{k\in\mathbb{N}
}\mathrm{Index}_{G,R}^{k}X=\mathrm{Index}_{G,R}X$.
\end{compactenum}
\end{proposition}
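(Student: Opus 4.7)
The plan is to derive all three properties from the multiplicative structure of the Serre spectral sequence of the Borel fibration $X\to\mathrm{E}G\times_G X\xrightarrow{\pi_X}\mathrm{B}G$, together with the factorization of $\pi_X^\ast$ through the edge row that is already spelled out in the excerpt:
\begin{equation*}
H^\ast(\mathrm{B}G,R)\longrightarrow E_2^{\ast,0}\longrightarrow E_3^{\ast,0}\longrightarrow\cdots\longrightarrow E_\infty^{\ast,0}\hookrightarrow H^\ast(\mathrm{E}G\times_G X,R).
\end{equation*}
The key ring-theoretic input that drives \textup{(1)} is that every arrow in this chain is a ring homomorphism. The first arrow is the standard multiplicative edge map of the Serre spectral sequence. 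For $k\ge 2$ the outgoing differential $d_k\colon E_k^{\ast,0}\to E_k^{\ast+k,1-k}$ vanishes because its target row has negative vertical degree, so $E_{k+1}^{\ast,0}=E_k^{\ast,0}/\operatorname{im}\!\bigl(d_k\colon E_k^{\ast-k,k-1}\to E_k^{\ast,0}\bigr)$. Because $d_k$ is a derivation on the bigraded ring $E_k^{\ast,\ast}$, for $a\in E_k^{s,0}$ and $y\in E_k^{\ast,k-1}$ one has $d_k(ay)=d_k(a)y+(-1)^s a\,d_k(y)=(-1)^s a\,d_k(y)$, so $a\,d_k(y)\in\operatorname{im}(d_k)$ and this image is an ideal of $E_k^{\ast,0}$. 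Hence $E_{k+1}^{\ast,0}$ is a quotient ring and $\mathrm{Index}^k_{G,R}X$, being the kernel of the resulting ring map into $E_k^{\ast,0}$, is an ideal.

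Part \textup{(2)} is then formal: the composite $H^\ast(\mathrm{B}G,R)\to E_{k+1}^{\ast,0}$ factors through the surjection $E_k^{\ast,0}\twoheadrightarrow E_{k+1}^{\ast,0}$, so any element vanishing in $E_k^{\ast,0}$ also vanishes in $E_{k+1}^{\ast,0}$, giving $\mathrm{Index}^k_{G,R}X\subseteq\mathrm{Index}^{k+1}_{G,R}X$. Likewise $\pi_X^\ast$ factors through every $E_k^{\ast,0}$ via $E_\infty^{\ast,0}$, which yields $\mathrm{Index}^k_{G,R}X\subseteq\mathrm{Index}_{G,R}X$.

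For \textup{(3)}, the nontrivial direction is $\mathrm{Index}_{G,R}X\subseteq\bigcup_k\mathrm{Index}^k_{G,R}X$, and the point is degreewise stabilization of the spectral sequence on the edge row. Fix $x\in H^n(\mathrm{B}G,R)$ with $\pi_X^\ast(x)=0$. At bidegree $(n,0)$ all outgoing differentials $d_k$ vanish (the target has second coordinate $1-k<0$), and all incoming differentials $d_k$ originate at $(n-k,k-1)$, which has negative first coordinate as soon as $k>n$; hence $E_k^{n,0}=E_\infty^{n,0}$ for every $k\ge n+1$. Because the filtration on $H^n(\mathrm{E}G\times_G X,R)$ satisfies $F^{n+1}H^n=0$, one has $E_\infty^{n,0}=F^nH^n$, so the image of $x$ under the edge composition equals $\pi_X^\ast(x)$. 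By assumption this is zero, so $x$ already vanishes in $E_{n+1}^{n,0}$ and lies in $\mathrm{Index}^{n+1}_{G,R}X$. I expect no serious obstacle: the only mildly technical step is verifying that the image of the incoming differential in \textup{(1)} is an ideal of the edge row; once that is in place, the remainder is routine bookkeeping with the Serre spectral sequence.
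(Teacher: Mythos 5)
Your argument is correct: part (1) follows since each bottom-row map is a ring homomorphism (the incoming image $\operatorname{im}(d_k)$ is an ideal by the derivation property, exactly as you say), part (2) is the formal factorization, and part (3) correctly uses that $E_k^{n,0}=E_\infty^{n,0}$ for $k>n$ together with $E_\infty^{n,0}=F^nH^n\subseteq H^n(\mathrm{E}G\times_G X,R)$, so the edge image of $x$ is $\pi_X^\ast(x)$. The paper gives no proof of this proposition (it is stated as derivable "from the definitions," with the edge factorization displayed just before), and your write-up is precisely that intended routine verification.
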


\begin{proposition}
\label{prop:basic}Let $X$ and $Y$ be $G$-spaces and $f:X\rightarrow Y$ a $G$%
-map. Then
\begin{equation*}
\mathrm{Index}_{G,R}(X)\supseteq \mathrm{Index}_{G,R}(Y)
\end{equation*}%
and for every $k\in\mathbb{N}$
\begin{equation*}
\mathrm{Index}_{G,R}^{k}(X)\supseteq \mathrm{Index}_{G,R}^{k}(Y).
\end{equation*}
\end{proposition}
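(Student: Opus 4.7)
The plan is to exploit functoriality of the Borel construction and naturality of the Serre spectral sequence. Both statements reduce to a commutative square in which the left edge is the identity on $H^*(\mathrm{B}G,R)$, so any class killed along the right edge for $Y$ is automatically killed along the right edge for $X$.

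First I would observe that the $G$-map $f\colon X\to Y$ induces a canonical map of Borel constructions
\begin{equation*}
\mathrm{id}_{\mathrm{E}G}\times_{G} f\colon \mathrm{E}G\times_{G}X \longrightarrow \mathrm{E}G\times_{G}Y,
\end{equation*}
fitting into a morphism of universal $G$-bundles: on the base it is the identity of $\mathrm{B}G$, and on fibres it is $f$ itself. Applying $H^{*}(-,R)$ yields the commutative square
\begin{equation*}
\begin{diagram}
\node{H^{*}(\mathrm{B}G,R)} \arrow{e,t}{=} \arrow{s,l}{\pi_{Y}^{*}} \node{H^{*}(\mathrm{B}G,R)} \arrow{s,r}{\pi_{X}^{*}} \\
\node{H^{*}(\mathrm{E}G\times_{G}Y,R)} \arrow{e,t}{(\mathrm{id}\times_{G} f)^{*}} \node{H^{*}(\mathrm{E}G\times_{G}X,R).}
\end{diagram}
\end{equation*}
For any class $u\in \mathrm{Index}_{G,R}(Y)=\ker \pi_{Y}^{*}$, commutativity gives $\pi_{X}^{*}(u)=(\mathrm{id}\times_{G} f)^{*}(\pi_{Y}^{*}(u))=0$, so $u\in \ker \pi_{X}^{*}=\mathrm{Index}_{G,R}(X)$. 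This establishes the first inclusion.

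For the $k$-th index, the same morphism of fibrations induces a morphism of the associated Serre spectral sequences, going \emph{against} $f$: for every $r\geq 2$ there are natural ring homomorphisms $E_{r}^{*,*}(Y)\to E_{r}^{*,*}(X)$ commuting with the differentials $d_{r}$. In particular, in the bottom row we obtain a commutative square
\begin{equation*}
\begin{diagram}
\node{H^{*}(\mathrm{B}G,R)} \arrow{e} \arrow{s,l}{=} \node{E_{k}^{*,0}(Y)} \arrow{s} \\
\node{H^{*}(\mathrm{B}G,R)} \arrow{e} \node{E_{k}^{*,0}(X),}
\end{diagram}
\end{equation*}
whose horizontal arrows are the edge homomorphisms used to define $\mathrm{Index}_{G,R}^{k}$. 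The same diagram chase as above gives $\mathrm{Index}_{G,R}^{k}(Y)\subseteq \mathrm{Index}_{G,R}^{k}(X)$ for every $k\geq 2$, and for $k=1$ both indexes equal $\{0\}$ by definition.

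There is no real obstacle here; the only point worth checking carefully is that the induced maps on the Serre spectral sequences are indeed compatible with the edge homomorphisms $H^{*}(\mathrm{B}G,R)\twoheadrightarrow E_{2}^{*,0}\twoheadrightarrow\cdots\twoheadrightarrow E_{k}^{*,0}$, but this is part of the standard naturality package of the Serre spectral sequence applied to the morphism of fibrations $(\mathrm{id}_{\mathrm{B}G},\mathrm{id}\times_{G} f)$. Everything else is a one-line diagram chase.
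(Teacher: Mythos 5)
Your argument is correct and follows essentially the same route as the paper: functoriality of the Borel construction gives the commutative triangle $\pi_X^{*}=(\mathrm{id}\times_G f)^{*}\circ\pi_Y^{*}$, hence $\ker\pi_Y^{*}\subseteq\ker\pi_X^{*}$, and naturality of the Serre spectral sequence for the morphism of fibrations over the identity of $\mathrm{B}G$ handles the partial indexes. The only difference is presentational: the paper's proof writes out the diagram chase only for $\mathrm{Index}_{G,R}$ and leaves the spectral-sequence naturality for $\mathrm{Index}_{G,R}^{k}$ implicit, whereas you spell it out, which is exactly the missing detail.
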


\begin{proof}
Functoriality of all constructions implies that the following diagrams
commute:
\begin{diagram}
X & &\rTo{f}& & Y & ~~& \mathrm{E}G\times _{G}X & &\rTo{\hat{f}}& & \mathrm{E}G\times _{G}Y &~~&\\
& \rdTo&&\ldTo && ~~& & \rdTo{\pi_X}&&\ldTo{\pi_Y} &&~~&\\
& & \mathrm{pt} &&&  ~~& & & \mathrm{B}G &&&~~&\\
\end{diagram}
and consequently applying cohomology functor
\begin{diagram}
H^{*}(\mathrm{E}G\times _{G}X,R) & &\lTo{f^{*}}& & H^{*}(\mathrm{E}G\times _{G}Y,R) &\\
& \luTo{\pi_X^{*}}&&\ruTo{\pi_Y^{*}} & \\
& & H^{*}(\mathrm{B}G,R) &  \\
\end{diagram}
\noindent $\pi _{X}=\pi _{Y}\circ\hat{f} $ and $\pi _{X}^{\ast }=
f^{\ast }\circ\pi _{Y}^{\ast }$. Thus $\ker \pi _{X}^{\ast }\supseteq \ker \pi _{Y}^{\ast } $.
\end{proof}

\begin{example}
\label{IndexOfSphere} $S^{n}$ is a $\mathbb{Z}_{2}$-space with the antipodal
action. The action is free and therefore
\begin{equation*}
\mathrm{E}\mathbb{Z}_{2}\times _{\mathbb{Z}_{2}}S^{n}\simeq S^{n}/\mathbb{Z}%
_{2}\approx\mathbb{R}\mathrm{P}^{n}~\Rightarrow ~H_{\mathbb{Z}_{2}}^{\ast }(S^{n},R)\cong H^{\ast
}(\mathbb{R}\mathrm{P}^{n},R).
\end{equation*}

\begin{compactenum}
\item $R=\mathbb{F}_{2}$: The cohomology ring $H^{\ast }(\mathrm{B}\mathbb{Z}%
_{2},\mathbb{F}_{2})=H^{\ast }(\mathbb{R}\mathrm{P}^{\infty },\mathbb{F}_{2})$ is the polynomial ring $\mathbb{F}%
_{2}[t]$ where $\deg (t)=1$. The $\mathbb{Z}_{2}$-index of $S^{n}$ is the
principal ideal generated by $t^{n+1}$:%
\begin{equation*}
\mathrm{Index}_{\mathbb{Z}_{2},\mathbb{F}_{2}}S^{n}=\mathrm{Index}_{\mathbb{Z%
}_{2},\mathbb{F}_{2}}^{n+2}S^{n}=\langle t^{n+1}\rangle \subseteq \mathbb{F}%
_{2}[t]\text{.}
\end{equation*}

\item $R=\mathbb{\mathbb{Z}}$: The cohomology ring $H^{\ast }(\mathrm{B}\mathbb{Z}_{2},\mathbb{
\mathbb{Z}})=H^{\ast }(\mathbb{R}\mathrm{P}^{\infty },\mathbb{\mathbb{Z}})$ is the quotient polynomial ring
$\mathbb{\mathbb{Z}}[\tau ]/\langle 2\tau \rangle $ where $\deg (\tau )=2$. The $\mathbb{Z}_{2}$%
-index of $S^{n}$ is the principal ideal%
\begin{equation*}
\mathrm{Index}_{\mathbb{Z}_{2},\mathbb{\mathbb{Z}}}S^{n}=\mathrm{Index}_{\mathbb{Z}_{2},\mathbb{\mathbb{Z}}}^{n+2}S^{n}=\left\{
\begin{array}{lll}
\langle \tau ^{\frac{n+1}{2}}\rangle , &  & \text{for }n\text{ odd,} \\
\langle \tau ^{\frac{n+2}{2}}\rangle , &  & \text{for }n\text{ even.}%
\end{array}%
\right.
\end{equation*}
\end{compactenum}
\end{example}

\begin{example}
\label{Ex:IndexS0}Let $G$ be a finite group and $H$ a subgroup of index $2$.
Then $H\lhd G$ and $G/H\cong\mathbb{Z}_{2}$. Let $V$ be the $1$-dimensional real representation of $G$ defined for
$v\in V$ by%
\begin{equation*}
g\cdot v=\left\{
\begin{array}{rll}
v, &  & \text{ for }g\in H, \\
-v, &  & \text{ for }g\notin H.
\end{array}%
\right.
\end{equation*}%
There is a $G$-homeomorphism $S(V)\approx\mathbb{Z}_{2}$. Therefore by \cite[last equation on page 34]{Hsiang}:%
\begin{equation*}
\mathrm{E}G\times _{G}S(V)\approx \mathrm{E}G\times _{G}\left( G/H\right)
\approx \left( \mathrm{E}G\times _{G}G\right) /H\approx \mathrm{E}G/H\approx
\mathrm{B}H
\end{equation*}%
and%
\begin{equation}
\mathrm{Index}_{G,R}S(V)=\ker \left( \mathrm{res}_{H}^{G}:H^{\ast
}(G,R)\rightarrow H^{\ast }(H,R)\right) .  \label{eq:IndexOfS0}
\end{equation}
\end{example}

\subsection{The restriction map and the index}

Let $X$ be a $G$-space and $K \subseteq G$ a subgroup. Then there is a
commutative diagram of fibrations \cite[pages 179-180]{Dieck87}:
\begin{equation}
\begin{diagram}
\mathrm{E}G\times _{G}X & \lTo{f} & \mathrm{E}G\times_{K}X \\
\dTo{} & & \dTo{}\\
\mathrm{B}G=\mathrm{E}G/G  & \lTo{\mathrm{B}i} & \mathrm{E}G/K=\mathrm{B}K\\
\end{diagram}
\label{eq:restrictionfibration}
\end{equation}
induced by inclusion $i:K\subset G$. Here $\mathrm{E}G$ in the lower right
corner is understood as a $K$-space and consequently a model for $\mathrm{E}%
K $. The map $\mathrm{B}i$ is a map between classifying spaces induced by
inclusion $i$. Now with coefficients in the ring $R$ we define%
\begin{equation*}
\mathrm{res}_{K}^{G}:=H^{\ast }(f):H^{\ast }(\mathrm{E}G\times
_{G}X,R)\rightarrow H^{\ast }(\mathrm{E}G\times _{K}X,R).
\end{equation*}%
If $G$ is a finite group, then the induced map on the cohomology of the
classifying spaces
\begin{equation*}
\mathrm{res}_{K}^{G}=(\mathrm{B}i)^{\ast }:H^{\ast }(\mathrm{B}%
G,R)\rightarrow H^{\ast }(\mathrm{B}K,R)
\end{equation*}%
coincides with the restriction homomorphism between group cohomologies
\begin{equation*}
\mathrm{res}_{K}^{G}:H^{\ast }(G,R)\rightarrow H^{\ast }(K,R).
\end{equation*}

\begin{proposition}
\label{Prop:Res-1}Let $X$ be a $G$-space, and $K$ and $L$ subgroups of $G$.

\begin{compactenum}[\rm(A)]
\item The morphism of fibrations
(\ref{eq:restrictionfibration}) provides the
following commutative diagram in cohomology:
\begin{equation}
\begin{diagram}
H^{\ast }(\mathrm{E}G\times _{G}X,R )  & \rTo{\mathrm{res}_{K}^{G}} & H^{\ast }(\mathrm{E}G\times _{K}X,R ) \\
\uTo{\pi _{X}^{\ast }} & & \uTo{\pi _{X}^{\ast }}\\
H^{\ast }(\mathrm{B}G,R)  & \rTo{\mathrm{res}_{K}^{G}} & H^{\ast }(\mathrm{B}K,R)\\
\end{diagram}
\label{eq:restrictionCohomology}
\end{equation}

\item For every $x\in H^{\ast }(\mathrm{B}G,R)$ and $y\in H^{\ast }(\mathrm{E}%
G\times _{G}X,R )$,%
\begin{equation*}
\mathrm{res}_{K}^{G}(x\cdot y)=\mathrm{res}_{K}^{G}(x)\cdot \mathrm{res}%
_{K}^{G}(y).
\end{equation*}

\item $L\subset K\subset G~\Rightarrow ~\mathrm{res}_{L}^{G}=\mathrm{res}%
_{L}^{K}\circ \mathrm{res}_{K}^{G}.$

\item The map of fibrations {\rm (\ref{eq:restrictionfibration})}
induces a morphism of Serre spectral sequences
\begin{equation*}
\Gamma _{i}^{\ast ,\ast }:E_{i}^{\ast ,\ast }(\mathrm{E}G\times
_{G}X,R)\rightarrow E_{i}^{\ast ,\ast }(\mathrm{E}K\times _{K}X,R)
\end{equation*}%
such that
\begin{compactenum}[\rm(1)]
\item $\Gamma _{\infty }^{\ast ,\ast }=\mathrm{res}_{K}^{G}:H^{\ast +\ast }(%
\mathrm{E}G\times _{G}X,R )\rightarrow H^{\ast +\ast }(\mathrm{E}G\times
_{K}X,R),$
\item $\Gamma _{2}^{\ast ,0}=\mathrm{res}_{K}^{G}:H^{\ast }(\mathrm{B}G,R
)\rightarrow H^{\ast }(\mathrm{B}K,R ).$
\end{compactenum}
\item Let $R$ and $S$ be commutative rings and $\phi:R \rightarrow
S$ a ring homomorphism. There are morphisms:
\begin{compactenum}[\rm(1)]
\item in equivariant cohomology $\Phi ^{\ast }:H^{\ast }(\mathrm{E}G\times _{G}X,R)\rightarrow H^{\ast }(\mathrm{E}%
G\times _{G}X,S)$,
\item in group cohomology $\Phi ^{\ast }:H^{\ast }(G,R)\rightarrow H^{\ast
}(G,S)$, and
\item between Serre spectral sequences $\Phi _{i}^{\ast ,\ast }:E_{i}^{\ast ,\ast }(\mathrm{E}G\times
_{G}X,R)\rightarrow E_{i}^{\ast ,\ast }(\mathrm{E}G\times _{G}X,S)$,
\end{compactenum}
induced by $\phi$ such that the following diagram commutes:
\begin{equation} \label{diagram:cube}
\begin{diagram}[size=1.5em,textflow]
H^{*}(\mathrm{E}G\times_G X,R) & & \rTo^{} & & H^{*}(\mathrm{E}G\times_K X,R) & & \\
& \rdTo_{\Phi} & & & \uTo & \rdTo_{\Phi} & \\
\uTo & & H^{*}(\mathrm{E}G\times_G X,S) & \rTo & \HonV & & H^{*}(\mathrm{E}G\times_K X,S) \\
& & \uTo & & \vLine & & \\
H^{*}(\mathrm{B}G,R) & \hLine & \VonH & \rTo & H^{*}(\mathrm{B}K,R) & & \uTo \\
& \rdTo_{\Phi} & & & & \rdTo_{\Phi} & \\
& & H^{*}(\mathrm{B}G,S) & & \rTo & & H^{*}(\mathrm{B}K,S) \\
\end{diagram}
\end{equation}
\end{compactenum}
\end{proposition}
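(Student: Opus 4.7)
The entire proposition is a chain of naturality statements that I would derive from a single input: the commuting square of fibrations (\ref{eq:restrictionfibration}), obtained by viewing $\mathrm{E}G$ as a free $K$-space and hence a valid model of $\mathrm{E}K$. My plan is to process the five parts in the order stated, each time identifying which general naturality principle is in play.

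Part (A) follows at once by applying $H^*(-,R)$ to (\ref{eq:restrictionfibration}): the verticals are by definition the two $\pi_X^*$, the top horizontal is the definition of $\mathrm{res}^G_K$ in equivariant cohomology, and the bottom horizontal is $(\mathrm{B}i)^*$, which, as recalled just before the proposition, coincides with the group-cohomology restriction. Part (B) is then immediate: the top $\mathrm{res}^G_K$ is a ring homomorphism because it is induced by a continuous map $f$; combining multiplicativity with the commuting square of part (A) transfers the $H^*(\mathrm{B}G,R)$-module structure on equivariant cohomology to the module structure on $H^*(\mathrm{E}G\times_K X, R)$ as claimed. Part (C) is pure functoriality of $H^*$ applied to the chain $\mathrm{B}L \to \mathrm{B}K \to \mathrm{B}G$ induced by $L \subset K \subset G$.

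The substantive work is in part (D). I would choose cellular filtrations on $\mathrm{B}G$ and $\mathrm{B}K$ that are compatible with $\mathrm{B}i$ (for instance, pick any CW structure on $\mathrm{B}G$ and lift to $\mathrm{B}K$ via the covering); these induce compatible filtrations of the Borel constructions of $X$, and the standard construction of the Serre spectral sequence then yields a morphism $\Gamma_i^{*,*}$ for all $i \geq 2$. The identification $\Gamma_\infty^{*,*} = \mathrm{res}^G_K$ is the statement that $f^*$ on equivariant cohomology respects the chosen filtrations and that $\Gamma_\infty$ is its associated graded. The identification $\Gamma_2^{*,0} = \mathrm{res}^G_K$ on base cohomology comes from the standard edge description: the map $H^*(\mathrm{B}G,R) \to E_2^{*,0}$ is canonical and natural under maps of fibrations, so it intertwines $(\mathrm{B}i)^*$ with $\Gamma_2^{*,0}$. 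Part (E) is once more naturality: a ring homomorphism $\phi:R\to S$ gives a natural transformation of cochain functors $C^*(-;R) \to C^*(-;S)$, which in turn induces the three $\Phi^*$ maps (on ordinary cohomology, on group cohomology and on each page of the Serre spectral sequence), and the six faces of the cube (\ref{diagram:cube}) are precisely the resulting naturality squares, combined with the previously established commutativity of (\ref{eq:restrictionCohomology}) applied over $R$ and over $S$.

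I expect the only step worth commenting on as an obstacle to be inside part (D): arranging the filtrations so that the morphism of spectral sequences is genuinely induced at the chain level, and then carefully identifying $\Gamma_\infty^{*,*}$ with $\mathrm{res}^G_K$ through its associated graded. Everything else is bookkeeping within the naturality of the Borel construction, of singular cohomology and of the Serre spectral sequence.
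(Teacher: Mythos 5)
Your proposal is correct: all five parts are indeed instances of naturality of the Borel construction, of cohomology, and of the Serre spectral sequence under the map of fibrations induced by $K\subset G$ (with $\mathrm{E}G$ serving as a model for $\mathrm{E}K$) and under the coefficient homomorphism $\phi\colon R\to S$, and your treatment of (D) via compatible filtrations and the edge homomorphism is the standard argument. The paper itself offers no proof of this proposition, treating it as a collection of standard facts (citing tom Dieck for the commuting square of fibrations), so your write-up simply supplies the routine details the authors omit.
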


\begin{remark*}
By a morphism of spectral sequences in properties (D) and (E) we mean that%
\begin{equation*}
{\Gamma _{i}^{\ast ,\ast }\circ \partial _{i}=\partial _{i}\circ \Gamma
_{i}^{\ast ,\ast }\qquad }\text{{and}}{\qquad {\Phi _{i}^{\ast ,\ast }\circ
\partial _{i}=\partial _{i}\circ \Phi _{i}^{\ast ,\ast }}}.
\end{equation*}%
These relations are applied in the situations where the right hand side is $%
\neq 0$ for a particular element $x$, to imply that the left hand side $%
\Gamma _{i}^{\ast ,\ast }\circ \partial _{i}(x)$ or $\Phi _{i}^{\ast ,\ast
}\circ \partial _{i}(x)$ is also $\neq 0$.\textrm{\ }In particular, then $%
\partial _{i}(x)\neq 0$.
\end{remark*}

\begin{figure}[htb]
\centering
\includegraphics[scale=0.70]{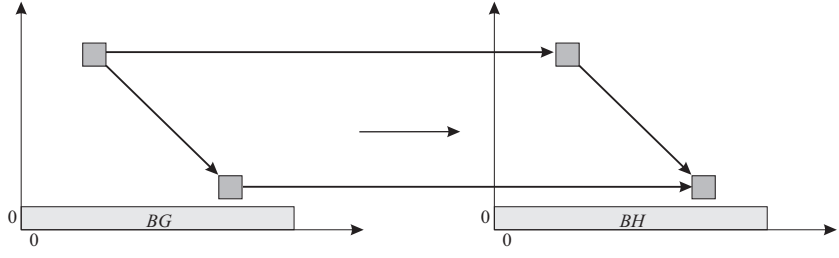}
\caption{{Illustration of Proposition \protect\ref{Prop:Res-1} (D) and (E)}}
\label{fig:mala1}
\end{figure}

\begin{proposition}
\label{prop:Res-Index}Let $X$ be a $G$-space and $K$ a subgroup of $G$. Let $%
R$ and $S$ be rings and $\phi :R\rightarrow S$ a ring homomorphism. Then

\begin{compactenum}[\rm(1)]
\item $\mathrm{res}_{K}^{G}\left( \mathrm{Index}_{G,R}X\right) \subseteq
\mathrm{Index}_{K,R}X$,

\item $\mathrm{res}_{K}^{G}\left( \mathrm{Index}_{G,R}^{r}X\right) \subseteq
\mathrm{Index}_{K,R}^{r}X$ for every $r\in\mathbb{N}$,

\item $\Phi ^{\ast }(\mathrm{Index}_{G,R}X)\subseteq
\mathrm{Index}_{G,S}X,$

\item $\Phi ^{\ast }(\mathrm{Index}_{G,R}^{r}X)\subseteq \mathrm{Index}%
_{G,S}^{r}X.$
\end{compactenum}
\end{proposition}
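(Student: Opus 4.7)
The plan is to derive all four inclusions as formal consequences of Proposition~\ref{Prop:Res-1}; no further topological input is required.

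For statement~(1) I would take $x\in\mathrm{Index}_{G,R}X=\ker\pi_X^{\ast}$ and chase it around the commutative square of Proposition~\ref{Prop:Res-1}(A):
\[
\pi_X^{\ast}(\mathrm{res}_K^G(x))\ =\ \mathrm{res}_K^G(\pi_X^{\ast}(x))\ =\ \mathrm{res}_K^G(0)\ =\ 0,
\]
where the outer $\pi_X^{\ast}$ is now the $K$-equivariant projection into $H^{\ast}(\mathrm{E}G\times_K X,R)$; hence $\mathrm{res}_K^G(x)\in\mathrm{Index}_{K,R}X$. Statement~(3) is the analogous diagram chase applied to the front-to-back face of the cube of Proposition~\ref{Prop:Res-1}(E): if $\pi_X^{\ast}(x)=0$ then $\pi_X^{\ast}(\Phi^{\ast}(x))=\Phi^{\ast}(\pi_X^{\ast}(x))=0$, so $\Phi^{\ast}(x)\in\mathrm{Index}_{G,S}X$.

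For statements~(2) and~(4) I would invoke the morphisms of Serre spectral sequences $\Gamma_i^{\ast,\ast}$ and $\Phi_i^{\ast,\ast}$ furnished by Proposition~\ref{Prop:Res-1}(D) and~(E)(3). Unwinding the definition, $\mathrm{Index}_{G,R}^{r}X$ is precisely the kernel of the composition of edge maps
\[
H^{\ast}(\mathrm{B}G,R)=E_2^{\ast,0}\ \longrightarrow\ E_3^{\ast,0}\ \longrightarrow\ \cdots\ \longrightarrow\ E_r^{\ast,0}
\]
in the $G$-sequence. Because a morphism of spectral sequences commutes with all differentials, it commutes with these edge maps on every page; combined with $\Gamma_2^{\ast,0}=\mathrm{res}_K^G$, this shows that $\mathrm{res}_K^G(x)$ dies in the $K$-sequence by page~$r$ whenever $x$ dies in the $G$-sequence by page~$r$, which is~(2). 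The identical argument with $\Phi_i^{\ast,\ast}$ in place of $\Gamma_i^{\ast,\ast}$ yields~(4).

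I do not anticipate a real obstacle: the entire proposition amounts to a naturality statement, and every ingredient is already packaged in Proposition~\ref{Prop:Res-1}. The one verification worth recording is that the edge maps $E_i^{\ast,0}\to E_{i+1}^{\ast,0}$ are themselves the restrictions induced by the differentials leaving the zero row, so compatibility of $\Gamma_i^{\ast,\ast}$ and $\Phi_i^{\ast,\ast}$ with differentials translates automatically into compatibility at the level of the filtration quotients $E_r^{\ast,0}$.
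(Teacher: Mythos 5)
Your argument is correct and is essentially the paper's own proof: parts (1) and (3) are exactly the diagram chases through Proposition \ref{Prop:Res-1}(A) and the cube (\ref{diagram:cube}), and parts (2) and (4) use the commutativity of $\Gamma_r^{\ast,0}$ and $\Phi_r^{\ast,0}$ with the maps $H^{\ast}(\mathrm{B}G,R)\to E_r^{\ast,0}$, which is precisely the pair of squares displayed in the paper's proof. The only nitpick is wording: the maps $E_i^{\ast,0}\to E_{i+1}^{\ast,0}$ are quotients by images of differentials \emph{arriving at} the zero row (those leaving it vanish for degree reasons), but this does not affect the argument.
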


\begin{proof}
The assertions about the $\mathrm{Index}_{G,R}$ follow from diagrams (\ref%
{eq:restrictionCohomology}) and (\ref{diagram:cube}). The commutative
diagrams%
\[
\begin{diagram}[size=2.5em,textflow]
E_{r}^{\ast ,0}(\mathrm{E}G\times _{G}X,R) & \rTo{\Gamma _{r}^{\ast ,0}} & E_{r}^{\ast ,0}(\mathrm{E}K\times _{K}X,R) \\
\uTo & & \uTo{}\\
H^{\ast }(\mathrm{B}G,R) & \rTo{\mathrm{res}_{K}^{G}} & H^{\ast }(\mathrm{B}K,R)\\
\end{diagram}
\]
and
\[
\begin{diagram}[size=2.5em,textflow]
E_{r}^{\ast ,0}(\mathrm{E}G\times _{G}X,R) & \rTo{\Phi _{r}^{\ast ,0}} & E_{r}^{\ast ,0}(\mathrm{E}G\times _{G}X,S) \\
\uTo & & \uTo{}\\
H^{\ast }(\mathrm{B}G,R) & \rTo{\Phi ^{\ast }} & H^{\ast }(\mathrm{B}G,S)\\
\end{diagram}
\]
imply the partial index assertions.
\end{proof}

\subsection{Basic calculations of the index}

\subsubsection{The index of a product}

Let $X$ be a $G$-space and $Y$ an $H$-space. Then $X\times Y$ has the
natural structure of a $G\times H$-space. What is the relation between the three
indexes $\mathrm{Index}_{G\times H}(X\times Y)$, $\mathrm{Index}_{G}(X) $,
and $\mathrm{Index}_{H}(Y)$? Using the K\"{u}nneth formula one can prove the
following proposition \cite[Corollary 3.2]{Fadell--Husseini}, \cite[%
Proposition 2.7]{guide2} when the coefficient ring is a field.

\begin{proposition}
\label{Prop:Prod}Let $X$ be a $G$-space and $Y$ an $H$-space and
\begin{equation*}
H^{\ast }(\mathrm{B}G,\Bbbk )\cong \Bbbk \lbrack x_{1},\ldots,x_{n}],\text{\qquad \qquad
}H^{\ast }(\mathrm{B}H,\Bbbk )\cong \Bbbk \lbrack y_{1},\ldots,y_{m}]
\end{equation*}%
the cohomology rings of the associated classifying spaces with coefficients
in the field $\Bbbk $. If
\begin{equation*}
\mathrm{Index}_{G,\Bbbk }X=\langle f_{1},\ldots,f_{i}\rangle \text{\qquad and
\qquad }\mathrm{Index}_{H,\Bbbk }(Y)=\langle g_{1},\ldots,g_{j}\rangle \text{,}
\end{equation*}%
then
\begin{equation*}
\mathrm{Index}_{G\times H,\Bbbk }X=\langle
f_{1},\ldots,f_{i},g_{1},\ldots,g_{j}\rangle \subseteq \Bbbk \lbrack
x_{1},\ldots,x_{n},y_{1},\ldots,y_{m}]\text{.}
\end{equation*}
\end{proposition}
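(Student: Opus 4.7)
The plan is to reduce the statement to a purely algebraic computation by exploiting the product structure of the Borel construction together with the K\"{u}nneth theorem over the field~$\Bbbk$.

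First I would fix the model $\mathrm{E}(G\times H)=\mathrm{E}G\times\mathrm{E}H$, which is a contractible free $G\times H$-space. This yields $\mathrm{B}(G\times H)\simeq\mathrm{B}G\times\mathrm{B}H$ together with the homotopy equivalence
$$\mathrm{E}(G\times H)\times_{G\times H}(X\times Y)\;\simeq\;(\mathrm{E}G\times_{G}X)\times(\mathrm{E}H\times_{H}Y),$$
under which the projection $\pi_{X\times Y}$ is identified with the product $\pi_{X}\times\pi_{Y}$ of the two separate projections onto the classifying spaces.

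Next I would apply the K\"{u}nneth theorem on both sides. Since the two classifying space cohomologies are polynomial (in particular of finite type in each degree), K\"{u}nneth gives a canonical ring isomorphism
$$H^{\ast}(\mathrm{B}(G\times H),\Bbbk)\;\cong\;\Bbbk[x_1,\ldots,x_n]\otimes_{\Bbbk}\Bbbk[y_1,\ldots,y_m]\;\cong\;\Bbbk[x_1,\ldots,x_n,y_1,\ldots,y_m],$$
and an analogous isomorphism on the Borel construction side. Under these identifications, the ring map $\pi_{X\times Y}^{\ast}$ becomes the tensor product $\pi_X^{\ast}\otimes\pi_Y^{\ast}$.

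The remaining step is the purely algebraic computation of the kernel of a tensor product of ring homomorphisms over a field. Since $\Bbbk$ is a field the tensor product functor is exact, giving
$$\ker\bigl(\pi_X^{\ast}\otimes\pi_Y^{\ast}\bigr)\;=\;(\ker\pi_X^{\ast})\otimes H^{\ast}(\mathrm{B}H,\Bbbk)\;+\;H^{\ast}(\mathrm{B}G,\Bbbk)\otimes(\ker\pi_Y^{\ast}).$$
Translated back to the polynomial ring, the right-hand side is exactly the ideal $\langle f_1,\ldots,f_i,g_1,\ldots,g_j\rangle$, which finishes the identification of $\mathrm{Index}_{G\times H,\Bbbk}(X\times Y)$. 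The only delicate point I anticipate is the finite-type hypothesis required for the K\"{u}nneth isomorphism on the Borel construction, but this is automatic for the spaces considered in this paper, so K\"{u}nneth applies without extra work; the hypothesis that $\Bbbk$ is a field is essential here, since over a general ring the tensor product of ideals is not the same as the kernel.
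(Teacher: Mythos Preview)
Your proof is correct and follows exactly the approach the paper indicates: the paper does not give its own proof but simply states that ``using the K\"{u}nneth formula one can prove the following proposition'' and cites \cite[Corollary 3.2]{Fadell--Husseini} and \cite[Proposition 2.7]{guide2}. Your write-up supplies precisely those details---the product decomposition of the Borel construction, K\"{u}nneth, and the kernel-of-a-tensor-product computation over a field---so there is nothing to add.
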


The $(\mathbb{Z}_2)^k$-index of a product of spheres can be computed using this
proposition and Example \ref{IndexOfSphere}.

\begin{corollary}
\label{Cor:IndexOfTorus}Let $S^{n_{1}}\times\cdots\times S^{n_{k}}$ be a $(%
\mathbb{Z}_{2})^{k}$-space with the product action. Then%
\begin{equation*}
\mathrm{Index}_{(\mathbb{Z}_{2})^{k},\mathbb{F}_{2}}S^{n_{1}}\times
\cdots\times S^{n_{k}}=\langle t_{1}^{n_{1}+1},\ldots,t_{k}^{n_{k}+1}\rangle
\subseteq \mathbb{F}_{2}[t_{1},\ldots,t_{k}].
\end{equation*}
\end{corollary}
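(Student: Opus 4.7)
The plan is to combine Example \ref{IndexOfSphere} with Proposition \ref{Prop:Prod} by induction on $k$, so essentially no new work is required beyond identifying the hypotheses of Proposition \ref{Prop:Prod}. The starting observation is that the product $(\mathbb{Z}_{2})^{k}$-action on $S^{n_{1}}\times\cdots\times S^{n_{k}}$ splits as a product action: the $i$-th factor $\mathbb{Z}_{2}$ acts antipodally on $S^{n_{i}}$ and trivially on the remaining factors. Thus the whole configuration is a product of pieces, each of which is a single $\mathbb{Z}_{2}$-space $S^{n_{i}}$ with antipodal action.

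For each individual factor, Example \ref{IndexOfSphere} gives
\[
\mathrm{Index}_{\mathbb{Z}_{2},\mathbb{F}_{2}}S^{n_{i}}=\langle t_{i}^{n_{i}+1}\rangle\subseteq H^{\ast}(\mathrm{B}\mathbb{Z}_{2},\mathbb{F}_{2})=\mathbb{F}_{2}[t_{i}].
\]
Now I would invoke Proposition \ref{Prop:Prod} with coefficient field $\Bbbk=\mathbb{F}_{2}$, writing $S^{n_{1}}\times\cdots\times S^{n_{k}}$ as the product of the $\mathbb{Z}_{2}$-space $S^{n_{1}}$ and the $(\mathbb{Z}_{2})^{k-1}$-space $S^{n_{2}}\times\cdots\times S^{n_{k}}$. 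This is legitimate because $H^{\ast}(\mathrm{B}\mathbb{Z}_{2},\mathbb{F}_{2})$ and (by induction, or by Künneth) $H^{\ast}(\mathrm{B}(\mathbb{Z}_{2})^{k-1},\mathbb{F}_{2})\cong\mathbb{F}_{2}[t_{2},\ldots,t_{k}]$ are polynomial rings, so the hypotheses of Proposition \ref{Prop:Prod} are met. An induction on $k$ then yields the claimed formula
\[
\mathrm{Index}_{(\mathbb{Z}_{2})^{k},\mathbb{F}_{2}}S^{n_{1}}\times\cdots\times S^{n_{k}}=\langle t_{1}^{n_{1}+1},\ldots,t_{k}^{n_{k}+1}\rangle.
\]

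There is really no obstacle to overcome: the statement is a direct corollary of the two results cited. The only mild point worth checking is that the identification $H^{\ast}(\mathrm{B}(\mathbb{Z}_{2})^{k},\mathbb{F}_{2})\cong\mathbb{F}_{2}[t_{1},\ldots,t_{k}]$ with the generators arising from the individual factors is compatible with the module structure used in Proposition \ref{Prop:Prod}; this is immediate from the Künneth formula over the field $\mathbb{F}_{2}$ applied to $\mathrm{B}(\mathbb{Z}_{2})^{k}\simeq \mathrm{B}\mathbb{Z}_{2}\times\cdots\times \mathrm{B}\mathbb{Z}_{2}$, which is what makes Proposition \ref{Prop:Prod} work in the first place.
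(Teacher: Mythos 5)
Your proposal is correct and is essentially the paper's own argument: the paper derives Corollary \ref{Cor:IndexOfTorus} directly from Proposition \ref{Prop:Prod} together with Example \ref{IndexOfSphere}, exactly as you do, with the inductive splitting of the product action being the obvious way to iterate the two-factor proposition. No gap to report.
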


Unfortunately when the coefficient ring is not a field the claim of
Proposition \ref{Prop:Prod} does not hold.

\begin{example}
\label{Ex:SxSwithZ}Let $S^{n}\times S^{n}$ be a $(\mathbb{Z}_{2})^{2}$-space
with the product action. From the previous corollary
\begin{equation}
\mathrm{Index}_{(\mathbb{Z}_{2})^{2},\mathbb{F}_{2}}S^{n}\times
S^{n}=\langle t_{1}^{n+1},t_{2}^{n+1}\rangle \subseteq \mathbb{F}%
_{2}[t_{1},t_{2}]=H^{\ast }((\mathbb{Z}_{2})^{2},\mathbb{F}_{2}).
\label{eq:pom-1}
\end{equation}%
To determine the $\mathbb{Z}$-index we proceed in two steps.

\medskip

\emph{Cohomology ring }$H^{\ast }((\mathbb{Z}_{2})^{2},\mathbb{Z})$%
: Following \cite[Section 4.1, page 508]{Lewis} the short exact sequence of
coefficients%
\begin{equation}
0\rightarrow\mathbb{Z}\overset{\times 2}{\rightarrow }\mathbb{Z}\overset{s}{\rightarrow }\mathbb{F}_{2}\rightarrow 0
\label{eq:coeffseq-1}
\end{equation}%
induces a long exact sequence in group cohomology \cite[Proposition 6.1,
page 71]{Brown} which in this case reduces to a sequence of short exact
sequences for $k>0$,%
\begin{equation}
0\rightarrow H^{k}((\mathbb{Z}_{2})^{2},\mathbb{Z})\overset{s_{\ast }}{%
\rightarrow }H^{k}((\mathbb{Z}_{2})^{2},\mathbb{F}_{2})\rightarrow H^{k+1}((%
\mathbb{Z}_{2})^{2},\mathbb{Z})\rightarrow 0.  \label{eq:Bocstain-1}
\end{equation}%
Therefore, as in \cite[Proposition 4.1, page 508]{Lewis},%
\begin{equation}
H^{\ast }((\mathbb{Z}_{2})^{2},\mathbb{Z})\cong\left( \mathbb{Z[\tau }_{1},\tau
_{2}\mathbb{]\otimes Z}[\mu ]\right)/\mathcal{I}  \label{eq:CohomologyZ_2^2withZ}
\end{equation}%
where $\deg \mathbb{\tau }_{1}=\deg \tau _{2}=2$, $\deg \mathbb{\mu }=3$ and the ideal $\mathcal{I}$ is
generated by the relations
\begin{equation*}
2\tau _{1}=2\tau _{2}=2\mu =0\text{\qquad and\qquad }\mu ^{2}=\tau _{1}\tau
_{2}(\tau _{1}+\tau _{2})\text{.}
\end{equation*}%
The ring morphism $c:\mathbb{Z}\rightarrow \mathbb{F}_{2}$ in the coefficient exact sequence (\ref%
{eq:coeffseq-1}) induces a morphism in group cohomology $c_{\ast }:H^{\ast
}((\mathbb{Z}_{2})^{2},\mathbb{Z})\rightarrow H^{\ast }((\mathbb{Z}_{2})^{2},%
\mathbb{F}_{2})$ given by:%
\begin{equation}
\mathbb{\tau }_{1}\longmapsto t_{1}^{2},~\mathbb{\tau }_{2}\longmapsto
t_{2}^{2},~\mathbb{\mu }\longmapsto t_{1}t_{2}(t_{1}+t_{2})\text{.}
\label{eq:correspodence-1}
\end{equation}%
The arguments used in the computation of the cohomology with integer
coefficients come from the Bockstein spectral sequence \cite{Browder}, \cite[%
pages 104-110]{Carlson} associated with the exact couple
\begin{equation*}
\begin{diagram}[size=2em,textflow]
H^{\ast }((\mathbb{Z}_{2})^{2},\mathbb{Z}) & &\lTo{p}& & H^{\ast }((\mathbb{Z}_{2})^{2},\mathbb{Z}) &\\
& \luTo^{\delta}&&\ldTo{q} & \\
& & H^{\ast }((\mathbb{Z}_{2})^{2},\mathbb{F}_{2}) &  \\
\end{diagram}
\end{equation*}
where $\deg (p)=\deg (q)=0$ and $\deg (\delta )=1$. The first differential $%
d_{1}=q\circ \delta $ coincides with the first Steenrod square $\mathrm{Sq}%
^{1}:H^{\ast }((\mathbb{Z}_{2})^{2},\mathbb{F}_{2})\rightarrow H^{\ast +1}((%
\mathbb{Z}_{2})^{2},\mathbb{F}_{2})$ and therefore is given by%
\begin{equation*}
1\mapsto 0,~t_{1}\mapsto t_{1}^{2},~t_{2}\mapsto t_{2}^{2}.
\end{equation*}%
Consequently, $t_{1}t_{2}\mapsto t_{1}^{2}t_{2}+t_{1}t_{2}^{2}$. The
spectral sequence stabilizes at the second step since the derived couple is
\begin{diagram}[size=2em,textflow]
0 & &\lTo& & 0 &\\
& \luTo^{\delta}&&\ldTo{q} & \\
& & \mathbb{F}_{2} &  \\
\end{diagram}
where $\mathbb{F}_{2}$ is in dimension $0$.

\medskip

$\mathrm{Index}_{(\mathbb{Z}_{2})^{2},\mathbb{Z}}S^{n}\times S^{n}$%
: The $(\mathbb{Z}_{2})^{2}$-action on $S^{n}\times S^{n}$, as a product of antipodal actions,
is free and therefore%
\begin{equation*}
\mathrm{E}(\mathbb{Z}_{2})^{2}\times _{(\mathbb{Z}_{2})^{2}}(S^{n}\times
S^{n})~\simeq ~\left( S^{n}\times S^{n}\right) /(\mathbb{Z}_{2})^{2}\approx
\mathbb{R}P^{n}\times\mathbb{R}P^{n}.
\end{equation*}%
Using equality (\ref{eq:pom-1}), Proposition \ref{Prop:Res-1}.E.3 on the
coefficient morphism $c:\mathbb{Z}\rightarrow \mathbb{F}_{2}$, the isomorphism%
\begin{equation*}
H_{(\mathbb{Z}_{2})^{2}}^{\ast }(S^{n}\times S^{n},\mathbb{Z})\cong H^{\ast }
(\mathbb{R}P^{n}\times\mathbb{R}P^{n},\mathbb{Z})
\end{equation*}%
and the existence of the $(\mathbb{Z}_{2})^{2}$-inclusions
\begin{equation*}
S^{n-1}\times S^{n-1}\subset S^{n}\times S^{n}\subset S^{n+1}\times S^{n+1},
\end{equation*}%
it can be concluded that%
\begin{equation}
\mathrm{Index}_{(\mathbb{Z}_{2})^{2},\mathbb{Z}}S^{n}\times S^{n}=\left\{
\begin{array}{lll}
\langle \tau _{1}^{\frac{n+1}{2}},\tau _{2}^{\frac{n+1}{2}}\rangle , &  &
\text{{\ for }}n\text{{\ \ odd}} \\
\langle \tau _{1}^{\frac{n+2}{2}},\tau _{2}^{\frac{n+2}{2}},\tau _{1}^{\frac{%
n}{2}}\mu ,\tau _{2}^{\frac{n}{2}}\mu \rangle , &  & \text{{\ for }}n\text{{%
\ \ even}}%
\end{array}%
\right. \subseteq H^{\ast }((\mathbb{Z}_{2})^{2},\mathbb{Z})\text{.}
\label{eq:Index-pom}
\end{equation}
\end{example}

\subsubsection{\label{Sec:IndexOfSphere}The index of a sphere}

We need to know how to compute the index of a sphere admitting an action of a finite group
different from the antipodal $\mathbb{Z}_{2}$-action. The following three
propositions will be of some help \cite[Proposition 3.13]{Fadell--Husseini},
\cite[Proposition 2.9]{guide2}.

\begin{proposition}
\label{Prop:IndexCherClasses}Let $G$ be a finite group and $V$ an $n$%
-dimensional complex representation of $G$. Then
\begin{equation*}
\mathrm{Index}_{G,\mathbb{Z}}S(V)=\langle c_{n}(V_G)\rangle \subset H^{\ast }(G,
\mathbb{Z})
\end{equation*}%
where $c_{n}(V_G)$ is the $n$-th Chern class of the bundle $%
V\rightarrow \mathrm{E}G\times _{G}V\rightarrow \mathrm{B}G$.
\end{proposition}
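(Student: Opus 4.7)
The plan is to identify the index with the kernel of the Gysin map arising from the sphere bundle of the associated vector bundle $V_G = \mathrm{E}G\times_G V\to \mathrm{B}G$. Since $V$ is a complex $G$-representation of rank $n$, the bundle $V_G$ is a complex vector bundle of rank $n$ over $\mathrm{B}G$, and its sphere bundle is precisely
\begin{equation*}
S(V)\longrightarrow \mathrm{E}G\times_G S(V)\overset{\pi_{S(V)}}{\longrightarrow} \mathrm{B}G,
\end{equation*}
the fibration whose induced map in cohomology defines $\mathrm{Index}_{G,\mathbb{Z}}S(V)$.

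The first step is to invoke the Gysin (Thom--Euler) long exact sequence for this oriented sphere bundle, which with integer coefficients reads
\begin{equation*}
\cdots\longrightarrow H^{k-2n}(\mathrm{B}G,\mathbb{Z})\overset{\cup\, e(V_G)}{\longrightarrow} H^{k}(\mathrm{B}G,\mathbb{Z})\overset{\pi_{S(V)}^{\ast}}{\longrightarrow} H^{k}(\mathrm{E}G\times_G S(V),\mathbb{Z})\longrightarrow H^{k-2n+1}(\mathrm{B}G,\mathbb{Z})\longrightarrow\cdots
\end{equation*}
Exactness at the middle term says $\ker \pi_{S(V)}^{\ast}=\mathrm{im}(\cup\, e(V_G))=\langle e(V_G)\rangle$ as an ideal of $H^{\ast}(\mathrm{B}G,\mathbb{Z})$. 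So the result reduces to identifying the Euler class of $V_G$ with its top Chern class.

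The second step is that standard identification: for any complex rank $n$ vector bundle one has $e(V_G)=c_n(V_G)$ in $H^{2n}(\mathrm{B}G,\mathbb{Z})$. This is a textbook fact and can be seen either via the splitting principle (reducing to a sum of complex line bundles, where $c_1(L)=e(L_{\mathbb{R}})$) or directly from the definition of $c_n$ as the obstruction to a nowhere-zero section. Combining the two steps yields
\begin{equation*}
\mathrm{Index}_{G,\mathbb{Z}}S(V)=\ker \pi_{S(V)}^{\ast}=\langle c_n(V_G)\rangle,
\end{equation*}
as required.

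The main (and essentially only) subtlety is to justify that the Gysin sequence is available in this setting: the base $\mathrm{B}G$ is not a finite CW complex for general finite $G$, so one should either pass to finite skeleta and take a direct limit, or appeal to the Thom isomorphism $H^{\ast}(\mathrm{B}G,\mathbb{Z})\cong \widetilde H^{\ast+2n}(\mathrm{Th}(V_G),\mathbb{Z})$ for the complex bundle $V_G$ (valid because a complex bundle is canonically oriented), and then deduce the Gysin sequence from the cofiber sequence $\mathrm{E}G\times_G S(V)\to \mathrm{E}G\times_G D(V)\simeq \mathrm{B}G\to \mathrm{Th}(V_G)$. Either route is routine; nothing else in the argument is delicate.
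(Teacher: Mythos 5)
Your proposal is correct and is essentially the paper's argument in a different packaging: the paper runs the Serre spectral sequence of the sphere bundle (noting that the complex structure forces the local coefficients to be trivial) to conclude $\mathrm{Index}_{G,\mathbb{Z}}S(V)=\langle e(V_G)\rangle$, and then cites $e(V_G)=c_n(V_G)$ for a complex bundle, exactly as you do via the equivalent Gysin sequence. Your extra remark about orientability/Thom isomorphism plays the same role as the paper's observation that $G$ acts trivially on $H^{\ast}(S(V),\mathbb{Z})$, so no gap remains.
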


\begin{proof}
If the group $G$ acts on $H^{\ast }(S(V), \mathbb{Z})$ trivially, then from
the Serre spectral sequence of the sphere bundle
\begin{equation*}
S(V)\rightarrow \mathrm{E}G\times _{G}S(V)\rightarrow \mathrm{B}G
\end{equation*}
it follows that
\begin{equation*}
\mathrm{Index}_{G,\mathbb{Z}}S(V)=\langle e(V_{G})\rangle \subset
H^{\ast }(G, \mathbb{Z} ),
\end{equation*}
where $e(V)$ is the Euler class of the bundle $V\rightarrow \mathrm{%
E}G\times _{G}V\rightarrow \mathrm{B}G$. Now $V$ is a complex $G$%
-representation, therefore the group $G$ acts trivially on $H^{\ast }(S(V),%
\mathbb{Z})$. From \cite[Exercise 3, page 261]{Huss} it follows that%
\begin{equation*}
e(V_{G})=c_{n}(V_{G})
\end{equation*}
and the statement is proved.
\end{proof}

\begin{proposition}
\label{Prop:IndexOf Join}Let $U$, $V$ be two $G$-representations and let $%
S(U)$, $S(V)$ be the associated $G$-spheres. Let $R$ be a ring and assume
that $H^{\ast }(S(U),R)$, $H^{\ast }(S(V),R)$ are trivial $G$-modules. If $%
\mathrm{Index}_{G,R}(S(U))=\langle f\rangle \subseteq H^{\ast }(\mathrm{B}%
G,R)$ and\/ $\mathrm{Index}_{G,R}(S(V))=\langle g\rangle \subseteq H^{\ast }(%
\mathrm{B}G,R)$, then
\begin{equation*}
\mathrm{Index}_{G,R}S(U\oplus V)=\langle f\cdot g\rangle \subseteq H^{\ast }(%
\mathrm{B}G,R)\text{.}
\end{equation*}
\end{proposition}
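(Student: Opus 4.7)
The plan is to identify each sphere's index with an equivariant Euler class and then reduce the proposition to Whitney multiplicativity. For any $G$-representation $W$ of real dimension $n$ with $H^{\ast}(S(W),R)$ a trivial $G$-module, I would analyze the Serre spectral sequence of the Borel fibration $S(W)\to \mathrm{E}G\times_{G}S(W)\to \mathrm{B}G$. Its $E_{2}$-page satisfies $E_{2}^{p,q}=H^{p}(\mathrm{B}G,R)$ for $q\in\{0,n-1\}$ and vanishes otherwise; the only potentially non-zero differential $d_{n}\colon E_{n}^{p,n-1}\to E_{n}^{p+n,0}$ is $H^{\ast}(\mathrm{B}G,R)$-linear and therefore multiplication by a single class $e(W):=d_{n}(\iota_{W})\in H^{n}(\mathrm{B}G,R)$, the transgression of the fundamental class of the fiber. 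Consequently $\mathrm{Index}_{G,R}(S(W))=\langle e(W)\rangle$, so I may identify $f=e(U)$ and $g=e(V)$.

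The module-triviality hypothesis propagates to $W=U\oplus V$: since $G$ preserves $R$-orientations of $S(U)$ and $S(V)$, it preserves one on $S(U\oplus V)\cong S^{n+m-1}$, so $H^{\ast}(S(U\oplus V),R)$ is again a trivial $G$-module. Applying the spectral sequence analysis above to $U\oplus V$ yields $\mathrm{Index}_{G,R}(S(U\oplus V))=\langle e(U\oplus V)\rangle$, and the proposition reduces to the Whitney identity
\begin{equation*}
e(U\oplus V)\ =\ e(U)\cdot e(V).
\end{equation*}

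For this last step I would invoke the equivariant Thom isomorphism: $e(W)$ is the pullback of the Thom class $\tau_{W}\in H^{n}_{G}(D(W),S(W);R)$ under the zero-section inclusion $\mathrm{B}G\hookrightarrow \mathrm{E}G\times_{G}D(W)$, and under the natural $G$-homeomorphism of pairs $(D(U\oplus V),S(U\oplus V))\simeq (D(U),S(U))\times(D(V),S(V))$ the standard product formula $\tau_{U\oplus V}=\tau_{U}\smile\tau_{V}$ applies. Restricting to the zero section then gives $e(U\oplus V)=e(U)\cdot e(V)=f\cdot g$, proving the claim. The main technical point is verifying that the equivariant Thom/Euler construction is available under only the cohomology-module triviality hypothesis with coefficients in an arbitrary ring $R$ — with no field, orientable, or complex structure assumed on $W$ — but this is precisely what the triviality of $H^{\ast}(S(W),R)$ as a $G$-module encodes (namely an equivariant $R$-orientation of $W$), so the standard argument goes through.
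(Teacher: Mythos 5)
Your proposal is correct and follows essentially the same route as the paper: the paper does not reprove this proposition (it quotes it from Fadell--Husseini and \v{Z}ivaljevi\'c), but its neighbouring proof of the Chern-class proposition uses exactly your identification of $\mathrm{Index}_{G,R}S(W)$ with the ideal generated by the transgression/Euler class $e(W_G)$ in the two-row Serre spectral sequence of the Borel sphere bundle, and the conclusion is then the Whitney-type multiplicativity $e((U\oplus V)_G)=e(U_G)\cdot e(V_G)$ that you establish via Thom classes (the triviality of the $G$-action on $H^{\ast}(S(U),R)$ and $H^{\ast}(S(V),R)$ supplying the $R$-orientability needed for the Thom isomorphism, and passing to the sum as you note). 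The only cosmetic point is that the hypotheses give $\langle f\rangle=\langle e(U_G)\rangle$ and $\langle g\rangle=\langle e(V_G)\rangle$ as ideals rather than equalities of elements, but this immediately yields $\langle f\cdot g\rangle=\langle e(U_G)\cdot e(V_G)\rangle$, so the argument stands.
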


\begin{proposition}
\label{Prop:IndexOfSphere}
\begin{compactenum}[\rm(A)]
\item Let $V$ be the $1$-dimensional $(\mathbb{Z}_{2})^{k}$%
-representation with the associated $\pm 1$ vector $(\alpha
_{1},\ldots,\alpha
_{k})\in (\mathbb{Z}_{2})^{k}$ \textrm{(as defined in Section \ref{sec:CS/TM}%
)}. Then
\begin{equation*}
\mathrm{Index}_{(\mathbb{Z}_{2})^{k},\mathbb{F}_{2}}S(V)=\langle \bar{\alpha}_{1}t_{1}+\cdots+%
\bar{\alpha}_{k}t_{k}\rangle \subseteq
\mathbb{F}_{2}[t_{1},\ldots,t_{k}],
\end{equation*}%
where $\bar{\alpha}_{i}=0$ if $\alpha _{i}=1$, and $\bar{\alpha}_{i}=1$ if $%
\alpha _{i}=-1$.

\item Let $U$ be an $n$-dimensional $(\mathbb{Z}_{2})^{k}$%
-representation with a decomposition $U\cong V_{1}\oplus\cdots\oplus V_{n}$
into $1$-dimensional $(\mathbb{Z}_{2})^{k}$-representations $V_{1},\ldots,V_{n}$%
. If $(\alpha _{1i},\ldots,\alpha _{ki})\in (\mathbb{Z}_{2})^{k}$ is the
associated $\pm 1$ vector of~$V_{i}$, then%
\begin{equation*}
\mathrm{Index}_{(\mathbb{Z}_{2})^{k},\mathbb{F}_{2}}S(U)=\Big\langle \prod\limits_{i=1}^{n}%
\left( \bar{\alpha}_{1i}t_{1}+\cdots+\bar{\alpha}_{ki}t_{k}\right) \Big\rangle %
\subseteq \mathbb{F}_{2}[t_{1},\ldots,t_{k}].
\end{equation*}
\end{compactenum}
\end{proposition}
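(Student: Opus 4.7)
For part (A), $V$ is $1$-dimensional so $S(V)\approx S^{0}$ is the two-point $G$-set on which $G=(\mathbb{Z}_{2})^{k}$ acts through the character $\chi_{V}\colon\epsilon_{i}\mapsto\alpha_{i}$. Set $H=\ker\chi_{V}$. In the trivial-representation case $H=G$, the projection $\mathrm{E}G\times_{G}S(V)\simeq\mathrm{B}G\sqcup\mathrm{B}G\to\mathrm{B}G$ induces the diagonal on cohomology with zero kernel, matching the formula where $L:=\bar\alpha_{1}t_{1}+\cdots+\bar\alpha_{k}t_{k}=0$. Otherwise, $H$ is an index-$2$ hyperplane and Example~\ref{Ex:IndexS0} gives $\mathrm{Index}_{G,\mathbb{F}_{2}}S(V)=\ker(\mathrm{res}_{H}^{G})$. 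The restriction $\mathbb{F}_{2}[t_{1},\ldots,t_{k}]\to H^{*}(H,\mathbb{F}_{2})\cong\mathbb{F}_{2}[s_{1},\ldots,s_{k-1}]$ is the surjective $\mathbb{F}_{2}$-algebra map dual to the inclusion $H\hookrightarrow G$ of $\mathbb{F}_{2}$-vector spaces, whose kernel is exactly the principal ideal generated by the defining linear functional $L$ of $H$. This settles part (A).

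For part (B), regard the Borel construction $\mathrm{E}G\times_{G}S(V)\to\mathrm{B}G$ as the sphere bundle of the associated rank-$n$ vector bundle $V_{G}\to\mathrm{B}G$. Since every real bundle is orientable with $\mathbb{F}_{2}$ coefficients, the Gysin sequence
\[
\cdots\to H^{i-n}(\mathrm{B}G,\mathbb{F}_{2})\xrightarrow{\cup\,w_{n}(V_{G})}H^{i}(\mathrm{B}G,\mathbb{F}_{2})\xrightarrow{\pi_{S(V)}^{*}}H_{G}^{i}(S(V),\mathbb{F}_{2})\to\cdots
\]
is exact, identifying $\mathrm{Index}_{G,\mathbb{F}_{2}}S(V)=\ker\pi_{S(V)}^{*}=\langle w_{n}(V_{G})\rangle$ as a principal ideal.

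The decomposition $V_{G}=(V_{1})_{G}\oplus\cdots\oplus(V_{n})_{G}$ combined with the Whitney product formula yields
\[
w_{n}(V_{G})=\prod_{i=1}^{n}w_{1}\bigl((V_{i})_{G}\bigr).
\]
Running the Gysin argument in rank one identifies $\langle w_{1}((V_{i})_{G})\rangle=\mathrm{Index}_{G,\mathbb{F}_{2}}S(V_{i})$, which by part (A) equals $\langle\bar\alpha_{1i}t_{1}+\cdots+\bar\alpha_{ki}t_{k}\rangle$. Since the only unit in $\mathbb{F}_{2}[t_{1},\ldots,t_{k}]$ is $1$, a principal ideal with a degree-$1$ generator has a unique such generator, so $w_{1}((V_{i})_{G})=\bar\alpha_{1i}t_{1}+\cdots+\bar\alpha_{ki}t_{k}$. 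Multiplying gives the stated formula.

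The main obstacle is the explicit identification $\ker(\mathrm{res}_{H}^{G})=\langle L\rangle$ in part (A), together with tracking the trivial-character edge case; once the first Stiefel--Whitney classes of the line bundles $(V_{i})_{G}$ are pinned down, the top Stiefel--Whitney class of $V_{G}$ follows by multiplicativity and the computation is routine.
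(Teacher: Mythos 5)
Your argument is correct. Note first that the paper itself does not prove this proposition: it is quoted from Fadell--Husseini and \v{Z}ivaljevi\'c, and the surrounding machinery shows the intended derivation, namely part (A) via Example \ref{Ex:IndexS0} (as you do) and part (B) from part (A) by the multiplicativity of principal indexes over direct sums, Proposition \ref{Prop:IndexOf Join}, i.e.\ the same mechanism the paper makes explicit with integer coefficients in Proposition \ref{Prop:IndexCherClasses} via the Euler class. Your route replaces that join/product formula by the Gysin sequence of the sphere bundle $S(U)\to \mathrm{E}G\times_G S(U)\to \mathrm{B}G$ together with the Whitney formula, so the generator of the index is exhibited directly as $w_n(U_G)=\prod_i w_1((V_i)_G)$; this is essentially the $\mathbb{F}_2$-analogue of the paper's Euler-class argument, assembled without invoking Proposition \ref{Prop:IndexOf Join}. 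What your version buys is self-containedness: with $\mathbb{F}_2$ coefficients orientability and the triviality of the local system are automatic, and your identification of $w_1((V_i)_G)$ with the linear form $\bar\alpha_{1i}t_1+\cdots+\bar\alpha_{ki}t_k$ (via uniqueness of a degree-one generator of a principal ideal in the domain $\mathbb{F}_2[t_1,\ldots,t_k]$, with the trivial-character case handled separately) is clean; the kernel computation $\ker(\mathrm{res}^G_H)=\langle L\rangle$ in (A) deserves one line (change of basis making $L$ a polynomial generator), but is routine as you say. What the paper's route buys is reusability: Proposition \ref{Prop:IndexOf Join} is stated for general coefficient rings and is what the paper actually applies later for $D_8$ and for $\mathbb{Z}$ coefficients, where the Stiefel--Whitney shortcut is not available. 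One cosmetic point: in part (B) you write $V$, $S(V)$, $V_G$ where the statement's $U$, $S(U)$, $U_G$ are meant.
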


\begin{example}
Let $V_{-+}$, $V_{+-}$ and $V_{--}$ be $1$-dimensional real $(\mathbb{Z}%
_{2})^{2}$-representations introduced in Section \ref{Sec:TestMap}. Then by
Proposition \ref{Prop:IndexOfSphere}%
\begin{equation*}
\mathrm{Index}_{(\mathbb{Z}_{2})^{2},\mathbb{F}_{2}}S(V_{-+})=\langle
t_{1}\rangle \text{, \quad\ }\mathrm{Index}_{(\mathbb{Z}_{2})^{2},\mathbb{F}%
_{2}}S(V_{+-})=\langle t_{2}\rangle \text{, }\quad \mathrm{Index}_{(\mathbb{Z%
}_{2})^{2},\mathbb{F}_{2}}S(V_{--})=\langle t_{1}+t_{2}\rangle .
\end{equation*}%
On the other hand, Example \ref{Ex:IndexS0} and the restriction diagram (\ref%
{eq:DiagramZ2xZ2withZ}) imply that
\begin{equation*}
\mathrm{Index}_{(\mathbb{Z}_{2})^{2},\mathbb{Z}}S(V_{-+})=\langle \tau
_{1},\mu \rangle \text{, }\quad \mathrm{Index}_{(\mathbb{Z}_{2})^{2},\mathbb{%
Z}}S(V_{+-})=\langle \tau _{2},\mu \rangle \text{, }\quad \mathrm{Index}_{(%
\mathbb{Z}_{2})^{2},\mathbb{Z}}S(V_{--})=\langle \tau _{1}+\tau _{2},\mu
\rangle .
\end{equation*}
\end{example}

\section{The cohomology of $D_{8}$ and the restriction diagram}

The dihedral group $W_{2}=D_{8}=(\mathbb{Z}_{2})^{2}\rtimes\mathbb{Z}_{2}=\left( \langle \varepsilon _{1}\rangle \times \langle \varepsilon
_{2}\rangle \right) \rtimes \langle \sigma \rangle $ can be presented by%
\begin{equation*}
D_{8}=\langle \varepsilon _{1},\sigma ~|~\varepsilon _{1}^{2}=\sigma
^{2}=\left( \varepsilon _{1}\sigma \right) ^{4}=1\rangle .
\end{equation*}%
Then $\langle \varepsilon _{1}\sigma \rangle \cong
\mathbb{Z}_{4}$ and $\varepsilon _{2}=\sigma \varepsilon _{1}\sigma $.

\subsection{The poset of subgroups of $D_{8}$}

The poset $\mathrm{Sub}(G)$ denotes the collection of all nontrivial
subgroups of a given group $G$ ordered by inclusion. The poset $\mathrm{Sub}%
(G)$ can be interpreted as a small category $\mathfrak{G}$ in the usual way:

\begin{compactitem}
\item $\mathrm{Ob}(\mathfrak{G})=\mathrm{Sub}(G)$,

\item for every two objects $H$ and $K$, subgroups of $G$, there is a unique
morphism $f_{H,K}:H\rightarrow K$ if $H\supseteq K$, and no morphism if $%
H\nsupseteq K$, i.e.%
\begin{equation*}
\mathrm{Mor}(H,K)=\left\{
\begin{array}{rll}
{\ \{f}_{H,K}{\ \}}&, & {H\supseteq K,} \\
{\ \emptyset }&, & {H\nsupseteq K.}%
\end{array}%
\right.
\end{equation*}
\end{compactitem}
The Hasse diagram of the poset $\mathrm{Sub}(D_{8})$ is presented in the
following diagram.

\begin{diagram}
&        &        &        &  D_8   &        &        &        &\\
&        &        & \ldTo  & \dTo       &\rdTo   &        &        &\\
&        &
\begin{tabular}{|c|}                                                   
\hline
$H_{1}$ \\ \hline
$\langle \varepsilon _{1},\varepsilon _{2}\rangle $ \\ \hline
$\mathbb{Z}_{2}\times\mathbb{Z}_{2}$ \\ \hline
\end{tabular}
&        &
\begin{tabular}{|c|}                                                   
\hline
$H_{2}$ \\ \hline
$\langle \varepsilon _{1}\sigma \rangle $ \\ \hline
$\mathbb{Z}_{4}$ \\ \hline
\end{tabular}
&        &
\begin{tabular}{|c|}                                                   
\hline
$H_{3}$ \\ \hline
$\langle \varepsilon _{1}\varepsilon _{2},\sigma \rangle $ \\ \hline
$\mathbb{Z}_{2}\times\mathbb{Z}_{2}$ \\ \hline
\end{tabular}
&        &\\
& \ldTo  & \dTo   & \rdTo  & \dTo   & \ldTo  & \dTo   & \rdTo  &\\
\begin{tabular}{|c|}
\hline
$K_{1}$ \\ \hline
$\langle \varepsilon _{1}\rangle $ \\ \hline
$\mathbb{Z}_{2}$ \\ \hline
\end{tabular}
&        &
\begin{tabular}{|c|}
\hline
$K_{2}$ \\ \hline
$\langle \varepsilon _{2}\rangle $ \\ \hline
$\mathbb{Z}_{2}$ \\ \hline
\end{tabular}
&        &
\begin{tabular}{|c|}
\hline
$K_{3}$ \\ \hline
$\langle \varepsilon _{1}\varepsilon _{2}\rangle $ \\ \hline
$\mathbb{Z}_{2}$ \\ \hline
\end{tabular}
&        &
\begin{tabular}{|c|}
\hline
$K_{4}$ \\ \hline
$\langle \sigma \rangle $ \\ \hline
$\mathbb{Z}_{2}$ \\ \hline
\end{tabular}
&        &
\begin{tabular}{|c|}
\hline
$K_{5}$ \\ \hline
$\langle \varepsilon _{1}\varepsilon _{2}\sigma \rangle $ \\ \hline
$\mathbb{Z}_{2}$ \\ \hline
\end{tabular}\\
\end{diagram}

\subsection{\label{Sec:H*(D8,F_2)}The cohomology ring $H^{\ast }(D_{8},%
\mathbb{F}_{2})$}

The dihedral group $D_{8}$ is an example of a wreath product. Therefore the
associated classifying space can, as in \cite[page 117]{Adem-Milgram}, be
written explicitly as
\begin{equation*}
\mathrm{B}D_{8}=\mathrm{B}(\mathbb{Z}_{2})^{2}\times _{\mathbb{Z}_{2}}%
\mathrm{E}\mathbb{Z}_{2}\approx (\mathrm{B}(\mathbb{Z}_{2})^{2})\times _{%
\mathbb{Z}_{2}}\mathrm{E}\mathbb{Z}_{2},
\end{equation*}%
where $\mathbb{Z}_{2}=\langle \sigma \rangle $ acts on $(\mathrm{B}\mathbb{Z}%
_{2})^{2}$ by interchanging coordinates. Presented in this way $\mathrm{B}%
D_{8}$ is the Borel construction of the $\mathbb{Z}_{2}$-space $(\mathrm{B}%
\mathbb{Z}_{2})^{2}$. Thus $\mathrm{B}D_{8}$ fits into a fibration%
\begin{equation}
\mathrm{B}(\mathbb{Z}_{2})^{2}\rightarrow (\mathrm{B}(\mathbb{Z}_{2})^{2})\times _{
\mathbb{Z}_{2}}\mathrm{E}\mathbb{Z}_{2}\rightarrow \mathrm{B}\mathbb{Z}_{2}.  \label{eq:fibration-1}
\end{equation}%
There is an associated Serre spectral sequence with $E_{2}$-term%
\begin{equation}
E_{2}^{p,q}=\left\{
\begin{array}{l}
H^{p}(\mathrm{B}\mathbb{Z}_{2},\mathcal{H}^{q}\left( \mathrm{B}(\mathbb{Z}_{2})^{2},\mathbb{F}_{2}\right) ) \\
H^{p}(\mathbb{Z}_{2},H^{q}\left( (\mathbb{Z}_{2})^{2},\mathbb{F}_{2}\right) )%
\end{array}%
\right. ~\Longrightarrow ~\left\{
\begin{array}{l}
H^{p+q}(\mathrm{B}D_{8},\mathbb{F}_{2}) \\
H^{p+q}(D_{8},\mathbb{F}_{2})%
\end{array}%
\right.  \label{eq:E2}
\end{equation}%
which converges to the cohomology of the group $D_{8}$ with $\mathbb{F}_{2}$%
-coefficients. This spectral sequence is also the Lyndon-Hochschild-Serre
(LHS) spectral sequence \cite[Section IV.1, page 116]{Adem-Milgram}
associated with the group extension sequence:%
\begin{equation*}
1\rightarrow (\mathbb{Z}_{2})^{2}\rightarrow D_{8}\rightarrow D_{8}/(\mathbb{%
Z}_{2})^{2}\rightarrow 1.
\end{equation*}

In \cite[Theorem 1.7, page 117]{Adem-Milgram} it is proved that the spectral
sequence (\ref{eq:E2}) collapses at the $E_{2}$-term. Therefore, to compute
the cohomology of~$D_{8}$ we only need to read the $E_{2}$-term.

\begin{lemma}
\label{Lemma:CohomologyD8}\qquad

\begin{compactenum}[\rm(i)]

\item $H^{\ast }\left( (\mathbb{Z}_{2})^{2},\mathbb{F}_{2}\right) \cong _{\mathrm{ring}}\mathbb{F}_{2}[a,a+b]$%
, where $\deg (a)=\deg (a+b)=1$ and the $\mathbb{Z}_{2}$-action induced by $\sigma $ is given by $\sigma \cdot a=a+b$.

\item $H^{\ast }\left( (
\mathbb{Z}_{2})^{2},\mathbb{F}_{2}\right) ^{\mathbb{Z}_{2}}\cong _{\mathrm{ring}}\mathbb{F}_{2}[b,a(a+b)]$.

\item $H^{i}\left( (\mathbb{Z}_{2})^{2},\mathbb{F}_{2}\right) \cong _{\mathbb{Z}_{2}\text{-}\mathrm{module}}\mathbb{F}_{2}[
\mathbb{Z}_{2}]^{s_{i,1}}\oplus \mathbb{F}_{2}^{s_{i,2}}$, where $s_{i,1}\geq 0$, $s_{i,2}\geq 0$ and $\mathbb{F}_{2}[
\mathbb{Z}_{2}]$ denotes a free $\mathbb{Z}_{2}$-module and $\mathbb{F}_{2}$ a trivial one.

\item $E_{2}^{\ast ,i}=H^{\ast }(\mathbb{Z}_{2},H^{i}\left( (\mathbb{Z}_{2})^{2},\mathbb{F}_{2}\right) )\cong _{\mathrm{ring}}H^{\ast }(
\mathbb{Z}_{2},\mathbb{F}_{2})^{\oplus s_{i,2}}\oplus
\mathbb{F}_{2}^{s_{i,1}}$, where $\mathbb{F}_{2}^{s_{i,1}}$
denotes a ring concentrated in dimension $0$.

\end{compactenum}
\end{lemma}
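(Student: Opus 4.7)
The plan is to treat the four statements in order, moving from the cohomology ring of $(\mathbb{Z}_2)^2$ and its $\sigma$-action, through the invariant subring, to an explicit $\mathbb{Z}_2$-module decomposition, and finally to the $E_2$-term of the LHS sequence.

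For (i), I would invoke the standard computation (K\"unneth, or the fact that $\mathrm{B}(\mathbb{Z}_2)^2\simeq \mathbb{R}\mathrm{P}^\infty\times \mathbb{R}\mathrm{P}^\infty$) to get $H^\ast((\mathbb{Z}_2)^2,\mathbb{F}_2)\cong \mathbb{F}_2[x_1,x_2]$ with $x_i$ dual to $\varepsilon_i$. Since $\sigma\varepsilon_1\sigma^{-1}=\varepsilon_2$ in $D_8$, the induced action on cohomology transposes $x_1$ and $x_2$. Relabelling $a:=x_1$ and $a+b:=x_2$ (so that $b=x_1+x_2$ over $\mathbb{F}_2$) yields the stated ring presentation and the rule $\sigma\cdot a=a+b$.

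For (ii), the fixed subring is the $S_2$-invariant subring of $\mathbb{F}_2[x_1,x_2]$ under transposition. By the classical (mod\,$2$) symmetric function theorem this is the polynomial ring in the elementary symmetric polynomials $e_1=x_1+x_2=b$ and $e_2=x_1x_2=a(a+b)$. I would verify $\mathbb{F}_2$-algebraic independence of $b$ and $a(a+b)$ directly by degree considerations to be sure the presentation is free.

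For (iii), in degree $i$ the monomials $\{x_1^p x_2^{i-p}:0\le p\le i\}$ form a basis, and $\sigma$ permutes this basis by $p\leftrightarrow i-p$. Pair up orbits: for $p<i-p$ the orbit $\{x_1^p x_2^{i-p},x_1^{i-p}x_2^p\}$ spans a copy of the regular module $\mathbb{F}_2[\mathbb{Z}_2]$, and (when $i$ is even) the fixed monomial $x_1^{i/2}x_2^{i/2}$ spans a trivial $\mathbb{F}_2$. This yields the decomposition with $s_{i,1}=\lfloor i/2\rfloor$ and $s_{i,2}=1$ if $i$ is even, $0$ if $i$ is odd.

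For (iv), apply $H^\ast(\mathbb{Z}_2,-)$ additively. By Shapiro's lemma $H^\ast(\mathbb{Z}_2,\mathbb{F}_2[\mathbb{Z}_2])\cong H^\ast(1,\mathbb{F}_2)=\mathbb{F}_2$ concentrated in degree $0$, so the free summands of (iii) contribute $\mathbb{F}_2^{s_{i,1}}$ in degree $0$; the trivial summands contribute $H^\ast(\mathbb{Z}_2,\mathbb{F}_2)^{\oplus s_{i,2}}$, giving the claimed ring/module isomorphism of the $i$-th row of the $E_2$-term. The one place where I would be most careful is the matching of ring (really $H^\ast(\mathbb{Z}_2,\mathbb{F}_2)$-module) structures in (iv): one must check that the isomorphism of (iii) is in fact an isomorphism of $\mathbb{F}_2[\mathbb{Z}_2]$-modules, not merely of abelian groups, so that the induced isomorphism on group cohomology respects the cup-product action of $H^\ast(\mathbb{Z}_2,\mathbb{F}_2)$ coming from the base of fibration (\ref{eq:fibration-1}); this is the only step that is more than bookkeeping, and it follows because the orbit decomposition in (iii) is constructed $\mathbb{Z}_2$-equivariantly by design.
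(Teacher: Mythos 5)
Your argument follows the paper's proof essentially step for step: (i) via the product decomposition $\mathrm{B}(\mathbb{Z}_2)^2\approx(\mathrm{B}\mathbb{Z}_2)^2$ with the coordinate-swapping action, (ii) via symmetric invariants, (iii) via a decomposition into free and trivial $\mathbb{Z}_2$-modules, and (iv) via additivity together with the fact that higher cohomology with coefficients in a free module vanishes (your appeal to Shapiro's lemma is the same fact in different clothing). Your (iii) is in fact a bit more explicit than the paper's, which merely invokes the classification of $\mathbb{Z}_2$-modules over $\mathbb{F}_2$; the monomial-orbit decomposition you give is correct, but your closed-form count has an off-by-one slip: the number of free summands in degree $i$ is $s_{i,1}=\lceil i/2\rceil$ (for instance $s_{1,1}=1$, coming from the single orbit $\{x_1,x_2\}$), not $\lfloor i/2\rfloor$. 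This does not affect the lemma, which only asserts existence of some $s_{i,1},s_{i,2}\geq 0$, but the correct multiplicities do matter when one subsequently reads off the $E_2$-term (Figure \ref{Fig-1}) to extract $H^{\ast}(D_8,\mathbb{F}_2)$.
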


\begin{proof}
(i) The statement follows from the observation that $\mathrm{B}(\mathbb{Z}_{2})^{2}\approx \left( \mathrm{B}(
\mathbb{Z}_{2})\right) ^{2}$, and consequently
\begin{equation*}
H^{\ast }\left( (\mathbb{Z}_{2})^{2},\mathbb{F}_{2}\right) \cong _{\mathrm{ring}}H^{\ast }\left(
\mathbb{Z}_{2},\mathbb{F}_{2}\right) \otimes H^{\ast }\left(\mathbb{Z}_{2},\mathbb{F}_{2}\right) \cong _{\mathrm{ring}}\mathbb{F}_{2}[a]\otimes
\mathbb{F}_{2}[a+b].
\end{equation*}
The $\mathbb{Z}_{2}$-action interchanges copies on the left hand side. Generators on the
right hand side are chosen such that the $\mathbb{Z}_{2}$-action coming from the isomorphism swaps $a$ and $a+b$.

(ii)\ With the induced $\mathbb{Z}_{2}$-action $b=a+(a+b)$ and $a(a+b)$ are invariant polynomials. They
generate the ring of all invariant polynomials.

(iii) The cohomology $H^{i}\left( (\mathbb{Z}_{2})^{2},\mathbb{F}_{2}\right) $ is a $\mathbb{Z}_{2}$-module and therefore a direct sum of irreducible $
\mathbb{Z}_{2}$-modules. There are only two irreducible $\mathbb{Z}_{2}$-modules over $\mathbb{F}_{2}$: the free one $\mathbb{F}_{2}[\mathbb{Z}_{2}]$ and the trivial one~$\mathbb{F}_{2}$.

(iv) The isomorphism follows from (iii) and the following two properties of
group cohomology \cite[Exercise 2.2, page 190]{Hilton} and \cite[Corollary
6.6, page 73]{Brown}. Let $M$ and $N$ be $G$-modules of a finite group $G$.
Then

\qquad (a) $H^{\ast }(G,M\oplus N)\cong H^{\ast }(G,M)\oplus H^{\ast }(G,N)$

\qquad (b) $M$ is a free $G$-module $~\Rightarrow ~H^{\ast
}(G,M)=H^{0}(G,M)\cong M^{G}$.

Applied in our case, this yields
\begin{eqnarray*}
E_{2}^{\ast ,i} &=_{\text{{ring}}}&H^{\ast }(\mathbb{Z}_{2},H^{i}\left( (%
\mathbb{Z}_{2})^{2},\mathbb{F}_{2}\right) ) \\
&\cong_{\text{{ring}}}& H^{\ast }(\mathbb{Z}_{2},\mathbb{F}_{2}[\mathbb{Z}%
_{2}]^{s_{i,1}}\oplus \mathbb{F}_{2}^{s_{i,2}}) \\
&\cong_{\text{{ring}}}&H^{\ast }(\mathbb{Z}_{2},\mathbb{F}_{2}[\mathbb{Z}%
_{2}])^{\oplus s_{i,1}}\oplus H^{\ast }( \mathbb{Z}_{2},\mathbb{F}%
_{2})^{\oplus s_{i,2}} \\
&\cong_{\text{{ring}}}&H^{0}(\mathbb{Z}_{2},\mathbb{F}_{2}[\mathbb{Z}%
_{2}])^{\oplus s_{i,1}}\oplus H^{\ast }( \mathbb{Z}_{2},\mathbb{F}%
_{2})^{\oplus s_{i,2}} \\
&\cong_{\text{{ring}}}&(\mathbb{F}_{2}[\mathbb{Z}_{2}]^{\mathbb{Z}%
_{2}})^{\oplus s_{i,1}}\oplus H^{\ast }( \mathbb{Z}_{2},\mathbb{F}%
_{2})^{\oplus s_{i,2}} \\
&\cong_{\text{{ring}}}&\mathbb{F}_{2}{}^{\oplus s_{i,1}}\oplus H^{\ast }(%
\mathbb{Z}_{2},\mathbb{F}_{2})^{\oplus s_{i,2}}
\end{eqnarray*}
\end{proof}


Let the cohomology of the base space of the fibration (\ref{eq:fibration-1})
be denoted by
\begin{equation*}
H^{\ast }(\mathbb{Z}_{2},\mathbb{F}_{2})=\mathbb{F}_{2}[x].
\end{equation*}%
The $E_{2}$-term (\ref{eq:E2}) can be pictured as in Figure \ref{Fig-1}.

\begin{figure}[htb]
\centering
\includegraphics[scale=0.50]{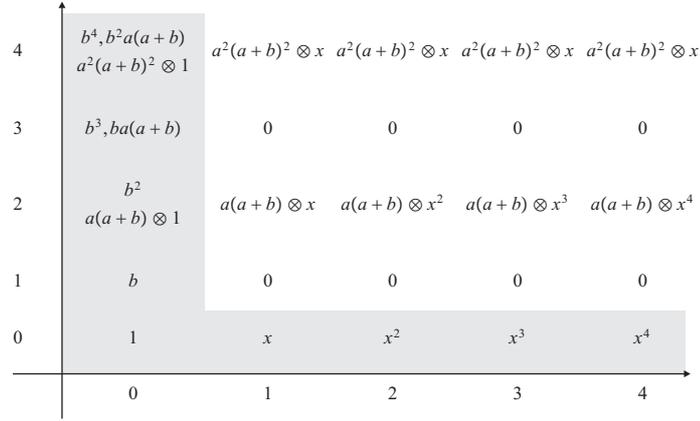}
\caption{{$E_{2}$-term}}
\label{Fig-1}
\end{figure}

The cohomology of $D_{8}$ can be read from the picture. If we denote
\begin{equation}
y:=b,\quad w:=a(a+b)  \label{eq:NotationOfElements}
\end{equation}%
and keep $x$ as we introduced above, then
\begin{equation*}
H^{\ast }(D_{8},\mathbb{F}_{2})=\mathbb{F}_{2}[x,y,w]/\langle xy\rangle .
\end{equation*}%
Also, the restriction homomorphism
\begin{equation}
\mathrm{res}_{H_{1}}^{D_{8}}:H^{\ast }(D_{8},\mathbb{F}_{2})=\mathbb{F}%
_{2}[x,y,w]/\langle xy\rangle \ \ \rightarrow \ \ H^{\ast }(H_{1},\mathbb{F}%
_{2})=\mathbb{F}_{2}[a,a+b]  \label{eq:RestrictionH1-1}
\end{equation}%
can be read off since it is induced by the inclusion of the fibre in the
fibration (\ref{eq:fibration-1}). On generators,
\begin{equation}
\mathrm{res}_{H_{1}}^{D_{8}}(x)=0,~\mathrm{res}_{H_{1}}^{D_{8}}(y)=b,~%
\mathrm{res}_{H_{1}}^{D_{8}}(w)=a(a+b).  \label{eq:RestrictionH1-2}
\end{equation}

\subsection{The cohomology diagram of subgroups with coefficients in $%
\mathbb{F}_{2}$}

Let $G$ be a finite group and $R$ an arbitrary ring. Then the diagram
\textrm{Res}$_{(R)}:\mathfrak{G\rightarrow Ring}$ (covariant functor)
defined by%
\begin{equation*}
\begin{array}{c}
\mathrm{Ob}(\mathfrak{G})\ni {\ H~\longmapsto ~H}^{\ast }(H,R) \\
\left( {\ H\supseteq K}\right) {\ ~\longmapsto ~}\left( \mathrm{res}_{K}^{H}:%
{\ H}^{\ast }(H,R)\rightarrow {\ H}^{\ast }(K,R)\right)%
\end{array}%
\end{equation*}%
is the \textit{cohomology diagram of subgroups} of $G$ with coefficients in
the ring $R$. In this section we assume that $R=\mathbb{F}_{2}$.

\subsubsection{The $\mathbb{Z}_{2}\times \mathbb{Z}_{2}$-diagram}

The cohomology of any elementary abelian $2$-group $\mathbb{Z}_{2}\times
\mathbb{Z}_{2}$ is a polynomial ring $\mathbb{F}_{2}{\ [x,y]}$, $\deg (x)=\deg (y)=1$.
The restrictions to the three subgroups of order $2$ are given by all
possible projections $\mathbb{F}_{2}{\ [x,y]\rightarrow }\mathbb{F}_{2}{\ [t]%
}$, $\deg (t)=1$:%
\begin{equation*}
\left( x\mapsto t,y\mapsto 0\right) \text{ \quad or }\quad \left( x\mapsto
0,y\mapsto t\right) \text{ \quad or }\quad \left( x\mapsto t,y\mapsto
t\right) .
\end{equation*}%
Thus the cohomology diagram of the subgroups of $\mathbb{Z}_{2}\times\mathbb{Z}_{2}$ is%

\begin{equation}
\begin{diagram}
&     & &
\begin{tabular}{|c|}
\hline
$\mathbb{Z}_{2}\times\mathbb{Z}_{2}$ \\ \hline
$\mathbb{F}_{2}{\ [x,y]}$ \\ \hline
\end{tabular}
&     &     &\\
& \ldTo{x\mapsto 0}_{y\mapsto t_1} &   & \dTo{x\mapsto t_2}_{y\mapsto 0} &   & \rdTo{x\mapsto t_3}_{y\mapsto t_3}     &     \\
\begin{tabular}{|c|}
\hline
$\mathbb{Z}_{2}$ \\ \hline
$\mathbb{F}_2 [t_{1}]$ \\ \hline
\end{tabular}
&     &    &
\begin{tabular}{|c|}
\hline
$\mathbb{Z}_{2}$ \\ \hline
$\mathbb{F}_2 [t_{2}]$ \\ \hline
\end{tabular}
&     & &
\begin{tabular}{|c|}
\hline
$\mathbb{Z}_{2}$ \\ \hline
$\mathbb{F}_2 [t_{3}]$ \\ \hline
\end{tabular}     \\
\end{diagram} \label{eq:DiagramZ2xZ2}
\end{equation}

\subsubsection{\label{Sec:RestrictionDiagram}The $D_{8}$-diagram}

For the dihedral group $D_{8}$, from \cite{WebD8} and (\ref%
{eq:RestrictionH1-1}), the two top levels of the diagram can be presented by:

\begin{equation}
\begin{diagram}[size=4em,textflow]
&  &  &\begin{tabular}{|c|}
\hline
${\ D}_{8}$ \\ \hline
$\mathbb{F}_{2}{\ [x,y;w]/\langle xy}{\ \rangle }$ \\ \hline
$\deg {\ :1,1,2}$ \\ \hline
\end{tabular}  &  &  & \\
&\ldTo{x\mapsto 0, y\mapsto b}_{w\mapsto a(a+b)}  &  &\dTo{x,y\mapsto e}_{w\mapsto u}  &  &\rdTo{x\mapsto d, y\mapsto 0}_{w\mapsto c(c+d)}  & \\
\begin{tabular}{|c|}
\hline
${\ H}_{1}$ \\ \hline
$\mathbb{F}_{2}{\ [a,b]}$ \\ \hline
$\deg {\ :1,1}$ \\ \hline
\end{tabular}&  &  &
\begin{tabular}{|c|}
\hline
${\ H}_{2}$ \\ \hline
$\mathbb{F}_{2}{\ [e,u]/\langle e}^{2}{\ \rangle }$ \\ \hline
$\deg {\ :1,2}$ \\ \hline
\end{tabular}  &  &  &
\begin{tabular}{|c|}
\hline
${\ H}_{3}$ \\ \hline
$\mathbb{F}_{2}{\ [c,d]}$ \\ \hline
$\deg {\ :1,1}$ \\ \hline
\end{tabular}\\
\end{diagram}\label{eq:diagramD8-1}
\end{equation}
Let $H^{\ast}(K_{i},\mathbb{F}_{2})=\mathbb{F}_{2}[t_i]$, $%
\deg (t_{i})=1$. From \cite[Corollary II.5.7, page 69]{Adem-Milgram} the
restriction
\begin{equation*}
\mathrm{res}_{K_{3}}^{H_{2}}:\left( H^{\ast }(H_{2},\mathbb{F}_{2})=\mathbb{F%
}_{2}[e,u]{\ /\langle }e^{2}\rangle \right) \longrightarrow \left( H^{\ast
}(K_{3},\mathbb{F}_{2})=\mathbb{F}_{2}[t_{3}]\right)
\end{equation*}%
is given by $e\mapsto 0$, $u\mapsto t_{3}^{2}$. Thus, the restriction $%
\mathrm{res}_{K_{3}}^{D_{8}}$ is given by $x\mapsto 0$, $y\mapsto 0$, $%
w\mapsto t_{3}^{2}$. Using diagrams (\ref{eq:DiagramZ2xZ2}), (\ref%
{eq:diagramD8-1}) with the property (C) from Proposition \ref{Prop:Res-1} we
almost completely reveal the cohomology diagram of subgroups of $D_{8}$. The
equalities%
\begin{equation*}
\mathrm{res}_{K_{3}}^{D_{8}}=\mathrm{res}_{K_{3}}^{H_{2}}\circ \mathrm{res}%
_{H_{2}}^{D_{8}}=\mathrm{res}_{K_{3}}^{H_{1}}\circ \mathrm{res}%
_{H_{1}}^{D_{8}}=\mathrm{res}_{K_{3}}^{H_{3}}\circ \mathrm{res}%
_{H_{3}}^{D_{8}}
\end{equation*}%
imply that

\begin{compactitem}
\item $\mathrm{res}_{K_{3}}^{H_{1}}:\left(H^{\ast}(H_{1},\mathbb{F}
_{2})=\mathbb{F}_{2}[a,b]\right) \longrightarrow \left(H^{\ast}
(K_{3},\mathbb{F}_{2})=\mathbb{F}_{2}[t_3]\right) $ is given by $%
a\mapsto t_{3},~b\mapsto 0$,

\item $\mathrm{res}_{K_{3}}^{H_{3}}:\left(H^{\ast }(H_{3},\mathbb{F}%
_{2})=\mathbb{F}_{2}[c,d]\right) \longrightarrow \left(H^{\ast}
(K_{3},\mathbb{F}_{2})=\mathbb{F}_{2}[t_3]\right) $ is given by $%
c\mapsto t_{3},~d\mapsto 0$.
\end{compactitem}

\begin{equation}
\begin{diagram}
\begin{tabular}{|c|}
\hline
$H_{1}$ \\ \hline
$\mathbb{F}_{2}{\ [a,b]}$ \\ \hline
$\deg : 1,1$ \\ \hline
\end{tabular}
&  & &
\begin{tabular}{|c|}
\hline
$H_{2}$ \\ \hline
$\mathbb{F}_{2}{\ [e,u]/\langle e}^{2}{\ \rangle }$ \\ \hline
$\deg :1,2$ \\ \hline
\end{tabular}  &  &  &
\begin{tabular}{|c|}
\hline
$H_{3}$ \\ \hline
$\mathbb{F}_{2}{\ [c,d]}$ \\ \hline
$\deg : 1,1$ \\ \hline
\end{tabular}
\\
&\rdTo_{a\mapsto t_3}^{b\mapsto 0}&  & \dTo_{e\mapsto 0}^{u\mapsto t_3^2} & & \ldTo_{c\mapsto t_3}^{d\mapsto 0} &  \\
&  &  &\begin{tabular}{|c|}
\hline
$K_{3}$ \\ \hline
$\mathbb{F}_{2}[t_3]$ \\ \hline
$\deg :1$ \\ \hline
\end{tabular}  &  & &\\
\end{diagram}\label{eq:diagramD8-2}
\end{equation}
The cohomology diagram (\ref{eq:DiagramZ2xZ2}) of subgroups of $\mathbb{Z}_{2}\times\mathbb{Z}_{2}$ and the part (\ref{eq:diagramD8-2}) of the $D_{8}$ diagram imply that
\begin{compactitem}
\item $\mathrm{res}_{K_{1}}^{H_{1}}:\mathbb{F}_{2}[a,b]\longrightarrow
\mathbb{F}_{2}[t_1]$ and $\mathrm{res}_{K_{2}}^{H_{1}}:\mathbb{F}%
_{2}[a,b]\longrightarrow \mathbb{F}_{2}[t_2]$ are given by
\begin{equation*}
\left( a\mapsto t_{1},b\mapsto t_{1}\text{ and }a\mapsto 0,b\mapsto
t_{2}\right) \text{ or }\left( a\mapsto 0,b\mapsto t_{1}\text{ and }a\mapsto
t_{2},b\mapsto t_{2}\right) ,
\end{equation*}

\item $\mathrm{res}_{K_{4}}^{H_{3}}:\mathbb{F}_{2}[c,d]\longrightarrow
\mathbb{F}_{2}[t_4]$ and $\mathrm{res}_{K_{5}}^{H_{3}}:\mathbb{F}%
_{2}[a,b]\longrightarrow \mathbb{F}_{2}[t_5]$ are given by
\begin{equation*}
\left( c\mapsto t_{4},d\mapsto t_{4}\text{ and }c\mapsto 0,d\mapsto
t_{5}\right) \text{ or }\left( c\mapsto 0,d\mapsto t_{4}\text{ and }c\mapsto
t_{5},d\mapsto t_{5}\right) .
\end{equation*}
\end{compactitem}

\begin{proposition}
For all $i\neq 3$, $\mathrm{res}_{K_i}^{D_{8}}(w)=0$, while $\mathrm{res}%
_{K_3}^{D_{8}}(w)\neq0$.
\end{proposition}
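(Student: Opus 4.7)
The plan is to reduce each restriction $\mathrm{res}_{K_i}^{D_8}$ to a composition through one of the maximal subgroups $H_j$ by transitivity of restriction (Proposition~\ref{Prop:Res-1}(C)), and then read off the image of $w$ directly from the diagrams~(\ref{eq:DiagramZ2xZ2}) and~(\ref{eq:diagramD8-1}). The key observation is that $\mathrm{res}_{H_1}^{D_8}(w) = a(a+b)$ and $\mathrm{res}_{H_3}^{D_8}(w) = c(c+d)$ are products of two linear forms in a polynomial ring $\mathbb{F}_2[\cdot,\cdot]$ on two generators, and every restriction from such a rank-$2$ elementary abelian $2$-group to a rank-$1$ subgroup kills at least one of the two linear factors.

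First, for $K_1$ and $K_2$, which both lie in $H_1$, I would write
\[
\mathrm{res}_{K_i}^{D_8}(w) \;=\; \mathrm{res}_{K_i}^{H_1}\bigl(\mathrm{res}_{H_1}^{D_8}(w)\bigr) \;=\; \mathrm{res}_{K_i}^{H_1}\bigl(a(a+b)\bigr).
\]
Under either of the two admissible options for $\mathrm{res}_{K_1}^{H_1}$ listed above, one factor vanishes: either $a+b\mapsto t_1+t_1 = 0$, or $a\mapsto 0$. The same dichotomy applies to $\mathrm{res}_{K_2}^{H_1}$, and in each case the product $a(a+b)$ maps to $0$. Similarly, for $K_4$ and $K_5$, which both lie in $H_3$, I factor through $H_3$ and observe that under any of the admissible options for $\mathrm{res}_{K_i}^{H_3}$, the product $c(c+d)$ has one vanishing factor and hence maps to~$0$.

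For $K_3$ the calculation is already recorded above: $\mathrm{res}_{K_3}^{H_2}(u) = t_3^2$, so by transitivity
\[
\mathrm{res}_{K_3}^{D_8}(w) \;=\; \mathrm{res}_{K_3}^{H_2}(u) \;=\; t_3^2 \;\neq\; 0,
\]
and alternatively one can double-check via $H_1$ or $H_3$, where $a(a+b)\mapsto t_3(t_3+0) = t_3^2$ and $c(c+d)\mapsto t_3(t_3+0) = t_3^2$ consistently.

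There is no real obstacle here: the only potential ambiguity is the pair of options for $\mathrm{res}_{K_1}^{H_1}, \mathrm{res}_{K_2}^{H_1}$ (and analogously for $K_4, K_5$ via $H_3$), but the conclusion $\mathrm{res}_{K_i}^{D_8}(w) = 0$ is independent of which option is chosen, precisely because $w$ decomposes as a product of two linear forms that become linearly dependent after restriction to any index-$2$ subgroup \emph{other than} $K_3$. This uniform vanishing is what singles $K_3$ out as the unique rank-$1$ subgroup detecting the class $w$.
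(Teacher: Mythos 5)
Your proposal is correct and follows essentially the same route as the paper: restriction to $K_1,K_2$ (resp.\ $K_4,K_5$) is factored through $H_1$ (resp.\ $H_3$) via Proposition~\ref{Prop:Res-1}(C), where $a(a+b)$ (resp.\ $c(c+d)$) dies because one linear factor vanishes under either admissible option, while $\mathrm{res}_{K_3}^{D_8}(w)=t_3^2\neq 0$ is read off through $H_2$. The only difference is cosmetic: you spell out the $K_3$ case inside the proof, whereas the paper records it just before stating the proposition.
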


\begin{proof}
The result follows from the diagram (\ref{eq:diagramD8-1}) in the following way:
\begin{compactenum}[\rm(a)]
\item For $i\in\{1,2\}$:
\[
\mathrm{res}_{K_i}^{D_{8}}(w)=\mathrm{res}_{K_i}^{H_1}\circ\mathrm{res}_{H_1}^{D_{8}}(w)=\mathrm{res}_{K_i}^{H_1}(a(a+b))=0
\]
since either $a\mapsto t_i, b\mapsto t_i$ or $a\mapsto 0, b\mapsto t_i$.
\item For $i\in\{4,5\}$:
\[
\mathrm{res}_{K_i}^{D_{8}}(w)=\mathrm{res}_{K_i}^{H_3}\circ\mathrm{res}_{H_3}^{D_{8}}(w)=\mathrm{res}_{K_i}^{H_3}(c(c+d))=0
\]
since either $c\mapsto t_i, d\mapsto t_i$ or $c\mapsto 0, d\mapsto t_i$.
\end{compactenum}
\end{proof}

\begin{corollary}
The cohomology of the dihedral group $D_{8}$ is%
\begin{equation*}
H^{\ast }(D_{8},\mathbb{F}_{2})=\mathbb{F}_{2}[x,y,w]/\langle xy\rangle
\end{equation*}%
where
\begin{compactenum}[\rm(a)]
\item $x\in H^{1}(D_{8},\mathbb{F}_{2})$ and $\mathrm{res}%
_{H_{1}}^{D_{8}}(x)=0 $,

\item $y\in H^{1}(D_{8},\mathbb{F}_{2})$ and $\mathrm{res}%
_{H_{3}}^{D_{8}}(y)=0 $,

\item $w\in H^{1}(D_{8},\mathbb{F}_{2})$ and $\mathrm{res}%
_{K_{1}}^{D_{8}}(w)=\mathrm{res}_{K_{2}}^{D_{8}}(w)=\mathrm{res}%
_{K_{4}}^{D_{8}}(w)=\mathrm{res}_{K_{5}}^{D_{8}}(w)=0$ and $\mathrm{res}%
_{K_{3}}^{D_{8}}(w)\neq 0$.

\end{compactenum}
\end{corollary}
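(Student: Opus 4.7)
The ring structure claim of the Corollary is essentially a restatement of what was read off from the Lyndon--Hochschild--Serre spectral sequence in Section \ref{Sec:H*(D8,F_2)}; the real content is the restriction information in (a)--(c). So my plan is to assemble the Corollary directly from the three pieces of input that the preceding subsections have set up, rather than doing any fresh spectral sequence computation.

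First, for the ring structure, the LHS spectral sequence
\[
E_2^{p,q}=H^p(\mathbb{Z}_2,H^q((\mathbb{Z}_2)^2,\mathbb{F}_2)) \Longrightarrow H^{p+q}(D_8,\mathbb{F}_2)
\]
collapses at $E_2$ (by Adem--Milgram, cited earlier), and the $\mathbb{Z}_2$-invariant subring $H^*((\mathbb{Z}_2)^2,\mathbb{F}_2)^{\mathbb{Z}_2}=\mathbb{F}_2[b,a(a+b)]$ computed in Lemma \ref{Lemma:CohomologyD8}(ii) lies in the bottom row $E_\infty^{0,*}=E_2^{0,*}$, while the pullback of $x\in H^1(\mathbb{Z}_2,\mathbb{F}_2)$ from the base generates the left column. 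Setting $y:=b$ and $w:=a(a+b)$ as in (\ref{eq:NotationOfElements}), the relation $xy=0$ is forced by the fact that $y$ sits in the row $q=1$ which, by Lemma \ref{Lemma:CohomologyD8}(iii)--(iv), becomes a free $H^*(\mathbb{Z}_2,\mathbb{F}_2)$-summand placed in degree $p=0$ only; hence $x\cdot y\in E_2^{1,1}=0$. This gives
\[
H^*(D_8,\mathbb{F}_2)=\mathbb{F}_2[x,y,w]/\langle xy\rangle,
\]
with $\deg x=\deg y=1$ and $\deg w=2$ (so the statement ``$w\in H^1$'' in (c) of the Corollary is a typo for $H^2$).

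Next, for (a), the class $x$ is pulled back from the base $\mathrm{B}\mathbb{Z}_2$ of the fibration (\ref{eq:fibration-1}), so its restriction to the fiber $\mathrm{B}H_1=\mathrm{B}(\mathbb{Z}_2)^2$ vanishes. This is precisely the first identity in (\ref{eq:RestrictionH1-2}). For (b), the definition $y=b$ together with the explicit description of $\mathrm{res}_{H_3}^{D_8}$ in diagram (\ref{eq:diagramD8-1}) (where $y\mapsto 0$ into $H^*(H_3,\mathbb{F}_2)$) gives the claim immediately.

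Finally, (c) is just the Proposition stated and proved immediately before the Corollary, which I would simply invoke: composing $\mathrm{res}_{K_i}^{D_8}$ through the appropriate maximal subgroup $H_1$ or $H_3$ and using the explicit formulas of diagram (\ref{eq:diagramD8-1}) sends $w=a(a+b)$ (resp.\ $c(c+d)$) to $0$ whenever the factor $a$ or $a+b$ maps to the same generator of $\mathbb{F}_2[t_i]$, which occurs precisely for $i\in\{1,2,4,5\}$, while $\mathrm{res}_{K_3}^{H_2}(u)=t_3^2\neq 0$ by the diagram (\ref{eq:diagramD8-2}) already derived. Since every step is either a cited result or a direct reading of diagrams (\ref{eq:diagramD8-1}) and (\ref{eq:diagramD8-2}), there is no real obstacle; the only mild subtlety is bookkeeping the two choices of orientation for $\mathrm{res}_{K_i}^{H_j}$ ($i=1,2,4,5$), but since both send the product $a(a+b)$ (resp.\ $c(c+d)$) to $0$, the ambiguity is harmless.
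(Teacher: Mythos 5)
Your assembly matches the paper's own treatment: the Corollary there carries no separate proof and is precisely a summary of the collapsed LHS spectral sequence of Section~\ref{Sec:H*(D8,F_2)} together with the restriction formulas (\ref{eq:RestrictionH1-2}), the diagrams (\ref{eq:diagramD8-1})--(\ref{eq:diagramD8-2}), and the Proposition immediately preceding it, and your reading (including the observation that ``$w\in H^{1}$'' is a typo for $H^{2}$) is correct. One small caution on your justification of $xy=0$: the vanishing of $E_{2}^{1,1}=E_{\infty}^{1,1}$ only shows that $xy$ lies in filtration $\geq 2$, i.e.\ $xy\in\{0,x^{2}\}$; to finish, restrict to $K_{4}\subset H_{3}$, where $\mathrm{res}_{K_{4}}^{D_{8}}(x)=t_{4}\neq 0$ while $\mathrm{res}_{K_{4}}^{D_{8}}(y)=0$ since $\mathrm{res}_{H_{3}}^{D_{8}}(y)=0$ by (\ref{eq:diagramD8-1}), so $xy\neq x^{2}$ --- the paper itself sidesteps this multiplicative-extension point by citing the Adem--Milgram collapse result.
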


\noindent \textbf{Assumption }Without lose of generality we can assume that%
\begin{equation}
\begin{array}{cccc}
\mathrm{res}_{K_{1}}^{H_{1}}(a)=t_{1}, & \mathrm{res}%
_{K_{1}}^{H_{1}}(b)=t_{1}, & \mathrm{res}_{K_{2}}^{H_{1}}(a)=0, & \mathrm{res%
}_{K_{2}}^{H_{1}}(b)=t_{2}.%
\end{array}
\label{Assumption-1}
\end{equation}

\subsection{The cohomology ring $H^{\ast }(D_{8},\mathbb{Z})$}

In this section we present the cohomology $H^{\ast }(D_{8},\mathbb{Z})$
based on:

\begin{compactitem}
\item[A.] Evens' approach \cite[Section 5, pages 191-192]{Evans}, where the
concrete generators in $H^{\ast }(D_{8},\mathbb{Z})$ are identified using
the Chern classes of appropriate irreducible complex $D_{8}$%
-representations. We also consider LHS spectral sequences associated with
following two extensions%
\begin{equation}
1\rightarrow H_{1}\rightarrow D_{8}\rightarrow D_{8}/(\mathbb{Z}%
_{2})^{2}\rightarrow 1\text{\qquad and}\qquad 1\rightarrow H_{2}\rightarrow
D_{8}\rightarrow D_{8}/\mathbb{Z}_{4}\rightarrow 1.  \label{eq:TwoExtensions}
\end{equation}%
Unfortunately, the ring structure on $E_{\infty }$ -terms of these LHS
spectral sequences does not coincide with the ring structure on $%
H^{\ast }(D_{8},\mathbb{Z})$.

\item[B.] The Bockstein spectral sequence of the exact couple
\begin{diagram}
H^{\ast }(D_{8},\mathbb{Z}) & &\rTo{\times 2}& & H^{\ast }(D_{8},%
\mathbb{Z}) \\
& \luTo{\delta}&&\ldTo{c} && \\
& & H^{\ast }(D_{8},\mathbb{F}_{2}) &&\\
\end{diagram}
where $d_{1}=c\circ \delta =\mathrm{Sq}^{1}:H^{\ast }(D_{8},\mathbb{F}%
_{2})\rightarrow H^{\ast +1}(D_{8},\mathbb{F}_{2})$ is given by $d_{1}\left(
x\right) =x^{2}$, $d_{1}\left( y\right) =y^{2}$ and $d_{1}\left( w\right)
=(x+y)w$ \cite[Theorem 2.7. page 127]{Adem-Milgram}. This approach allows
determination of the ring structure on $H^{\ast }(D_{8},\mathbb{Z})$.
\end{compactitem}

\subsubsection{\label{Sec:EvansView}Evens' view}

\noindent Let $V_{+-}^{\mathbb{C}}\oplus V_{-+}^{\mathbb{C}}=
\mathbb{C}\oplus\mathbb{C}$, $V_{--}^{\mathbb{C}}=\mathbb{C}$ and $U_{2}^{
\mathbb{C}}=\mathbb{C}$ be the complex $D_{8}$-representations given by

\begin{compactitem}
\item[A.] For $(u,v)\in V_{+-}^{\mathbb{C}}\oplus V_{-+}^{\mathbb{C}}$:
\begin{equation*}
\varepsilon _{1}\cdot (u,v)=(u,-v)\text{, \quad }\varepsilon _{2}\cdot
(u,v)=(-u,v)\text{, \quad }\sigma \cdot (u,v)=(v,u).
\end{equation*}

\item[B.] For $u\in V_{--}^{\mathbb{C}}$:
\begin{equation*}
\varepsilon _{1}\cdot u=-u,\text{\quad }\varepsilon _{2}\cdot u=-u,\text{%
\quad }\sigma \cdot u=u.
\end{equation*}

\item[C.] For $u\in U_{2}^{\mathbb{C}}$:
\begin{equation*}
\varepsilon _{1}\cdot u=u\text{, \quad }\varepsilon _{2}\cdot u=u\text{,
\quad }\sigma \cdot u=-u.
\end{equation*}
\end{compactitem}

\noindent There are isomorphisms of real $D_{8}$-representations
\begin{equation*}
V_{+-}^{\mathbb{C}}\oplus V_{-+}^{\mathbb{C}}\cong \left( V_{+-}\oplus V_{-+}\right) ^{\oplus 2},~\text{\quad }V_{--}^{
\mathbb{C}}\cong \left( V_{--}\right) ^{\oplus 2},~\text{\quad }U_{2}^{\mathbb{C}}=\left( U_{2}\right) ^{\oplus 2}.
\end{equation*}

\noindent Let $\chi _{1}$, $\xi \in H^{\ast }(D_{8},\mathbb{Z})$ be $1$-dimensional complex $D_{8}$-representations given by character
(here we assume the identification $c_{1}:\mathrm{Hom}(G,U\left( 1\right)
)\rightarrow H^{2}(G,\mathbb{Z})$, \cite[page 286]{Atiyah}):
\begin{equation*}
\begin{array}{lll}
\chi _{1}(\varepsilon _{1})=1, & \chi _{1}(\varepsilon _{2})=1, & \chi
_{1}(\sigma )=-1, \\
\xi (\varepsilon _{1})=-1, & \xi (\varepsilon _{2})=-1, & \xi (\sigma )=-1.%
\end{array}%
\end{equation*}%
Then $\chi _{1}=U_{2}^{\mathbb{C}}$, $\xi =U_{2}^{\mathbb{C}}\otimes V_{--}^{\mathbb{C}}$ and consequently
\begin{equation}
\mathrm{c}_{1}(U_{2}^{\mathbb{C}})=\chi _{1},\text{ }\qquad \text{and\qquad }\mathrm{c}_{1}(U_{2}^{
\mathbb{C}})+\mathrm{c}_{1}(V_{--}^{\mathbb{C}})=\xi .  \label{eq:1-chernClasses}
\end{equation}

\medskip

\noindent The cohomology $H^{\ast }(D_{8},\mathbb{Z})$ is given in \cite[pages 191-192]{Evans} by
\begin{equation}
H^{\ast }(D_{8},\mathbb{Z})=\mathbb{Z}
\lbrack \xi ,\chi _{1},\zeta ,\chi ]  \label{eq:Evans-Presentation}
\end{equation}%
where%
\begin{equation*}
\deg \xi =\deg \chi _{1}=2,~\deg \zeta =3,~\deg \chi =4
\end{equation*}%
and%
\begin{equation}
2\xi =2\chi _{1}=2\zeta =4\chi =0\text{, }\chi _{1}^{2}=\xi \cdot \chi _{1}%
\text{, }\zeta ^{2}=\xi \cdot \chi \text{.}  \label{relations}
\end{equation}%
There are four $1$-dimensional irreducible complex representations of $%
D_{8} $:%
\begin{equation*}
1,~\xi =U_{2}^{\mathbb{C}}\otimes V_{--}^{\mathbb{C}}\text{, }\chi _{1}=U_{2}^{
\mathbb{C}}\text{, }\xi \otimes \chi _{1}=V_{--}^{\mathbb{C}},
\end{equation*}%
and one $2$-dimensional complex representation which is denoted by $\rho $
in \cite[pages 191-192]{Evans}:%
\begin{equation*}
\rho =V_{+-}^{\mathbb{C}}\oplus V_{-+}^{\mathbb{C}}.
\end{equation*}%
It is computed in \cite[pages 191-192]{Evans} that
\begin{equation}
\mathrm{c}(V_{+-}^{\mathbb{C}}\oplus V_{-+}^{\mathbb{C}})=1+\xi +\chi \text{ \qquad and \qquad }\mathrm{c}_{2}(V_{+-}^{
\mathbb{C}}\oplus V_{-+}^{\mathbb{C}})=\chi .  \label{eq:2-chernClasses}
\end{equation}

\medskip

\noindent The relations (\ref{eq:1-chernClasses}) and (\ref%
{eq:2-chernClasses}) along with Proposition \ref{Prop:IndexCherClasses}
imply the following statement.

\begin{proposition}
$\mathrm{Index}_{D_{8},\mathbb{Z}}S(V_{--}^{
\mathbb{C}})=\langle \xi +\chi _{1}\rangle $, $\mathrm{Index}_{D_{8},\mathbb{Z}%
}S(U_{2}^{\mathbb{C}})=\langle \chi _{1}\rangle $, $\mathrm{Index}_{D_{8},\mathbb{Z}}S(V_{+-}^{
\mathbb{C}}\oplus V_{-+}^{\mathbb{C}})=\langle \chi \rangle $.
\end{proposition}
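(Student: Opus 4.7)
The proof is essentially a direct assembly of pieces already in hand: the three representations are complex, so Proposition \ref{Prop:IndexCherClasses} reduces each index computation to identifying the top Chern class of the associated Borel-bundle.

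My plan is to proceed case-by-case. For $U_2^{\mathbb{C}}$, which is $1$-dimensional, Proposition \ref{Prop:IndexCherClasses} gives
\[
\mathrm{Index}_{D_8,\mathbb{Z}}S(U_2^{\mathbb{C}})=\langle c_1(U_2^{\mathbb{C}})\rangle,
\]
and the first identity of \eqref{eq:1-chernClasses} tells us this is $\langle \chi_1\rangle$. For $V_{--}^{\mathbb{C}}$, again $1$-dimensional, I would extract $c_1(V_{--}^{\mathbb{C}})$ from the second identity of \eqref{eq:1-chernClasses}, namely $c_1(U_2^{\mathbb{C}})+c_1(V_{--}^{\mathbb{C}})=\xi$, which gives $c_1(V_{--}^{\mathbb{C}})=\xi-\chi_1$; invoking the relation $2\chi_1=0$ from \eqref{relations}, this is the same as $\xi+\chi_1$, hence
\[
\mathrm{Index}_{D_8,\mathbb{Z}}S(V_{--}^{\mathbb{C}})=\langle \xi+\chi_1\rangle.
\]
For the $2$-dimensional representation $V_{+-}^{\mathbb{C}}\oplus V_{-+}^{\mathbb{C}}$, Proposition \ref{Prop:IndexCherClasses} requires the second Chern class, which is exactly what is recorded in the second identity of \eqref{eq:2-chernClasses}: $c_2(V_{+-}^{\mathbb{C}}\oplus V_{-+}^{\mathbb{C}})=\chi$, giving
\[
\mathrm{Index}_{D_8,\mathbb{Z}}S(V_{+-}^{\mathbb{C}}\oplus V_{-+}^{\mathbb{C}})=\langle \chi\rangle.
\]

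There is no real obstacle here because the Chern-class computations of Evens have been quoted, and Proposition \ref{Prop:IndexCherClasses} has already been established. The only subtlety is the sign manipulation $\xi-\chi_1=\xi+\chi_1$ in $H^{\ast}(D_8,\mathbb{Z})$, which is immediate from the $2$-torsion relation in \eqref{relations}. Accordingly, I would present the proposition as a short corollary of Proposition \ref{Prop:IndexCherClasses} together with the Chern-class identities \eqref{eq:1-chernClasses} and \eqref{eq:2-chernClasses}, without need for any further spectral-sequence or restriction-diagram analysis.
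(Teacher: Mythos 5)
Your proposal is correct and matches the paper's own argument: the paper derives this proposition exactly by combining Proposition \ref{Prop:IndexCherClasses} with the Chern-class identities (\ref{eq:1-chernClasses}) and (\ref{eq:2-chernClasses}), the only wrinkle being the $2$-torsion identification $\xi-\chi_{1}=\xi+\chi_{1}$, which you handle correctly.
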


\medskip

\noindent Before proceeding to the Bockstein spectral sequence approach we
give descriptions of the $E_{2}$-terms of two LHS spectral sequences. Even
though it is not an easy consequence, it can be proved that both spectral
sequences stabilize and that $E_{2}=E_{\infty }$.

\paragraph*{LHS spectral sequences of the extension $1\rightarrow
H_{1}\rightarrow D_{8}\rightarrow D_{8}/H_{1}\rightarrow 1$.}

The LHS spectral sequence of this extension (\ref{eq:E2}) allows
computation of the cohomology ring $H^{\ast }(D_{8},\mathbb{F}_{2})$ with $%
\mathbb{F}_{2}$ coefficients. If we now consider $\mathbb{Z}$ coefficients, then the $E_{2}$-term has the form
\begin{equation}
E_{2}^{p,q}=H^{p}(D_{8}/H_{1},H^{q}\left( H_{1},\mathbb{Z}\right) )\cong H^{p}(\mathbb{Z}_{2},H^{q}\left( (
\mathbb{Z}_{2})^{2},\mathbb{Z}\right) ).  \label{eq:E2withZ}
\end{equation}%
The spectral sequence converges to the graded group $\mathrm{Gr}\left(
H^{p+q}(D_{8},\mathbb{Z})\right) $ associated with $H^{p+q}(D_{8},\mathbb{Z})$ appropriately filtered.
To present the $E_{2}$-term we choose generators
of $H^{\ast }\left( H_{1},\mathbb{Z}\right) $ consistent with the choices made in Lemma \ref%
{Lemma:CohomologyD8}. Let $c:\mathbb{Z}\rightarrow \mathbb{F}_{2}$ be reduction $\mathrm{mod}~2$ and $c_{\ast }:H^{\ast
}(D_{8},\mathbb{Z})\rightarrow H^{\ast }(D_{8},\mathbb{F}_{2})$ the induced
map in cohomology. Consider the following presentation of the $H_{1}$
cohomology ring:%
\begin{equation}
H^{\ast }\left( H_{1},\mathbb{Z}\right) =\mathbb{Z}\lbrack \alpha ,\alpha +\beta ]\otimes
\mathbb{Z}\lbrack \mu ]  \label{eq:CohomologyH1withZ}
\end{equation}%
where
\begin{compactitem}
\item[A.] $\deg (\alpha )=\deg (\beta )=2$, $\deg (\mu )=3;$

\item[B.] $2\alpha =2\beta =2\mu =0$ and $\mu ^{2}=\alpha \beta (\alpha
+\beta );$

\item[C.] $\sigma $ action on $H^{\ast }\left( H_{1},\mathbb{Z}\right)$
is given by $\sigma \cdot \alpha =\alpha +\beta $ and $\sigma
\cdot \mu =\mu ;$

\item[D.] $c_{\ast }(\alpha )=a^{2}$, $c_{\ast }\left( \beta \right) =b^{2}$%
, $c_{\ast }\left( \mu \right) =ab(a+b)$.
\end{compactitem}

\begin{figure}[htb]
\centering
\includegraphics[scale=0.50]{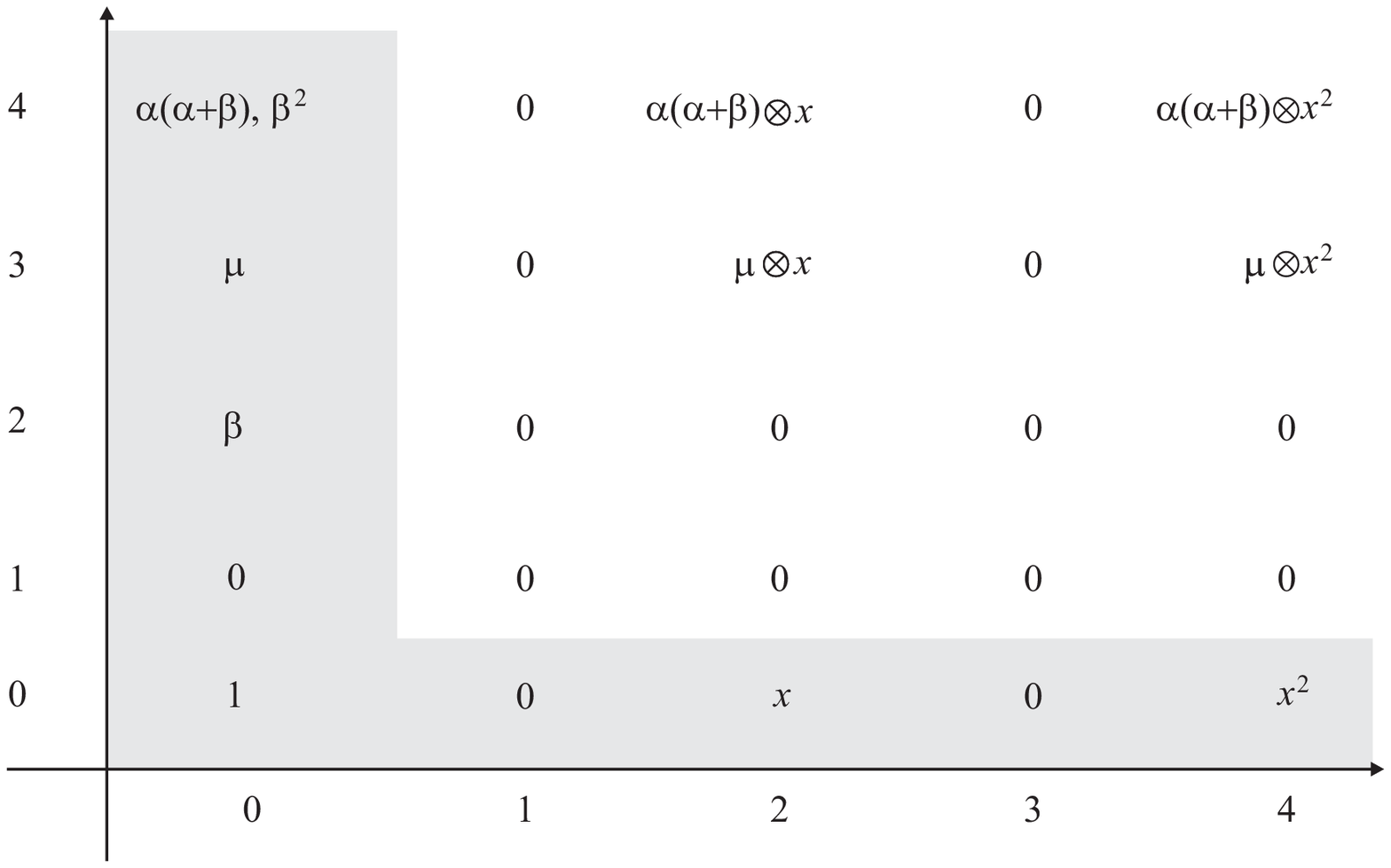}
\caption{{\protect\small {{$E_{2}$-term} of extension $1\rightarrow
H_{1}\rightarrow D_{8}\rightarrow D_{8}/H_{1}\rightarrow 1.$}}}
\label{Fig-3.1}
\end{figure}

\noindent Now the $E_{2}$-term (Figure \ref{Fig-3.1}) is given by%
\begin{equation*}
{\ E_{2}^{p,q}\cong H^{p}(\mathbb{Z}_{2},H^{q}\left( (\mathbb{Z}_{2})^{2},\mathbb{Z}\right) )\cong \left\{
\begin{array}{lll}
H^{p}(\mathbb{Z}_{2},\mathbb{Z}), &  & q=0 \\
0, &  & q=1 \\
H^{p}(\mathbb{Z}_{2},\mathbb{F}_{2}[\mathbb{Z}_{2}]), &  & q=2 \\
H^{p}(\mathbb{Z}_{2},\mathbb{F}_{2}), &  & q=3 \\
\ldots, &  & q>3.%
\end{array}%
\right. }
\end{equation*}%
The morphism of LHS spectral sequences of the extension $1\rightarrow
H_{1}\rightarrow D_{8}\rightarrow D_{8}/H_{1}\rightarrow 1$ induced by the
$\mathrm{mod}~2$ reduction $c:\mathbb{Z}\rightarrow \mathbb{F}_{2}$ (Proposition \ref{Prop:Res-1} E.3) gives a proof
that $E_{2}=E_{\infty }$ for $\mathbb{Z}$ coefficients. The ring structures
on $E_{\infty }$ and $H^{\ast }(D_{8},\mathbb{Z})$ do not coincide. Moreover there is no element in $E_{\infty }$ of
exponent $4$. One thing is clear: the element $\mu $ in the $E_{2}=E_{\infty }$%
-term coincides with the element $\zeta $ in the Evens' presentation (\ref%
{eq:Evans-Presentation}) of $H^{\ast }(D_{8},\mathbb{Z})$.

\paragraph*{LHS spectral sequences of the extension $1\rightarrow
H_{2}\rightarrow D_{8}\rightarrow D_{8}/H_{2}\rightarrow 1$.}

The $E_{2}$-term has the form:%
\begin{equation*}
E_{2}^{p,q}=H^{p}(D_{8}/H_{2},H^{q}\left( H_{2},\mathbb{Z}
\right) )\cong H^{p}(\mathbb{Z}_{2},H^{q}\left(\mathbb{Z}_{4},\mathbb{Z}\right) )\cong {\ \left\{
\begin{array}{lll}
H^{p}(\mathbb{Z}_{2},\mathbb{Z}), &  & q=0 \\
0, &  & q\text{ odd} \\
H^{p}(\mathbb{Z}_{2},\mathcal{Z}_{4}), &  & q\text{ even and }4\nmid q \\
H^{p}(\mathbb{Z}_{2},\mathbb{Z}_{4}), &  & q>0\text{ even and }4|q,
\end{array}
\right. }
\end{equation*}%
where $\mathcal{Z}_{4}\cong\mathbb{Z}_{4}$ is a non-trivial $\mathbb{Z}_{2}$-module. Using \cite[Example 2, pages 58-59]{Brown} the $E_{2}$-term has
the shape given in the Figure \ref{Fig-3.2}.
\begin{figure}[tbh]
\centering
\includegraphics[scale=0.50]{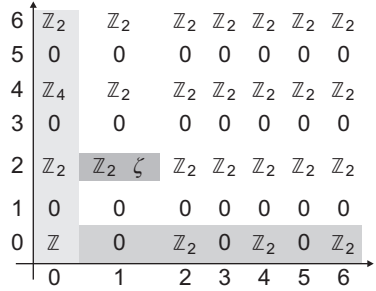}
\caption{{\protect\small {{$E_{2}$-term} of extension $1\rightarrow
H_{2}\rightarrow D_{8}\rightarrow D_{8}/H_{2}\rightarrow 1.$}}}
\label{Fig-3.2}
\end{figure}
This diagram provides just two hints: there might be elements of exponent $4$
in the cohomology $H^{\ast }(D_{8},\mathbb{Z})$ and definitely there is only one element $\zeta $ of degree $3$ from the
Evens' presentation.

\paragraph*{Conclusion.}

The LHS spectral sequences of different extensions gives an incomplete
picture of the cohomology ring with integer coefficients, $H^{\ast }(D_{8},\mathbb{Z})$.
Therefore, for the purposes of the computations with $\mathbb{Z}$ coefficients we use the Bockstein
spectral sequence utilizing results obtained from the LHS spectral sequence with $\mathbb{F}_{2}$ coefficients. Presentations
of these two spectral sequences will be used in the description of
the restriction diagram in Section \ref{Sec:RestrictionDiagram-Z}.

\subsubsection{The Bockstein spectral sequence view}

Let $G$ be a finite group. The exact sequence $0\rightarrow
\mathbb{Z}\overset{\times 2}{\rightarrow }\mathbb{Z}
\rightarrow \mathbb{F}_{2}\rightarrow 0$ induces a long exact sequence in
group cohomology, or an exact couple%
\begin{diagram}
H^{\ast }(G,\mathbb{Z})&&\rTo{\times 2}&& H^{\ast }(G,\mathbb{Z})\\
&\luTo{\delta}&&\ldTo{c}&\\
&&H^{\ast }(G,\mathbb{F}_{2}).&&\\
\end{diagram}
The spectral sequence of this exact couple is the Bockstein spectral
sequence. It converges to
\begin{equation*}
\left( H^{\ast }(G,\mathbb{Z})/\mathrm{torsion}\right) \otimes \mathbb{F}_{2}
\end{equation*}%
which in the case of a finite group $G$ is just $\mathbb{F}_{2}$ in
dimension $0$. Here \textquotedblleft torsion\textquotedblright\ means $%
\mathbb{Z}$-torsion. The first differential $d_{1}=c\circ \delta $ is the
Bockstein homomorphism and in this case coincides with the first Steenrod
square $\mathrm{Sq}^{1}:H^{\ast }(G,\mathbb{F}_{2})\rightarrow H^{\ast +1}(G,%
\mathbb{F}_{2})$.

\noindent Let $H$ be a subgroup of $G$. The restriction $\mathrm{res}%
_{H}^{G} $ commutes with the maps in the exact couples associated to the groups $%
G$ and $H $ and therefore induces a morphism of Bockstein spectral sequences
\cite[page 109 before 5.7.6]{Carlson}.

\medskip

Consider two Bockstein spectral sequences associated with $D_{8}$ and its
subgroup $H_{2}\cong \mathbb{Z}_{4}$.

\begin{compactitem}
\item[A.] \textbf{Group} $D_{8}$. The exact couple is
\begin{equation}
\begin{diagram}
H^{\ast }(D_{8},\mathbb{Z})&&\rTo{\times 2}&& H^{\ast}(D_{8},\mathbb{Z})\\
&\luTo{\delta}&&\ldTo{c}&\\
&&H^{\ast }(D_{8},\mathbb{F}_{2}).&&\\
\end{diagram}
\label{eq:Exact-1}
\end{equation}%
and $d_{1}=c\circ \delta =\mathrm{Sq}^{1}$ is given by $d_{1}\left( x\right)
=x^{2}$, $d_{1}\left( y\right) =y^{2}$ and $d_{1}\left( w\right) =(x+y)w$,
\cite[Theorem 2.7. page 127]{Adem-Milgram}. The derived couple is%
\begin{diagram}[size=2.5em,textflow]
2\cdot H^{\ast }(D_{8},\mathbb{Z})&&\rTo{\times 2}&& 2\cdot H^{\ast }(D_{8},\mathbb{Z})\\
&\luTo{\delta_1}&&\ldTo{c_1}&\\
&&\langle x^{2},y^{2},xw,yw,w^{2}\rangle /\langle x^{2},y^{2},xw+yw\rangle.&&\\
\end{diagram}
Then by \cite[Remark 5.7.4, page 108]{Carlson} there are elements $\mathcal{X%
},\mathcal{Y}\in H^{2}(D_{8},\mathbb{Z})$, $\mathcal{M}\in H^{3}(D_{8},%
\mathbb{Z})$ of exponent $2$ such that $c_{*}(\mathcal{X})=x^{2}$, $c_{*}(\mathcal{Y}%
)=y^{2}$, $c_{*}(\mathcal{M})=(x+y)w$ and $\mathcal{XY}=0$.

\item[B.] \textbf{Group} $\mathbb{Z}_{4}$. The exact couple is%
\begin{equation}
\begin{diagram}
H^{\ast }(\mathbb{Z}_{4},\mathbb{Z})&&\rTo{\times 2}&& H^{\ast }(\mathbb{Z}_{4},\mathbb{Z})\\
&\luTo{\delta}&&\ldTo{c}&\\
&&H^{\ast }(\mathbb{Z}_{4},\mathbb{F}_{2}).&&\\
\end{diagram}
\label{eq:Exact-1-1}
\end{equation}
Since $H^{\ast }(\mathbb{Z}_{4},\mathbb{Z})=\mathbb{Z}[U]/\langle 4U\rangle $%
, $\deg U=2$ and $H^{\ast }(\mathbb{Z}_{4},\mathbb{F}_{2})=\mathbb{F}_{2}{\
[e,u]/\langle e}^{2}{\rangle }$, $\deg e=1$, $\deg u=2$, the unrolling of
the exact couple to a long exact sequence \cite[Proposition 6.1, page 71]%
{Brown}%
\begin{equation*}
\begin{diagram}
0 & \rTo &
\begin{array}{c}
H^{0}(\mathbb{Z}_{4},\mathbb{Z}) \\
\begin{array}{cc}
\mathbb{Z} & ,~1%
\end{array}%
\end{array}
& \rTo{\times 2} &
\begin{array}{c}
H^{0}(\mathbb{Z}_{4},\mathbb{Z}) \\
\begin{array}{cc}
\mathbb{Z} & ,~1%
\end{array}%
\end{array}
& \rTo &
\begin{array}{c}
H^{0}(\mathbb{Z}_{4},\mathbb{F}_{2}) \\
\begin{array}{cc}
\mathbb{F}_{2} & ,~1%
\end{array}%
\end{array}
\\
& \rTo^{\delta } &
\begin{array}{c}
H^{1}(\mathbb{Z}_{4},\mathbb{Z}) \\
0%
\end{array}
& \rTo{\times 2} &
\begin{array}{c}
H^{1}(\mathbb{Z}_{4},\mathbb{Z}) \\
0
\end{array}
& \rTo &
\begin{array}{c}
H^{1}(\mathbb{Z}_{4},\mathbb{F}_{2}) \\
\begin{array}{cc}
\mathbb{F}_{2} & ,~e%
\end{array}%
\end{array}
\\
& \rTo^{\delta } &
\begin{array}{c}
H^{2}(\mathbb{Z}_{4},\mathbb{Z}) \\
\begin{array}{cc}
\mathbb{Z}_{4} & ,~U%
\end{array}%
\end{array}
& \rTo{\times 2} &
\begin{array}{c}
H^{2}(\mathbb{Z}_{4},\mathbb{Z}) \\
\begin{array}{cc}
\mathbb{Z}_{4} & ,~U%
\end{array}%
\end{array}
& \rTo&
\begin{array}{c}
H^{2}(\mathbb{Z}_{4},\mathbb{F}_{2}) \\
\begin{array}{cc}
\mathbb{F}_{2} & ,~u%
\end{array}%
\end{array}
\\
& \rTo^{\delta } &
\begin{array}{c}
H^{3}(\mathbb{Z}_{4},\mathbb{Z}) \\
0%
\end{array}
& \rTo{\times 2} &
\begin{array}{c}
H^{3}(\mathbb{Z}_{4},\mathbb{Z}) \\
0%
\end{array}
& \rTo &
\begin{array}{c}
H^{3}(\mathbb{Z}_{4},\mathbb{F}_{2}) \\
\begin{array}{cc}
\mathbb{F}_{2} & ,~eu%
\end{array}%
\end{array}
\\
& \rTo^{\delta } &
\begin{array}{c}
H^{4}(\mathbb{Z}_{4},\mathbb{Z}) \\
U^{2}%
\end{array}
&  & \ldots &  &
\end{diagram}
\end{equation*}%
allows us to show that for $j\geq 0:$%
\begin{equation*}
\delta (u^{i})=0\text{ and }\delta (eu^{i})=2U^{i+1}\text{.}
\end{equation*}%
Thus $d_{1}=0$ and the derived couple is
\begin{diagram}
2\cdot H^{\ast }(\mathbb{Z}_{4},\mathbb{Z})&&\rTo{\times 2}&& 2\cdot H^{\ast }(\mathbb{Z}_{4},\mathbb{Z})\\
&\luTo{\delta_1}&&\ldTo{c_1}&\\
&&H^{\ast }(\mathbb{Z}_{4},\mathbb{F}_{2}).&&\\
\end{diagram}
\noindent Moreover, by definition of the differential of a derived couple we
have that%
\begin{equation*}
d_{2}(u^{i})=0\text{ and }d_{2}(eu^{i})=u^{i+1}\text{.}
\end{equation*}
\end{compactitem}

\noindent The restriction map $\mathrm{res}_{H_{2}}^{D_{8}}:H^{\ast }(D_{8},%
\mathbb{F}_{2})\rightarrow H^{\ast }(H_{2},\mathbb{F}_{2})$ is determined by
the restriction diagram (\ref{eq:diagramD8-1}). Therefore, the morphism
between spectral sequences induced by the restriction $\mathrm{res}%
_{H_{2}}^{D_{8}}$ implies that:%
\begin{equation*}
\mathrm{res}_{H_{2}}^{D_{8}}\left( d_{2}[xw]\right) =d_{2}\left( \mathrm{res}%
_{H_{2}}^{D_{8}}[xw]\right) =d_{2}(eu)=u^{2}
\end{equation*}%
and consequently%
\begin{equation*}
d_{2}[xw]=[w^{2}]
\end{equation*}%
Here $[~\cdot~]$ denotes the class in the quotient $\langle
x^{2},y^{2},xw,yw,w^{2}\rangle /\langle x^{2},y^{2},xw+yw\rangle $. Thus, by
\cite[Remark 5.7.4, page 108]{Carlson} there is an element $\mathcal{W}\in
H^{4}(D_{8},\mathbb{Z})$ of exponent $4$ such that $c_{*}(\mathcal{W})=w^{2}$
and $\mathcal{M}^{2}=\mathcal{W(X+Y)}$. The second derived couple of (\ref%
{eq:Exact-1}) stabilizes. Thus the cohomology ring $H^{\ast }(D_{8},\mathbb{Z%
})$ and the map $c_{*}:H^{\ast }(D_{8},\mathbb{Z})\longrightarrow H^{\ast
}(D_{8},\mathbb{F}_{2})$ are described.

\begin{theorem}\label{Th:Bo-Presentation}
The cohomology ring $H^{\ast }(D_{8},\mathbb{Z})$ can be presented by%
\begin{equation*}
H^{\ast }(D_{8},\mathbb{Z})=\mathbb{Z}\lbrack \mathcal{X},\mathcal{Y},\mathcal{M},\mathcal{W}]/\mathcal{I}
\end{equation*}%
where%
\begin{equation*}
\deg \mathcal{X}=\deg \mathcal{Y}=2,~\deg \mathcal{M}=3,~\deg \mathcal{W}=4
\end{equation*}%
and the ideal $\mathcal{I}$ is generated by the equations
\begin{equation}
2\mathcal{X}=2\mathcal{Y}=2\mathcal{M}=4\mathcal{W}=0\text{, }\mathcal{XY}=0%
\text{, }\mathcal{M}^{2}=\mathcal{W(X+Y)}\text{.}  \label{relations-2}
\end{equation}%
The map $c_{*}:H^{\ast }(D_{8},\mathbb{Z})\longrightarrow H^{\ast }(D_{8},%
\mathbb{F}_{2})$, induced by the reduction of coefficients $\mathbb{%
Z\rightarrow F}_{2}$, is given by%
\begin{equation}
\mathcal{X\mapsto }x^{2}~,~\qquad \mathcal{Y\mapsto }y^{2}~,~\qquad \mathcal{%
M\mapsto }w(x+y)~,~\qquad \mathcal{W\mapsto }w^{2}\text{.}  \label{eq:Map-j}
\end{equation}
\end{theorem}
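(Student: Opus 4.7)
The plan is to run the Bockstein spectral sequence for $D_8$ set up in the exact couple~(\ref{eq:Exact-1}) and extract both the integral cohomology ring and the reduction map $c_*$ from the resulting differentials. The starting page is the known mod-$2$ cohomology $H^*(D_8,\mathbb{F}_2) = \mathbb{F}_2[x,y,w]/\langle xy\rangle$ from Section~\ref{Sec:H*(D8,F_2)}, and the first differential $d_1 = \mathrm{Sq}^1$ is evaluated on generators by $d_1(x)=x^2$, $d_1(y)=y^2$, $d_1(w)=(x+y)w$.

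First I would produce the integral classes of exponent $2$. Since $d_1 = c\circ\delta$, applying the connecting map $\delta$ to $x$, $y$, and $w$ yields classes $\mathcal{X} := \delta(x) \in H^2(D_8,\mathbb{Z})$, $\mathcal{Y} := \delta(y) \in H^2(D_8,\mathbb{Z})$, $\mathcal{M} := \delta(w) \in H^3(D_8,\mathbb{Z})$; each is killed by $2$ because $\mathrm{im}(\delta) \subseteq \ker(\times 2)$, and by construction their mod-$2$ reductions are $x^2$, $y^2$, $(x+y)w$ respectively. The relation $\mathcal{XY}=0$ would then follow from $xy=0$ via Carlson's Remark~5.7.4 applied to the $2$-torsion classes.

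The main obstacle is locating a class of exponent exactly $4$, which is needed to account for the surviving mod-$2$ generator $w^2$. The elements $xw$, $yw$, $w^2$ all lie in $\ker d_1$, and while $(x+y)w\in \mathrm{im}\,d_1$, the square $w^2$ is not; so $w^2$ persists to the first derived couple. To detect that it dies at $d_2$ I would use naturality under the restriction $\mathrm{res}_{H_2}^{D_8}$, where $H_2\cong \mathbb{Z}_4$. The Bockstein spectral sequence for $\mathbb{Z}_4$ can be computed directly from $H^*(\mathbb{Z}_4,\mathbb{Z}) = \mathbb{Z}[U]/\langle 4U\rangle$; explicit unrolling of the long exact sequence shows $d_1 = 0$ and $d_2(eu^i) = u^{i+1}$. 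The restriction diagram~(\ref{eq:diagramD8-1}) sends $xw \mapsto eu$ and $w^2 \mapsto u^2$, so by naturality $d_2[xw] = [w^2]$ in the $D_8$ spectral sequence. Hence $w^2$ is a $d_2$-boundary, and Carlson's Remark~5.7.4 produces an integral class $\mathcal{W}\in H^4(D_8,\mathbb{Z})$ of exponent $4$ with $c_*(\mathcal{W})=w^2$.

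To finish, I would observe that the second derived couple has trivial $E$-term in positive degrees (the spectral sequence converges to $\mathbb{F}_2$ concentrated in degree~$0$ because $D_8$ is finite), so no further multiplicative generators are needed. The remaining relations are read off by combining mod-$2$ reductions with the exponent information obtained above: the vanishing relations $2\mathcal{X}=2\mathcal{Y}=2\mathcal{M}=4\mathcal{W}=0$ and $\mathcal{XY}=0$ come directly from the construction, and the nontrivial identity $\mathcal{M}^2=\mathcal{W}(\mathcal{X}+\mathcal{Y})$ follows from $c_*(\mathcal{M}^2)=((x+y)w)^2 = (x^2+y^2)w^2 = c_*(\mathcal{W}(\mathcal{X}+\mathcal{Y}))$ together with the uniqueness of integral lifts of $2$-torsion classes guaranteed by Remark~5.7.4 in this range. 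The delicate step throughout is the $d_2$-computation: without the restriction to $\mathbb{Z}_4$ the exponent-$4$ phenomenon would remain invisible, and the whole presentation would collapse.
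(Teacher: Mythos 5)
Your proposal is correct and follows essentially the same route as the paper's own proof: the Bockstein exact couple for $D_8$ with $d_{1}=\mathrm{Sq}^{1}$ producing the exponent-$2$ classes $\mathcal{X},\mathcal{Y},\mathcal{M}$, the comparison with the explicitly computed Bockstein spectral sequence of $H_{2}\cong\mathbb{Z}_{4}$ via $\mathrm{res}_{H_{2}}^{D_{8}}$ to force $d_{2}[xw]=[w^{2}]$ and hence the exponent-$4$ class $\mathcal{W}$, and collapse of the second derived couple. The only places where you are as terse as the paper (reading off $\mathcal{XY}=0$ and $\mathcal{M}^{2}=\mathcal{W}(\mathcal{X}+\mathcal{Y})$ from mod-$2$ reductions, which tacitly uses that no exponent-$4$ classes occur in those degrees) are exactly the places the paper delegates to Carlson's Remark 5.7.4, so this matches the published argument.
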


\medskip

\begin{remark*}
The correspondence between the Evens' and Bockstein spectral sequence view is given by%
\begin{equation}
\mathcal{X\leftrightarrow \chi }_{1}~,~\qquad \mathcal{Y\leftrightarrow \xi
+\chi }_{1}~,~\qquad \mathcal{M\leftrightarrow \zeta ~},~\qquad \mathcal{%
W\leftrightarrow \chi }  \label{eq:Correspondence}
\end{equation}
\end{remark*}

\subsection{\label{Sec:RestrictionDiagram-Z}The $D_{8}$-diagram with
coefficients in $\mathbb{Z}$}

Let $G$ be a finite group and $R$ and $S$ rings. A ring homomorphism $\phi
:R\rightarrow S$ induces a morphism of diagrams (natural transformation of
covariant functors) $\Phi~:~$\textrm{Res}$_{(R)}\rightarrow $\textrm{Res}$%
_{(S)}$. The morphism $\Phi$ on each object $H\in \mathrm{Ob}(\mathfrak{G})$ is defined by the coefficient reduction
$\Phi(H):H^{*}(H,R)\rightarrow H^{*}(H,S)$ induced by $\phi$. Particularly in this section, as a tool for the reconstruction of
the diagram \textrm{Res}$_{(\mathbb{Z})}$, we use the diagram morphism $C:
\mathrm{Res}_{(\mathbb{Z})}\rightarrow \mathrm{Res}_{(\mathbb{F}_{2})}$
induced by the coefficient reduction homomorphism $c:\mathbb{Z}\rightarrow \mathbb{F}_{2}$.

\subsubsection{The $\mathbb{Z}_{2}\times \mathbb{Z}_{2}$-diagram}

The cohomology restriction diagram \textrm{Res}$_{(\mathbb{F}_{2})}$ of the
elementary abelian $2$-group $\mathbb{Z}_{2}\times\mathbb{Z}_{2}$ is given in the diagram (\ref{eq:DiagramZ2xZ2}). Using the
presentation of cohomology $H^{\ast }(\mathbb{Z}_{2}\times\mathbb{Z}_{2},\mathbb{Z})$ and the homomorphism $H^{\ast }(
\mathbb{Z}_{2}\times\mathbb{Z}_{2},\mathbb{Z})\rightarrow H^{\ast }(\mathbb{Z}_{2}\times\mathbb{Z}_{2},\mathbb{F}_{2})$ given
in Example \ref{Ex:SxSwithZ} we can reconstruct the restriction diagram \textrm{Res}$_{(\mathbb{Z})}$:
\begin{equation}
\begin{diagram}
&&&
\begin{tabular}{|l|l|}
\hline
$\mathbb{Z}_{2}\times\mathbb{Z}
{2}$ & $\mathbb{Z[\tau }_{1},\tau _{2}\mathbb{]\otimes Z}[\mu ]$ \\ \hline
\multicolumn{2}{|l|}{$\deg \mathbb{\tau }_{1}=\deg \tau _{2}=2$, $\deg
\mathbb{\mu }=3$} \\ \hline
\multicolumn{2}{|l|}{$2\tau _{1}=2\tau _{2}=2\mu =0,$} \\ \hline
\multicolumn{2}{|l|}{$\mu ^{2}=\tau _{1}\tau _{2}(\tau _{1}+\tau _{2})$} \\
\hline
\end{tabular}
&&&\\
&&&&&&\\
&\ldTo{\begin{array}{cc}
{\mathbb{\tau }_{1}\mapsto 0,\tau _{2}\mapsto }\mathbb{\theta }_{1}, &  \\
\mu {\mapsto 0} &
\end{array}}&&\dTo{\begin{array}{ccc}
{\mathbb{\tau }_{1}\mapsto }\mathbb{\theta }_{2}, \\
\tau _{2}\mapsto 0,\\
 \mu {\mapsto 0}
\end{array}}&&\rdTo{\begin{array}{cc}
\mathbb{\tau }_{1}\mapsto \mathbb{\theta }_{3},\tau _{2}\mapsto \mathbb{\theta }_{3}, \\
\mu {\mapsto 0}&
\end{array}}&\\
&&&&&&\\
\begin{tabular}{|l|l|}
\hline
$\mathbb{Z}_{2}$ & $\mathbb{Z[\theta }_{1}]$ \\ \hline
\multicolumn{2}{|l|}{$\deg \mathbb{\theta }_{1}=2$} \\ \hline
\multicolumn{2}{|l|}{$2\mathbb{\theta }_{1}=0$} \\ \hline
\end{tabular}&&&\begin{tabular}{|l|l|}
\hline
$\mathbb{Z}_{2}$ & $\mathbb{Z[\theta }_{2}]$ \\ \hline
\multicolumn{2}{|l|}{$\deg \mathbb{\theta }_{2}=2$} \\ \hline
\multicolumn{2}{|l|}{$2\mathbb{\theta }_{2}=0$} \\ \hline
\end{tabular}
&&&
\begin{tabular}{|l|l|}
\hline
$\mathbb{Z}_{2}$ & $\mathbb{Z[\theta }_{3}]$ \\ \hline
\multicolumn{2}{|l|}{$\deg \mathbb{\theta }_{3}=2$} \\ \hline
\multicolumn{2}{|l|}{$2\mathbb{\theta }_{3}=0$} \\ \hline
\end{tabular}
\\
\end{diagram}
\label{eq:DiagramZ2xZ2withZ}
\end{equation}

\subsubsection{\label{Sec:RestrictionDiagram-D8-Z}The $D_{8}$-diagram}

In a similar fashion, using:

\begin{compactitem}
\item the $D_{8}$ restriction diagram (\ref{eq:diagramD8-1}) and (\ref%
{eq:diagramD8-2}) with $\mathbb{F}_{2}$ coefficients,

\item the $\mathbb{Z}_{2}\times\mathbb{Z}_{2}$ restriction diagrams (\ref{eq:DiagramZ2xZ2withZ}) with
$\mathbb{Z}$ coefficients,

\item the presentation of cohomology $H^{\ast }(H_{1},\mathbb{Z})$ given in (\ref{eq:CohomologyH1withZ}),

\item the Bockstein presentation of $H^{\ast
}(D_{8},\mathbb{Z})$ given in Theorem \ref{Th:Bo-Presentation},

\item a glance at the restriction maps $\mathrm{res}_{H_{1}}^{D_{8}}$ and $%
\mathrm{res}_{H_{2}}^{D_{8}}$ obtained from the $E_{2}=E_{\infty }$ terms of
the LHS spectral sequences Figure \ref{Fig-3.1} and Figure \ref{Fig-3.2}, and

\item the homomorphism $c_{*}:H^{\ast }(D_{8},\mathbb{Z})\rightarrow H^{\ast }(D_{8},\mathbb{F}_{2})$ described in (\ref{eq:Map-j}),
\end{compactitem}
\noindent we can reconstruct the restriction diagram of $D_{8}$ with $\mathbb{Z}$ coefficients.
\begin{equation}
\begin{diagram}
&&&\begin{tabular}{|l|l|}
\hline
${D}_{8}$ & $\mathbb{Z}\lbrack \mathcal{X},\mathcal{Y},\mathcal{M},\mathcal{W}]$ \\ \hline
\multicolumn{2}{|l|}{$\deg :2,~2,~3,~4$} \\ \hline
\multicolumn{2}{|l|}{$2\mathcal{X}=2\mathcal{Y}=2\mathcal{M}=4\mathcal{W}=0%
\text{,}$} \\ \hline
\multicolumn{2}{|l|}{$\mathcal{XY}=0\text{, }\mathcal{M}^{2}=\mathcal{W(X+Y)}
$} \\ \hline
\end{tabular}&&&\\
&&&&&&\\
&\ldTo{\begin{array}{cc}
{\mathcal{X}\mapsto 0},~\mathcal{Y}{\mapsto }\mathcal{\beta },\mathcal{M}{%
\mapsto }\mu &  \\
\mathcal{W}{\mapsto }\mathcal{\alpha (\alpha +\beta )} &
\end{array}}&&\dTo_{\begin{array}{ccc}
{\mathcal{X}\mapsto }2U \\
\mathcal{M}{\mapsto }0 \\
\end{array}}^{\begin{array}{ccc}
\mathcal{Y}\mapsto 2U \\
\mathcal{W}{\mapsto U}^{2}\\
\end{array}}&&\rdTo{\begin{array}{cc}
& {\mathcal{X}\mapsto \delta },~\mathcal{Y}{\mapsto 0},\mathcal{M}{\mapsto }%
\eta \\
 & \mathcal{W}{\mapsto }\mathcal{\gamma (\gamma +\delta )}%
\end{array}}&\\
&&&&&&\\
\begin{tabular}{|l|l|}
\hline
${H}_{1}$ & $\mathbb{Z}\lbrack \mathcal{\alpha },\mathcal{\alpha +\beta },\mu ]$ \\ \hline
\multicolumn{2}{|l|}{$\deg :2,~2,~3,$} \\ \hline
\multicolumn{2}{|l|}{$2\mathcal{\alpha }=2\mathcal{\beta }=2\mu =0\text{,}$}
\\ \hline
\multicolumn{2}{|l|}{$\mu ^{2}=\alpha \beta (\mathcal{\alpha +\beta })$} \\
\hline
\end{tabular}&&&\begin{tabular}{|l|l|}
\hline
${H}_{2}$ & $\mathbb{Z}\lbrack U]$ \\ \hline
\multicolumn{2}{|l|}{$\deg :2$} \\ \hline
\multicolumn{2}{|l|}{$4U=0\text{,}$} \\ \hline
\end{tabular}&&&\begin{tabular}{|l|l|}
\hline
${H}_{3}$ & $\mathbb{Z}\lbrack \mathcal{\gamma },\mathcal{\gamma +\delta },\eta ]$ \\ \hline
\multicolumn{2}{|l|}{$\deg :2,~2,~3,$} \\ \hline
\multicolumn{2}{|l|}{$2\mathcal{\gamma }=2\mathcal{\delta }=2\eta =0\text{,}$%
} \\ \hline
\multicolumn{2}{|l|}{$\eta ^{2}=\gamma \delta (\gamma +\delta )$} \\ \hline
\end{tabular}\\
~~&&&&&&~~\\
&\rdTo{
\begin{array}{cc}
\mathcal{\alpha }{\mapsto }\theta _{3},~\mathcal{\beta }{\mapsto }0 &  \\
\mu {\mapsto }0 &
\end{array}}&&\dTo{\begin{array}{c}
U{\mapsto }\theta _{3} \\
\end{array}}&&\ldTo{\begin{array}{cc}
& \gamma {\mapsto }\theta _{3},\delta {\mapsto }0 \\
 & \eta {\mapsto 0}%
\end{array}}&\\
&&&\begin{tabular}{|c|c|c|}
\hline
$K_{3}$ & $\mathbb{Z}\lbrack \theta _{3}]$ & $\deg \theta _{3}=2$ \\ \hline
\end{tabular}&&&\\
\end{diagram}
\label{eq:diagramD8-1-Z}
\end{equation}

\noindent Now the determination of the diagram morphism $C:$\textrm{Res}$_{(\mathbb{Z}
)}\rightarrow $\textrm{Res}$_{(\mathbb{F}_{2})}$ induced by the coefficient
reduction homomorphism $c:\mathbb{Z}\rightarrow \mathbb{F}_{2}$ is just a routine exercise.

\section{\label{Sec:IndexOfSpheresD8-F2}$\mathbf{Index}_{D_{8},\mathbb{F}%
_{2}}\mathbf{S(R}_{4}^{\oplus j}\mathbf{)}$}

In this section we show the following equality:
\begin{equation*}
\mathrm{Index}_{D_{8},\mathbb{F}_{2}}S(R_{4}^{\oplus j})=\mathrm{Index}%
_{D_{8},\mathbb{F}_{2}}^{3j+1}S(R_{4}^{\oplus j})=\langle w^{j}y^{j}\rangle .
\end{equation*}

\medskip

\noindent The $D_{8}$-representation $R_{4}^{\oplus j}$ can be decomposed
into a sum of irreducibles in the following way%
\begin{equation*}
R_{4}=\left( V_{-+}\oplus V_{+-}\right) \oplus V_{--}~\Rightarrow
~R_{4}^{\oplus j}=\left( V_{-+}\oplus V_{+-}\right) ^{\oplus j}\oplus
V_{--}^{\oplus j}
\end{equation*}%
where $V_{-+}\oplus V_{+-}$ is a $2$-dimensional irreducible $D_{8}$%
-representation. Since in this section $\mathbb{F}_{2}$ coefficients are
assumed, Proposition \ref{Prop:IndexOf Join} implies that computing the
indexes of the spheres $S(V_{-+}\oplus V_{+-})$ and $S(V_{--})$ suffices.
The strategy employed uses Proposition \ref{prop:Res-Index} and the
following particular facts.

\begin{compactitem}
\item[\textbf{A.}] Let $X=S(T)$ for some $D_{8}$-representation $T$. Then
the $E_{2}$-term of the Serre spectral sequence associated to $\mathrm{E}%
D_{8}\times _{D_{8}}X$ is
\begin{equation}
E_{2}^{p,q}=H^{p}(D_{8},\mathbb{F}_{2})\otimes H^{q}(X,\mathbb{F}_{2}).
\label{eq:SSS-F2}
\end{equation}%
The local coefficients are trivial since $X$ is a sphere and the
coefficients are $\mathbb{F}_{2}$. Since only $\partial _{\dim T,\mathbb{F}%
_{2}}$ may be $\neq 0$, from the multiplicative property of the spectral
sequence it follows that
\begin{equation*}
\mathrm{Index}_{D_{8},\mathbb{F}_{2}}X=\langle \partial _{\dim V,\mathbb{F}%
_{2}}^{0,\dim V-1}(1\otimes l)\rangle
\end{equation*}%
where $l\in H^{\dim V-1}(X,\mathbb{F}_{2})$ is the generator. Therefore, $%
\mathrm{Index}_{D_{8},\mathbb{F}_{2}}(X)=\mathrm{Index}_{D_{8},\mathbb{F}%
_{2}}^{\dim V+1}(X)$.

\item[\textbf{B.}] For any subgroup $H$ of $D_{8}$, with some abuse of
notation,
\begin{equation}
\Gamma _{\dim V}^{\dim V,0}\circ \partial _{\dim V,\mathbb{F}_{2}}^{0,\dim
V-1}(1\otimes l)=\partial _{\dim V,\mathbb{F}_{2}}^{0,\dim V-1}\circ \Gamma
_{\dim V}^{0,\dim V-1}(1\otimes l),  \label{eq:01}
\end{equation}%
where $\Gamma $ denotes the restriction morphism of Serre spectral sequences
introduced in Proposition \ref{Prop:Res-1}(D). Therefore, for every subgroup
$H$ of $D_{8}$ we get%
\begin{equation*}
\mathrm{Index}_{D_{8},\mathbb{F}_{2}}X=\langle a\rangle ,\quad \mathrm{Index}%
_{H,\mathbb{F}_{2}}X=\langle a_{H}\rangle \ \ \Longrightarrow \ \ \mathrm{res%
}_{K}^{G}(a)=a_{H}.
\end{equation*}%
In particular, if $a_{H}\neq 0$ then $a\neq 0$.

Our computation of $\mathrm{Index}_{D_{8},\mathbb{F}_{2}}X$ for $%
X=S(V_{-+}\oplus V_{+-})$ and $X=S(V_{--})$ has two steps:

\begin{compactitem}[$\bullet$]
\item compute $\mathrm{Index}_{H,\mathbb{F}_{2}}X=\langle a_{H}\rangle $ for
all proper subgroups $H$ of $D_{8}$,

\item search for an element $a\in H^{\ast }(D_{8},\mathbb{F}_{2})$ such that
for every computed $a_{H}$
\begin{equation*}
\mathrm{res}_{K}^{G}(a)=a_{H}.
\end{equation*}
\end{compactitem}
\end{compactitem}

\subsection{\label{sec:Index-W}$\mathrm{Index}_{D_{8},\mathbb{F}%
_{2}}S(V_{-+}\oplus V_{+-})=\langle w\rangle $}

Proposition \ref{Prop:IndexOfSphere} and the properties of the action of $D_{8}$ on
$V_{-+}\oplus V_{+-}$ provide the following information:
\begin{equation*}
\mathrm{Index}_{H_{1},\mathbb{F}_{2}}S(V_{-+}\oplus V_{+-})=\left\{
\begin{array}{cc}
\langle a(a+b)\rangle & \text{or} \\
\langle b(a+b)\rangle & \text{or} \\
\langle ab\rangle . &
\end{array}%
\right.
\end{equation*}%
Since initially we do not know which of the possible generators $a$, $b$, $a+b$
of $\mathbb{F}_{2}[a,b]$ correspond to the generators $\varepsilon _{1}$, $%
\varepsilon _{2}$, $\varepsilon _{1}\varepsilon _{2}$, we have to take all
three possibilities into account. Similarly:
\begin{equation*}
\mathrm{Index}_{H_{3},\mathbb{F}_{2}}S(V_{-+}\oplus V_{+-})=\left\{
\begin{array}{cc}
\langle c(c+d)\rangle & \text{or} \\
\langle d(c+d)\rangle & \text{or} \\
\langle cd\rangle . &
\end{array}%
\right.
\end{equation*}%
Furthermore,
\begin{equation*}
\begin{tabular}{lll}
$\varepsilon _{1}$ acts trivially on $V_{+-}$ & $\Rightarrow $ & $\mathrm{%
Index}_{K_{1},\mathbb{F}_{2}}S(V_{-+}\oplus V_{+-})=0$ \\
$\varepsilon _{2}$ acts trivially on $V_{-+}$ & $\Rightarrow $ & $\mathrm{%
Index}_{K_{2},\mathbb{F}_{2}}S(V_{-+}\oplus V_{+-})=0$ \\
$\sigma $ acts trivially on $\{(x,x)\in V_{-+}\oplus V_{+-}\}$ & $%
\Rightarrow $ & $\mathrm{Index}_{K_{4},\mathbb{F}_{2}}S(V_{-+}\oplus
V_{+-})=0$ \\
$\varepsilon _{1}\varepsilon _{2}\sigma $ acts trivially on $\{(x,-x)\in
V_{-+}\oplus V_{+-}\}$ & $\Rightarrow $ & $\mathrm{Index}_{K_{5},\mathbb{F}%
_{2}}S(V_{-+}\oplus V_{+-})=0$.%
\end{tabular}%
\end{equation*}%
The only nonzero element of $H^{2}(D_{8},\mathbb{F}_{2})$ satisfying all
requirements of commutativity with restrictions is~$w$. Hence,
\begin{equation}
\mathrm{Index}_{D_{8},\mathbb{F}_{2}}S(V_{-+}\oplus V_{+-})=\langle w\rangle.
\label{eq:Index=w}
\end{equation}

\begin{remark*}
The side information coming from this computation is that generators $%
\varepsilon _{1}$ and $\varepsilon _{2}$ of the group $H_{1}$ correspond to
generators $a$ and $a+b$ in the cohomology ring $H^{\ast }(H_{1},\mathbb{F}_{2})$.
This correspondence suggested the choice of generators in Lemma \ref%
{Lemma:CohomologyD8}(i).
\end{remark*}

\subsection{$\mathrm{Index}_{D_{8},\mathbb{F}_{2}}S(V_{--})=\langle y\rangle
$}

Again, $V_{--}$ is a concrete $D_{8}$-representation, and from Proposition %
\ref{Prop:IndexOfSphere}:%
\begin{equation*}
\mathrm{Index}_{H_{1},\mathbb{F}_{2}}S(V_{--})=\left\{
\begin{array}{cc}
\langle a+b\rangle \text{, } & \text{or} \\
\langle a+(a+b)\rangle \text{, } & \text{or} \\
\langle b+(a+b)\rangle . &
\end{array}%
\right.
\end{equation*}%
Again, we allow all three possibilities since we do not know the
correspondence between generators of $H_{1}$ and the chosen generators of $%
H^{\ast }(H_{q},\mathbb{F}_{2})$. Furthermore, since $K_{1}$ and $K_{2}$ act
nontrivially on $V_{--}$,
\begin{equation*}
\begin{array}{cc}
\mathrm{Index}_{K_{1},\mathbb{F}_{2}}S(V_{--})=\langle t_{1}\rangle , &
\mathrm{Index}_{K_{2},\mathbb{F}_{2}}S(V_{--})=\langle t_{2}\rangle%
\end{array}%
.
\end{equation*}%
On the other hand, $H_{3}$ acts trivially on $S(V_{--})$ and so%
\begin{equation*}
\mathrm{Index}_{H_{3},\mathbb{F}_{2}}S(V_{--})=0.
\end{equation*}%
By commutativity of the restriction diagram, or since the groups $K_{3}$, $%
K_{4}$ and $K_{5}$ act trivially on $V_{(1,1)}$, it follows that
\begin{equation*}
\mathrm{Index}_{K_{3},\mathbb{F}_{2}}S(V_{--})\ =\ \mathrm{Index}_{K_{4},%
\mathbb{F}_{2}}S(V_{--})\ =\ \mathrm{Index}_{K_{5},\mathbb{F}_{2}}S(V_{--})\
=\ 0.
\end{equation*}%
The only element satisfying the commutativity requirements is $y\in
H^{1}(D_{8},\mathbb{F}_{2})$, so%
\begin{equation}
\mathrm{Index}_{D_{8},\mathbb{F}_{2}}S(V_{--})=\langle y\rangle .
\label{eq:Index=y}
\end{equation}

\begin{remark*}
\label{Rm2.F_2-joinscheme}From the previous remark the fact that $\mathrm{Index}%
_{H_{1},\mathbb{F}_{2}}S(V_{--})=\langle b\rangle =\langle a+(a+b)\rangle $
follows directly. Alternatively, equation (\ref{eq:Index=y}) is a
consequence of (\ref{eq:IndexOfS0}) and (\ref{eq:diagramD8-1}).
\end{remark*}

\subsection{$\mathrm{Index}_{D_{8},\mathbb{F}_{2}}S(R_{4}^{\oplus
j})=\langle y^{j}w^{j}\rangle $}

From Proposition \ref{Prop:IndexOf Join} we get that
\begin{equation*}
\mathrm{Index}_{D_{8},\mathbb{F}_{2}}S(R_{4}^{\oplus j})=\mathrm{Index}%
_{D_{8},\mathbb{F}_{2}}S((V_{-+}\oplus V_{+-})^{\oplus j}\oplus
V_{--}^{\oplus j})=\langle y^{j}w^{j}\rangle .
\end{equation*}

\begin{remark*}
\label{Rem:Fail-1}In the same way we can compute that%
\begin{equation}
\mathrm{Index}_{D_{8},\mathbb{F}_{2}}(U_{2})=\langle x\rangle .
\label{eq:Index=x}
\end{equation}%
Therefore $\mathrm{Index}_{D_{8},\mathbb{F}_{2}}(U_{2}\oplus R_{4}^{\oplus
j})=0$. This means that on the join CS/TM scheme the Fadell--Husseini index
theory with $\mathbb{F}_{2}$ coefficients yields no obstruction to the existence of the
equivariant map in question.
\end{remark*}

\section{\label{Sec:IndexOfSpheresD8-Z}$\mathbf{Index}_{D_{8},\mathbb{Z}}\mathbf{S(R}_{4}^{\oplus j}\mathbf{)}$}

In this section we show that%
\begin{equation}
\mathrm{Index}_{D_{8},\mathbb{\mathbb{Z}}}S(R_{4}^{\oplus j})=\mathrm{Index}_{D_{8},\mathbb{\mathbb{Z}
}}^{3j+1}S(R_{4}^{\oplus j})=\left\{
\begin{array}{ll}
\langle \mathcal{Y}^{\frac{j}{2}}\mathcal{W}^{\frac{j}{2}}\rangle , & {\
\text{for }j\text{ even}} \\
\langle \mathcal{Y}^{\frac{j+1}{2}}\mathcal{W}^{\frac{j-1}{2}}\mathcal{M},%
\mathcal{Y}^{\frac{j+1}{2}}\mathcal{W}^{\frac{j+1}{2}}\rangle , & {\ \text{%
for }j\text{ odd.}}%
\end{array}%
\right.   \label{eq:IndexSpehere-Z}
\end{equation}

\subsection{The case when $j$ is even}

We give two proofs of the equation (\ref{eq:IndexSpehere-Z}) in the case
when $j$ is even.

\medskip

\noindent \emph{Method 1:} According to definition of the complex $D_{8}$
-representations $V_{+-}^{\mathbb{C}}\oplus V_{-+}^{\mathbb{C}}$ and $V_{--}^{\mathbb{C}}$,
in Section \ref{Sec:EvansView}, we have an isomorphism of real $D_{8}$%
-representations%
\begin{equation*}
R_{4}^{\oplus j}=\left( V_{-+}\oplus V_{+-}\right) ^{\oplus j}\oplus
V_{--}^{\oplus j}\cong \left( V_{+-}^{\mathbb{C}}\oplus V_{-+}^{\mathbb{C}}\right) ^{\oplus \frac{j}{2}}\oplus \left( V_{--}^{
\mathbb{C}}\right) ^{\oplus \frac{j}{2}}\text{.}
\end{equation*}%
Thus by Propositions \ref{Prop:IndexCherClasses} and \ref{Prop:IndexOf Join}%
, properties of Chern classes \cite[(5) page 286]{Atiyah} and equations (\ref%
{eq:1-chernClasses}) and (\ref{eq:2-chernClasses}) we have that
\begin{eqnarray*}
\mathrm{Index}_{D_{8},\mathbb{\mathbb{Z}}}S(R_{4}^{\oplus j}) &=&\langle \mathrm{c}_{\frac{3j}{2}}\left( \left(
V_{+-}^{\mathbb{C}}\oplus V_{-+}^{\mathbb{C}}\right) ^{\oplus \frac{j}{2}}\oplus \left( V_{--}^{
\mathbb{C}}\right) ^{\oplus \frac{j}{2}}\right) \rangle =\langle \mathrm{c}_{2}\left(
V_{+-}^{\mathbb{C}}\oplus V_{-+}^{\mathbb{C}}\right) ^{\frac{j}{2}}\cdot \mathrm{c}_{1}\left( V_{--}^{
\mathbb{C}}\right) ^{\frac{j}{2}}\rangle \\
&=&\langle \chi ^{\frac{j}{2}}\left( \xi +\chi _{1}\right) ^{\frac{j}{2}%
}\rangle .
\end{eqnarray*}%
The correspondence between Evens' and Bockstein spectral sequence views implies the statement.

\medskip

\noindent \emph{Method 2: }The group $D_{8}$ acts trivially on the
cohomology $H^{\ast }(S(R_{4}^{\oplus j}),\mathbb{Z})$. Then the $E_{2}$-term of the Serre spectral sequence associated to
$\mathrm{E}D_{8}\times _{D_{8}}S(R_{4}^{\oplus j})$ is a tensor product%
\begin{equation*}
E_{2}^{p,q}=H^{p}(D_{8},\mathbb{Z})\otimes H^{q}(S(R_{4}^{\oplus j}),\mathbb{Z}).
\end{equation*}
Since only $\partial _{3j,\mathbb{Z}}$ may be $\neq 0$, the multiplicative property of the spectral sequence
implies that
\begin{equation*}
\mathrm{Index}_{D_{8},\mathbb{Z}}S(R_{4}^{\oplus j})=\mathrm{Index}_{D_{8},\mathbb{Z}
}^{\dim V+1}S(R_{4}^{\oplus j})=\langle \partial _{3j,\mathbb{Z}}^{0,3j-1}(1\otimes l)\rangle
\end{equation*}%
where $l\in H^{3j-1}(S(R_{4}^{\oplus j}),\mathbb{Z})$ is a generator. The coefficient reduction morphism $c:\mathbb{Z}\rightarrow \mathbb{F}_{2}$ induces a morphism of Serre spectral sequences (Proposition \ref{Prop:Res-1}. E. 3) associated with the Borel construction
of the sphere $S(R_{4}^{\oplus j})$. Thus,
\begin{equation*}
s_{\ast }\left( \partial _{3j,\mathbb{\mathbb{Z}}}^{0,3j-1}(1\otimes l)\right) =\partial _{3j,\mathbb{F}_{2}}^{0,3j-1}\left(
s_{\ast }(1\otimes l)\right) \in H^{3j}(D_{8},\mathbb{F}_{2})
\end{equation*}%
and according to the result of the previous section%
\begin{equation*}
s_{\ast }\left( \partial _{3j,\mathbb{\mathbb{Z}}}^{0,3j-1}(1\otimes l)\right) =y^{j}w^{j}\text{.}
\end{equation*}%
Now, from the description of the map $c_{*}:H^{\ast }(D_{8},\mathbb{Z}%
)\longrightarrow H^{\ast }(D_{8},\mathbb{F}_{2})$ in (\ref{eq:Map-j})
follows the statement for $j$ even.

\subsection{The case when $j$ is odd}

The group $D_{8}$ acts nontrivially on the cohomology $H^{\ast
}(S(R_{4}^{\oplus j}),\mathbb{\mathbb{Z}})$. Precisely, the $D_{8}$-module $\mathcal{Z}=H^{3j-1}(S(R_{4}^{\oplus j}),%
\mathbb{\mathbb{Z}})$ is a nontrivial $D_{8}$-module and for $z\in \mathcal{Z}$:
\begin{equation*}
\varepsilon _{1}\cdot z=z\text{, \qquad }\varepsilon _{2}\cdot z=z\text{,
\qquad }\sigma \cdot z=-z.
\end{equation*}%
Then the $E_{2}$-term of the Serre spectral sequence associated to $\mathrm{E%
}D_{8}\times _{D_{8}}S(R_{4}^{\oplus j})$ is not a tensor product and
\begin{equation}
E_{2}^{p,q}=H^{p}(D_{8},H^{q}(S(R_{4}^{\oplus j}),\mathbb{\mathbb{Z}}))=\left\{
\begin{array}{ll}
H^{p}(D_{8},\mathbb{\mathbb{Z}}) & \text{, }q=0 \\
H^{p}(D_{8},\mathcal{Z}) & \text{, }q=3j-1 \\
0 & \text{, }q\neq 0,3j-1.
\end{array}%
\right.   \label{eq:SSS-Z}
\end{equation}%
To compute the index in this case we have to study the $H^{\ast }(D_{8},
\mathbb{Z})$-module structure of $H^{\ast }(D_{8},\mathcal{Z})$. Since the use of
LHS-spectral sequence, as in the case of field coefficients (Proposition %
\ref{Prop:Modulestructure}), cannot be of significant help we apply the
Bockstein spectral sequence associated with the following exact sequence of $%
D_{8}$-modules:%
\begin{equation}
0\rightarrow \mathcal{Z}\overset{\times 2}{\rightarrow }\mathcal{Z}%
\rightarrow \mathbb{F}_{2}\rightarrow 0 .  \label{eq:ExamSeqofModules-1}
\end{equation}

\begin{proposition}
\label{Prop:Modul_ZC}\qquad
\begin{compactenum}[\rm(A)]
\item $2\cdot H^{\ast }(D_{8},\mathcal{Z})=0$

\item $H^{\ast }(D_{8},\mathcal{Z})$ is generated as a $H^{\ast }(D_{8},%
\mathbb{\mathbb{Z}})$-module by three elements $\rho _{1}$, $\rho _{2}$, $\rho _{3}$ of degree
$1$, $2$, $3$ such that
\begin{equation*}
\rho _{1}\cdot \mathcal{Y}=0\text{, }\rho _{2}\cdot \mathcal{X}=0\text{, }%
\rho _{3}\cdot \mathcal{X}=0
\end{equation*}%
and
\begin{equation*}
c(\rho _{1})=x,~c(\rho _{2})=y^{2},~c(\rho _{3})=yw
\end{equation*}%
where $c$ is the map induced by the map $\mathcal{Z}\rightarrow \mathbb{F}%
_{2}$ from the exact sequence (\ref{eq:ExamSeqofModules-1}).

\end{compactenum}
\end{proposition}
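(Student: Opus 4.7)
The plan is to apply the Bockstein spectral sequence associated with the short exact sequence of $D_{8}$-modules
\[
0 \to \mathcal{Z} \xrightarrow{\,\times 2\,} \mathcal{Z} \to \mathbb{F}_{2} \to 0,
\]
which is well-defined because the sign action on $\mathcal{Z}$ is trivial mod~$2$. The $E_{1}$-page is $H^{*}(D_{8},\mathbb{F}_{2})=\mathbb{F}_{2}[x,y,w]/\langle xy\rangle$ and the sequence converges to $(H^{*}(D_{8},\mathcal{Z})/\mathrm{torsion})\otimes \mathbb{F}_{2}$; since $H^{0}(D_{8},\mathcal{Z})=\mathcal{Z}^{D_{8}}=0$ (as $\sigma$ acts by $-1$) and higher cohomology of a finite group is torsion, this limit vanishes.

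First I would identify the first differential $d_{1}=c\circ\delta$ explicitly. A cochain-level computation using the twisted coboundary for $\mathcal{Z}$-valued cochains — where the leading face carries the factor $\lambda(g_{1})\in\{\pm 1\}$ for $\lambda:D_{8}\to D_{8}/H_{1}\cong \mathbb{Z}_{2}$ the homomorphism defining $\mathcal{Z}$ — gives
\[
d_{1}(u) = \mathrm{Sq}^{1}(u) + x \cdot u
\]
on $H^{*}(D_{8},\mathbb{F}_{2})$, with $x$ the class dual to $\lambda$, which coincides with the generator $x$ of the presentation. The identity $d_{1}^{2}=0$ then follows from $\mathrm{Sq}^{1}\circ\mathrm{Sq}^{1}=0$, $\mathrm{Sq}^{1}(x)=x^{2}$, and characteristic~$2$.

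Next I would verify $E_{2}=0$ in positive total degree by a monomial calculation on the basis $\{x^{a}w^{c}:a,c\geq 0\}\cup\{y^{b}w^{c}:b\geq 1,\,c\geq 0\}$. Using
\[
d_{1}(x^{a}w^{c}) = (a+c+1)\,x^{a+1}w^{c}\text{ for }a\geq 1, \qquad d_{1}(y^{b}w^{c}) = (b+c)\,y^{b+1}w^{c}\text{ for }b\geq 1,
\]
and $d_{1}(w^{c})=xw^{c}$ or $yw^{c}$ depending on the parity of $c$, a parity bookkeeping shows $\ker(d_{1})=\mathrm{im}(d_{1})$ in every positive degree. Since $E_{r}^{n}$ in the Bockstein spectral sequence detects elements of $H^{n}(D_{8},\mathcal{Z})$ of order exactly $2^{r}$, vanishing of $E_{2}$ forces $2\cdot H^{*}(D_{8},\mathcal{Z})=0$, proving (A).

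For (B) I would use (A) to split the long exact sequence into short exact sequences $0\to H^{n}(D_{8},\mathcal{Z})\xrightarrow{\,c\,} H^{n}(D_{8},\mathbb{F}_{2})\xrightarrow{\,\delta\,} H^{n+1}(D_{8},\mathcal{Z})\to 0$, so $c$ is injective with image $\ker(d_{1})$. Then I would define $\rho_{1},\rho_{2},\rho_{3}$ as the unique $c$-preimages of $x$, $y^{2}$, $yw$; all three lie in $\ker(d_{1})$ by a direct check using the formula for $d_{1}$. The three relations $\rho_{1}\cdot\mathcal{Y}=\rho_{2}\cdot\mathcal{X}=\rho_{3}\cdot\mathcal{X}=0$ follow by applying $c$ and invoking $c_{\ast}(\mathcal{X})=x^{2}$, $c_{\ast}(\mathcal{Y})=y^{2}$ together with $xy=0$. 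To see that $\rho_{1},\rho_{2},\rho_{3}$ generate the module, I would classify kernel monomials: every $x^{a}w^{c}$ with $a+c$ odd equals either $x\cdot c_{\ast}(\mathcal{X}^{(a-1)/2}\mathcal{W}^{c/2})$ (when $a$ is odd) or $x\cdot c_{\ast}(\mathcal{X}^{(a-2)/2}\mathcal{M}\mathcal{W}^{(c-1)/2})$ (when $a$ is even, using $c_{\ast}(\mathcal{M})=(x+y)w$ and $xy=0$ to kill the auxiliary $y$-term), and every $y^{b}w^{c}$ with $b+c$ even is either $y^{2}\cdot c_{\ast}(\mathcal{Y}^{(b-2)/2}\mathcal{W}^{c/2})$ or $yw\cdot c_{\ast}(\mathcal{Y}^{(b-1)/2}\mathcal{W}^{(c-1)/2})$ according to the parity of $b$. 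The main obstacle I anticipate is paragraph~2: the twisted Bockstein is not $\mathrm{Sq}^{1}$ but differs by the correction $x\cdot(-)$, and this term must be extracted carefully from the cochain-level coboundary involving the sign twist~$\lambda$.
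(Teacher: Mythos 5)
Your proposal is correct, and although it runs on the same Bockstein exact couple for the sequence $0\to\mathcal{Z}\to\mathcal{Z}\to\mathbb{F}_2\to 0$ that the paper uses, the way you extract the answer from it is genuinely different. The paper never writes a closed formula for $d_1$: it pins down $d_1(1),d_1(x),d_1(y),d_1(w)$ by mapping the $D_8$ couple to the couples of $H_1$ (where $\mathcal{Z}$ is trivial and $d_1=\mathrm{Sq}^1$), of $H_2\cong\mathbb{Z}_4$ and of $K_4\cong\mathbb{Z}_2$ (where the long exact sequences are unrolled by hand, with $K_4$ excluding the alternatives $y^2+x^2$ and $yw+x^3$), and then invokes Carlson's Remark 5.7.4 to produce $\rho_1,\rho_2,\rho_3$, getting (A) from the vanishing of the derived couple. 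You instead use the closed twisted-Bockstein formula $d_1=\mathrm{Sq}^1+x\cup(-)$, where $x$ is exactly the class of the twisting homomorphism $D_8\to\mathbb{Z}_2$ with kernel $H_1$; this formula is correct, and reassuringly it reproduces on $1,x,y,w$ precisely the values the paper obtains by restriction, which you can use as an independent cross-check. Your monomial bookkeeping is sound: $d_1$ sends distinct basis monomials to distinct basis monomials or to zero, so kernel and image are monomially spanned, and the parity count does give $\ker d_1=\mathrm{im}\,d_1$ in every degree, whence $E_2=0$. For (A) the cleanest phrasing of your "detection" step is via exactness of the derived couple: $E_2=0$ makes multiplication by $2$ injective on $2\cdot H^{*}(D_8,\mathcal{Z})$, which is impossible for a nonzero group annihilated by $|D_8|$, and $H^{0}(D_8,\mathcal{Z})=\mathcal{Z}^{D_8}=0$. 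What your route buys is a self-contained proof of the generation statement in (B) — the identification $H^{*}(D_8,\mathcal{Z})\cong\ker d_1$ via the injective map $c$, plus your explicit monomial identities over $c_{*}(\mathcal{X}),c_{*}(\mathcal{Y}),c_{*}(\mathcal{M}),c_{*}(\mathcal{W})$ — where the paper outsources this to Carlson's remark; the price is that the one step you flag, the cochain-level derivation of $d_1=\mathrm{Sq}^1+x\cup(-)$ (the leading face of the twisted coboundary contributes $(\lambda(g_1)-1)\tilde u=-2\chi(g_1)\tilde u$, giving the correction $x\cup u$ after dividing by $2$), must indeed be carried out or properly cited, since your entire $E_2$ computation rests on it; if you prefer to avoid it, the paper's subgroup restrictions give the same values on the generators at the cost of losing the global formula.
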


\begin{proof}
The strategy of the proof is to consider four exact couples induced by the
exact sequence (\ref{eq:ExamSeqofModules-1}):
\begin{diagram}
H^{\ast }(D_{8},\mathcal{Z}) & &\rTo{\times 2}& & H^{\ast }(D_{8},\mathcal{Z}) & ~~& H^{\ast }(H_{1},\mathcal{Z}) & &\rTo{\times 2}& & H^{\ast }(H_{1},\mathcal{Z}) &~~&\\
& \luTo{\delta}&&\ldTo{c} && ~~& & \luTo{\delta}&&\ldTo{c} &&~~&\\
& & H^{\ast }(D_{8},\mathbb{F}_{2}) &&&  ~~& & & H^{\ast }(H_{1},\mathbb{F}_{2}) &&&~~&\\
\end{diagram}
\begin{diagram}
H^{\ast }(H_{2},\mathcal{Z}) & &\rTo{\times 2}& & H^{\ast }(H_{2},\mathcal{Z}) & ~~& H^{\ast }(K_{4},\mathcal{Z}) & &\rTo{\times 2}& & H^{\ast }(K_{4},\mathcal{Z}) &~~&\\
& \luTo{\delta}&&\ldTo{c} && ~~& & \luTo{\delta}&&\ldTo{c} &&~~&\\
& & H^{\ast }(H_{2},\mathbb{F}_{2}) &&&  ~~& & & H^{\ast }(K_{4},\mathbb{F}_{2}) &&&~~&\\
\end{diagram}
and the corresponding morphisms induced by $\mathrm{res}_{H_{1}}^{D_{8}}$, $%
\mathrm{res}_{H_{2}}^{D_{8}}$ and $\mathrm{res}_{K_{4}}^{D_{8}}$. Our
notation is as in the restriction diagram (\ref{eq:diagramD8-1}).

\begin{compactenum}
\item The module $\mathcal{Z}$ as a $H_{1}$-module is a trivial module.
Therefore in the $H_{1}$ exact couple $d_{1}$ is the usual Bockstein
homomorphism and so%
\begin{equation*}
d_{1}(a)=\mathrm{Sq}^{1}(a)=a^{2},~\qquad d_{1}(b)=\mathrm{Sq}^{1}(b)=b^{2}.
\end{equation*}%
Thus from the restriction homomorphism $\mathrm{res}_{H_{1}}^{D_{8}}$ we
have:%
\begin{equation}
\begin{array}{lll}
\mathrm{res}_{H_{1}}^{D_{8}}\left( d_{1}(1)\right) =d_{1}\left( \mathrm{res}%
_{H_{1}}^{D_{8}}\left( 1\right) \right) =d_{1}(1)=0 & \Rightarrow &
d_{1}(1)\in \ker \left( \mathrm{res}_{H_{1}}^{D_{8}}\right) , \\
\mathrm{res}_{H_{1}}^{D_{8}}\left( d_{1}(x)\right) =d_{1}\left( \mathrm{res}%
_{H_{1}}^{D_{8}}\left( x\right) \right) =d_{1}(0)=0 & \Rightarrow &
d_{1}(x)\in \ker \left( \mathrm{res}_{H_{1}}^{D_{8}}\right) , \\
\mathrm{res}_{H_{1}}^{D_{8}}\left( d_{1}(y)\right) =d_{1}\left( \mathrm{res}%
_{H_{1}}^{D_{8}}\left( y\right) \right) =d_{1}(b)=b^{2} & \Rightarrow &
d_{1}(y)\in y^{2}+\ker \left( \mathrm{res}_{H_{1}}^{D_{8}}\right) , \\
\mathrm{res}_{H_{1}}^{D_{8}}\left( d_{1}(w)\right) =d_{1}\left( \mathrm{res}%
_{H_{1}}^{D_{8}}\left( w\right) \right) =ba(a+b) & \Rightarrow & d_{1}(w)\in
yw+\ker \left( \mathrm{res}_{H_{1}}^{D_{8}}\right) .%
\end{array}
\label{proof:rel-1}
\end{equation}

\item The module $\mathcal{Z}$ as a $H_{2}$-module is a non-trivial module.
The $H_{2}\cong \mathbb{Z}_{4}$ exact couple unrolls into a long exact
sequence \cite[Proposition 6.1, page 71]{Brown}
\begin{diagram}
0 & \rTo &
\begin{array}{c}
H^{0}(\mathbb{Z}_{4},\mathcal{Z}) \\
0%
\end{array}
& \rTo{\times 2} &
\begin{array}{c}
H^{0}(\mathbb{Z}_{4},\mathcal{Z}) \\
0%
\end{array}
& \rTo &
\begin{array}{c}
H^{0}(\mathbb{Z}_{4},\mathbb{F}_{2}) \\
\begin{array}{cc}
\mathbb{F}_{2} & ,~1%
\end{array}%
\end{array}
\\
& \rTo^{\delta } &
\begin{array}{c}
H^{1}(\mathbb{Z}_{4},\mathcal{Z}) \\
\begin{array}{cc}
\mathbb{F}_{2} & ,\lambda%
\end{array}%
\end{array}
& \rTo{\times 2} &
\begin{array}{c}
H^{1}(\mathbb{Z}_{4},\mathcal{Z}) \\
\begin{array}{cc}
\mathbb{F}_{2} & ,\lambda%
\end{array}%
\end{array}
& \rTo &
\begin{array}{c}
H^{1}(\mathbb{Z}_{4},\mathbb{F}_{2}) \\
\begin{array}{cc}
\mathbb{F}_{2} & ,~e%
\end{array}%
\end{array}
\\
& \rTo^{\delta } &
\begin{array}{c}
H^{2}(\mathbb{Z}_{4},\mathcal{Z}) \\
0
\end{array}
& \rTo{\times 2} &
\begin{array}{c}
H^{2}(\mathbb{Z}_{4},\mathcal{Z}) \\
0
\end{array}
& \rTo&
\begin{array}{c}
H^{2}(\mathbb{Z}_{4},\mathbb{F}_{2}) \\
\begin{array}{cc}
\mathbb{F}_{2} & ,~u%
\end{array}%
\end{array}
\\
& \rTo^{\delta } &
\begin{array}{c}
H^{3}(\mathbb{Z}_{4},\mathcal{Z}) \\
\begin{array}{cc}
\mathbb{F}_{2} & ,\lambda U%
\end{array}%
\end{array}
& \rTo{\times 2} &
\begin{array}{c}
H^{3}(\mathbb{Z}_{4},\mathcal{Z}) \\
\begin{array}{cc}
\mathbb{F}_{2} & ,\lambda U%
\end{array}%
\end{array}
& \rTo &
\begin{array}{c}
H^{3}(\mathbb{Z}_{4},\mathbb{F}_{2}) \\
\begin{array}{cc}
\mathbb{F}_{2} & ,~eu%
\end{array}%
\end{array}
\\
& \rTo^{\delta } &
\begin{array}{c}
H^{4}(\mathbb{Z}_{4},\mathcal{Z}) \\
0
\end{array}
&  & \ldots &  &
\end{diagram}
Here we have used the facts that $H^{i}(\mathbb{Z}_{4},\mathcal{Z})=\left\{
\begin{array}{ll}
\mathbb{F}_{2} & { \text{, }i\text{ odd}} \\
0 & { \text{, }i\text{ even}}%
\end{array}%
\right. $ and that multiplication by $U\in H^{2}(\mathbb{Z}_{4},\mathbb{Z})$
in $H^{\ast }(\mathbb{Z}_{4},\mathcal{Z})$ is an isomorphism \cite[Section
XII. 7. pages 250-253]{CaEi}. The long exact sequence describes the boundary
operator:
\begin{equation*}
\delta (1)=\lambda \text{, \quad }\delta (e)=0\text{, \quad }\delta
(u)=\lambda U
\end{equation*}%
and consequently the first differential:%
\begin{equation*}
d_{1}(1)=e\text{, \quad }d_{1}(e)=0\text{, \quad }d_{1}(u)=eu.
\end{equation*}%
The restriction homomorphism $\mathrm{res}_{H_{2}}^{D_{8}}$ implies that:%
\begin{equation}
\begin{array}{lll}
\mathrm{res}_{H_{2}}^{D_{8}}\left( d_{1}(1)\right) =d_{1}\left( \mathrm{res}%
_{H_{1}}^{D_{8}}\left( 1\right) \right) =d_{1}(1)=e & \Rightarrow &
d_{1}(1)\in x+\ker \left( \mathrm{res}_{H_{2}}^{D_{8}}\right) , \\
\mathrm{res}_{H_{1}}^{D_{8}}\left( d_{1}(x)\right) =d_{1}\left( \mathrm{res}%
_{H_{1}}^{D_{8}}\left( x\right) \right) =d_{1}(e)=0 & \Rightarrow &
d_{1}(x)\in \ker \left( \mathrm{res}_{H_{2}}^{D_{8}}\right) , \\
\mathrm{res}_{H_{1}}^{D_{8}}\left( d_{1}(y)\right) =d_{1}\left( \mathrm{res}%
_{H_{1}}^{D_{8}}\left( y\right) \right) =d_{1}(e)=0 & \Rightarrow &
d_{1}(y)\in \ker \left( \mathrm{res}_{H_{2}}^{D_{8}}\right) , \\
\mathrm{res}_{H_{1}}^{D_{8}}\left( d_{1}(w)\right) =d_{1}\left( \mathrm{res}%
_{H_{1}}^{D_{8}}\left( w\right) \right) =d_{1}(u)=eu & \Rightarrow &
d_{1}(w)\in yw+\ker \left( \mathrm{res}_{H_{2}}^{D_{8}}\right) .%
\end{array}
\label{proof:rel-2}
\end{equation}

\item The module $\mathcal{Z}$ as a $K_{4}$-module is a non-trivial module.
Then the $K_{4}\cong \mathbb{Z}_{2}$ exact couple unrolls into%
\begin{equation}
\begin{diagram}
0 & \rTo &
\begin{array}{c}
H^{0}(\mathbb{Z}_{2},\mathcal{Z}) \\
0%
\end{array}
& \rTo{\times 2} &
\begin{array}{c}
H^{0}(\mathbb{Z}_{2},\mathcal{Z}) \\
0%
\end{array}
& \rTo &
\begin{array}{c}
H^{0}(\mathbb{Z}_{2},\mathbb{F}_{2}) \\
\begin{array}{cc}
\mathbb{F}_{2} & ,~1%
\end{array}%
\end{array}
\\
& \rTo^{\delta } &
\begin{array}{c}
H^{1}(\mathbb{Z}_{2},\mathcal{Z}) \\
\begin{array}{cc}
\mathbb{F}_{2} & ,\varphi%
\end{array}%
\end{array}
& \rTo{\times 2} &
\begin{array}{c}
H^{1}(\mathbb{Z}_{2},\mathcal{Z}) \\
\begin{array}{cc}
\mathbb{F}_{2} & ,\varphi%
\end{array}%
\end{array}
& \rTo &
\begin{array}{c}
H^{1}(\mathbb{Z}_{2},\mathbb{F}_{2}) \\
\begin{array}{cc}
\mathbb{F}_{2} & ,~t_4%
\end{array}%
\end{array}
\\
& \rTo^{\delta } &
\begin{array}{c}
H^{2}(\mathbb{Z}_{2},\mathcal{Z}) \\
0
\end{array}
& \rTo{\times 2} &
\begin{array}{c}
H^{2}(\mathbb{Z}_{2},\mathcal{Z}) \\
0
\end{array}
& \rTo&
\begin{array}{c}
H^{2}(\mathbb{Z}_{2},\mathbb{F}_{2}) \\
\begin{array}{cc}
\mathbb{F}_{2} & ,~t_4^{2}%
\end{array}%
\end{array}
\\
& \rTo^{\delta } &
\begin{array}{c}
H^{3}(\mathbb{Z}_{2},\mathcal{Z}) \\
\begin{array}{cc}
\mathbb{F}_{2} & ,\varphi T%
\end{array}%
\end{array}
& \rTo{\times 2} &
\begin{array}{c}
H^{3}(\mathbb{Z}_{2},\mathcal{Z}) \\
\begin{array}{cc}
\mathbb{F}_{2} & ,\varphi T%
\end{array}%
\end{array}
& \rTo &
\begin{array}{c}
H^{3}(\mathbb{Z}_{2},\mathbb{F}_{2}) \\
\begin{array}{cc}
\mathbb{F}_{2} & ,~t_4^{3}%
\end{array}%
\end{array}
\\
& \rTo^{\delta } &
\begin{array}{c}
H^{4}(\mathbb{Z}_{2},\mathcal{Z}) \\
0
\end{array}
&  & \ldots &  &
\end{diagram}
\label{proof:rel-2.1}
\end{equation}
Similarly, $H^{i}(\mathbb{Z}_{2},\mathcal{Z})=\left\{
\begin{array}{ll}
\mathbb{F}_{2} & { \text{, }i\text{ odd}} \\
0 & { \text{, }i\text{ even}}%
\end{array}%
\right. $ and multiplication by $T\in H^{2}(\mathbb{Z}_{2},\mathbb{Z})$ in $%
H^{\ast }(\mathbb{Z}_{2},\mathcal{Z})$ is an isomorphism \cite[Section XII.
7. pages 250-253]{CaEi}. Then%
\begin{equation*}
d_{1}(1)=t_{4},~\qquad d_{1}(t_{4}^{2i+1})=0,~\qquad
d_{1}(t_{4}^{2i})=t_{4}^{2i+1}
\end{equation*}%
for $i\geq 0$. This implies that%
\begin{equation}
\mathrm{res}_{K_{4}}^{D_{8}}\left( d_{1}(w)\right) =d_{1}\left( \mathrm{res}%
_{H_{1}}^{D_{8}}\left( w\right) \right) =d_{1}(0)=0  \label{proof:rel-3}
\end{equation}%
and%
\begin{equation}
\mathrm{res}_{K_{4}}^{D_{8}}\left( d_{1}(y)\right) =d_{1}\left( \mathrm{res}%
_{H_{1}}^{D_{8}}\left( y\right) \right) =d_{1}(0)=0.  \label{proof:rel-3-1}
\end{equation}
\end{compactenum}

\noindent From (\ref{proof:rel-1}), (\ref{proof:rel-2}) and the restriction
diagram (\ref{eq:diagramD8-1}) follows:%
\begin{equation*}
d_{1}(1)=x,~\qquad d_{1}(x)=0,
\end{equation*}%
and%
\begin{equation}
d_{1}(w)\in \{yw,~yw+x^{3}\}\text{ \quad and \quad }d_{1}(y)\in
\{y^{2},y^{2}+x^{2}\}\text{.}  \label{proof:rel-4}
\end{equation}%
Since $\mathrm{res}_{K_{4}}^{D_{8}}\left( yw\right) =0$, $\mathrm{res}%
_{K_{4}}^{D_{8}}\left( yw+x^{3}\right) =t_{4}^{3}\neq 0$ and $\mathrm{res}%
_{K_{4}}^{D_{8}}\left( y^{2}\right) =0$, $\mathrm{res}_{K_{4}}^{D_{8}}\left(
y^{2}+x^{2}\right) =t_{4}^{2}\neq 0$, then the equations (\ref{proof:rel-3})
and (\ref{proof:rel-3-1}) resolve the final dilemmas (\ref{proof:rel-4}).
Thus $d_{1}(w)=yw$. According to \cite[Remark 5.7.4, page 108]{Carlson}
there are elements $\rho _{1}$, $\rho _{2}$, $\rho _{3}$ of degree $1$, $2$,
$3$ and of exponent $2$ in $H^{\ast }(D_{8},\mathcal{Z})$ satisfying
property (B) of this proposition.

The property (A) follows from the properties of Bockstein spectral sequence
and the fact that the derived couple of the $D_{8}$ exact couple is:
\begin{diagram}[size=2em,textflow]
0&&\rTo&&0\\
&\luTo{\delta _1}&&\ldTo{c_1}&\\
&&\mathbb{F}_{2}&&\\
\end{diagram}

\noindent where $\mathbb{F}_{2}$ appears in dimension $0$.
\end{proof}

\begin{remark*}
The proposition does not describes the complete $H^{*}(D_8,\mathbb{Z})$-modulo structure on $H^{*}(D_8,\mathcal{Z})$. It gives only the necessary information for the computation of $\mathrm{Index}_{D_{8},\mathbb{\mathbb{Z}}}S(R_{4}^{\oplus j})$. The complete result can be found
in \cite[Theorem 5.11.(a)]{Hand}.
\end{remark*}
\medskip

\noindent Thus, the index is given by%
\begin{equation*}
\mathrm{Index}_{D_{8},\mathbb{\mathbb{Z}}}S(R_{4}^{\oplus j})=\langle \partial _{3j,\mathbb{
\mathbb{Z}}}^{1,3j-1}(\rho _{1}),\partial _{3j,\mathbb{\mathbb{Z}}}^{2,3j-1}(\rho _{2}),\partial _{3j,\mathbb{
\mathbb{Z}}}^{3,3j-1}(\rho _{3})\rangle .
\end{equation*}%
The morphism $C$ from spectral sequence (\ref{eq:SSS-Z}) to spectral
sequence (\ref{eq:SSS-F2}) induced by the reduction homomorphism $\mathbb{Z\rightarrow F}_{2}$ implies that:%
\begin{equation}
\begin{array}{cccccccc}
C(\partial _{3j,\mathbb{\mathbb{Z}}}^{1,3j-1}(\rho _{1})) & = & \partial _{3j,\mathbb{F}_{2}}^{1,3j-1}(c_{*}(\rho
_{1})) & = & \partial _{3j,\mathbb{F}_{2}}^{1,3j-1}(x) & = & 0 &  \\
C(\partial _{3j,\mathbb{\mathbb{Z}}}^{2,3j-1}(\rho _{2})) & = & \partial _{3j,\mathbb{F}_{2}}^{2,3j-1}(c_{*}(\rho
_{2})) & = & \partial _{3j,\mathbb{F}_{2}}^{2,3j-1}(y^{2}) & = & y^{j+2}w^{j}
& =y^{j+1}w^{j-1}(y+x)w \\
C(\partial _{3j,\mathbb{Z}}^{3,3j-1}(\rho _{3})) & = & \partial _{3j,\mathbb{F}_{2}}^{3,3j-1}(c_{*}(\rho
_{3})) & = & \partial _{3j,\mathbb{F}_{2}}^{3,3j-1}(yw) & = & y^{j+1}w^{j+1}
&
\end{array}
\label{proof:rel-5}
\end{equation}

\medskip

\noindent The sequence of $D_{8}$ inclusion maps
\begin{equation*}
S(R_{4}^{\oplus (j-1)})\subset S(R_{4}^{\oplus j})\subset S(R_{4}^{\oplus
(j+1)})
\end{equation*}%
provides (Proposition \ref{prop:basic}) a sequence of inclusions:%
\begin{equation}
\langle \mathcal{Y}^{\frac{j-1}{2}}\mathcal{W}^{\frac{j-1}{2}}\rangle =%
\mathrm{Index}_{D_{8},\mathbb{Z}}S(R_{4}^{\oplus (j-1)})\supseteq \mathrm{Index}_{D_{8},
\mathbb{Z}}S(R_{4}^{\oplus j})\supseteq \mathrm{Index}_{D_{8},\mathbb{Z}}S(R_{4}^{\oplus (j+1)})=\langle \mathcal{Y}^{\frac{j+1}{2}}\mathcal{W}^{
\frac{j+1}{2}}\rangle .  \label{proof:rel-6}
\end{equation}

\medskip

\noindent The relations (\ref{proof:rel-5}), (\ref{proof:rel-6}) and (\ref%
{eq:Map-j}), along with Proposition \ref{Prop:Modul_ZC} imply that for $j$
odd:

\begin{equation*}
\mathrm{Index}_{D_{8},\mathbb{\mathbb{Z}}}S(R_{4}^{\oplus j})=\langle \mathcal{Y}^{\frac{j+1}{2}}\mathcal{W}^{\frac{%
j-1}{2}}\mathcal{M},\mathcal{Y}^{\frac{j+1}{2}}\mathcal{W}^{\frac{j+1}{2}%
}\rangle .
\end{equation*}

\begin{remark*}
\label{Rem:Fail-2} The index $\mathrm{Index}_{D_{8},\mathbb{\mathbb{Z}
}}S(U_{k}\times R_{4}^{\oplus j})$ appearing in the join test map scheme
can now be computed. From Example \ref{Ex:IndexS0} and the restriction
diagram (\ref{eq:diagramD8-1-Z}) it follows that%
\begin{equation*}
\mathrm{Index}_{D_{8},\mathbb{
\mathbb{Z}}}S(U_{k})=\mathrm{Index}_{D_{8},\mathbb{
\mathbb{Z}}}D_{8}/H_{1}=\ker \left( \mathrm{res}_{H_{1}}^{D_{8}}:H^{\ast }(D_{8},
\mathbb{Z})\rightarrow H^{\ast }(H_{1},\mathbb{Z})\right) =\langle \mathcal{X}\rangle .
\end{equation*}%
The inclusions
\begin{equation*}
\mathrm{Index}_{D_{8},\mathbb{\mathbb{Z}}}S(U_{k}\times R_{4}^{\oplus j})\subseteq \mathrm{Index}_{D_{8},\mathbb{
\mathbb{Z}}}S(R_{4}^{\oplus j})\text{ \quad and \quad }\mathrm{Index}_{D_{8},\mathbb{
\mathbb{Z}}}S(U_{k}\times R_{4}^{\oplus j})\subseteq \mathrm{Index}_{D_{8},\mathbb{\mathbb{Z}}}S(U_{k})
\end{equation*}%
imply that
\begin{equation*}
\mathrm{Index}_{D_{8},\mathbb{\mathbb{Z}}}S(U_{k}\times R_{4}^{\oplus j})\subseteq \mathrm{Index}_{D_{8},\mathbb{
\mathbb{Z}}}S(R_{4}^{\oplus j})\cap \mathrm{Index}_{D_{8},\mathbb{\mathbb{Z}}}S(U_{k})=\{0\}.
\end{equation*}%
Thus, as in the case of $\mathbb{F}_{2}$ coefficients, the Fadell--Husseini
index theory with $\mathbb{Z}$ coefficients on the join CS/TM scheme does
not lead to any obstruction to the existence of the
equivariant map in question.
\end{remark*}

\section{\label{Sec:IndexProductD8}$\mathbf{Index}_{D_{8},\mathbb{F}_{2}}%
\mathbf{S}^{d}\mathbf{\times S}^{d}$}

This section is devoted to the proof of the equality%
\begin{equation}
\mathrm{Index}_{D_{8},\mathbb{F}_{2}}S^{d}\times S^{d}=\langle \pi
_{d+1},\pi _{d+2},w^{d+1}\rangle .  \label{eq:IndexSxS-F2}
\end{equation}%
The index will be determined by the explicit computation of the Serre
spectral sequence associated with the Borel construction%
\begin{equation*}
S^{d}\times S^{d}\rightarrow \mathrm{E}D_{8}\times _{D_{8}}\left(
S^{d}\times S^{d}\right) \rightarrow \mathrm{B}D_{8}.
\end{equation*}%
The group $D_{8}$ acts nontrivially on the cohomology of the fibre, and
therefore the spectral sequence has nontrivial local coefficients. The $%
E_{2} $-term is given by
\begin{equation}
\begin{array}{lll}
E_{2}^{p,q} & = & H^{p}(\mathrm{B}D_{8},\mathcal{H}^{q}(S^{d}\times S^{d},%
\mathbb{F}_{2}))=H^{p}(D_{8},H^{q}(S^{d}\times S^{d},\mathbb{F}_{2})) \\
&  &  \\
& = & \left\{
\begin{array}{lllll}
H^{p}(D_{8},\mathbb{F}_{2}) & \text{, }q=0,2d &  &  &  \\
H^{p}(D_{8},\mathbb{F}_{2}[D_{8}/H_{1}]) & \text{, }q=d &  &  &  \\
0 & \text{, }q\neq 0,d,2d. &  &  &
\end{array}%
\right. %
\end{array}
\label{eq:E2-Product}
\end{equation}%
The nontriviality of local coefficients appears in the $d$-th row of the
spectral sequence.

\medskip

In Section \ref{Sec:Alternative} there is a sketch of an alternative proof
of the fact (\ref{eq:IndexSxS-F2}) suggested by a referee for an earlier, $%
\mathbb{F}_{2}$-coefficient, version of the paper.

\subsection{\label{Sec:Row}The $d$-th row as an $H^{\ast }(D_{8},\mathbb{F}%
_{2})$-module}

Since the spectral sequence is an $H^{\ast}(D_{8},\mathbb{F}_{2})$-module
and the differentials are module maps we need to understand the $%
H^{\ast}(D_{8},\mathbb{F}_{2})$-module structure of the $E_{2}$-term.

\begin{proposition}
$H^{\ast }(D_{8},\mathbb{F}_{2}[D_{8}/H_{1}])\ \cong _{\mathrm{ring}}\
H^{\ast }(H_{1},\mathbb{F}_{2})$.
\end{proposition}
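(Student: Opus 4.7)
The plan is to argue geometrically, using the standard topological model for the equivariant cohomology of a discrete orbit space. The key observation is that $D_{8}/H_{1}$ is a two-point ($0$-dimensional) discrete $D_{8}$-space, so the coefficient module $\mathbb{F}_{2}[D_{8}/H_{1}]$ coincides with the ordinary cohomology $H^{0}(D_{8}/H_{1},\mathbb{F}_{2})$ equipped with its induced $D_{8}$-action.

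Next I would invoke the identification already used in Example~\ref{Ex:IndexS0}, namely
$$\mathrm{E}D_{8}\times _{D_{8}}(D_{8}/H_{1})\;\approx\; \mathrm{E}D_{8}/H_{1}\;\approx\; \mathrm{B}H_{1},$$
together with the Serre spectral sequence of the fibration $D_{8}/H_{1}\to \mathrm{E}D_{8}\times _{D_{8}}(D_{8}/H_{1})\to \mathrm{B}D_{8}$. Because the fibre is discrete, this spectral sequence is concentrated in the row $q=0$, so it collapses at $E_{2}$ and identifies
$$H^{\ast }(D_{8},\mathbb{F}_{2}[D_{8}/H_{1}])\;=\;E_{\infty }^{\ast ,0}\;\cong\; H^{\ast }(\mathrm{B}H_{1},\mathbb{F}_{2})\;=\;H^{\ast }(H_{1},\mathbb{F}_{2}).$$
This is just the topological manifestation of Shapiro's lemma applied to the induced/coinduced module $\mathbb{F}_{2}[D_{8}/H_{1}]=\mathrm{Ind}_{H_{1}}^{D_{8}}\mathbb{F}_{2}$ (induction equals coinduction since $[D_{8}:H_{1}]<\infty$).

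For the ring statement, the left hand side inherits its ring structure from regarding $\mathbb{F}_{2}[D_{8}/H_{1}]$ as a $D_{8}$-algebra under pointwise multiplication of $\mathbb{F}_{2}$-valued functions on $D_{8}/H_{1}$; under the geometric identification of the previous paragraph this product corresponds precisely to the cup product on $H^{\ast }(\mathrm{B}H_{1},\mathbb{F}_{2})$, making the isomorphism one of graded rings. I do not expect any real obstacle: the entire statement is an instance of the standard principle that the Borel construction of an orbit $G/H$ computes the cohomology of $H$, and only the bookkeeping for the ring structure needs a sentence of justification.
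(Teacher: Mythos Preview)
Your argument is correct and is essentially the paper's Method~1 (Shapiro's lemma, using $\mathrm{Ind}\cong\mathrm{Coind}$ for finite index) recast in its topological Borel-construction form; your remark on the ring structure via the $D_{8}$-algebra structure of $\mathbb{F}_{2}[D_{8}/H_{1}]$ is the right justification. The paper also records a second proof via the LHS spectral sequence of the extension $1\to H_{1}\to D_{8}\to D_{8}/H_{1}\to 1$, which collapses to the $0$-\emph{column} (freeness of $\mathbb{F}_{2}[\mathbb{Z}_{2}]$) rather than to the $0$-row as in your fibration argument.
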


\begin{proof}
Here $H_{1}=\langle \varepsilon _{1},\varepsilon _{2}\rangle \cong
\mathbb{Z}_{2}\times\mathbb{Z}_{2}$ is a maximal (normal) subgroup of index $2$ in $D_{8}$.

\noindent \emph{Method 1:} The statement follows from Shapiro's lemma \cite[%
Proposition 6.2, page 73]{Brown} and the fact that when $[G:H]<\infty $,
then there is an isomorphism of $G$-modules \textrm{Coind}$_{H}^{G}M\cong ~$%
\textrm{Ind}$_{H}^{G}M$.

\noindent \emph{Method 2:} There is an exact sequence of groups%
\begin{equation*}
1\rightarrow H_{1}\rightarrow D_{8}\rightarrow D_{8}/H_{1}\rightarrow 1.
\end{equation*}%
The associated LHS spectral sequence \cite[Corollary 1.2, page 116]%
{Adem-Milgram} has the $E_{2}$-term:%
\begin{eqnarray*}
A_{2}^{p,q} &=&H^{p}(D_{8}/H_{1},H^{q}(H_{1},\mathbb{F}_{2}[D_{8}/H_{1}])) \\
&\cong &H^{p}(\mathbb{Z}_{2},H^{q}((\mathbb{Z}
_{2})^{2},\mathbb{F}_{2}\oplus \mathbb{F}_{2})) \\
&\cong &H^{p}(\mathbb{Z}_{2};H^{q}((\mathbb{Z}_{2})^{2},\mathbb{F}_{2})\oplus H^{q}((
\mathbb{Z}_{2})^{2},\mathbb{F}_{2})).
\end{eqnarray*}%
The action of the group $D_{8}/H_{1}\cong\mathbb{Z}_{2}$ on the sum is given by the conjugation action of $G$ on the pair $%
(H_{1},H^{q}(H_{1},\mathbb{F}_{2}[D_{8}/H_{1}]))$ \cite[Corollary 8.4, page 80]{Brown}. Since $\mathbb{F}_{2}[\mathbb{Z}
_{2}]$ is a free $\mathbb{Z}_{2}$-module
\begin{equation*}
H^{0}(\mathbb{Z}_{2};\mathbb{F}_{2}[\mathbb{Z}_{2}])=(\mathbb{F}_{2}[
\mathbb{Z}_{2}])^{\mathbb{Z}_{2}}=\mathbb{F}_{2}
\end{equation*}%
and $H^{p}(\mathbb{Z}_{2};\mathbb{F}_{2}[\mathbb{Z}_{2}])=0$ for $p>0$. Thus
\begin{eqnarray*}
A_{2}^{p,q} &\cong &H^{p}(D_{8}/H_{1};H^{q}((
\mathbb{Z}_{2})^{2},\mathbb{F}_{2})\oplus H^{q}((
\mathbb{Z}_{2})^{2},\mathbb{F}_{2})) \\
&\cong &H^{p}(D_{8}/H_{1};\mathbb{F}_{2}[
\mathbb{Z}_{2}]^{q+1}) \\
&\cong &H^{p}(D_{8}/H_{1};\mathbb{F}_{2}[\mathbb{Z}_{2}])^{q+1}\cong \left\{
\begin{array}{cc}
\left( H^{p}(\mathbb{Z}_{2};\mathbb{F}_{2}[\mathbb{Z}_{2}])^{q+1}\right) ^{
\mathbb{Z}_{2}}\cong \mathbb{F}_{2}^{q+1} & \text{, }p=0 \\
0 & \text{, }p>0.%
\end{array}%
\right.
\end{eqnarray*}%
Thus the $E_{2}$-term has the shape as in Figure \ref{Fig-2} (concentrated
in the $0$-column) and collapses.
\end{proof}

\begin{figure}[tbh]
\centering\includegraphics[scale=0.80]{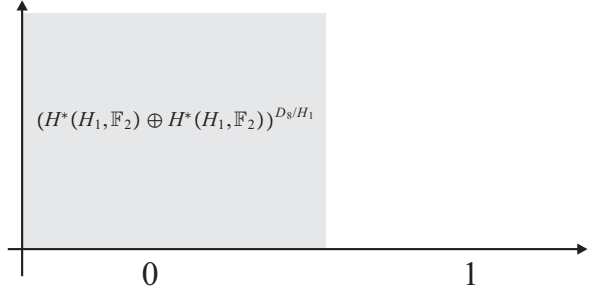}
\caption{{The $A_{2}$-term of the LHS spectral sequence}}
\label{Fig-2}
\end{figure}

The first information about the $H^{\ast }(D_{8},\mathbb{F}_{2})$-module
structure on $H^{\ast }(D_{8},\mathbb{F}_{2}[D_{8}/H_{1}])$, as well as the
method for revealing the complete structure, comes from the following
proposition.

\begin{proposition}
\label{Prop:MultiplyByX} We have $x\cdot H^{\ast }(D_{8},\mathbb{F}%
_{2}[D_{8}/H_{1}])=0$ for the nonzero element $x\in H^{1}(D_{8},\mathbb{F}_{2})$
that is characterized by $\mathrm{res}_{H_{1}}^{D_{8}}(x)=0$.
\end{proposition}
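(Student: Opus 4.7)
The plan is to use Shapiro's lemma, which the previous proposition already supplies, together with the naturality of its isomorphism with respect to cup products by classes restricted from $D_{8}$. Concretely, I would first recall that the Shapiro isomorphism
\[
\Sigma \colon H^{*}(D_{8}, \mathbb{F}_{2}[D_{8}/H_{1}]) \xrightarrow{\ \cong\ } H^{*}(H_{1}, \mathbb{F}_{2})
\]
is multiplicative for the external pairing with $H^{*}(D_{8}, \mathbb{F}_{2})$ in the following sense: for every $\alpha \in H^{*}(D_{8}, \mathbb{F}_{2})$ and every $\beta \in H^{*}(D_{8}, \mathbb{F}_{2}[D_{8}/H_{1}])$,
\[
\Sigma(\alpha \cdot \beta) \;=\; \mathrm{res}_{H_{1}}^{D_{8}}(\alpha) \cdot \Sigma(\beta).
\]
This compatibility reflects the fact that the evaluation-at-identity map $\mathbb{F}_{2}[D_{8}/H_{1}] \to \mathbb{F}_{2}$ (unit of the restriction--coinduction adjunction) is compatible with the $\mathbb{F}_{2}[D_{8}]$-module pairing $\mathbb{F}_{2}\otimes \mathbb{F}_{2}[D_{8}/H_{1}]\to\mathbb{F}_{2}[D_{8}/H_{1}]$.

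Once this is in hand, the proof is immediate: by the previous corollary, $x$ is characterized by $\mathrm{res}_{H_{1}}^{D_{8}}(x) = 0$, so the displayed formula gives $\Sigma(x\cdot\beta) = 0\cdot \Sigma(\beta) = 0$ for every $\beta$, and hence $x\cdot\beta = 0$ because $\Sigma$ is an isomorphism.

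The one step that requires care is the multiplicativity of the Shapiro isomorphism. A perhaps more self-contained way to package this, using only tools already deployed in the paper, is via the LHS spectral sequence from the proof of the previous proposition: that spectral sequence is concentrated in the column $p=0$, so the associated filtration on $H^{*}(D_{8}, \mathbb{F}_{2}[D_{8}/H_{1}])$ satisfies $F^{1} = 0$. On the other hand, the LHS spectral sequence for trivial coefficients has edge map equal to $\mathrm{res}_{H_{1}}^{D_{8}}$, so the vanishing $\mathrm{res}_{H_{1}}^{D_{8}}(x) = 0$ places $x$ in filtration degree $\geq 1$. The standard multiplicativity of the LHS filtration, $F^{p}\cdot F^{p'}\subseteq F^{p+p'}$, applied to the module pairing, then forces $x\cdot\beta \in F^{1} H^{*}(D_{8}, \mathbb{F}_{2}[D_{8}/H_{1}]) = 0$ for every $\beta$, which is exactly the claim.
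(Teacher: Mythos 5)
Your proposal is correct and matches the paper's own proof: your first argument (the Shapiro isomorphism transporting the $H^{\ast}(D_{8},\mathbb{F}_{2})$-module structure to $H^{\ast}(H_{1},\mathbb{F}_{2})$ via $\mathrm{res}_{H_{1}}^{D_{8}}$, so $\mathrm{res}_{H_{1}}^{D_{8}}(x)=0$ kills the action of $x$) is exactly the paper's Method~1, and your filtration argument with the two LHS spectral sequences of $1\rightarrow H_{1}\rightarrow D_{8}\rightarrow D_{8}/H_{1}\rightarrow 1$ (the module spectral sequence concentrated in the column $p=0$, with $x$ of filtration degree $\geq 1$ acting into $F^{1}=0$) is a filtration-level rephrasing of the paper's Method~2. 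No gaps.
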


\begin{proof}
~

\noindent \emph{Method 1:} The isomorphism $H^{\ast}(D_{8},\mathbb{F}%
_{2}[D_{8}/H_{1}])\cong_{\text{{ring}}} H^{\ast}(H_{1},\mathbb{F}_{2})$
induced by Shapiro's lemma \cite[Proposition 6.2, page 73]{Brown} carries
the $H^{\ast }(D_{8},\mathbb{F}_{2})$-module structure to $H^{\ast }(H_{1},%
\mathbb{F}_{2})$ via the restriction homomorphism $\mathrm{res}%
_{H_{1}}^{D_{8}}:H^{\ast }(D_{8},\mathbb{F}_{2})\rightarrow H^{\ast }(H_{1},%
\mathbb{F}_{2})$. In this way the complete $H^{\ast }(D_{8},\mathbb{F}_{2})$%
-module structure is given on $H^{\ast }(D_{8},\mathbb{F}_{2}[D_{8}/H_{1}])$%
. In particular, since $\mathrm{res}_{H_{1}}^{D_{8}}(x)=0$, the proposition
is proved.

\noindent \emph{Method 2:} The exact sequence of groups%
\begin{equation*}
1\rightarrow H_{1}\rightarrow D_{8}\rightarrow D_{8}/H_{1}\rightarrow 1
\end{equation*}%
induces two LHS spectral sequences%
\begin{equation}
A_{2}^{p,q}=H^{p}\left( D_{8}/H_{1},H^{q}\left( H_{1},\mathbb{F}%
_{2}[D_{8}/H_{1}]\right) \right) ~\Longrightarrow ~H^{p+q}(D_{8},\mathbb{F}%
_{2}[D_{8}/H_{1}]),  \label{sq-1}
\end{equation}%
\begin{equation}
B_{2}^{p,q}=H^{p}\left( D_{8}/H_{1},H^{q}\left( H_{1},\mathbb{F}_{2}\right)
\right)~\hphantom{[D_{8}/H_{1}]}\Longrightarrow ~H^{p+q}(D_{8},\mathbb{F}%
_{2}).\hphantom{[D_{8}/H_{1}]}  \label{sq-2}
\end{equation}%
The spectral sequence (\ref{sq-2}) acts on the spectral sequence (\ref{sq-1})%
\begin{equation*}
B_{t}^{r,s}\times A_{t}^{u,v}\rightarrow A_{t}^{u+r,v+s}
\end{equation*}%
In the $E_{\infty}$-term this action becomes an action of $H^{\ast }(D_{8},%
\mathbb{F}_{2})$ on $H^{\ast }(D_{8},\mathbb{F}_{2}[D_{8}/H_{1}])$. Since we
already discussed both spectral sequences we know that%
\begin{equation*}
A_{2}^{p,q}=A_{\infty }^{p,q}\text{ ~~~and~~~ }B_{2}^{p,q}=B_{\infty }^{p,q}%
\text{.}
\end{equation*}%
From Figures \ref{Fig-1} and \ref{Fig-2} it is apparent that $x\in
B_{2}^{1,0}=B_{\infty }^{1,0}$ acts by $x\cdot A_{2}^{p,q}=0$ for every $p$ and $q$.
\end{proof}

\begin{corollary}
\label{Cor:Index-1}$\mathrm{Index}_{D_{8},\mathbb{F}_{2}}^{d+2}S^{d}\times
S^{d}=\mathrm{im}\big(\partial _{d+1}:E_{d+1}^{\ast ,d}\rightarrow
E_{d+1}^{\ast +d+1,0}\big)\subseteq y\cdot H^{\ast }(D_{8},\mathbb{F}_{2}).$
\end{corollary}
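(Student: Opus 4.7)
The plan is to first identify $\mathrm{Index}^{d+2}$ with the image of a single differential, and then to exploit the $H^{\ast}(D_{8},\mathbb{F}_{2})$-module structure of the spectral sequence together with Proposition \ref{Prop:MultiplyByX} to force that image into the principal ideal $\langle y\rangle$.

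First I would observe that the $E_{2}$-term (\ref{eq:E2-Product}) is concentrated in rows $q\in\{0,d,2d\}$, so any differential $\partial_{r}:E_{r}^{p,q}\to E_{r}^{p+r,q-r+1}$ landing in row $0$ must originate from row $r-1\in\{d,2d\}$, i.e.\ $r\in\{d+1,2d+1\}$. In particular, no differential hits row $0$ before page $d+1$, so $E_{d+1}^{\ast,0}=E_{2}^{\ast,0}=H^{\ast}(D_{8},\mathbb{F}_{2})$, and the \emph{only} differential affecting row $0$ before page $d+2$ is $\partial_{d+1}:E_{d+1}^{\ast,d}\to E_{d+1}^{\ast+d+1,0}$. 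By the very definition of the partial index this gives
\begin{equation*}
\mathrm{Index}_{D_{8},\mathbb{F}_{2}}^{d+2}S^{d}\times S^{d}
=\ker\bigl(H^{\ast}(D_{8},\mathbb{F}_{2})\to E_{d+2}^{\ast,0}\bigr)
=\mathrm{im}(\partial_{d+1}),
\end{equation*}
which is the first equality asserted in the corollary.

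Next I would use that the Serre spectral sequence is multiplicative over $H^{\ast}(D_{8},\mathbb{F}_{2})$ and that each $\partial_{r}$ is an $H^{\ast}(D_{8},\mathbb{F}_{2})$-module map. Proposition \ref{Prop:MultiplyByX} asserts that $x$ annihilates $E_{2}^{\ast,d}=H^{\ast}(D_{8},\mathbb{F}_{2}[D_{8}/H_{1}])$, hence $x$ also annihilates every subsequent subquotient $E_{r}^{\ast,d}$ for $r\geq 2$. Therefore for any $z\in E_{d+1}^{\ast,d}$,
\begin{equation*}
x\cdot\partial_{d+1}(z)=\partial_{d+1}(x\cdot z)=\partial_{d+1}(0)=0
\end{equation*}
in $E_{d+1}^{\ast+d+1,0}=H^{\ast}(D_{8},\mathbb{F}_{2})$. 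Thus every element of $\mathrm{im}(\partial_{d+1})$ is annihilated by $x$.

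The final step is a short algebraic calculation in $H^{\ast}(D_{8},\mathbb{F}_{2})=\mathbb{F}_{2}[x,y,w]/\langle xy\rangle$: an $\mathbb{F}_{2}$-basis is given by the monomials $x^{i}w^{k}$ ($i,k\geq 0$) together with $y^{j}w^{k}$ ($j\geq 1$, $k\geq 0$), so every element can be written uniquely as $\alpha(x,w)+y\beta(y,w)$. Since $xy=0$, multiplication by $x$ sends this to $x\alpha(x,w)$, and the map $\alpha\mapsto x\alpha$ on $\mathbb{F}_{2}[x,w]$ is injective; hence $x\cdot(\alpha+y\beta)=0$ forces $\alpha=0$. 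The annihilator of $x$ in $H^{\ast}(D_{8},\mathbb{F}_{2})$ is therefore exactly $y\cdot H^{\ast}(D_{8},\mathbb{F}_{2})$, and the desired inclusion $\mathrm{im}(\partial_{d+1})\subseteq y\cdot H^{\ast}(D_{8},\mathbb{F}_{2})$ follows. I do not expect a genuine obstacle in this argument: the real work has already been carried out in establishing Proposition \ref{Prop:MultiplyByX}, and the corollary is a formal bookkeeping consequence of the module structure together with the ring presentation of $H^{\ast}(D_{8},\mathbb{F}_{2})$.
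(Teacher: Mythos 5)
Your argument is correct and follows essentially the same route as the paper: the key step in both is that $\partial_{d+1}$ is an $H^{\ast}(D_{8},\mathbb{F}_{2})$-module map and $x$ annihilates the $d$-th row (Proposition \ref{Prop:MultiplyByX}), so the image of $\partial_{d+1}$ is killed by $x$ and hence lies in $y\cdot H^{\ast}(D_{8},\mathbb{F}_{2})$. You merely make explicit two points the paper leaves implicit, namely the identification $\mathrm{Index}^{d+2}=\mathrm{im}(\partial_{d+1})$ from the vanishing rows and the computation that the annihilator of $x$ in $\mathbb{F}_{2}[x,y,w]/\langle xy\rangle$ is exactly $\langle y\rangle$; both are fine.
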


\begin{proof}
Let $\alpha \in E_{d+1}^{\ast ,d}$ and $\partial _{d+1}(\alpha )\notin
y\cdot H^{\ast }(D_{8},\mathbb{F}_{2})$. Then $x\cdot \partial _{d+1}(\alpha
)\neq 0$. Since $\partial _{d+1}$ is a $H^{\ast }(D_{8},\mathbb{F}_{2})$%
-module map and $x$ acts trivially on $H^{\ast }(D_{8},\mathbb{F}%
_{2}[D_{8}/H_{1}])$, as indicated by Proposition \ref{Prop:MultiplyByX},
there is a contradiction%
\begin{equation*}
0=\partial _{d+1}(x\cdot \alpha )=x\cdot \partial _{d+1}(\alpha )\neq 0.
\end{equation*}
\end{proof}

\begin{proposition}
\label{Prop:Modulestructure}$H^{\ast }(D_{8},\mathbb{F}_{2}[D_{8}/H_{1}])$
is generated as an $H^{\ast }(D_{8},\mathbb{F}_{2})$-module by
\begin{equation*}
H^{0}(D_{8},\mathbb{F}_{2}[D_{8}/H_{1}])\quad\text{and}\quad H^{1}(D_{8},%
\mathbb{F}_{2}[D_{8}/H_{1}]).
\end{equation*}
\end{proposition}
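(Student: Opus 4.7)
The plan is to make the $H^{*}(D_{8},\mathbb{F}_{2})$-module structure on $H^{*}(D_{8},\mathbb{F}_{2}[D_{8}/H_{1}])$ completely explicit via the preceding proposition and then reduce the generation statement to a classical invariant-theoretic fact about the symmetric group $S_{2}$ acting on a polynomial ring in two variables.

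First I would use the ring isomorphism $H^{*}(D_{8},\mathbb{F}_{2}[D_{8}/H_{1}])\cong H^{*}(H_{1},\mathbb{F}_{2})=\mathbb{F}_{2}[a,a+b]$ coming from Shapiro's lemma. Shapiro's isomorphism is natural for the $H^{*}(D_{8},\mathbb{F}_{2})$-module structure: the action on the left transports to an action on $\mathbb{F}_{2}[a,a+b]$ which factors through $\mathrm{res}_{H_{1}}^{D_{8}}$. By the explicit formulas \eqref{eq:RestrictionH1-2}, this action is given by $x\cdot{-}=0$, $y\cdot{-}=b\cdot{-}$, and $w\cdot{-}=a(a+b)\cdot{-}$. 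In particular, the subring of $\mathbb{F}_{2}[a,a+b]$ that acts nontrivially is $\mathbb{F}_{2}[b,a(a+b)]$, which by Lemma~\ref{Lemma:CohomologyD8}(ii) is exactly the subring of $\sigma$-invariants under the involution $\sigma\colon a\mapsto a+b$.

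Second, I would invoke the classical fact that $\mathbb{F}_{2}[u,v]$ is free of rank $2$ over the symmetric subring $\mathbb{F}_{2}[u+v,\,uv]$, with basis $\{1,u\}$. A short proof works even though $S_{2}$ is modular in characteristic $2$: every power of $u$ reduces modulo the relation $u^{2}+(u+v)u+uv=0$ to an expression $p+qu$ with $p,q\in\mathbb{F}_{2}[u+v,uv]$, and if $p+qu=0$ then applying $\sigma$ and adding gives $(u+v)q=0$ in the domain $\mathbb{F}_{2}[u,v]$, forcing $q=0$ and then $p=0$. Applied with $u=a$ and $v=a+b$, so that $u+v=b$ and $uv=a(a+b)$, this yields
\[
\mathbb{F}_{2}[a,a+b]\;=\;\mathbb{F}_{2}[b,a(a+b)]\cdot 1\;\oplus\;\mathbb{F}_{2}[b,a(a+b)]\cdot a,
\]
which, transported back through Shapiro, says precisely that $H^{*}(D_{8},\mathbb{F}_{2}[D_{8}/H_{1}])$ is generated as an $H^{*}(D_{8},\mathbb{F}_{2})$-module by the two elements $1\in H^{0}$ and $a\in H^{1}$ (the other generator $b\in H^{1}$ is redundant since $b=y\cdot 1$).

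The main point requiring real attention is the transport of the module structure through Shapiro's isomorphism, since the previous proposition was phrased only as a ring isomorphism; once one checks that transport commutes with $\mathrm{res}_{H_{1}}^{D_{8}}$ (a standard naturality fact), the remainder is the elementary invariant-theoretic argument above. As a sanity check, the Poincaré series match: the module $\mathbb{F}_{2}[b,a(a+b)]\cdot 1\oplus\mathbb{F}_{2}[b,a(a+b)]\cdot a$ has Hilbert series $(1+t)/((1-t)(1-t^{2}))=1/(1-t)^{2}$, which is the Hilbert series of $\mathbb{F}_{2}[a,b]=H^{*}(H_{1},\mathbb{F}_{2})$.
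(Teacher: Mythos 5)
Your proof is correct and takes essentially the same route as the paper's first argument: transport the $H^{\ast}(D_{8},\mathbb{F}_{2})$-module structure through Shapiro's lemma, where it acts via $\mathrm{res}_{H_{1}}^{D_{8}}$ (so through the subring $\mathbb{F}_{2}[b,a(a+b)]$), and observe that $\mathbb{F}_{2}[a,a+b]$ is generated over this subring by $1$ and $a$. The only difference is that you make explicit, via the freeness of $\mathbb{F}_{2}[u,v]$ over $\mathbb{F}_{2}[u+v,uv]$ with basis $\{1,u\}$, the generation step that the paper merely asserts, which is a welcome but not essentially different elaboration.
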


\begin{proof}
~

\noindent \emph{Method 1:} We already observed that Shapiro's lemma $H^{\ast
}(D_{8},\mathbb{F}_{2}[D_{8}/H_{1}])\cong _{\text{{ring}}}H^{\ast }(H_{1},%
\mathbb{F}_{2})$ carries the $H^{\ast }(D_{8},\mathbb{F}_{2})$-module
structure to $H^{\ast }(H_{1},\mathbb{F}_{2}) $ via the restriction
homomorphism $\mathrm{res}_{H_{1}}^{D_{8}}:H^{\ast }(D_{8},\mathbb{F}%
_{2})\rightarrow H^{\ast }(H_{1},\mathbb{F}_{2})$. Thus $H^{\ast }(H_{1},%
\mathbb{F}_{2})$ as an $H^{\ast }(D_{8},\mathbb{F}_{2}) $-module is
generated by $1\in H^{0}(H_{1},\mathbb{F}_{2})$ together with $a\in
H^{1}(H_{1},\mathbb{F}_{2})$.

\noindent \emph{Method 2:} There is the exact sequence of $D_{8}$-modules%
\begin{equation}
0\rightarrow \mathbb{F}_{2}\rightarrow \mathbb{F}_{2}[D_{8}/H_{1}]%
\rightarrow \mathbb{F}_{2}\rightarrow 0,  \label{sq-3}
\end{equation}%
where the left and right modules $\mathbb{F}_{2}$ are trivial $D_{8}$%
-modules. The first map is a diagonal inclusion while the second one is a
quotient map. The sequence (\ref{sq-3}) induces a long exact sequence on
group cohomology \cite[Proposition 6.1, page 71]{Brown},
\begin{equation}
\begin{array}{cl}
0\rightarrow & H^{0}\left( D_{8},\mathbb{F}_{2}\right) \overset{i_{0}}{%
\rightarrow }H^{0}\left( D_{8},\mathbb{F}_{2}[D_{8}/H_{1}]\right) \overset{%
q_{o}}{\rightarrow }H^{0}\left( D_{8},\mathbb{F}_{2}\right) \overset{\delta
_{0}}{\rightarrow } \\
& \qquad H^{1}\left( D_{8},\mathbb{F}_{2}\right) \overset{i_{1}}{\rightarrow
}H^{1}\left( D_{8},\mathbb{F}_{2}[D_{8}/H_{1}]\right) \overset{q_{1}}{%
\rightarrow }H^{1}\left( D_{8},\mathbb{F}_{2}\right) \overset{\delta _{1}}{%
\rightarrow}\ ~\ldots%
\end{array}
\label{sq-4}
\end{equation}%
From the exact sequence (\ref{sq-4}), compatibility of the cup product \cite[%
page 110, (3.3)]{Brown} and Proposition \ref{Prop:MultiplyByX} one can
deduce that $\delta _{0}(1)=x$. Then by chasing along sequence (\ref{sq-4})
with compatibility of the cup product \cite[page 110,(3.3)]{Brown} as a tool
it can be proved that $H^{\ast }(D_{8},\mathbb{F}_{2}[D_{8}/H_{1}])$ is
generated as a $H^{\ast }(D_{8},\mathbb{F}_{2})$-module by $I=i_{0}(1)$ and $%
A\in q_{1}^{-1}(\{y\})$.
\end{proof}

\subsection{\label{Sec:IndexSxS-F2}$\mathrm{Index}_{D_{8},\mathbb{F}%
_{2}}^{d+2}S^{d}\times S^{d}=\langle \protect\pi _{d+1},\protect\pi %
_{d+2}\rangle $}

The index by definition is
\begin{eqnarray*}
\mathrm{Index}_{D_{8},\mathbb{F}_{2}}^{d+2}S^{d}\times S^{d} &=&\mathrm{im}%
\big(\partial _{d+1}:E_{d+1}^{\ast ,d}\rightarrow E_{d+1}^{\ast +d+1,0}\big)
\\
&=&\mathrm{im}\left( \partial _{d+1}:H^{\ast }\left( D_{8},\mathbb{F}%
_{2}[D_{8}/H_{1}]\right) \rightarrow H^{\ast +d+1}(D_{8},\mathbb{F}%
_{2}\right) ).
\end{eqnarray*}%
From Proposition \ref{Prop:Modulestructure} this image is generated as a
module by the $\partial _{d+1}$-images of $H^{0}\left( D_{8},\mathbb{F}%
_{2}[D_{8}/H_{1}]\right) $ and of $H^{1}\left( D_{8},\mathbb{F}%
_{2}[D_{8}/H_{1}]\right) $. The $\partial _{d+1}$ image is computed by
applying restriction properties given in Proposition \ref{Prop:Res-1} to the
subgroup $H_{1}$. With the identification of $H^{\ast }\left( D_{8},\mathbb{F%
}_{2}[D_{8}/H_{1}]\right) $ given by Shapiro's lemma the morphism of
spectral sequences of Borel constructions induced by restriction is
specified in Figure \ref{Fig-3}. Also,%
\begin{equation*}
\mathrm{Index}_{D_{8},\mathbb{F}_{2}}^{d+2}S^{d}\times S^{d}=\langle
\partial _{d+1}^{D_{8}}(1),\partial _{d+1}^{D_{8}}(a),\partial
_{d+1}^{D_{8}}(b),\partial _{d+1}^{D_{8}}(a+b)\rangle .
\end{equation*}

\begin{figure}[htb]
\centering
\includegraphics[scale=0.65]{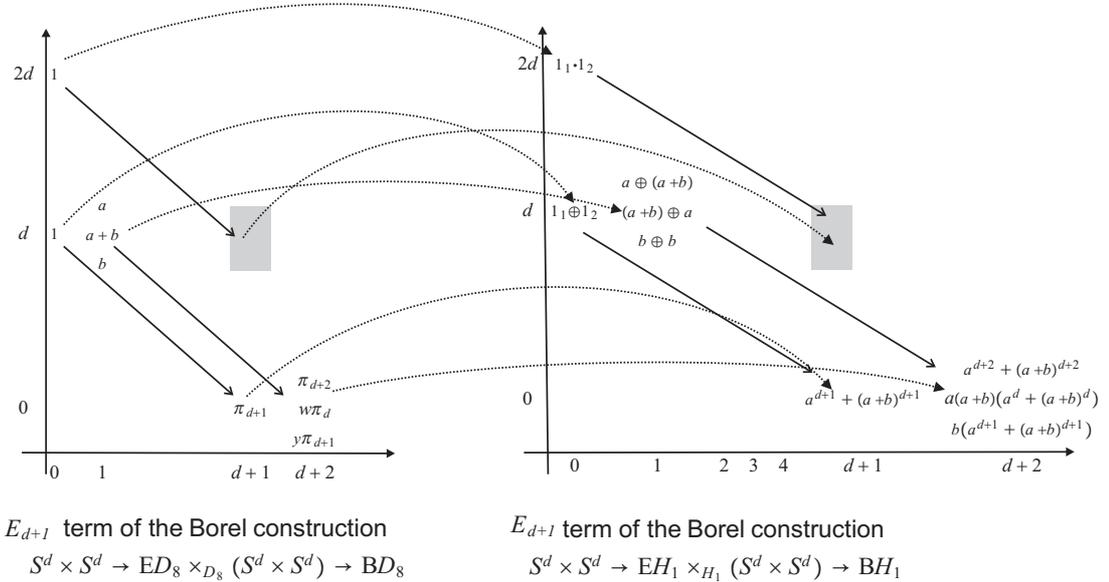}
\caption{{The morphism of spectral sequences}}
\label{Fig-3}
\end{figure}

To simplify notation let $\rho _{d}:=a^{d}+(a+b)^{d+1}$. Then from
\begin{equation*}
\begin{array}{ccccc}
1 & \overset{\mathrm{res}_{H_{1}}^{D_{8}}}{\longmapsto } & 1_{1}\oplus 1_{2}
& \overset{\partial _{d+1}^{H_{1}}}{\longmapsto } & \rho _{d+1} \\
&  &  &  &  \\
\left\{ a,a+b,b\right\} & \overset{\mathrm{res}_{H_{1}}^{D_{8}}}{\longmapsto
} & \left\{
\begin{array}{c}
a\oplus (a+b) \\
\left( a+b\right) \oplus a \\
b\oplus b%
\end{array}%
\right\} & \overset{\partial _{d+1}^{H_{1}}}{\longmapsto } & \left\{ \rho
_{d+2},a(a+b)\rho _{d},b\rho _{d+1}\right\}%
\end{array}%
\end{equation*}%
it follows that
\begin{equation*}
\mathrm{res}_{H_{1}}^{D_{8}}\big( \big\{ \partial
_{d+1}^{D_{8}}(1),~\partial _{d+1}^{D_{8}}(a),~\partial
_{d+1}^{D_{8}}(b)~,\partial _{d+1}^{D_{8}}(a+b)\big\} \big) {\ =\{\rho
_{d+2},~a(a+b)\rho _{d},~b\rho _{d+1}\}.}
\end{equation*}%
The formula%
\begin{eqnarray*}
\rho _{d+2} &=&a^{d+2}+(a+b)^{d+2} \\
&=&(a+a+b)\Big( \rho _{d+1}+a(a+b)\sum_{i=0}^{d-1}a^{i}(a+b)^{d-1-i}\Big) \\
&=&b\rho _{d+1}+a(a+b)(a+a+b)\sum_{i=0}^{d-1}a^{i}(a+b)^{d-1-i} \\
&=&b\rho _{d+1}+a(a+b)\rho _{d}
\end{eqnarray*}%
together with Remark \ref{Rem:definition} and the knowledge of the
restriction $\mathrm{res}_{H_{1}}^{D_{8}}$ implies that%
\begin{equation*}
\mathrm{res}_{H_{1}}^{D_{8}}(\pi _{d})={\ \rho _{d}.}
\end{equation*}%
Therefore, there exist $x\alpha ,x\beta ,x\gamma ,x\delta \in \mathrm{\ker }%
( \mathrm{res}_{H_{1}}^{D_{8}}) $ such that%
\begin{equation*}
\partial _{d+1}^{D_{8}}(1)=\pi _{d+1}+x\alpha
\end{equation*}%
and%
\begin{equation*}
\big\{ \partial _{d+1}^{D_{8}}(a),~\partial _{d+1}^{D_{8}}(b),~\partial
_{d+1}^{D_{8}}(a+b)\big\} =\left\{ \pi _{d+2}+x\beta ,~y\pi _{d+1}+x\gamma
,~w\pi _{d}+x\delta \right\} \text{.}
\end{equation*}%
Since $y$ divides $\pi _{d}$, Proposition \ref{Prop:MultiplyByX} implies
that $\alpha =\beta =\gamma =\delta =0$, and
\begin{eqnarray*}
\mathrm{Index}_{D_{8},\mathbb{F}_2}^{d+2}S^{d}\times S^{d} &=&\langle \partial
_{d+1}^{D_{8}}(1),\partial _{d+1}^{D_{8}}(a),\partial
_{d+1}^{D_{8}}(b),\partial _{d+1}^{D_{8}}(a+b)\rangle \\
&=&\langle \pi _{d+1},\pi _{d+2},~y\pi _{d+1},~w\pi _{d}\rangle \\
&=&\langle \pi _{d+1},\pi _{d+2}\rangle .
\end{eqnarray*}

\begin{remark*}
\label{Rem-Injective}The property that the concretely described homomorphism
\begin{equation*}
\mathrm{res}_{H_{1}}^{D_{8}}:H^{\ast }(D_{8},\mathbb{F}_{2}[D_{8}/H_{1}])%
\rightarrow H^{\ast }(H_{1},\mathbb{F}_{2}[D_{8}/H_{1}])
\end{equation*}%
is injective holds more generally \cite[Lemma on page 187]{Evans}.
\end{remark*}

\subsection{$\mathrm{Index}_{D_{8},\mathbb{F}_{2}}S^{d}\times S^{d}=\langle
\protect\pi _{d+1},\protect\pi _{d+2},w^{d+1}\rangle $}

In the previous section we described the differential $\partial
_{d+1}^{D_{8}}$ of the Serre spectral sequence associated with the Borel
construction%
\begin{equation*}
S^{d}\times S^{d}\rightarrow \mathrm{E}D_{8}\times _{D_{8}}\left(
S^{d}\times S^{d}\right) \rightarrow \mathrm{B}D_{8}.
\end{equation*}%
The only remaining, possibly non-trivial, differential is $\partial
_{2d+1}^{D_{8}}$.

The following proposition describing $E_{2d+1}^{\ast ,2d}$ can be obtained
from Figure \ref{Fig-3}.

\begin{proposition}
$E_{2d+1}^{\ast ,2d}=\ker \big( \partial _{d+1}^{D_{8}}:E_{d+1}^{\ast
,2d}\rightarrow E_{d+1}^{\ast +d+1,d}\big) =x\cdot H^{\ast }(D_{8},\mathbb{F}%
_{2})$
\end{proposition}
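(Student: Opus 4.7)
The plan: Because $\mathbb{F}_2$ coefficients suppress any sign-changes, $D_8$ acts trivially on the top class $H^{2d}(S^d \times S^d, \mathbb{F}_2) \cong \mathbb{F}_2$. Hence the row $q = 2d$ in \eqref{eq:E2-Product} has trivial local coefficients, and $E_2^{\ast, 2d} \cong H^\ast(D_8, \mathbb{F}_2) \cdot e$ is a free rank-one module over $H^\ast(D_8, \mathbb{F}_2)$ generated by the fiber top class $e = u_1 u_2$. Since the rows above $2d$ vanish and the only nonzero intermediate row is $q = d$, the only possibly nonzero differential leaving row $2d$ before stage $2d+1$ is $\partial_{d+1}$. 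This yields $E_{2d+1}^{\ast, 2d} = \ker(\partial_{d+1}\colon E_{d+1}^{\ast, 2d} \to E_{d+1}^{\ast + d+1, d})$, which is the first equality; it remains to identify this kernel.

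The inclusion $x \cdot H^\ast(D_8, \mathbb{F}_2) \subseteq \ker(\partial_{d+1})$ is immediate from Proposition \ref{Prop:MultiplyByX} and the fact that $\partial_{d+1}$ is an $H^\ast(D_8,\mathbb{F}_2)$-module map: namely $\partial_{d+1}(x\alpha \cdot e) = x \cdot \alpha \cdot \partial_{d+1}(e) = 0$, because $x$ annihilates $H^\ast(D_8, \mathbb{F}_2[D_8/H_1])$.

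For the reverse inclusion, I restrict to $H_1$ via the morphism of spectral sequences in Figure \ref{Fig-3}. Over $H_1$ the action is free and the product of two antipodal actions, so the $H_1$-spectral sequence is the tensor product of two one-sphere spectral sequences. By Example \ref{IndexOfSphere} together with Assumption \eqref{Assumption-1}, the factor differentials are $\partial_{d+1}^{H_1}(u_1) = a^{d+1}$ and $\partial_{d+1}^{H_1}(u_2) = (a+b)^{d+1}$, whence by the Leibniz rule $\partial_{d+1}^{H_1}(e) = a^{d+1} u_2 + (a+b)^{d+1} u_1$. By naturality, for $\alpha \in H^\ast(D_8, \mathbb{F}_2)$,
$$\mathrm{res}_{H_1}^{D_8}\bigl(\partial_{d+1}^{D_8}(\alpha \cdot e)\bigr) \;=\; \mathrm{res}_{H_1}^{D_8}(\alpha) \cdot \bigl(a^{d+1} u_2 + (a+b)^{d+1} u_1\bigr),$$
an element of $\mathbb{F}_2[a, a+b]\{u_1\} \oplus \mathbb{F}_2[a, a+b]\{u_2\}$. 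Since $\mathbb{F}_2[a, a+b]$ is an integral domain and $a^{d+1}, (a+b)^{d+1}$ are nonzero, this vanishes iff $\mathrm{res}_{H_1}^{D_8}(\alpha) = 0$; so any $\alpha \in \ker(\partial_{d+1}^{D_8})$ must lie in $\ker(\mathrm{res}_{H_1}^{D_8})$.

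Finally, $\ker(\mathrm{res}_{H_1}^{D_8}) = x \cdot H^\ast(D_8, \mathbb{F}_2)$. Writing $\alpha \in \mathbb{F}_2[x, y, w]/\langle xy\rangle$ uniquely as $\alpha = P(x, w) + Q(y, w)$ with $Q$ involving only positive powers of $y$, and applying $x \mapsto 0$, $y \mapsto b$, $w \mapsto a(a+b)$, one obtains $P(0, a(a+b)) + Q(b, a(a+b))$ in $\mathbb{F}_2[b, a(a+b)]$. Since $b$ and $a(a+b)$ are algebraically independent in $\mathbb{F}_2[a, a+b]$, this vanishes iff $P(0, \cdot) = 0$ and $Q = 0$, which (using $xy = 0$) forces $\alpha \in x \cdot H^\ast(D_8, \mathbb{F}_2)$. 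The main obstacle is executing the restriction-naturality argument at the spectral-sequence level cleanly; once $\partial_{d+1}^{H_1}(e)$ is written explicitly, the integral-domain observation closes the computation.
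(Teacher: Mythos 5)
Your proof is correct and takes essentially the same route as the paper: the inclusion $x\cdot H^{\ast}(D_{8},\mathbb{F}_{2})\subseteq\ker\partial_{d+1}^{D_{8}}$ via Proposition \ref{Prop:MultiplyByX} and the module-map property, and the reverse inclusion by restriction to $H_{1}$, which the paper compresses into the statements that $\partial_{d+1}^{D_{8}}(1)\neq 0$ and that multiplication by $y$ and by $w$ is injective on $H^{\ast}(D_{8},\mathbb{F}_{2}[D_{8}/H_{1}])\cong\mathbb{F}_{2}[a,b]$. Your explicit Leibniz computation of $\partial_{d+1}^{H_{1}}(u_{1}u_{2})=a^{d+1}u_{2}+(a+b)^{d+1}u_{1}$ and the direct verification that $\ker\big(\mathrm{res}_{H_{1}}^{D_{8}}\big)=x\cdot H^{\ast}(D_{8},\mathbb{F}_{2})$ merely spell out the same mechanism in more detail.
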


\begin{proof}
The restriction property from Proposition \ref{Prop:Res-1}(D), applied to
the element $1\in E_{d+1}^{0,2d}=H^{\ast }(D_{8},\mathbb{F}_{2})$ implies
that $\partial _{d+1}^{D_{8}}(1)\neq 0$. Proposition \ref{Prop:MultiplyByX},
together with the fact that multiplication by $y$ and by $w$ in $H^{\ast
}(D_{8},\mathbb{F}_{2}[D_{8}/H_{1}])$ is injective, implies that $\ker \big( %
\partial _{d+1}^{D_{8}}:E_{d+1}^{\ast ,2d}\rightarrow E_{d+1}^{\ast +d+1,d}%
\big) =xH^{\ast }(D_{8},\mathbb{F}_{2})$.
\end{proof}

The description of the differential $\partial _{2d+1}^{D_{8}}:E_{2d+1}^{\ast
,2d}\rightarrow E_{2d+1}^{\ast +2d+1,0}$ comes in an indirect way. There is
a $D_{8}$-equivariant map
\begin{equation*}
S^{d}\times S^{d}\rightarrow S^{d}\ast S^{d}\approx S(\left( V_{+-}\oplus
V_{-+}\right) ^{\oplus (d+1)})
\end{equation*}%
given by $S^d\times S^d\ni(t_1,t_2)\mapsto \frac{1}{2}t_1 + \frac{1}{2}t_2\in S^d * S^d$.
The result of Section \ref{sec:Index-W} and the basic property of the index
(Proposition \ref{prop:basic}) imply that%
\begin{equation*}
\mathrm{Index}_{D_{8},\mathbb{F}_{2}}S^{d}\times S^{d}\supseteq \mathrm{Index%
}_{D_{8},\mathbb{F}_{2}}S(\left( V_{+-}\oplus V_{-+}\right) ^{\oplus
(d+1)})=\langle w^{d+1}\rangle .
\end{equation*}%
Thus $w^{d+1}\in \mathrm{Index}_{D_{8},\mathbb{F}_{2}}S^{d}\times S^{d}$.
Since by Corollary \ref{Cor:Index-1} $w^{d+1}\notin \mathrm{Index}_{D_{8},%
\mathbb{F}_{2}}^{d+1}S^{d}\times S^{d}$ it follows that
\begin{equation*}
w^{d+1}\in \mathrm{im}\big(\partial
_{2d+1}^{D_{8}}:E_{2d+1}^{1,2d}\rightarrow E_{2d+1}^{2d+2,0}\big).
\end{equation*}%
But the only nonzero element in $E_{2d+1}^{1,2d}$ is $x$, therefore
\begin{equation*}
\partial _{2d+1}^{D_{8}}\left( x\right) =w^{d+1}.
\end{equation*}%
This concludes the proof of equation (\ref{eq:IndexSxS-F2}).

\subsection{\label{Sec:Alternative}An alternative proof, sketch}

The objective of our index calculation is to find the kernel of the map (cf.
Section \ref{Sec:Fadell--Husseini})
\begin{equation}
H^{\ast }(\mathrm{E}D_{8}\times _{D_{8}}\left( S^{d}\times S^{d}\right) ,%
\mathbb{F}_{2})=H_{D_{8}}^{\ast }(S^{d}\times S^{d},\mathbb{F}%
_{2})\leftarrow H_{D_{8}}^{\ast }(\mathrm{pt},\mathbb{F}_{2})=H^{\ast }(\mathrm{E}%
D_{8}\times _{D_{8}}\mathrm{pt},\mathbb{F}_{2})\text{.}  \label{1}
\end{equation}%
This map is induced by the map of spaces%
\begin{equation}
\mathrm{E}D_{8}\times _{D_{8}}(S^{d}\times S^{d})\rightarrow \mathrm{E}%
D_{8}\times _{D_{8}}\mathrm{pt}.  \label{2}
\end{equation}%
From the definition of the product $\times _{D_{8}}$ the map (\ref{2}) is
induced by $\mathrm{E}D_{8}\times (S^{d}\times S^{d})\rightarrow \mathrm{E}%
D_{8}\times \mathrm{pt}$, i.e. by $(S^{d}\times S^{d})\rightarrow \mathrm{pt}$. The map (\ref%
{2}), again by definition of product $\times _{D_{8}}$ is%
\begin{equation}
\left( \mathrm{E}D_{8}\times (S^{d}\times S^{d})\right) /D_{8}\rightarrow
\left( \mathrm{E}D_{8}\times \mathrm{pt}\right) /D_{8}.  \label{3}
\end{equation}%
Let $S_{2}\cong\mathbb{Z}_{2}$ denotes the quotient group $D_{8}/H_{1}$. There is a natural
homeomorphisms \cite[Proposition 1.59, page 40]{Kawa}
\begin{equation}
\left( \left( \mathrm{E}D_{8}\times (S^{d}\times S^{d})\right) /H_{1}\right)
/S_{2}\rightarrow \left( \left( \mathrm{E}D_{8}\times \mathrm{pt}\right)
/H_{1}\right) /S_{2}  \label{4}
\end{equation}%
which is induced by the map

\begin{equation}
\left( \mathrm{E}D_{8}\times (S^{d}\times S^{d})\right) /H_{1}\rightarrow
\left( \mathrm{E}D_{8}\times \mathrm{pt}\right) /H_{1}  \label{5}
\end{equation}%
Since $\mathrm{E}D_{8}$ is also a model for $\mathrm{E}H_{1}$, the map (\ref%
{5}) is a projection map in the Borel construction of $S^{d}\times S^{d}$
with respect to the group $H_{1}$:
\begin{equation}
\begin{diagram}[size=2em,textflow]
S^{d}\times S^{d} & \rTo & \left( \mathrm{E}D_{8}\times (S^{d}\times
S^{d})\right) /H_{1} \\
&  & \dTo \\
&  & \mathrm{B}H_{1}
\end{diagram}
\label{5.1}
\end{equation}

\medskip

\noindent The group $D_{8}$ acts freely on $\mathrm{E}D_{8}\times
(S^{d}\times S^{d})$ and on $\mathrm{E}D_{8}\times \mathrm{pt}$. Therefore the $S_{2}$
actions on the spaces $\left( \mathrm{E}D_{8}\times (S^{d}\times S^{d})\right) /H_{1}$
and $\left( \mathrm{E}D_{8}\times \mathrm{pt}\right) /H_{1}$ are also free. There are
natural homotopy equivalences
\begin{eqnarray*}
\left( \left( \mathrm{E}D_{8}\times (S^{d}\times S^{d})\right) /H_{1}\right)
/S_{2} &\simeq &\mathrm{E}S_{2}\times _{S_{2}}\left( \left( \mathrm{E}%
D_{8}\times (S^{d}\times S^{d})\right) /H_{1}\right) \text{, } \\
\left( \left( \mathrm{E}D_{8}\times \mathrm{pt}\right) /H_{1}\right) /S_{2} &\simeq &%
\mathrm{E}S_{2}\times _{S_{2}}\left( \left( \mathrm{E}D_{8}\times \mathrm{pt}\right)
/H_{1}\right)
\end{eqnarray*}%
which transform the map (\ref{4}) into a map of Borel constructions
\begin{equation}
\mathrm{E}S_{2}\times _{S_{2}}\left( \left( \mathrm{E}D_{8}\times
(S^{d}\times S^{d})\right) /H_{1}\right) \rightarrow \mathrm{E}S_{2}\times
_{S_{2}}\left( \left( \mathrm{E}D_{8}\times \mathrm{pt}\right) /H_{1}\right)
\label{6}
\end{equation}%
induced by the map (\ref{5}) on the fibres.

\noindent The map between Borel constructions (\ref{6}) induces a map of
associated Serre spectral sequences which on the $E_{2}$-term looks like%
\begin{equation}
\mathcal{E}_{2}^{p,q}=H^{p}(S_{2},H^{q}((\mathrm{E}D_{8}\times \left(
S^{d}\times S^{d}\right) )/H_{1},\mathbb{F}_{2}))\leftarrow
H^{p}(S_{2},H^{q}(\left( \mathrm{E}D_{8}\times \mathrm{pt}\right) /H_{1},\mathbb{F}%
_{2}))=\mathcal{H}_{2}^{p,q}\text{.}  \label{7}
\end{equation}%
The spectral sequence $\mathcal{H}_{2}^{p,q}$ is the one studied in section %
\ref{Sec:H*(D8,F_2)}. It converges to $H^{\ast }(D_{8},\mathbb{F}_{2})$ and $%
\mathcal{H}_{2}^{p,q}=\mathcal{H}_{\infty }^{p,q}$.

\begin{lemma}
$\mathcal{E}_{2}^{p,q}=\mathcal{E}_{\infty }^{p,q}$.
\end{lemma}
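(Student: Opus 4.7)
The plan is to establish collapse of $\mathcal{E}$ by comparison with the spectral sequence $\mathcal{H}$ of Section~\ref{Sec:H*(D8,F_2)}, which we already know collapses at its $E_2$-term. The map of total spaces~(\ref{6}) induces a natural morphism of Serre spectral sequences $f\colon \mathcal{H}^{\ast,\ast}\to\mathcal{E}^{\ast,\ast}$. Once I show that $f$ is surjective on $E_2$-terms, a routine induction finishes the argument: assume inductively that $f_r\colon A_r\to B_r$ is surjective and $d_r^A=0$; then for every $b=f_r(a)\in B_r$ we have $d_r^B(b)=d_r^B(f_r(a))=f_r(d_r^A(a))=0$, so $d_r^B\equiv 0$, hence $B_{r+1}=B_r$ and $A_{r+1}=A_r$, and the induced map $f_{r+1}=f_r$ is again surjective. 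Starting from $\mathcal{H}$, where $d_r^\mathcal{H}=0$ for all $r\geq 2$, this forces $d_r^\mathcal{E}=0$ for all $r\geq 2$, i.e.\ $\mathcal{E}_2=\mathcal{E}_\infty$.

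To identify $f$ on $E_2$, I use the fact that by naturality, $f_2$ is obtained, in each fiber degree $q$, by applying $H^p(S_2,-)$ to the map $H^q(\mathrm{B}H_1,\mathbb{F}_2)\to H^q((\mathrm{E}D_8\times(S^d\times S^d))/H_1,\mathbb{F}_2)$ coming from the fiber map in~(\ref{5}). Since $H_1$ acts freely on $S^d\times S^d$, the latter space is homotopy equivalent to $\mathbb{R}P^d\times\mathbb{R}P^d$, and the fiber map is the classifying map of the free $H_1$-action. Concretely, it induces the natural quotient homomorphism $\mathbb{F}_2[a,b]\twoheadrightarrow\mathbb{F}_2[a,b]/\langle a^{d+1},b^{d+1}\rangle$, which is $S_2$-equivariant for the swap action $a\leftrightarrow b$.

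The crucial observation is that this short exact sequence of $\mathbb{F}_2[S_2]$-modules splits. Indeed, the subspace $N':=\mathrm{span}_{\mathbb{F}_2}\{a^ib^j : 0\leq i,j\leq d\}\subset\mathbb{F}_2[a,b]$ is stable under the swap (the condition $i,j\leq d$ is symmetric in $i$ and $j$) and projects isomorphically onto the quotient. Applying $H^p(S_2,-)$ in each fiber degree $q$ then yields surjections
\begin{equation*}
\mathcal{H}_2^{p,q}=H^p(S_2,H^q(\mathrm{B}H_1,\mathbb{F}_2))\twoheadrightarrow H^p(S_2,H^q(\mathbb{R}P^d\times\mathbb{R}P^d,\mathbb{F}_2))=\mathcal{E}_2^{p,q},
\end{equation*}
as required. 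The only nonformal step is the $S_2$-equivariant splitting, which reduces to this elementary combinatorial observation; the rest of the argument is standard naturality.
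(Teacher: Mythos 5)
Your proof is correct, but it reaches the collapse by a genuinely different route than the paper. The paper uses the $S_2$-homotopy equivalence coming from the free $H_1$-action (as in (\ref{8})) to identify $\mathcal{E}_2^{p,q}$ with the $E_2$-term $\mathcal{G}_2^{p,q}=H^p(S_2,H^q(\mathbb{R}P^d\times\mathbb{R}P^d,\mathbb{F}_2))$ of the $S_2$-Borel construction of $\mathbb{R}P^d\times\mathbb{R}P^d$ with the swap action, and then quotes the wreath-product collapse theorem of \cite{Adem-Milgram} for $\mathcal{G}$; you instead exploit the morphism of spectral sequences induced by (\ref{6}): since the coefficient surjection $H^\ast(\mathrm{B}H_1,\mathbb{F}_2)\twoheadrightarrow H^\ast(\mathbb{R}P^d\times\mathbb{R}P^d,\mathbb{F}_2)$ admits the $S_2$-equivariant splitting given by the span of the monomials $a^ib^j$ with $i,j\le d$, the map $\mathcal{H}_2\to\mathcal{E}_2$ is surjective, and a surjective image of a spectral sequence with vanishing differentials (the collapse of $\mathcal{H}$ at $E_2$ being the Adem--Milgram result already invoked in Section \ref{Sec:H*(D8,F_2)}) must itself have vanishing differentials. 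Both arguments rest on the same geometric input (freeness of the $H_1$-action identifying the fibre with $\mathbb{R}P^d\times\mathbb{R}P^d$), but yours avoids citing the collapse theorem for the product case $X\times X$, replacing it with an elementary equivariant splitting plus naturality, and as a by-product establishes surjectivity of the map (\ref{7}) of $E_2$-terms, which is in the spirit of the subsequent identification of the index as a kernel of a map of $S_2$-invariants; the paper's citation, on the other hand, is shorter and comes packaged with the multiplicative description of $\mathcal{E}_\infty$ used afterwards. All steps of your induction (surjectivity at $E_r$ plus $d_r^{\mathcal{H}}=0$ forcing $d_r^{\mathcal{E}}=0$ and preserving surjectivity at $E_{r+1}$) are sound, and your identification of the $E_2$-map as $H^p(S_2,-)$ applied to the fibre restriction is the correct use of naturality for fibrations over the fixed base $\mathrm{B}S_2$.
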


\begin{proof}
The action of $H_{1}$ on $S^{d}\times S^{d}$ is free. Therefore
\begin{equation}
\left( \mathrm{E}D_{8}\times \left( S^{d}\times S^{d}\right) \right)
/H_{1}\simeq \left( S^{d}\times S^{d}\right) /H_{1}=\mathbb{R}P^{d}\times\mathbb{R}
P^{d}  \label{8}
\end{equation}%
where the induced action of $S_{2}$ from $\left( \mathrm{E}D_{8}\times
\left( S^{d}\times S^{d}\right) \right) /H_{1}$ onto $\mathbb{R}P^{d}\times
\mathbb{R}P^{d}$ interchanges the copies of $\mathbb{R}P^{d}\times\mathbb{R}
P^{d}$. The $S_{2}$-homotopy equivalence (\ref{8}) induces an isomorphism of
induced Serre spectral sequences of Borel constructions
\begin{equation*}
\mathcal{E}_{2}^{p,q}=H^{p}(S_{2},H^{q}(\left( \mathrm{E}D_{8}\times \left(
S^{d}\times S^{d}\right) \right) /H_{1},\mathbb{F}_{2}))\cong
H^{p}(S_{2},H^{q}(\mathbb{R}P^{d}\times\mathbb{R}P^{d},\mathbb{F}_{2}))=\mathcal{G}_{2}^{p,q}\text{.}
\end{equation*}%
Since for the spectral sequence $\mathcal{G}_{2}^{p,q}$, by \cite[Theorem
1.7, page 118]{Adem-Milgram}, we know that $\mathcal{G}_{2}^{p,q}=\mathcal{G}%
_{\infty }^{p,q}$, the same must hold for the spectral sequence $\mathcal{E}%
_{\ast }^{\ast ,\ast }$.
\end{proof}

\medskip

\noindent We have obtained the following presentation of the map (\ref{1}) and
the related map of the fibres (\ref{5}).

\begin{proposition}
\qquad
\begin{compactenum}[\rm(A)]
\item The map $H_{D_{8}}^{\ast }(\mathrm{pt},\mathbb{F}_{2})\rightarrow
H_{D_{8}}^{\ast }(S^{d}\times S^{d},\mathbb{F}_{2})$ gives rise to
a map of spectral sequences of $S_{2}$-Borel constructions%
\begin{equation}
\mathcal{H}_{2}^{p,q}=H^{p}(S_{2},H^{q}(\left( \mathrm{E}D_{8}\times
\mathrm{pt}\right) /H_{1},\mathbb{F}_{2}))\rightarrow H^{p}(S_{2},H^{q}((\mathrm{E}%
D_{8}\times \left( S^{d}\times S^{d}\right) )/H_{1},\mathbb{F}_{2}))=%
\mathcal{E}_{2}^{p,q}  \label{9}
\end{equation}%
which is induced by the map on fibres $\left( \mathrm{E}D_{8}\times
(S^{d}\times S^{d})\right) /H_{1}\rightarrow \left( \mathrm{E}D_{8}\times
\mathrm{pt}\right) /H_{1}$.

\item The map on the fibres is the projection map in the $H_{1}$%
-Borel construction%
\begin{equation*}
\begin{array}{ccccc}
S^{d}\times S^{d} & \rightarrow & \left( \mathrm{E}D_{8}\times (S^{d}\times
S^{d})\right) /H_{1} & \rightarrow & \mathrm{B}H_{1}%
\end{array}%
.
\end{equation*}
It is completely determined in $\mathbb{F}_{2}$ cohomology by its kernel:%
\begin{equation*}
\ker \left( H^{\ast }(H_{1},\mathbb{F}_{2})\rightarrow H^{\ast }(\left(
\mathrm{E}D_{8}\times (S^{d}\times S^{d})\right) /H_{1},\mathbb{F}%
_{2})\right) =\mathrm{Index}_{H_{1},\mathbb{F}_{2}}S^{d}\times S^{d}=\langle
a^{d+1},(a+b)^{d+1}\rangle .
\end{equation*}
\end{compactenum}
\end{proposition}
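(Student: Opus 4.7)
The plan is to prove parts (A) and (B) together by repackaging the original $D_8$-Borel construction as an iterated construction using the normal series $1\lhd H_1\lhd D_8$ with quotient $S_2=D_8/H_1$, as sketched in equations (\ref{3})--(\ref{6}) of the excerpt above. The entire argument is a diagram chase that combines naturality of the Serre spectral sequence with the freeness of the $H_1$-action on $S^d\times S^d$.

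For part (A), I would first justify carefully the chain of natural identifications. Since $D_8$ acts freely on $\mathrm{E}D_8\times X$ for any $D_8$-space $X$, there is a natural homeomorphism $(\mathrm{E}D_8\times X)/D_8\cong \bigl((\mathrm{E}D_8\times X)/H_1\bigr)/S_2$ by \cite[Proposition 1.59]{Kawa}. Since $\mathrm{E}D_8$ (viewed as an $H_1$-space) is also a model for $\mathrm{E}H_1$, and since the induced $S_2$-action on $(\mathrm{E}D_8\times X)/H_1$ is free (any $S_2$-fixed point would lift to a $D_8$-translation by an element outside $H_1$ that fixes a point of the free $D_8$-space $\mathrm{E}D_8\times X$), the outer quotient is homotopy equivalent to the $S_2$-Borel construction $\mathrm{E}S_2\times_{S_2}\bigl((\mathrm{E}D_8\times X)/H_1\bigr)$. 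Applying this to $X=\mathrm{pt}$ and $X=S^d\times S^d$ and invoking naturality of the Serre spectral sequence with respect to the fibre map $(\mathrm{E}D_8\times(S^d\times S^d))/H_1\to(\mathrm{E}D_8\times\mathrm{pt})/H_1$ yields the morphism (\ref{9}).

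For part (B), I would identify the fibre map with the projection of the Borel fibration $S^d\times S^d\hookrightarrow \mathrm{E}H_1\times_{H_1}(S^d\times S^d)\to\mathrm{B}H_1$. Since the $H_1=(\mathbb{Z}_2)^2$-action on $S^d\times S^d$ is free (each $\mathbb{Z}_2$ acts antipodally on its respective copy of $S^d$), the total space is homotopy equivalent to the orbit space $\mathbb{R}P^d\times\mathbb{R}P^d$ (as recorded in (\ref{8})), and the induced ring homomorphism
\begin{equation*}
\mathbb{F}_2[a,a+b]=H^{\ast}(\mathrm{B}H_1,\mathbb{F}_2)\longrightarrow H^{\ast}(\mathbb{R}P^d\times\mathbb{R}P^d,\mathbb{F}_2)
\end{equation*}
is surjective. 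Its kernel is by definition $\mathrm{Index}_{H_1,\mathbb{F}_2}(S^d\times S^d)$, which equals $\langle a^{d+1},(a+b)^{d+1}\rangle$ by Corollary \ref{Cor:IndexOfTorus} combined with Assumption (\ref{Assumption-1}). Since the source is a polynomial ring and the map is surjective, this kernel determines the homomorphism up to isomorphism, which is what (B) asserts.

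The main obstacle I anticipate is bookkeeping rather than anything deep: one must verify that the $S_2$-action induced on the fibre $(S^d\times S^d)/H_1\simeq\mathbb{R}P^d\times\mathbb{R}P^d$ really is the coordinate-swapping action of $\sigma\in D_8$, and that under this identification $\sigma$ exchanges the chosen generators $a$ and $a+b$ of $H^{\ast}(H_1,\mathbb{F}_2)$, so that the answer $\langle a^{d+1},(a+b)^{d+1}\rangle$ is $S_2$-equivariantly meaningful and compatible with the conventions fixed in Lemma \ref{Lemma:CohomologyD8} and Assumption (\ref{Assumption-1}). Once this compatibility is secured, the proposition and the subsequent collapse fact $\mathcal{E}_2=\mathcal{E}_\infty$ together furnish the ingredients needed to extract $\mathrm{Index}_{D_8,\mathbb{F}_2}(S^d\times S^d)$ as the kernel in (\ref{1}) by a single $S_2$-spectral-sequence comparison, without invoking the $H^{\ast}(D_8,\mathbb{F}_2)$-module analysis of Section \ref{Sec:Row}.
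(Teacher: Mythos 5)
Your proposal is correct and follows essentially the same route as the paper: the same chain of identifications $(\mathrm{E}D_{8}\times X)/D_{8}\cong\bigl((\mathrm{E}D_{8}\times X)/H_{1}\bigr)/S_{2}$, the freeness of the induced $S_{2}$-action deduced from the freeness of the $D_{8}$-action, the resulting homotopy equivalences with $S_{2}$-Borel constructions and naturality of the Serre spectral sequence for (A), and the identification of the fibre map with the $H_{1}$-Borel projection whose kernel is $\mathrm{Index}_{H_{1},\mathbb{F}_{2}}S^{d}\times S^{d}=\langle a^{d+1},(a+b)^{d+1}\rangle$ for (B). The compatibility point you flag (that $\sigma$ swaps $a$ and $a+b$) is exactly the convention fixed in Lemma \ref{Lemma:CohomologyD8} and Assumption (\ref{Assumption-1}), so no further work is needed.
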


\medskip

\noindent The $\mathcal{E}_{2}^{p,q}=\mathcal{E}_{\infty }^{p,q}$ and $%
\mathcal{H}_{2}^{p,q}=\mathcal{H}_{\infty }^{p,q}$ are described by \cite[%
Lemma 1.4, page 117]{Adem-Milgram}. Therefore, $\mathrm{Index}_{D_{8},%
\mathbb{F}_{2}}S^{d}\times S^{d}$ or the kernel of the map of spectral
sequences (\ref{9}) is completely determined by the kernel of the map of $%
S_{2}$-invariants%
\begin{equation}
\begin{array}{ccc}
\mathbb{F}_{2}[a,a+b]^{S_{2}} & \rightarrow & \left( \mathbb{F}%
_{2}[a,a+b]/\langle a^{d+1},(a+b)^{d+1}\rangle \right) ^{S_{2}} \\
\shortparallel &  & \shortparallel \\
H^{\ast }(H_{1},\mathbb{F}_{2})^{S_{2}} & \rightarrow & H^{\ast }(\left(
\mathrm{E}D_{8}\times (S^{d}\times S^{d})\right) /H_{1},\mathbb{F}%
_{2})^{S_{2}}%
\end{array}
\label{10}
\end{equation}%
where $S_{2}$ action is given by $a\longmapsto a+b$. The equation (\ref%
{eq:IndexSxS-F2})
\begin{equation*}
\mathrm{Index}_{D_{8},\mathbb{F}_{2}}S^{d}\times S^{d}=\langle \pi
_{d+1},\pi _{d+2},w^{d+1}\rangle .
\end{equation*}%
is a consequence of the previous discussion, identification of elements (\ref%
{eq:NotationOfElements}) in the spectral sequence (\ref{eq:E2}) and the
following proposition about symmetric polynomials.

\pagebreak

\begin{proposition}
\qquad
\begin{compactenum}[\rm(A)]
\item A symmetric polynomial $\sum a^{i_{k}}(a+b)^{j_{k}}\in \mathbb{F}%
_{2}[a,a+b]^{S_{2}}$ is in the kernel of the map {\rm (\ref{10})}
if and only if
for every monomial%
\begin{equation*}
a^{d+1}~|~a^{i_{k}}(a+b)^{j_{k}}\text{ or }%
(a+b)^{d+1}~|~a^{i_{k}}(a+b)^{j_{k}}.
\end{equation*}

\item The kernel of the map {\rm (\ref{10})}, as an ideal in $\mathbb{F}%
_{2}[a,a+b]^{S_{2}}$ is generated by%
\begin{equation*}
a^{d+1}+(a+b)^{d+1},~\qquad a^{d+2}+(a+b)^{d+2},~\qquad a^{d+1}(a+b)^{d+1}.
\end{equation*}
\end{compactenum}
\end{proposition}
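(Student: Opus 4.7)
My plan is to derive (A) as an essentially formal consequence of the fact that $\{a^i(a+b)^j\}_{i,j\ge 0}$ is an $\mathbb{F}_2$-basis of $\mathbb{F}_2[a,a+b]=\mathbb{F}_2[a,b]$, and then prove (B) by decomposing any symmetric element of the kernel into $S_2$-orbit sums in this basis and handling each orbit type, using the three-term recursion $\pi_{m+1}=y\pi_m+w\pi_{m-1}$ (which is the Newton identity for the power sums of $a$ and $a+b$, whose elementary symmetric functions are $y=a+(a+b)$ and $w=a(a+b)$) as the main engine.

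For (A), write the kernel of the map (\ref{10}) as $\mathbb{F}_2[a,a+b]^{S_2}\cap I$, where $I=\langle a^{d+1},(a+b)^{d+1}\rangle$. Since the set $\{a^i(a+b)^j:i,j\ge 0\}$ is an $\mathbb{F}_2$-basis of $\mathbb{F}_2[a,a+b]$, the ideal $I$ has $\mathbb{F}_2$-basis exactly $\{a^i(a+b)^j:i\ge d+1\ \text{or}\ j\ge d+1\}$. Consequently a polynomial, written uniquely in this monomial basis, belongs to $I$ iff every appearing monomial satisfies the divisibility condition, which is precisely statement (A).

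For (B), the inclusion $\langle \pi_{d+1},\pi_{d+2},w^{d+1}\rangle\subseteq\ker$ is immediate, since $\pi_{d+1}=a^{d+1}+(a+b)^{d+1}$, $\pi_{d+2}=a^{d+2}+(a+b)^{d+2}$, and $w^{d+1}=a^{d+1}(a+b)^{d+1}$ all lie in $I$. For the reverse inclusion, I would decompose an arbitrary symmetric kernel element as an $\mathbb{F}_2$-combination of $S_2$-orbit sums in the monomial basis: singletons $w^i$ (when $i=j$) and doubletons $a^i(a+b)^j+a^j(a+b)^i$ with $i>j$. By (A), each self-symmetric summand $w^i$ in the kernel has $i\ge d+1$ and hence lies in $(w^{d+1})$. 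In each doubleton in the kernel one may assume $i\ge d+1$ by (A), and the factorization $a^i(a+b)^j+a^j(a+b)^i=w^j\,\pi_{i-j}$ reduces the problem to the single technical claim that $w^j\pi_n\in\langle \pi_{d+1},\pi_{d+2},w^{d+1}\rangle$ whenever $n+j\ge d+1$.

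The main obstacle is this last claim, which is the only step that uses the special structure of the $\pi_d$'s. I would prove it by induction on $j$, using the identity $w\pi_m=\pi_{m+2}+y\pi_{m+1}$ (obtained by rearranging the recursion in characteristic two): each application rewrites $w^j\pi_n$ as $w^{j-1}\pi_{n+2}+yw^{j-1}\pi_{n+1}$, lowering the $w$-exponent by one while raising every $\pi$-index. If $j\ge d+1$ we are done as $w^j\in(w^{d+1})$; otherwise, after exactly $j$ iterations we obtain an $\mathbb{F}_2[y,w]$-combination of terms $\pi_m$ with $m$ in the range $[n+j,n+2j]$, all of which satisfy $m\ge n+j\ge d+1$. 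A short secondary induction on $m$, again using $\pi_{m+1}=y\pi_m+w\pi_{m-1}$, shows that $\pi_m\in\langle\pi_{d+1},\pi_{d+2}\rangle$ for every $m\ge d+1$, finishing the proof.
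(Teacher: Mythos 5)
Your argument is correct, and in fact the paper gives no proof of this proposition at all: it appears only inside the sketched alternative approach of Section~\ref{Sec:Alternative}, stated as an elementary fact about symmetric polynomials, so you are supplying details the paper omits. Your route is sound at every step: the kernel of (\ref{10}) is exactly $\mathbb{F}_{2}[a,a+b]^{S_{2}}\cap\langle a^{d+1},(a+b)^{d+1}\rangle$, and since $\langle a^{d+1},(a+b)^{d+1}\rangle$ is a monomial ideal in the polynomial generators $a$, $a+b$, membership is detected monomial by monomial, which is (A). For (B), the decomposition of an invariant into $S_{2}$-orbit sums (singletons $w^{i}$ and pairs $a^{i}(a+b)^{j}+a^{j}(a+b)^{i}$, which involve disjoint monomials, so no cancellation occurs over $\mathbb{F}_{2}$), the factorization $a^{i}(a+b)^{j}+a^{j}(a+b)^{i}=w^{j}\pi_{i-j}$, and your key lemma that $w^{j}\pi_{n}\in\langle\pi_{d+1},\pi_{d+2},w^{d+1}\rangle$ whenever $n+j\geq d+1$ (proved via $w\pi_{m}=\pi_{m+2}+y\pi_{m+1}$ together with the secondary induction showing $\pi_{m}\in\langle\pi_{d+1},\pi_{d+2}\rangle$ for $m\geq d+1$) all check out; note the recursion is valid from $m=0$ on with the paper's convention $\pi_{0}=0$. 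It is worth observing that your lemma is the same kind of reduction the paper performs elsewhere: at the end of Section~\ref{Sec:IndexSxS-F2} the identity $w\pi_{d}=\pi_{d+2}+y\pi_{d+1}$ is implicitly used to collapse $\langle\pi_{d+1},\pi_{d+2},y\pi_{d+1},w\pi_{d}\rangle$ to $\langle\pi_{d+1},\pi_{d+2}\rangle$, and the manipulations in the $\mathbb{F}_{2}$-evaluation of Section~1.4.1 pair off monomials in exactly the way your orbit-sum decomposition formalizes; so your proof is consistent with, and completes, the paper's sketch.
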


\medskip

\noindent The approach presented here, with all its advantages, has two disadvantages:
\begin{compactenum}[\rm(1)]
\item The carrier of the combinatorial lower bound for the mass
partition
problem, the partial index $\mathrm{Index}_{D_{8},\mathbb{F}%
_{2}}^{d+2}S^{d}\times S^{d}$, cannot be obtained without extra
effort.

\item It cannot be used for computation of the index
$\mathrm{Index}_{D_{8}, \mathbb{Z}}^{d+2}S^{d}\times S^{d}$; the
spectral sequence $\mathcal{H}_{2}^{p,q}$, if considered with
$\mathbb{Z}$ coefficients, is the sequence (\ref{eq:E2withZ})
whose $E_{\infty}$-term has a ring structure that differs from
$H^{*}(D_8,\mathbb{Z})$.
\end{compactenum}

\noindent These were our reasons for presenting this idea just as a sketch.

\section{\label{Sec:IndexProductD8-Z}$\mathbf{Index}_{D_{8},\mathbb{Z}
}\mathbf{S}^{d}\mathbf{\times S}^{d}$}

Let $\Pi _{0}=0$, $\Pi _{1}=\mathcal{Y}$ and $\Pi _{n+2}=\mathcal{Y}\Pi
_{n+1}+\mathcal{W}\Pi _{n}$, for $n\geq 0$, be a sequence of polynomials in $%
H^{\ast }(D_{8},\mathbb{\mathbb{Z}})$. This section is devoted to the proof of the equality%
\begin{equation}
\mathrm{Index}_{D_{8},\mathbb{\mathbb{Z}}}^{d+2}S^{d}\times S^{d}=\left\{
\begin{array}{ll}
\langle \Pi _{\frac{d+2}{2}},\Pi _{\frac{d+4}{2}},\mathcal{M}\Pi _{\frac{d}{2%
}}\rangle & ,{\ \text{ for }}d{\ \text{ even}} \\
\langle \Pi _{\frac{d+1}{2}},\Pi _{\frac{d+3}{2}}\rangle & ,{\ \text{ for }d%
\text{ odd.}}%
\end{array}%
\right.  \label{eq:IndexSxS-ZZ}
\end{equation}%
The index is determined by the explicit computation of the $E_{d+2}$-term of
the Serre spectral sequence associated with the Borel construction%
\begin{equation*}
S^{d}\times S^{d}\rightarrow \mathrm{E}D_{8}\times _{D_{8}}\left(
S^{d}\times S^{d}\right) \rightarrow \mathrm{B}D_{8}.
\end{equation*}%
As in the previous section, the group $D_{8}$ acts nontrivially on the
cohomology of the fibre and thus the coefficients in the spectral sequence
are local. The $E_{2}$-term is given by%
\begin{equation}
\begin{array}{lll}
E_{2}^{p,q} & = & H^{p}(\mathrm{B}D_{8},\mathcal{H}^{q}(S^{d}\times S^{d},%
\mathbb{Z}))=H^{p}(D_{8},H^{q}(S^{d}\times S^{d},\mathbb{Z})) \\
&  &  \\
& = & \left\{
\begin{array}{lllll}
H^{p}(D_{8},\mathbb{Z}) & \text{, }q=0,2d &  &  &  \\
H^{p}(D_{8},H^{d}(S^{d}\times S^{d},\mathbb{Z})) & \text{, }q=d &  &  &  \\
0 & \text{, }q\neq 0,d,2d. &  &  &
\end{array}%
\right.
\end{array}
\label{eqSS-Z}
\end{equation}%
The local coefficients are nontrivial in the $d$-th row of the spectral
sequence.

\subsection{\label{Sec:Row-Z}The $d$-th row as an $H^{\ast }(D_{8},\mathbb{Z}
)$-module}

The $D_{8}$-module $M:=H^{d}(S^{d}\times S^{d},\mathbb{\mathbb{Z}})$, as an abelian group, is isomorphic to $\mathbb{
\mathbb{Z}\times\mathbb{Z}}$. Since the action of $D_{8}$ on $M$ depends on $d$ we distinguish two
cases.

\subsubsection{The case when $d$ is odd}

The action on $M$ is given by%
\begin{equation*}
\varepsilon _{1}\cdot (x,y)=(x,y),~\qquad \varepsilon _{2}\cdot
(x,y)=(x,y),~\qquad \sigma \cdot (x,y)=(y,x).
\end{equation*}%
Thus, there is an isomorphism of $D_{8}$-modules $M\cong \mathbb{Z}[D_{8}/H_{1}]$.
The situation resembles the one in Section \ref{Sec:Row},
and therefore the following propositions hold.

\begin{proposition}
$H^{\ast }(D_{8},\mathbb{\mathbb{Z}}[D_{8}/H_{1}])\ \cong _{\mathrm{ring}}\ H^{\ast }(H_{1},\mathbb{
\mathbb{Z}})$.
\end{proposition}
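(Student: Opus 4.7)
The plan is to mimic the proof of the analogous $\mathbb{F}_{2}$-coefficient proposition in Section \ref{Sec:Row} essentially verbatim; neither of the two methods given there used anything specific to the coefficient ring being $\mathbb{F}_{2}$, so both arguments adapt with $\mathbb{Z}$ in place of $\mathbb{F}_{2}$. The underlying geometric/algebraic fact that makes everything work is that $H_{1}\lhd D_{8}$ is a normal subgroup of index $2$.

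First I would invoke Shapiro's lemma. Since $[D_{8}:H_{1}]=2<\infty$, the induced and coinduced modules coincide, and there is an isomorphism of $D_{8}$-modules
\begin{equation*}
\mathbb{Z}[D_{8}/H_{1}]\ \cong\ \mathrm{Ind}_{H_{1}}^{D_{8}}\mathbb{Z}\ \cong\ \mathrm{Coind}_{H_{1}}^{D_{8}}\mathbb{Z}.
\end{equation*}
Applying Shapiro's lemma \cite[Proposition 6.2, page 73]{Brown} yields a natural isomorphism $H^{\ast}(D_{8},\mathbb{Z}[D_{8}/H_{1}])\cong H^{\ast}(H_{1},\mathbb{Z})$ of graded abelian groups. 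The ring structures match as in the $\mathbb{F}_{2}$ case: the cup product on the left, induced by the componentwise multiplication making $\mathbb{Z}[D_{8}/H_{1}]\cong \mathbb{Z}\oplus\mathbb{Z}$ into a $D_{8}$-equivariant ring, is transported through Shapiro's identification $\mathrm{Ext}^{\ast}_{D_{8}}(\mathbb{Z},\mathrm{Coind}_{H_{1}}^{D_{8}}\mathbb{Z})\cong \mathrm{Ext}^{\ast}_{H_{1}}(\mathbb{Z},\mathbb{Z})$ to the cup product on $H^{\ast}(H_{1},\mathbb{Z})$.

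As an alternative one can run the LHS spectral sequence of the extension $1\to H_{1}\to D_{8}\to D_{8}/H_{1}\to 1$ with coefficients in the $D_{8}$-module $\mathbb{Z}[D_{8}/H_{1}]$. As an $H_{1}$-module this is $\mathbb{Z}\oplus\mathbb{Z}$ with trivial action, so
\begin{equation*}
A_{2}^{p,q}\ =\ H^{p}\!\left(D_{8}/H_{1},\,H^{q}(H_{1},\mathbb{Z})\oplus H^{q}(H_{1},\mathbb{Z})\right),
\end{equation*}
where the quotient group $D_{8}/H_{1}\cong\mathbb{Z}_{2}$ swaps the two summands by the standard conjugation action. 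The swap action makes each row a free $\mathbb{Z}_{2}$-module, so $H^{p}(\mathbb{Z}_{2};\mathbb{Z}[\mathbb{Z}_{2}])$ vanishes for $p>0$ and equals $\mathbb{Z}$ in degree $0$. Hence $A_{2}^{p,q}$ is concentrated in the column $p=0$ where it equals $H^{q}(H_{1},\mathbb{Z})$, the spectral sequence collapses at $E_{2}$ without extension problems, and the claimed ring isomorphism follows.

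The main thing to be careful about is that nothing in either argument actually used $\mathbb{F}_{2}$: Shapiro's lemma holds over any commutative ring, and the vanishing of $H^{>0}(\mathbb{Z}_{2},\mathbb{Z}[\mathbb{Z}_{2}])$ is just the statement that a free module over a finite group has trivial higher group cohomology, which is coefficient-independent. Thus there is no genuine obstacle; the content of the proposition is merely that the $\mathbb{F}_{2}$ proof transfers. The only subtlety worth flagging is the compatibility of ring structures in Method 1, but this is the standard functoriality of Shapiro's isomorphism as a map of $\mathrm{Ext}$-algebras.
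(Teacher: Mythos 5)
Your primary argument coincides with the paper's own proof: the paper disposes of this proposition exactly by citing Shapiro's lemma together with the isomorphism $\mathrm{Ind}_{H_{1}}^{D_{8}}\mathbb{Z}\cong \mathrm{Coind}_{H_{1}}^{D_{8}}\mathbb{Z}$ for the finite-index (here index-two) subgroup, which is your Method~1. Your alternative LHS argument (the analogue of the paper's $\mathbb{F}_{2}$ Method~2) is also sound, with the minor caveat that $H^{q}(H_{1},\mathbb{Z})\otimes\mathbb{Z}[\mathbb{Z}_{2}]$ is an induced rather than a free $\mathbb{Z}_{2}$-module when $H^{q}(H_{1},\mathbb{Z})$ has torsion; inducedness already gives the required vanishing of $H^{p}(\mathbb{Z}_{2},-)$ for $p>0$.
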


\begin{proof}
The claim follows from Shapiro's lemma \cite[Proposition 6.2, page 73]{Brown}
and the fact that when $[G:H]<\infty $ there is an isomorphism of $G$%
-modules \textrm{Coind}$_{H}^{G}M\cong ~$\textrm{Ind}$_{H}^{G}M$.
\end{proof}

\begin{proposition}
\label{Prop:MultiplyByX-Z} Let $\mathcal{T\in }H^{\ast }(D_{8},
\mathbb{Z})$ and $P\in H^{\ast }(H_{1},\mathbb{Z})\cong H^{\ast }(D_{8},
\mathbb{Z}[D_{8}/H_{1}])$.
\begin{compactenum}[\rm(A)]
\item The action of $H^{\ast }(D_{8},\mathbb{Z})$ on $H^{\ast }(D_{8},
\mathbb{Z}[D_{8}/H_{1}])$ is given by
\begin{equation*}
\mathcal{T\cdot }P:=\mathrm{res}_{H_{1}}^{D_{8}}\left( \mathcal{T}\right)
\mathcal{\cdot }P\text{.}
\end{equation*}%
Here $P$ on the right hand side is an element of $H^{\ast }(H_{1},\mathbb{Z})$ and on the left hand side is
its isomorphic image under the isomorphism from the previous proposition. In particular,
\[\mathcal{X\cdot }H^{\ast}(D_{8},\mathbb{Z}[D_{8}/H_{1}])=0.
\]

\item $H^{\ast }(D_{8},\mathbb{Z})$-module $H^{\ast }(D_{8},\mathbb{Z}[D_{8}/H_{1}])$ is generated by the two elements
\begin{equation*}
1,\alpha \in H^{\ast }(H_{1},\mathbb{Z})\cong H^{\ast }(D_{8},\mathbb{Z}[D_{8}/H_{1}])
\end{equation*}%
of degree $0$ and $2$.

\item The map $H^{\ast }(D_{8},\mathbb{Z}[D_{8}/H_{1}])\rightarrow H^{\ast }(D_{8},\mathbb{F}_{2}[D_{8}/H_{1}]),$
induced by the coefficient map $\mathbb{Z}
\rightarrow \mathbb{F}_{2}$, is given by $1,\alpha \longmapsto 1,a^{2}$.
\end{compactenum}
\end{proposition}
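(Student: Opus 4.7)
The plan is to follow closely the $\mathbb{F}_{2}$-coefficient treatment of Section \ref{Sec:Row}, relying on Shapiro's lemma together with the $\mathbb{Z}$-coefficient restriction diagram (\ref{eq:diagramD8-1-Z}).

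For (A), since $[D_{8}:H_{1}]=2<\infty$ induction and coinduction of the trivial $H_{1}$-module $\mathbb{Z}$ coincide, so $\mathbb{Z}[D_{8}/H_{1}]\cong \mathrm{Coind}_{H_{1}}^{D_{8}}\mathbb{Z}$. Shapiro's lemma \cite[Proposition 6.2, page 73]{Brown} then provides a ring isomorphism $\Phi : H^{\ast}(D_{8},\mathbb{Z}[D_{8}/H_{1}])\xrightarrow{\cong} H^{\ast}(H_{1},\mathbb{Z})$. Naturality of cup products under induction (Frobenius reciprocity) shows that, under $\Phi$, external multiplication by $\mathcal{T}\in H^{\ast}(D_{8},\mathbb{Z})$ corresponds to cup product in $H^{\ast}(H_{1},\mathbb{Z})$ with $\mathrm{res}_{H_{1}}^{D_{8}}(\mathcal{T})$. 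Combined with $\mathrm{res}_{H_{1}}^{D_{8}}(\mathcal{X})=0$ from (\ref{eq:diagramD8-1-Z}), this yields both statements of (A).

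For (B), the same restriction diagram records $\mathrm{res}_{H_{1}}^{D_{8}}(\mathcal{Y})=\beta$, $\mathrm{res}_{H_{1}}^{D_{8}}(\mathcal{W})=\alpha(\alpha+\beta)$ and $\mathrm{res}_{H_{1}}^{D_{8}}(\mathcal{M})=\mu$, which are exactly the generators of the $\sigma$-invariant subring of $H^{\ast}(H_{1},\mathbb{Z})$ (here $\sigma$ swaps $\alpha$ with $\alpha+\beta$ and fixes $\mu$, and $2\beta=0$ makes $\beta$ itself invariant). The key observation is that $\alpha(\alpha+\beta)=\alpha^{2}+\alpha\beta$ together with $2\alpha=0$ rewrites as $\alpha^{2}=\mathcal{W}\cdot 1+\mathcal{Y}\cdot\alpha$ in the $H^{\ast}(D_{8},\mathbb{Z})$-module structure. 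An easy induction on the degree of a monomial in $\alpha$ then reduces any polynomial in $\alpha,\beta$ to an $\mathrm{Image}(\mathrm{res}_{H_{1}}^{D_{8}})$-linear combination of $1$ and $\alpha$; the factor $\mu$ is itself in the image of restriction, and $\mu^{2}=\alpha\beta(\alpha+\beta)=\mathcal{Y}\cdot\mathcal{W}\cdot 1$ requires no further generators. Hence $\{1,\alpha\}$ generates $H^{\ast}(D_{8},\mathbb{Z}[D_{8}/H_{1}])$ as an $H^{\ast}(D_{8},\mathbb{Z})$-module.

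Assertion (C) is immediate from the description of coefficient reduction $c_{\ast}:H^{\ast}(H_{1},\mathbb{Z})\to H^{\ast}(H_{1},\mathbb{F}_{2})$ given below (\ref{eq:CohomologyH1withZ}), which sends $1\mapsto 1$ and $\alpha\mapsto a^{2}$, together with naturality of $\Phi$ with respect to the coefficient morphism. The main obstacle is the book-keeping in (A): one must confirm that the $H^{\ast}(D_{8},\mathbb{Z})$-module structure on the $d$-th row of the Serre spectral sequence (\ref{eqSS-Z}) is precisely the one that Shapiro's isomorphism transports to cup product with restricted classes, so that the action really does factor through $\mathrm{res}_{H_{1}}^{D_{8}}$.
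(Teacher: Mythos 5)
Your proposal is correct and follows essentially the same route as the paper: Shapiro's lemma transports the $H^{\ast}(D_{8},\mathbb{Z})$-module structure to $H^{\ast}(H_{1},\mathbb{Z})$ via $\mathrm{res}_{H_{1}}^{D_{8}}$, the restriction diagram (\ref{eq:diagramD8-1-Z}) gives (A) and (B), and naturality under the coefficient reduction $\mathbb{Z}\rightarrow\mathbb{F}_{2}$ gives (C). Your explicit rewriting $\alpha^{2}=\mathcal{W}\cdot 1+\mathcal{Y}\cdot\alpha$ and the induction on powers of $\alpha$ merely fill in details the paper leaves implicit when it says (B) "follows from the restriction diagram."
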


\begin{proof}
The isomorphism $H^{\ast }(D_{8},\mathbb{Z}
[D_{8}/H_{1}])\ \cong _{\mathrm{ring}}\ H^{\ast }(H_{1},
\mathbb{Z})$ induced by Shapiro's lemma \cite[Proposition 6.2, page 73]{Brown}
carries the $H^{\ast }(D_{8},\mathbb{Z})$-module structure to $H^{\ast }(H_{1},
\mathbb{Z})$ via $\mathrm{res}_{H_{1}}^{D_{8}}:H^{\ast }(D_{8},
\mathbb{Z})\rightarrow H^{\ast }(H_{1},\mathbb{Z})$. In this way the complete $H^{\ast }(D_{8},
\mathbb{Z})$-module structure is given on $H^{\ast }(D_{8},\mathbb{Z}[D_{8}/H_{1}])$. The claim (B) follows from the restriction diagram (\ref%
{eq:diagramD8-1-Z}). The morphism of restriction diagrams induced by
the coefficient reduction homomorphism $c:\mathbb{Z}\rightarrow \mathbb{F}_{2}$ implies the last statement.
\end{proof}

\subsubsection{The case when $d$ is even}

The action on $M$ is given by%
\begin{equation*}
\varepsilon _{1}\cdot (x,y)=(-x,y),~\qquad \varepsilon _{2}\cdot
(x,y)=(x,-y),~\qquad \sigma \cdot (x,y)=(y,x).
\end{equation*}%
In this case we are forced to analyze the Bockstein spectral sequence
associated with the exact sequence of $D_{8}$-modules%
\begin{equation}
0\rightarrow M\overset{\times 2}{\rightarrow }M\rightarrow \mathbb{F}%
_{2}[D_{8}/H_{1}]\rightarrow 0,  \label{a-1}
\end{equation}%
i.e. with the exact couple%
\begin{equation}
\begin{diagram}[size=2em,textflow]
H^{\ast }(D_{8},M)&&\rTo{\times 2}&&H^{\ast }(D_{8},M)\\
&\luTo{\delta }&&\ldTo{c}&\\
&&H^{\ast }(D_{8},\mathbb{F}_{2}[D_{8}/H_{1}]).&&\\
\end{diagram}
\label{a-2}
\end{equation}

\medskip

First we study the Bockstein spectral sequence%
\begin{equation}
\begin{diagram}[size=2em,textflow]
H^{\ast }(H_{1},M)&&\rTo{\times 2}&&H^{\ast }(H_{1},M)\\
&\luTo{\delta }&&\ldTo{c}&\\
&&H^{\ast }(H_{1},\mathbb{F}_{2}[D_{8}/H_{1}]).&&\\
\end{diagram}
\label{a-3}
\end{equation}
As in Section \ref{Sec:IndexSxS-F2}, we have that $H^{\ast }(H_{1},%
\mathbb{F}_{2}[D_{8}/H_{1}])=\mathbb{F}_{2}[a,a+b]\oplus \mathbb{F}%
_{2}[a,a+b]$. The module $M$ as an $H_{1}$-module can be decomposed into the sum
of two $H_{1}$-modules $Z_{1}$ and $Z_{2}$. The modules $Z_{1}\cong _{Ab}\mathbb{Z}$ and $Z_{2}\cong _{Ab}
\mathbb{Z}$ are given by
\begin{equation*}
\varepsilon _{1}\cdot x=-x,~\varepsilon _{2}\cdot x=x\text{ ~~~~and~~~~ }\varepsilon
_{1}\cdot y=y,~\varepsilon _{2}\cdot y=-y
\end{equation*}%
for $x\in Z_{1}$ and $y\in Z_{2}$. This decomposition also induces a
decomposition of $H_{1}$-modules $\mathbb{F}_{2}[D_{8}/H_{1}]\cong \mathbb{F}%
_{2}\oplus \mathbb{F}_{2}$. Thus, the exact couple (\ref{a-3}) decomposes
into the direct sum of two exact couples
\begin{equation}
\begin{diagram}[size=2em,textflow]
H^{\ast }(H_{1},Z_{1})&&\rTo{\times 2}&&H^{\ast }(H_{1},Z_{1})&~~~~&H^{\ast }(H_{1},Z_{2})&&\rTo{\times 2}&&H^{\ast }(H_{1},Z_{2})\\
&\luTo{\delta }&&\ldTo{c}&&~~~~&&\luTo{\delta }&&\ldTo{c}&\\
&&H^{\ast }(H_{1},\mathbb{F}_{2})&&&~~~~&&&H^{\ast }(H_{1},\mathbb{F}_{2})&&\\
\end{diagram}
\label{a-4}
\end{equation}
Since all the maps in these exact couples are $H^{\ast }(H_{1},\mathbb{Z})$-module maps, the following proposition completely determines both exact
couples.

\smallskip

\begin{proposition}
In the exact couples \textrm{(\ref{a-4})} differentials $d_{1}=c\circ \delta
$ are determined, respectively, by%
\begin{equation}
d_{1}(1)=a,~d_{1}(b)=b(b+a)~\qquad \text{and}~\qquad
d_{1}(1)=a+b,~d_{1}(a)=d_{1}(b)=ab.  \label{a-5}
\end{equation}
\end{proposition}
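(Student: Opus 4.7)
The plan is to use naturality of the Bockstein spectral sequence under restriction from $H_1$ to its three order-two subgroups $K_1 = \langle \varepsilon_1\rangle$, $K_2 = \langle \varepsilon_2\rangle$, $K_3 = \langle \varepsilon_1\varepsilon_2\rangle$. The key observation is that when restricted to any such $K_i$, each of the $H_1$-modules $Z_1$ and $Z_2$ becomes either the trivial module $\mathbb{Z}$ or the sign module $\mathcal{Z}$, for which the Bockstein differential is already known. A direct inspection of the actions gives that $Z_1$ is trivial over $K_2$ and is the sign representation over $K_1$ and $K_3$, while $Z_2$ is trivial over $K_1$ and is the sign representation over $K_2$ and $K_3$.

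For the trivial module, $d_1^{K_i}$ coincides with $\mathrm{Sq}^1 : H^*(\mathbb{Z}_2,\mathbb{F}_2)\to H^{*+1}(\mathbb{Z}_2,\mathbb{F}_2)$, so $d_1^{K_i}(1)=0$ and $d_1^{K_i}(t_i)=t_i^2$. For the sign module, the computation carried out in item~(3) of the proof of Proposition~\ref{Prop:Modul_ZC} (compare the long exact sequence \eqref{proof:rel-2.1}) yields $d_1^{K_i}(1)=t_i$, $d_1^{K_i}(t_i^{2j})=t_i^{2j+1}$ and $d_1^{K_i}(t_i^{2j+1})=0$.

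Naturality of $c$ and $\delta$ under a group inclusion gives $\mathrm{res}_{K_i}^{H_1}\circ d_1 = d_1^{K_i}\circ \mathrm{res}_{K_i}^{H_1}$, so each $\mathrm{res}_{K_i}^{H_1}(d_1(\cdot))$ is explicitly computed from the restriction formulas $\mathrm{res}_{K_1}^{H_1}(a)=\mathrm{res}_{K_1}^{H_1}(b)=t_1$ and $\mathrm{res}_{K_2}^{H_1}(a)=0$, $\mathrm{res}_{K_2}^{H_1}(b)=t_2$ of \eqref{Assumption-1}, together with $\mathrm{res}_{K_3}^{H_1}(a)=t_3$, $\mathrm{res}_{K_3}^{H_1}(b)=0$ read off from \eqref{eq:diagramD8-2}. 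For the $Z_1$-couple one finds that $d_1(1)$ must restrict to $(t_1,0,t_3)$, singled out uniquely among the four elements of $\mathbb{F}_2[a,b]_1$ by $a$; and $d_1(b)$ must restrict to $(0,t_2^2,0)$, uniquely realised in degree two by $b(a+b)$. For the $Z_2$-couple the analogous check produces $d_1(1)=a+b$ (triple $(0,t_2,t_3)$) and $d_1(a)=d_1(b)=ab$ (both have restriction triple $(t_1^2,0,0)$).

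The main, though mild, obstacle is the finite case-check that each prescribed restriction triple singles out a \emph{unique} element of $\mathbb{F}_2[a,b]_k$ for $k=1,2$; once the kernels of the three maps $\mathrm{res}_{K_i}^{H_1}$ in low degrees are tabulated, the identification of the five differential values is immediate. Strictly speaking, the proposition only asserts these generator values, so no further derivation-type argument is required after the restriction case-check.
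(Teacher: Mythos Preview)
Your proof is correct and follows essentially the same approach as the paper: restrict the Bockstein exact couple from $H_1$ to each of $K_1,K_2,K_3$, identify whether $Z_i$ becomes the trivial or the sign module there, read off $d_1^{K_i}$ accordingly, and then use naturality plus the known restriction maps \eqref{Assumption-1} and \eqref{eq:diagramD8-2} to pin down $d_1(1)$, $d_1(a)$, $d_1(b)$ uniquely in $\mathbb{F}_2[a,b]$. Your presentation is in fact slightly more explicit than the paper's on the uniqueness check in degrees~$1$ and~$2$ and on the value $d_1(a)=ab$ in the second couple, which the paper's proof leaves implicit.
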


\begin{proof}
In both claims we use the following diagram of exact couples induced by
restrictions, where $i\in \{1,2\}$:

\smallskip
\mathsmall{
\[
\begin{diagram}[leftflush]
&&
\begin{tabular}{|l|}
\hline
\begin{diagram}[height=1.5em,width=0.3em]
H^{\ast }(H_{1},Z_{i})&&\rTo{\times 2}&&H^{\ast }(H_{1},Z_{i})\\
&\luTo{\delta }&&\ldTo{c}&\\
&&H^{\ast }(H_{1},\mathbb{F}_{2})&&\\
\end{diagram}
 \\ \hline
\end{tabular}
&&  \\
&\ldTo & \dTo & \rdTo&  \\
&&&&\\
\begin{tabular}{|l|}
\hline
\begin{diagram}[height=1.5em,width=1em]
H^{\ast }(K_{1},Z_{i})&&\rTo{\times 2}&&H^{\ast }(K_{1},Z_{i})~~\\
&\luTo{\delta }&&\ldTo{c}&\\
&&H^{\ast }(K_{1},\mathbb{F}_{2})&&\\
\end{diagram}
 \\ \hline
\end{tabular}
&&
\begin{tabular}{|l|}
\hline
\begin{diagram}[height=1.5em,width=1em]
H^{\ast }(K_{2},Z_{i})&&\rTo{\times 2}&&H^{\ast }(K_{2},Z_{i})~~\\
&\luTo{\delta }&&\ldTo{c}&\\
&&H^{\ast }(K_{2},\mathbb{F}_{2})&&\\
\end{diagram}
 \\ \hline
\end{tabular}
&&
\begin{tabular}{|l|}
\hline
\begin{diagram}[height=1.5em,width=1em]
H^{\ast }(K_{3},Z_{i})&&\rTo{\times 2}&&H^{\ast }(K_{3},Z_{i})\\
&\luTo{\delta }&&\ldTo{c}&\\
&&H^{\ast }(K_{3},\mathbb{F}_{2})&&\\
\end{diagram}
 \\ \hline
\end{tabular}\\
\end{diagram}
\]}
\smallskip

\noindent\emph{The first exact couple. }The module $Z_{1}$ is a non-trivial $K_{1}$
and $K_{3}$-module, but a trivial $K_{2}$-module. Therefore by the long
exact sequences  (\ref{proof:rel-2.1}), properties of Steenrod squares and the
assumption at the end of the Section \ref{Sec:RestrictionDiagram}:
\begin{compactenum}[\rm(A)]
\item $K_{1}$-exact couple: $d_{1}(1)=t_{1}$ and $d_{1}(t_{1})=0$;

\item $K_{2}$-exact couple: $d_{1}(1)=0$ and $d_{1}(t_{2})=t_{2}^{2}$;

\item $K_{3}$-exact couple: $d_{1}(1)=t_{3}$ and $d_{1}(t_{3})=0$.
\end{compactenum}
\noindent Now
\begin{equation*}
\left.
\begin{array}{c}
\mathrm{res}_{K_{1}}^{H_{1}}(d_{1}(1))=t_{1} \\
\mathrm{res}_{K_{2}}^{H_{1}}(d_{1}(1))=0 \\
\mathrm{res}_{K_{3}}^{H_{1}}(d_{1}(1))=t_{3}%
\end{array}%
\right\} \Rightarrow d_{1}(1)=a~\qquad \left.
\begin{array}{c}
\mathrm{res}_{K_{1}}^{H_{1}}(d_{1}(b))=0 \\
\mathrm{res}_{K_{2}}^{H_{1}}(d_{1}(b))=t_{2}^{2} \\
\mathrm{res}_{K_{3}}^{H_{1}}(d_{1}(b))=0%
\end{array}%
\right\} \Rightarrow d_{1}(b)=b(b+a).
\end{equation*}%
\emph{The second exact couple. }The module $Z_{2}$ is a non-trivial $K_{2}$
and $K_{3}$-module, while it is a trivial $K_{1}$-module. Therefore by the
long exact (\ref{proof:rel-2.1}), properties of Steenrod squares and the
assumption at the end of the Section \ref{Sec:RestrictionDiagram}:
\begin{compactenum}[\rm(A)]
\item $K_{1}$-exact couple: $d_{1}(1)=0$ and $d_{1}(t_{1})=t_{1}^{2}$;

\item $K_{2}$-exact couple: $d_{1}(1)=t_{2}$ and $d_{1}(t_{2})=0$;

\item $K_{3}$-exact couple: $d_{1}(1)=t_{3}$ and $d_{1}(t_{3})=0$.
\end{compactenum}
\noindent Now
\begin{equation*}
\left.
\begin{array}{c}
\mathrm{res}_{K_{1}}^{H_{1}}(d_{1}(1))=0 \\
\mathrm{res}_{K_{2}}^{H_{1}}(d_{1}(1))=t_{2} \\
\mathrm{res}_{K_{3}}^{H_{1}}(d_{1}(1))=t_{3}%
\end{array}%
\right\} \Rightarrow d_{1}(1)=a+b~\qquad \left.
\begin{array}{c}
\mathrm{res}_{K_{1}}^{H_{1}}(d_{1}(b))=t_{1}^{2} \\
\mathrm{res}_{K_{2}}^{H_{1}}(d_{1}(b))=0 \\
\mathrm{res}_{K_{3}}^{H_{1}}(d_{1}(b))=0%
\end{array}%
\right\} \Rightarrow d_{1}(b)=ab.
\end{equation*}
\end{proof}

\begin{remark*}
The result of the previous proposition can be seen as a key step in an
alternative proof of the equation (\ref{eq:Index-pom}).
\end{remark*}

\medskip

\begin{proposition}
In the exact couple \textrm{(\ref{a-2})}, with identification $H^{\ast
}(D_{8},\mathbb{F}_{2}[D_{8}/H_{1}])=\mathbb{F}_{2}[a,a+b]$, the
differential $d_{1}=s\circ \delta $ satisfies%
\begin{equation}
d_{1}(1)=a,~d_{1}(a+b)=d_{1}(b)=b(b+a),~d_{1}(a^{2})=a^{3}.  \label{a-6}
\end{equation}%
(This determines $d_{1}$ completely since $c$ and $\delta $ are $H^{\ast
}(D_{8},\mathbb{Z})$-module maps.)
\end{proposition}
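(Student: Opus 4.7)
The plan is to reduce the $D_8$-level computation to the $H_1$-level Bockstein already treated in the preceding proposition, via Shapiro's lemma for the induced module structure of $M$. When $d$ is even, the splitting $M|_{H_1} = Z_1 \oplus Z_2$ together with $\sigma Z_1 \sigma^{-1} = Z_2$ identifies $M$ with $\mathrm{Ind}_{H_1}^{D_8} Z_1$ as $D_8$-modules. Since induction is exact, the whole coefficient sequence (\ref{a-1}) is obtained by applying $\mathrm{Ind}_{H_1}^{D_8}$ to $0 \to Z_1 \xrightarrow{\times 2} Z_1 \to \mathbb{F}_2 \to 0$. By the naturality of Shapiro's isomorphism with respect to short exact coefficient sequences, the exact couple (\ref{a-2}) is isomorphic to the first couple in (\ref{a-4}); in particular the $D_8$-level differential $d_1$ on $H^*(D_8, \mathbb{F}_2[D_8/H_1]) = \mathbb{F}_2[a,b]$ is identified with $d_1^{Z_1}$ on $H^*(H_1, \mathbb{F}_2) = \mathbb{F}_2[a,b]$, the generators $a, b$ matching those fixed in Section~\ref{Sec:IndexSxS-F2}.

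With this identification, the first two lines of (\ref{a-6}) follow directly from the previous proposition, which records $d_1^{Z_1}(1) = a$ and $d_1^{Z_1}(b) = b(b+a)$. For the remaining values I would invoke the Leibniz formula for the Bockstein attached to the coefficient sequence $0 \to Z_1 \to Z_1 \to \mathbb{F}_2 \to 0$,
\[
d_1^{Z_1}(\alpha \beta) \ = \ \mathrm{Sq}^1(\alpha)\,\beta \ + \ \alpha\, d_1^{Z_1}(\beta) \qquad (\alpha, \beta \in H^*(H_1, \mathbb{F}_2)),
\]
which comes from the naturality of the connecting homomorphism under the pairing $\mathbb{F}_2 \otimes Z_1/2 \to Z_1/2$ and its integral lift $\mathbb{Z} \otimes Z_1 \to Z_1$. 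Specialising to $\alpha = a,\ \beta = 1$ gives $d_1^{Z_1}(a) = a^2 + a \cdot a = 0$, whence $d_1(a+b) = 0 + b(b+a) = b(b+a) = d_1(b)$; specialising to $\alpha = \beta = a$ gives $d_1^{Z_1}(a^2) = a^3 + a \cdot 0 = a^3$, i.e.\ $d_1(a^2) = a^3$.

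The main technical obstacle lies in establishing the Leibniz formula above with twisted coefficients; this is standard but requires a cochain-level naturality argument for the Bockstein coupled to cup products. A fallback that avoids the formula is to pin down $d_1^{Z_1}(a)$ and $d_1^{Z_1}(a^2)$ by restricting to the rank-one subgroups $K_1, K_2, K_3$ of $H_1$ and using the small-group Bocksteins spelled out in the proof of the previous proposition (trivial Steenrod $\mathrm{Sq}^1$ on the $K_2$ factor where $Z_1$ is trivial, and the pattern $d_1(1) = t_i$, $d_1(t_i) = 0$ on the $K_1, K_3$ factors where $Z_1$ is non-trivial): one finds that both $d_1^{Z_1}(a)$ and $d_1^{Z_1}(a^2) - a^3$ restrict to zero in every factor, and a brief check of the restriction map in low degrees then forces both to vanish.
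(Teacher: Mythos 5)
Your main route is correct, and it is genuinely different from the paper's. The paper never identifies the $D_8$-level couple with a single $H_1$-level couple: it restricts to $H_1$, invokes the injectivity of $\mathrm{res}_{H_{1}}^{D_{8}}:H^{\ast}(D_{8},\mathbb{F}_{2}[D_{8}/H_{1}])\rightarrow H^{\ast}(H_{1},\mathbb{F}_{2}[D_{8}/H_{1}])$ (Remark \ref{Rem-Injective}, Evens), uses that restriction commutes with the differentials of the Bockstein couples, and then fills in the three values by comparing with \emph{both} summand couples of (\ref{a-4}), for $Z_{1}$ and for $Z_{2}$. Your observation that $M\cong \mathrm{Ind}_{H_{1}}^{D_{8}}Z_{1}$, that applying induction to $0\rightarrow Z_{1}\rightarrow Z_{1}\rightarrow \mathbb{F}_{2}\rightarrow 0$ reproduces (\ref{a-1}), and that Shapiro's isomorphism is natural for such coefficient sequences identifies the whole couple (\ref{a-2}) with the first couple of (\ref{a-4}); this is cleaner and buys more, since together with your Leibniz formula it determines $d_{1}$ on all of $\mathbb{F}_{2}[a,a+b]$ as the twisted Bockstein $\mathrm{Sq}^{1}+a\cup(-)$, not merely on the four listed elements. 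The Leibniz formula itself is correct (it is precisely the statement that the twisted Bockstein is $\mathrm{Sq}^{1}$ plus cup product with the twisting class), and its cochain-level verification is standard, as you say. The one bookkeeping point you should make explicit is that the Shapiro identification used in the statement (projection onto the identity-coset summand) is matched with $Z_{1}$, the summand on which $\varepsilon_{1}$ acts by $-1$ and whose twisting class is $a$, rather than with $Z_{2}$; otherwise the roles of $a$ and $a+b$ are interchanged and you would obtain $d_{1}(1)=a+b$. The paper makes the same choice implicitly through its restriction formulas, so this is a matter of stating the convention, not a defect of the method.

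There is, however, a genuine flaw in your fallback for $d_{1}(a^{2})$: the combined restriction $H^{3}(H_{1},\mathbb{F}_{2})\rightarrow H^{3}(K_{1},\mathbb{F}_{2})\oplus H^{3}(K_{2},\mathbb{F}_{2})\oplus H^{3}(K_{3},\mathbb{F}_{2})$ is not injective, its kernel being spanned by $ab(a+b)$, so the $K_{i}$-computations only show $d_{1}^{Z_{1}}(a^{2})\in\{a^{3},\,a^{3}+ab(a+b)\}$. (For $d_{1}^{Z_{1}}(a)$ the same argument is fine, since the restriction is injective in degree $2$.) The gap is easily closed without the Leibniz formula by the module property already built into the couple: $a^{2}=c_{*}(\alpha)$ is the reduction of an integral class, and $\delta$ and $c$ are $H^{\ast}(H_{1},\mathbb{Z})$-module maps, so $d_{1}(a^{2})=d_{1}(c_{*}(\alpha)\cdot 1)=c_{*}(\alpha)\cdot d_{1}(1)=a^{2}\cdot a=a^{3}$.
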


\begin{proof}
Recall from the Remark \ref{Rem-Injective} that the restriction map%
\begin{equation*}
\mathrm{res}_{H_{1}}^{D_{8}}:H^{\ast }(D_{8},\mathbb{F}_{2}[D_{8}/H_{1}])%
\rightarrow H^{\ast }(H_{1},\mathbb{F}_{2}[D_{8}/H_{1}])
\end{equation*}%
is injective. Then the equations (\ref{a-6}) are obtained by filling the
empty places in the following diagrams%
\begin{equation*}
\begin{diagram}[size=2.2em,textflow]
1 & \rTo{d_{1}} &
\begin{tabular}{|l|}
\hline
\\ \hline
\end{tabular}
&  & a+b & \rTo{d_{1}} &
\begin{tabular}{|l|}
\hline
\\ \hline
\end{tabular}
&  & a^{2} & \rTo{d_{1}} &
\begin{tabular}{|l|}
\hline
\\ \hline
\end{tabular}
\\
\dTo &  & \dTo &  & \dTo &  & \dTo &  & \dTo &
& \dTo \\
1\oplus 1 & \rTo{d_{1}} & a\oplus (a+b) &~~~~  &
(a+b)\oplus a & \rTo{d_{1}} & b(b+a)\oplus ab &~~~~  &
a^{2}\oplus (a+b)^{2} & \rTo{d_{1}} & a^{3}\oplus
(a+b)^{3}%
\end{diagram}%
\end{equation*}
where all vertical maps are $\mathrm{res}_{H_{1}}^{D_{8}}$.
\end{proof}

\begin{corollary}
\label{Coroll-1}$H^{\ast }(D_{8},M)$ is generated as a $H^{\ast }(D_{8},
\mathbb{Z})$-module by three elements $\zeta _{1}$, $\zeta _{2}$, $\zeta _{3}$ of
degree $1$, $2$, $3$ such that%
\begin{equation*}
c(\zeta _{1})=a,~\qquad c(\zeta _{2})=b(a+b),~\qquad c(\zeta _{3})=a^{3}
\end{equation*}%
where $c$ is the map $H^{\ast }(D_{8},M)\rightarrow H^{\ast }(D_{8},\mathbb{F%
}_{2}[D_{8}/H_{1}])$ from the exact couple (\ref{a-2}).
\end{corollary}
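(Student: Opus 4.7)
The plan is to read off the three generators of $H^{*}(D_{8},M)$ directly from the Bockstein spectral sequence of the exact couple (\ref{a-2}), using the first differential $d_{1}=c\circ\delta$ just computed in the preceding proposition together with \cite[Remark 5.7.4, page 108]{Carlson}, in complete analogy with the Bockstein argument at the end of the proof of Proposition \ref{Prop:Modul_ZC}.

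First I would verify that the Bockstein spectral sequence collapses at the second page by checking that the first derived couple of (\ref{a-2}) becomes trivial in positive total degree; this is a direct consequence of the values $d_{1}(1)=a$, $d_{1}(a+b)=d_{1}(b)=b(b+a)$, $d_{1}(a^{2})=a^{3}$ computed above, together with the $H^{*}(D_{8},\mathbb{Z})$-module linearity of $d_{1}$. Consequently $2\cdot H^{>0}(D_{8},M)=0$, and the long exact sequence attached to (\ref{a-1}) splits into short exact sequences
\begin{equation*}
0\to H^{k}(D_{8},M)\xrightarrow{c}H^{k}(D_{8},\mathbb{F}_{2}[D_{8}/H_{1}])\xrightarrow{\delta}H^{k+1}(D_{8},M)\to 0\qquad(k\geq 1),
\end{equation*}
so that $c$ is injective and $\delta$ is surjective onto $H^{\ge 2}(D_{8},M)$. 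Setting $\zeta_{1}:=\delta(1)$, $\zeta_{2}:=\delta(b)$, $\zeta_{3}:=\delta(a^{2})$ produces three elements of degrees $1$, $2$, $3$ respectively, and the identity $c\circ\delta=d_{1}$ immediately yields the required $c$-images $a$, $b(a+b)$, $a^{3}$.

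The remaining task is to verify that $\zeta_{1},\zeta_{2},\zeta_{3}$ generate $H^{*}(D_{8},M)$ as an $H^{*}(D_{8},\mathbb{Z})$-module. By $H^{*}(D_{8},\mathbb{Z})$-linearity and surjectivity of $\delta$ in positive degree, together with $H^{0}(D_{8},\mathbb{F}_{2}[D_{8}/H_{1}])\cong\mathbb{F}_{2}$ accounting for degree $1$, this reduces to showing that $\mathrm{im}(d_{1})$ is generated as a $c_{*}(H^{*}(D_{8},\mathbb{Z}))$-submodule of $H^{*}(D_{8},\mathbb{F}_{2}[D_{8}/H_{1}])$ by $\{a,\,b(a+b),\,a^{3}\}$. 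The expected main obstacle is that $c_{*}\colon H^{*}(D_{8},\mathbb{Z})\to H^{*}(D_{8},\mathbb{F}_{2})$ is not surjective, so the two-generator result of Proposition \ref{Prop:Modulestructure} over $H^{*}(D_{8},\mathbb{F}_{2})$ does not transfer verbatim; the verification becomes a finite polynomial bookkeeping in $\mathbb{F}_{2}[a,a+b]$ (via Shapiro's lemma) using the restricted image values $c_{*}(\mathcal{Y})|_{H_{1}}=b^{2}$, $c_{*}(\mathcal{W})|_{H_{1}}=a^{2}(a+b)^{2}$, $c_{*}(\mathcal{M})|_{H_{1}}=ab(a+b)$, closely mirroring the kind of polynomial argument carried out in Section \ref{Sec:IndexSxS-F2}. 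As the remark following Proposition \ref{Prop:Modul_ZC} notes for the analogous $\mathcal{Z}$-case, only the information needed downstream is required here, and the complete module structure can be invoked from \cite[Theorem 5.11(a)]{Hand} if necessary.
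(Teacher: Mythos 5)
Your plan is essentially the argument the paper intends: Corollary \ref{Coroll-1} is stated there without proof, as an immediate consequence of the preceding $d_{1}$-computation via exactly the mechanism you describe (collapse of the Bockstein couple (\ref{a-2}), exponent $2$ in positive degrees, surjectivity of $\delta$, and \cite[Remark 5.7.4]{Carlson}), in parallel with part (B) of Proposition \ref{Prop:Modul_ZC}; your reduction of the generation statement to ``$\mathrm{im}(d_{1})$ is generated over $c_{*}(H^{*}(D_{8},\mathbb{Z}))$ by $a$, $b(a+b)$, $a^{3}$'' is the correct one, and you rightly flag that Proposition \ref{Prop:Modulestructure} does not transfer because $c_{*}$ is not surjective.

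Two caveats on the details you defer. First, the collapse is not quite ``a direct consequence'' of the three values $d_{1}(1)=a$, $d_{1}(b)=b(a+b)$, $d_{1}(a^{2})=a^{3}$ plus $H^{*}(D_{8},\mathbb{Z})$-linearity: the image of $H^{*}(D_{8},\mathbb{Z})$ in $\mathbb{F}_{2}[a,a+b]$ is the subring generated by $b^{2}$, $ab(a+b)$, $a^{2}(a+b)^{2}$, and the submodule it generates from $1$, $b$, $a^{2}$ is proper (it contains neither $a$ nor $ab$), so these data alone do not determine $d_{1}$. You must also use $d_{1}\circ d_{1}=0$ (giving $d_{1}(a)=d_{1}(a^{3})=0$ and $d_{1}(ab)=ab^{2}$ from $0=d_{1}(d_{1}(b))$), or equivalently identify $d_{1}$ via Shapiro's lemma with the twisted Bockstein $\mathrm{Sq}^{1}+a\smallsmile(-)$ on $H^{*}(H_{1},\mathbb{F}_{2})$; only then can you compute that the homology of $d_{1}$ vanishes (e.g.\ it splits as the tensor product of the acyclic complexes $(\mathbb{F}_{2}[a],\mathrm{Sq}^{1}+a)$ and $(\mathbb{F}_{2}[b],\mathrm{Sq}^{1})$), which with finiteness of each $H^{k}(D_{8},M)$ yields $2\cdot H^{>0}(D_{8},M)=0$. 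Second, the ``finite polynomial bookkeeping'' is really an all-degrees claim, but it does go through by a short induction: the odd part of $\mathrm{im}(d_{1})$ is spanned by the monomials $a^{2i+1}b^{2j}$, obtained from the identity $a^{2i+1}=\mathcal{W}\cdot a^{2i-3}+\mathcal{Y}\cdot a^{2i-1}$ together with multiplication by $\mathcal{Y}$, and the even part is spanned by $\mathcal{Y}^{m}\cdot b(a+b)$ and $\mathcal{M}\cdot a^{2s-1}b^{2(m-s)}$, all of which lie in the submodule generated by $a$, $b(a+b)$, $a^{3}$. With these two repairs your proposal is a complete proof along the paper's intended lines.
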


\subsection{$\mathrm{Index}_{D_{8},\mathbb{Z}}^{d+2}S^{d}\times S^{d}$}

The relation between the sequences of polynomials $\pi _{d}\in H^{\ast }(D_{8},%
\mathbb{F}_{2})$ and $\Pi _{d}\in H^{\ast }(D_{8},\mathbb{Z})$ is described by the following lemma.

\begin{lemma}
Let $c_{*}:H^{\ast }(D_{8},\mathbb{Z})\rightarrow H^{\ast }(D_{8},\mathbb{F}_{2})$ be the map induced by the
coefficient morphism $\mathbb{Z}\rightarrow \mathbb{F}_{2}$ (explicitly given by (\ref{eq:Map-j})). Then for every $%
d\geq 0$,
\begin{equation*}
c_{*}(\Pi _{d})=\pi _{2d}.
\end{equation*}
\end{lemma}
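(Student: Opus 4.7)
The plan is to exploit the explicit monomial formulas for $\Pi_d$ and $\pi_{2d}$ together with the description of the coefficient map $c_{*}$ given in Theorem \ref{Th:Bo-Presentation}, equation (\ref{eq:Map-j}). Since $\Pi_d$ is a polynomial in $\mathcal{Y}$ and $\mathcal{W}$ alone, with no occurrences of $\mathcal{X}$ or $\mathcal{M}$, the only substitutions that enter are $c_{*}(\mathcal{Y}) = y^{2}$ and $c_{*}(\mathcal{W}) = w^{2}$. Applying these directly to the defining sum yields
\[
c_{*}(\Pi_{d}) \;=\; \sum_{i} \binom{d-1-i}{i}_{\mathrm{mod}\,2}\,w^{2i}\,y^{2d-4i}.
\]
On the other side, writing out $\pi_{2d}$ one has
\[
\pi_{2d} \;=\; \sum_{j} \binom{2d-1-j}{j}_{\mathrm{mod}\,2}\,w^{j}\,y^{2d-2j},
\]
and it suffices to match these two expressions term by term.

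The matching will be carried out by splitting the sum defining $\pi_{2d}$ according to the parity of $j$ and invoking Lucas's theorem in the form of Lemma \ref{lemma:L-1}. For odd $j = 2i+1$, the binary expansion of $j$ has last digit~$1$, while $2d-1-j = 2(d-1-i)$ has last digit~$0$; Lucas's theorem then forces $\binom{2d-1-j}{j}_{\mathrm{mod}\,2}=0$, so all odd-$j$ terms drop out. For even $j = 2i$, one writes $2d-1-2i = 1 + 2(d-1-i)$ and $2i = 0 + 2i$, and reads off the binary digits: position $0$ contributes $\binom{1}{0}=1$, and the higher positions reproduce exactly the binary digits of $d-1-i$ against those of $i$. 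Applying Lemma \ref{lemma:L-1} once more gives $\binom{2d-1-2i}{2i}_{\mathrm{mod}\,2} = \binom{d-1-i}{i}_{\mathrm{mod}\,2}$, and the two sums coincide.

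I expect no real obstacle: the entire argument is the parity analysis of binomial coefficients using Lucas's theorem, which is already assembled in Lemma \ref{lemma:L-1}. The only care required is in keeping the bookkeeping of exponents straight when reindexing $j = 2i$. As an optional sanity check, one may alternatively verify the identity recursively by noting that $c_{*}$ applied to the recursion $\hat{\Pi}_{d+1} = Y\hat{\Pi}_{d} + W\hat{\Pi}_{d-1}$ yields $c_{*}(\Pi_{d+1}) = y^{2} c_{*}(\Pi_{d}) + w^{2} c_{*}(\Pi_{d-1})$, and checking that $\pi_{2(d+1)}$ satisfies the same recursion modulo the vanishing of odd-$w$-degree terms (which is again Lucas); the initial values $c_{*}(\Pi_{0})=0=\pi_{0}$ and $c_{*}(\Pi_{1})=y^{2}=\pi_{2}$ then close the induction.
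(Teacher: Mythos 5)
Your proposal is correct, but it takes a different route from the paper. The paper proves the lemma by induction on $d$, using the recursion $\pi_{n+1}=y\pi_{n}+w\pi_{n-1}$ (the reduction of $\hat{\Pi}_{n+1}=Y\hat{\Pi}_{n}+W\hat{\Pi}_{n-1}$ from Remark \ref{Rem:definition}): from the inductive hypothesis one gets $c_{*}(\Pi_{k+2})=y^{2}\pi_{2k+2}+w^{2}\pi_{2k}$, and then inserts $2yw\pi_{2k+1}=0$ (characteristic $2$) to regroup this as $y(y\pi_{2k+2}+w\pi_{2k+1})+w(y\pi_{2k+1}+w\pi_{2k})=y\pi_{2k+3}+w\pi_{2k+2}=\pi_{2k+4}$ -- no binomial coefficients ever appear. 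You instead compare the two explicit monomial expansions directly and reduce the claim to the congruences $\binom{2d-1-2i}{2i}\equiv\binom{d-1-i}{i}\pmod 2$ and $\binom{2d-2-2i}{2i+1}\equiv 0 \pmod 2$, both immediate from Lemma \ref{lemma:L-1}; your digit bookkeeping is correct (the last binary digit of $2d-1-2i$ is $1$ and the higher digits are those of $d-1-i$, while $2i$ has last digit $0$ and higher digits those of $i$), and one checks easily that the ranges of nonvanishing terms agree, since $2i\le d-1$ is equivalent to $4i\le 2d-1$. The paper's induction is shorter and needs no arithmetic of binomial coefficients, while your computation is a one-shot closed-form verification that makes the index doubling $\mathcal{W}^{i}\mathcal{Y}^{d-2i}\mapsto w^{2i}y^{2d-4i}$ and its effect on coefficients completely explicit, reusing a lemma the paper already has on hand; your ``optional sanity check'' at the end is essentially the paper's own proof, so either write-up would serve.
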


\begin{proof}
Induction on $d\geq 0$. For $d=0$ and $d=1$ the claim is obvious. Let $d\geq
2$ and let us assume that claim holds for every $d\leq k+1$. Then%
\begin{equation*}
\begin{array}{lllll}
c_{*}(\Pi _{k+2}) & = & c_{*}(\mathcal{Y}\Pi _{k+1}+\mathcal{W}\Pi _{k})\overset{%
hypo.}{=}y^{2}\pi _{2k+2}+w^{2}\pi _{2k} & = & y^{2}\pi _{2k+2}+yw\pi
_{2d+1}+yw\pi _{2d+1}+w^{2}\pi _{2k} \\
& = & y(y\pi _{2k+2}+w\pi _{2d+1})+w(y\pi _{2d+1}+w\pi _{2k}) & = & y\pi
_{2k+3}+w\pi _{2k+2} \\
& = & \pi _{2k+4}. &  &
\end{array}%
\end{equation*}
\end{proof}

\smallskip

There is a sequence of $D_{8}$-inclusions%
\begin{equation*}
S^{1}\times S^{1}\subset S^{2}\times S^{2}\subset\cdots\subset S^{d-1}\times
S^{d-1}\subset S^{d}\times S^{d}\subset S^{d+1}\times S^{d+1}\subset\cdots
\end{equation*}%
implying a sequence of ideal inclusions%
\begin{equation}
\mathrm{Index}_{D_{8},\mathbb{Z}
}^{3}S^{1}\times S^{1}\supseteq \mathrm{Index}_{D_{8},
\mathbb{Z}}^{4}S^{2}\times S^{2}\supseteq\cdots\supseteq \mathrm{Index}_{D_{8},
\mathbb{Z}}^{d+1}S^{d-1}\times S^{d-1}\supseteq \mathrm{Index}_{D_{8},
\mathbb{Z}}^{d+2}S^{d}\times S^{d}\supseteq\cdots  \label{incusion-1}
\end{equation}

\subsubsection{The case when $d$ is odd}

In this section we prove that%
\begin{equation}
\mathrm{Index}_{D_{8},\mathbb{Z}}^{d+2}S^{d}\times S^{d}=\langle \Pi _{\frac{d+1}{2}},\Pi _{\frac{d+3}{2}%
}\rangle .  \label{eqq-1}
\end{equation}%
The proof can be conducted as in the case of $\mathbb{F}_{2}$ coefficients
(Section \ref{Sec:IndexSxS-F2}). The results of Section \ref%
{Sec:IndexSxS-F2} can also be used to simplify the proof of equation (\ref%
{eqq-1}). The morphism $c_{*}:H^{\ast }(D_{8},\mathbb{\mathbb{Z}})\rightarrow H^{\ast }(D_{8},\mathbb{F}_{2})$ induced by the coefficient
morphism $\mathbb{Z}\rightarrow \mathbb{F}_{2}$ is a part of the morphism $C$ of Serre spectral
sequences (\ref{eqSS-Z}) and (\ref{eq:E2-Product}). Thus, for $1\in
E_{d+1}^{0,d}=H^{0}(D_{8},H^{d}(S^{d}\times S^{d},\mathbb{Z}))$,
$\hat{1}\in E_{d+1}^{0,d}=H^{0}(D_{8},H^{d}(S^{d}\times S^{d},\mathbb{F}%
_{2}))$, $\alpha \in E_{d+1}^{2,d}=H^{2}(D_{8},H^{d}(S^{d}\times S^{d},
\mathbb{Z}))$ and $a\in E_{d+1}^{1,d}=H^{1}(D_{8},H^{d}(S^{d}\times S^{d},
\mathbb{Z}))$,
\begin{eqnarray*}
C(\partial _{d+1}(1)) &=&\partial _{d+1}(C(1))=\partial _{d+1}(\hat{1})=\pi
_{d+1}=C\left( \Pi _{\frac{d+1}{2}}\right) , \\
C(\partial _{d+1}(\alpha )) &=&\partial _{d+1}(C(\alpha ))=\partial
_{d+1}(a^{2})=\partial _{d+1}(w\cdot \hat{1}+y\cdot a)=w\pi _{d+1}+y\pi
_{d+2}=\pi _{d+3}=C\left( \Pi _{\frac{d+3}{2}}\right) .
\end{eqnarray*}%
From Proposition \ref{Prop:MultiplyByX-Z} and the sequence of inclusions (%
\ref{incusion-1}) it follows that%
\begin{equation*}
\partial _{d+1}(1)=\Pi _{\frac{d+1}{2}}\text{ and }\partial _{d+1}(\alpha
)=\Pi _{\frac{d+3}{2}}\text{.}
\end{equation*}%
Finally, the statement (B) of Proposition \ref{Prop:MultiplyByX-Z} implies
equation (\ref{eqq-1}).

\subsubsection{The case when $d$ is even}

In this section we prove that%
\begin{equation}
\mathrm{Index}_{D_{8},\mathbb{Z}}^{d+2}S^{d}\times S^{d}=\langle \Pi _{\frac{d+2}{2}},\Pi _{\frac{d+4}{2}},%
\mathcal{M}\Pi _{\frac{d}{2}}\rangle .  \label{eqq-2}
\end{equation}%
The previous section implies that%
\begin{equation}
\langle \Pi _{\frac{d}{2}},\Pi _{\frac{d+2}{2}}\rangle \supseteq \mathrm{%
Index}_{D_{8},\mathbb{Z}}^{d+2}S^{d}\times S^{d}\supseteq \langle \Pi _{\frac{d+2}{2}},\Pi _{\frac{%
d+4}{2}}\rangle .  \label{incusion-2}
\end{equation}%
From Corollary \ref{Coroll-1} we know that $\mathrm{Index}_{D_{8},
\mathbb{Z}}^{d+2}S^{d}\times S^{d}$ is generated by three elements $\partial
_{d+1}(\zeta _{1})$, $\partial _{d+1}(\zeta _{2})$, $\partial _{d+1}(\zeta
_{3})$ of degrees $d+2$, $d+3$, $d+4$. Thus, $\partial _{d+1}(\zeta
_{1})=\Pi _{\frac{d+2}{2}}$ and $\partial _{d+1}(\zeta _{2})=\mathcal{M}\Pi
_{\frac{d}{2}}$. Since $\Pi _{\frac{d+4}{2}}\notin \langle \Pi _{\frac{d+2}{2%
}},\mathcal{M}\Pi _{\frac{d}{2}}\rangle $, then $\partial _{d+1}(\zeta
_{3})=\Pi _{\frac{d+4}{2}}$. The proof of the equation (\ref{eqq-2}) is
concluded.

Alternatively, the proof can be obtained with the help of the morphism $C$
of Serre spectral sequences (\ref{eqSS-Z}) and (\ref{eq:E2-Product}).

\end{document}